\theoremstyle{plain}
\newtheorem*{stirling'sformula}{Stirling's Formula}
\newtheorem{thm}{Theorem}[section]
\newtheorem{prop}[thm]{Proposition}
\newtheorem{lem}[thm]{Lemma}
\newtheorem{cor}[thm]{Corollary}
\theoremstyle{definition}
\newtheorem{definition}[thm]{Definition}
\theoremstyle{remark}
\newtheorem{notation}[thm]{Notation}
\newtheorem{remark}[thm]{Remark}
\numberwithin{equation}{section}
\newcommand{\Amc}{\mathcal{A}} 
\newcommand{\Bmc}{\mathcal{B}} 
\newcommand{\Cmc}{\mathcal{C}} 
\newcommand{\Dmc}{\mathcal{D}} 
\newcommand{\Emc}{\mathcal{E}} 
\newcommand{\Fmc}{\mathcal{F}} 
\newcommand{\Hmc}{\mathcal{H}} 
\newcommand{\Jmc}{\mathcal{J}}
\newcommand{\Mmc}{\mathcal{M}}
\newcommand{\Pmc}{\mathcal{P}}
\newcommand{\Smc}{\mathcal{S}} 
\newcommand{\Tmc}{\mathcal{T}}
\newcommand{\Wmc}{\mathcal{W}} 
\newcommand{\Xmc}{\mathcal{X}} 
\newcommand{\Ymc}{\mathcal{Y}} 
\newcommand{\Zmc}{\mathcal{Z}}
\newcommand{\Fbbb}{\mathbb{F}} 
\newcommand{\Gbbb}{\mathbb{G}}
\newcommand{\Pbbb}{\mathbb{P}} 
\newcommand{\Rbbb}{\mathbb{R}} 
\newcommand{\Tbbb}{\mathbb{T}}  
\newcommand{\Vbbb}{\mathbb{V}}
\newcommand{\Zbbb}{\mathbb{Z}}
\newcommand{\lmf}{\mathfrak{l}}
\newcommand{\smf}{\mathfrak{s}}
\newcommand{\Itd}{\widetilde{I}}
\newcommand{\Std}{\widetilde{S}}
\DeclareMathAlphabet{\pazocal}{OMS}{zplm}{m}{n}
\DeclareMathOperator{\sgn}{sgn}
\DeclareMathOperator{\PSL}{\mathrm{PSL}}
\DeclareMathOperator{\PGL}{\mathrm{PGL}}
\DeclareMathOperator{\Span}{\mathrm{Span}}
\DeclareMathOperator{\tr}{\mathrm{tr}}
\DeclareMathOperator{\End}{\mathrm{End}}
\DeclareMathOperator{\Hit}{\mathrm{Hit}}
\DeclareMathOperator{\id}{\mathrm{id}}
\DeclareMathOperator{\Ad}{\mathrm{Ad}}
\DeclareMathOperator{\Hom}{\mathrm{Hom}}
\begin{document}

\title{The Goldman Symplectic form on the $\PGL(V)$-Hitchin component}

\author{Zhe Sun}
\address{School of Mathematical Sciences, University of Science and Technology of China, 96 Jinzhai Road, 230026 Hefei, Anhui, China}
\email{sunz@ustc.edu.cn}

\author{Tengren Zhang}
\address{Mathematics Department, National University of Singapore, 10 Lower Kent Ridge Road, Singapore 119076}
\email{matzt@nus.edu.sg}
\thanks{ZS was partially supported by the Luxembourg National Research Fund(FNR) AFR bilateral grant COALAS 11802479-2. TZ was partially supported by the National Science Foundation under agreements DMS-1536017, and by the NUS-MOE grant R-146-000-270-133 and A-8000458-00-00. The authors acknowledge support from U.S. National Science Foundation grants DMS 1107452, 1107263, 1107367 ``RNMS: GEometric structures And Representation varieties" (the GEAR Network).}

\begin{abstract}
This article is the second of a pair of articles about the Goldman symplectic form on $\Hit_V(S)$, the $\PGL(V)$-Hitchin component of a closed, connected, oriented, hyperbolic surface $S$. We show that any ideal triangulation $\Tmc$ on $S$ and any compatible bridge system $\Jmc$ determine a symplectic trivialization of the tangent bundle to the $\PGL(V)$-Hitchin component of $S$. Using this, we prove that a large class of vector fields defined in the companion paper \cite{SunWienhardZhang}, called the $(\Tmc,\Jmc)$-parallel vector fields, are Hamiltonian. We also show that if $(\Tmc,\Jmc)$ is subordinate to a pants decomposition $\Pmc$ of $S$, then the special $(\Tmc,\Jmc)$-parallel vector fields constructed in the companion paper \cite{SunWienhardZhang} give a symplectic basis of the tangent space to $\Hit_V(S)$ at every point in $\Hit_V(S)$. This is then used to prove that the explicit global coordinate system defined in the companion paper \cite{SunWienhardZhang} is a global Darboux coordinate system for the $\PGL(V)$-Hitchin component.
\end{abstract}

\maketitle

\tableofcontents

%%%%%%%%%%%%%%%%%%%%%%%%%%%%%%%%%%%%%%%%%%%%%%%%%%%%%%%%%%%%%%%%%%%%%%%%%%%%%%%%%%%%%%%%%%%%%%%%%%%%
\section{Introduction} 
%%%%%%%%%%%%%%%%%%%%%%%%%%%%%%%%%%%%%%%%%%%%%%%%%%%%%%%%%%%%%%%%%%%%%%%%%%%%%%%%%%%%%%%%%%%%%%%%%%%%

Let $S$ be a closed, oriented, connected, hyperbolic surface, and let $\Gamma$ denote the fundamental group of $S$. The \emph{Teichm\"uller space} of $S$, denoted $\Tmc(S)$, is the deformation space  of hyperbolic structures on $S$. By considering the holonomy representations of these hyperbolic structures, one can identify $\Tmc(S)$ as the connected component
\[\left\{[\rho]\in\Xmc\big(\Gamma,\PGL(2,\Rbbb)\big):\rho\text{ is discrete and faithful}\right\}\]
of $\Xmc\big(\Gamma,\PGL(2,\Rbbb)\big):=\Hom\big(\Gamma,\PGL(2,\Rbbb)\big)/\PGL(2,\Rbbb)$. 

This description of $\Tmc(S)$ as representations admits a natural generalization where we replace $\PGL(2,\Rbbb)$ with a higher rank, semisimple Lie group $G$ that is real split. These generalizations of $\Tmc(S)$ are known as the $G$-\emph{Hitchin components}, and are central objects of investigation in the study of higher Teichm\"uller theory. In this article, we consider the setting when $G=\PGL(V)$, where $V$ is an $n$-dimensional real vector space (see Section \ref{sec:Hitchin}). The $\PGL(V)$-Hitchin component, denoted $\Hit_V(S)$, was first studied by Hitchin \cite{Hitchin}, who showed that $\Hit_V(S)$ is real-analytically diffeomorphic to $\Rbbb^{(n^2-1)(2g-2)}$. Later, Labourie \cite{Labourie2006} proved that every representation in $\Hit_V(S)$ is Borel-Anosov, which implies in particular that these representations are all quasi-isometric embeddings from $\Gamma$ (with any word metric) to $\PGL(V)$ (with any left-invariant Riemannian metric).

By specializing a general construction due to Goldman \cite{Goldmansymplectic}, one can equip $\Hit_V(S)$ with a natural symplectic structure (see Section \ref{sec:Goldman symplectic form}), known as the \emph{Goldman symplectic form}. Goldman also proved \cite{Goldmaninvariantfunctions} that when $V=\Rbbb^2$, this symplectic form is a multiple of the more classical Weil-Petersson symplectic form on $\Tmc(S)$. The Goldman symplectic form on $\Hit_V(S)$ has been an object of interest in the last few years, and was previously studied by many other authors, including \cite{Bridgeman}, \cite{LabourieWentworth}, \cite{Labourie_swapping}, \cite{Nie_Thesis}, \cite{Sun}, \cite{Sun1}.

Perhaps the most famous result regarding this symplectic form on $\Tmc(S)$ is a theorem of Wolpert \cite{Wolpert1},\cite{Wolpert2}, which states that for any pants decomposition $\Pmc$ on $S$, the Fenchel-Nielsen coordinate functions associated to $\Pmc$ are global Darboux coordinates for the Weil-Petersson symplectic form. In other words, the Hamiltonian vector fields of the Fenchel-Nielsen coordinate functions corresponding to $\Pmc$ are commuting vector fields that give a symplectic basis of $T_{[\rho]}\Tmc(S)$ for every $[\rho]\in\Tmc(S)$. This was partially generalized to $\Hit_V(S)$ when $\dim(V)=3$ by Kim \cite{kim1999} and Choi-Jung-Kim \cite{ChoiJungKim}, and when $\dim(V)=4$ by H.T. Jung (in preparation). The key tool used to prove these results is known as Fox calculus, which is a technique that allows one to compute the Goldman Poisson pairing between certain types of functions on $\Hit_V(S)$. However, the computations required to implement Fox calculus become exceedingly complicated as $\dim(V)$ gets large.

This article is the second of a pair of articles about the Goldman symplectic form on $\Hit_V(S)$, where the objective is to generalize Wolpert's theorem to $\Hit_V(S)$ for general $V$. 

\subsection{Statement of results} In the companion article,  Sun, Wienhard, and Zhang \cite{SunWienhardZhang} showed that that given any ideal triangulation $\Tmc$ on $S$ and any compatible bridge system $\Jmc$ (see Section \ref{sec:topological constructions} for definitions), one can construct a family of real-analytic vector fields on $\Hit_V(S)$ using the geometry of the representations in $\Hit_V(S)$. These are called the \emph{$(\Tmc,\Jmc)$-parallel vector fields} (\cite[Section 5.1]{SunWienhardZhang}), and satisfy the following properties:
\begin{itemize}
\item Any pair of $(\Tmc,\Jmc)$-parallel vector fields commute.
\item For any $[\rho]\in\Hit_V(S)$ and any $v\in T_{[\rho]}\Hit_V(S)$, there is a unique $(\Tmc,\Jmc)$-parallel vector field $\Xmc$ such that $\Xmc([\rho])=v$.
\end{itemize}
Special cases of these flows were previously constructed by Wienhard-Zhang \cite{WienhardZhang}. 

Furthermore, by modifying Bonahon and Dreyer's \cite{BonahonDreyer1} parameterization  of $\Hit_V(S)$, Sun, Wienhard, and Zhang \cite[Theorem 1.1]{SunWienhardZhang} also constructed a real-analytic diffeomorphism 
\[\Omega=\Omega_{\Tmc,\Jmc}:\Hit_V(S)\to \mathscr{C},\]
where $\mathscr{C}$ is a convex polyhedral cone of dimension $(n^2-1)(2g-2)$ in a real vector space $\mathscr{W}$, such that the $(\Tmc,\Jmc)$-parallel vector fields on $\Hit_V(S)$ are exactly the vector fields that are identified via the \emph{$(\Tmc,\Jmc)$-trivialization}
\[{\rm d}\Omega:T\Hit_V(S)\to T\mathscr{C}\cong\mathscr{C}\times\mathscr{W}\] 
to the constant vector fields on $\mathscr{C}$. 

%More informally, given a choice of some topological data on $S$ (the choice of $\Tmc$ and $\Jmc$), the geometry of the representations in $\Hit_V(S)$ determine a vector space of commuting flows on $\Hit_V(S)$ that is naturally in bijection with $T_{[\rho]}\Hit_V(S)$ for every $[\rho]\in\Hit_V(S)$. In particular, this determines a trivialization of $T\Hit_V(S)$. 

The first goal of this article is to prove that ${\rm d}\Omega$ is a symplectic trivialization with respect to the Goldman symplectic form.

\begin{thm}[Theorem \ref{thm:constant}]\label{thm:motive}
Let $\Tmc$ be an ideal triangulation on $S$ and let $\Jmc$ be a compatible bridge system. There is a symplectic bilinear form $\omega_{\mathscr{W}}$ on the vector space $\mathscr{W}$ for which the $(\Tmc,\Jmc)$-trivialization ${\rm d}\Omega$ is symplectic. Equivalently, if $\Xmc'$ and $\Xmc$ are $(\Tmc,\Jmc)$-parallel vector fields on $\Hit_V(S)$, then the function $\Hit_V(S)\to\Rbbb$ given by 
\[[\rho]\mapsto\omega_{[\rho]}(\Xmc'([\rho]),\Xmc([\rho]))\] 
is constant. 
\end{thm}

Theorem \ref{thm:motive} yields the following corollary.

\begin{cor}[Corollary \ref{cor:Hamiltonian}]\label{cor:intro}
Any $(\Tmc,\Jmc)$-parallel vector field is a Hamiltonian vector field on $\Hit_V(S)$.
\end{cor}

%As a consequence, we have the following corollary.

%\begin{cor}[Corollary \ref{cor:integrable}]
%$\Hit_V(S)$ is a complete integrable system.
%\end{cor}

When $(\Tmc,\Jmc)$ is subordinate to some pants decomposition $\Pmc$ of $S$, Sun, Wienhard, and Zhang \cite{SunWienhardZhang} specified a particular collection of $(\Tmc,\Jmc)$-parallel vector fields on $\Hit_V(S)$, called the \emph{special $(\Tmc,\Jmc)$-parallel vector fields}, see Definition~\ref{def:special}. These consist of 
\begin{itemize}
\item $\frac{(n-1)(n-2)}{2}$ \emph{eruption fields} for each pair of pants of $\Pmc$,
\item $\frac{(n-1)(n-2)}{2}$ \emph{hexagon fields} for each pair of pants of $\Pmc$,
\item $n-1$ \emph{twist fields} for each simple closed curve in $\Pmc$,
\item $n-1$ \emph{length fields} for each simple closed curve in $\Pmc$.
\end{itemize}
In total, there are $(n^2-1)(2g-2)$ special $(\Tmc,\Jmc)$-tangent parallel vector fields. If we specialize to the case when $\dim(V)=2$, i.e. when $\Hit_V(S)=\Tmc(S)$, then there are no eruption and hexagon fields, but there is one twist field and one length field for each simple closed curve in $\Pmc$, and the twist field integrates to the Fenchel-Nielsen twist flow. 

The second goal of this article is to use Theorem \ref{thm:motive} to prove that the special $(\Tmc,\Jmc)$-parallel vector fields form a symplectic basis at every point in $\Hit_V(S)$.

\begin{thm}[Corollary~\ref{cor:symplectic}]\label{thm:intro2}
Let $(\Tmc,\Jmc)$ be subordinate to a pants decomposition $\Pmc$ of $S$. For any $[\rho]\in\Hit_V(S)$, the set 
\[\{\Xmc([\rho])\in T_{[\rho]}\Hit_V(S):\Xmc\text{ is a special }(\Tmc,\Jmc)\text{-parallel vector field}\}\]
is a symplectic basis of $T_{[\rho]}\Hit_V(S)$.
\end{thm}

\begin{remark}\label{rem: mistake 1}
There is a small mistake (a missing factor of $2$) in the description of the length vector fields in the companion paper \cite{SunWienhardZhang}. See Definition \ref{def:length} for the correct description of the length vector fields used in Theorem \ref{thm:intro2} to hold, and see Remark \ref{rem: correction} for a description of the difference between the length vector fields described in \cite{SunWienhardZhang} and the length vector fields described here.
\end{remark}

In the companion paper \cite{SunWienhardZhang}, the authors also constructed, for every choice of $(\Tmc,\Jmc)$ that is subordinate to a pants decomposition of $S$, a global coordinate system on $\Hit_V(S)$ by associating to every special $(\Tmc,\Jmc)$-parallel vector field $\Xmc$ a coordinate function $H(\Xmc)$ of this coordinate system, which is explicitly given by projective invariants that can be read off from the Frenet curves associated to the $\PGL(V)$-Hitchin representations. As a consequence of Theorem~\ref{thm:intro2}, we have the following corollary.

\begin{cor}\label{cor: intro}
The global coordinate system on $\Hit_V(S)$ defined in the companion paper \cite[Corollary 8.15]{SunWienhardZhang} is a global Darboux coordinate system on $\Hit_V(S)$.
\end{cor}

%\begin{remark}
%Because of the mistake in the companion paper described in Remark \ref{rem: mistake 1}, one has to accordingly modify (by a factor of $2$) the coordinate functions associated to the ? vector fields in order for Corollary \ref{cor: intro} to hold. See Remark \ref{} for the corrected version.
%\end{remark}

When $\dim(V)=2$, this coordinate system recovers the Fenchel-Nielsen coordinates on $\Tmc(S)$. This thus generalizes Wolpert's theorem to $\Hit_V(S)$.

\subsection{Proof strategy for Theorem \ref{thm:motive}} Behind the proof of Theorem \ref{thm:motive} is a new method to compute the Goldman symplectic form on $\Hit_V(S)$, which is similar in flavor to the techniques used by Bonahon-Sozen \cite{BonahonSozen} on $\Tmc(S)$. This method uses the following theorem obtained by combining the work of Labourie \cite{Labourie2006} and Guichard \cite{Guichard}.

\begin{thm} [Guichard, Labourie]\label{thm:tool}
Let $\Fmc(V)$ denote the space of complete flags in $V$. There is canonical bijection
\[\Hit_V(S)\simeq\left\{\xi:\partial\Gamma\to\Fmc(V)\bigg|\begin{array}{l}
\xi\text{ is Frenet and }\rho\text{-equivariant }\\
\text{for some }\rho\in\Hom\big(\Gamma,\PGL(V)\big)\end{array}\right\}\bigg/\PGL(V)\]
\end{thm}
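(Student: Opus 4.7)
The plan is to construct the forward map $[\rho]\mapsto[\xi_\rho]$ using Labourie's theorem and then establish bijectivity by invoking Guichard's characterization of representations admitting equivariant Frenet curves. Throughout I interpret the target as the full flag variety $\Fmc(V)$, which is the natural home for a Frenet curve.

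For the forward map, let $[\rho]\in\Hit_V(S)$. By Labourie's theorem (already quoted in the introduction), $\rho$ is Anosov with respect to the minimal parabolic of $\PSL(V)$. The Anosov structure yields a unique continuous $\rho$-equivariant boundary map $\xi_\rho:\partial\Gamma\to\Fmc(V)$ that sends the attracting fixed point $\gamma^+$ of each infinite-order $\gamma\in\Gamma$ to the attracting fixed flag of $\rho(\gamma)$. Labourie's further analysis upgrades $\xi_\rho$ to a Frenet curve, establishing both the transversality of arbitrary tuples of component flags and the osculation condition in the limit. Because conjugating $\rho$ by $g\in\PGL(V)$ post-composes $\xi_\rho$ with $g$, the assignment $[\rho]\mapsto[\xi_\rho]$ descends to $\PGL(V)$-orbits.

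For injectivity, suppose two Hitchin classes share an image; after conjugating, assume $\xi_{\rho_1}=\xi_{\rho_2}=\xi$. For each $\gamma\in\Gamma$, both $\rho_1(\gamma)$ and $\rho_2(\gamma)$ are proximal with attracting flag $\xi(\gamma^+)$ and repelling flag $\xi(\gamma^-)$ and act on these two transverse flags through the equivariance of $\xi$; density of the attracting–repelling pairs $(\gamma^+,\gamma^-)$ in $\partial\Gamma\times\partial\Gamma$, together with Frenet transversality, pins down $\rho_1=\rho_2$. For surjectivity, which is Guichard's contribution, the key is to show that the subset of $\Xmc\bigl(\Gamma,\PSL(V)\bigr)$ consisting of classes admitting a $\rho$-equivariant Frenet curve is both open and closed: openness follows from structural stability of Anosov representations, and closedness from the fact that Frenet transversality is a closed condition and that equivariant limits of Frenet curves remain Frenet. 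This locus is nonempty because it contains the Fuchsian representations composed with the principal embedding $\PSL(2,\Rbbb)\hookrightarrow\PSL(V)$, whose boundary curves are the Veronese images of $\partial\Gamma$. Since $\Hit_V(S)$ is by definition the connected component of this Fuchsian locus, surjectivity follows.

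The main obstacle is the first step: producing $\xi_\rho$ with the full Frenet property for an arbitrary Hitchin representation. Establishing the Anosov property on all of $\Hit_V(S)$, and then upgrading the resulting boundary map from bare transversality to full hyperconvexity, is the deep analytic content of Labourie's work, proceeding via a careful study of cross-ratios on $\partial\Gamma$ and the limit behaviour of the equivariant curve under shrinking configurations. By contrast, the injectivity and surjectivity arguments outlined above are largely formal once Labourie's and Guichard's theorems are in hand.
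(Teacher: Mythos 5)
The paper does not supply a proof of this theorem at all — it is a citation. Theorem~\ref{thm:tool} (restated later as Theorem~\ref{thm:Labourie,Guichard}) is imported as a black box from Labourie \cite[Theorem 1.4]{Labourie2006} and Guichard \cite[Theorem 1]{Guichard}. The paper's only ``proof'' is the attribution and the one-sentence remark following Theorem~\ref{thm:Labourie,Guichard} that the theorem ``establishes a natural bijection'' as stated. So you are not reconstructing an argument the paper gives; you are reconstructing the literature.

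As a reconstruction, your outline is faithful at the level of who-proves-what: Labourie supplies the forward map (Anosov property and the upgrade to a Frenet curve) and effectively the injectivity (uniqueness of the equivariant Frenet curve, and the converse that $\xi_\rho$ determines $\rho$ via the equivariance condition against a spanning family of transverse flags); Guichard supplies surjectivity. Two remarks, though. First, you were right to read $\Fmc(V)$ for $\PSL(V)$ in the target; the theorem statement as printed has a typo, and the Frenet curve indeed takes values in the flag variety. Second, and more substantively, the sentence ``closedness from the fact that Frenet transversality is a closed condition'' is wrong, and in the wrong direction: transversality of a $k$-tuple of flags is an \emph{open} condition. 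The entire difficulty in the closedness half — and this is precisely where Guichard's analytical work lives — is to show that a limit of representations admitting equivariant Frenet curves still admits one, i.e.\ that the transversality and osculation properties do not degenerate in the limit. This is not a formal topological observation; Guichard's argument uses the structure of hyperconvex curves (positivity, cross-ratio estimates, control of the limiting boundary map) in an essential way. So the open–closed–nonempty skeleton you describe is the right shape of Guichard's proof, but the content you attribute to the closedness step does not carry it, and presenting it as routine misrepresents where the work is.
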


In other words, instead of thinking of $\Hit_V(S)$ as a space of conjugacy classes of representations, one can think of $\Hit_V(S)$ as a space of projective classes of Frenet curves (see Definition~\ref{def:Frenet}) satisfying an equivariance property.

We now briefly describe this new computational method. Let $[\rho]\in\Hit_V(S)$ and let $X\in T_{[\rho]}\Hit_V(S)$. Fix a choice of $(\Tmc,\Jmc)$ and a choice of representative $\rho\in[\rho]$. First, we use the Frenet curves point of view for Hitchin representations to explicitly define a cocycle 
\[\nu_X=\nu_{X,\rho,\Tmc,\Jmc}\in C^1(S,\smf\lmf(V)_{\Ad\circ\rho})\] 
called the $(\rho,\Tmc,\Jmc)$-\emph{tangent cocycle} of $X$, see Sections \ref{sec:tangent cocycle} and \ref{sec:tangent cocycle general}. We then prove that the assignment 
\[\Psi=\Psi_{\rho,\Tmc,\Jmc}:T_{[\rho]}\Hit_V(S)\to C^1(S,\smf\lmf(V)_{\Ad\circ\rho})\] 
given by $X\mapsto \nu_X$ is a linear injection, so its image $\mathscr{T}=\mathscr{T}(\rho,\Tmc,\Jmc)$ is a linear subspace of $C^1(S,\smf\lmf(V)_{\Ad\circ\rho})$ of dimension $(n^2-1)(2g-2)$. We then show, see Proposition \ref{prop:tangentcocyclebij}, that the induced map 
\[T_{[\rho]}\Hit_V(S)\to H^1(S,\smf\lmf(V)_{\Ad\circ\rho})\]
given by $X\mapsto[\Psi(X)]$ agrees precisely with the linear isomorphism $T_{[\rho]}\Hit_V(S)\simeq H^1(S,\smf\lmf(V)_{\Ad\circ\rho})$ introduced by Weil \cite{Weil}, which Goldman uses to define the Goldman symplectic form. In particular, each cohomology class in $H^1(S,\smf\lmf(V)_{\Ad\circ\rho})$ admits a unique cocycle representative in $\mathscr{T}$.

Then, we explicitly define another linear isomorphism
\[\Xi:=\Xi_{\rho,\Tmc,\Jmc}:\mathscr{T}\to\mathscr{W},\]
see Section \ref{sec: Xi} and Proposition \ref{prop: Xi injective}. One can think of $\Xi$ as abstracting certain ``coefficient data" from every $(\rho,\Tmc,\Jmc)$-tangent cocycle. We then prove, see Theorem~\ref{thm: symplectic trivialization}, that for any point $[\rho]\in\Hit_V(S)$ and any representative $\rho\in[\rho]$, the map 
\[\Xi\circ\Psi:T_{[\rho]}\Hit_V(S)\to \mathscr{W}\cong T_{\Omega([\rho])} \mathscr{C}\]
agrees with ${\rm d}\Omega_{[\rho]}$. In particular, $\Xi\circ\Psi$ does not depend on the choice of $\rho\in[\rho]$, and the map $[\rho]\mapsto \Xi\circ\Psi(\Xmc([\rho]))$ is constant for any $(\Tmc,\Jmc)$-parallel vector field $\Xmc$ on $\Hit_V(S)$.

Using the explicit descriptions of the $\Psi$ and $\Xi$, as well as a well-chosen triangulation of $S$ that is compatible with $(\Tmc,\Jmc)$, see Section \ref{sec:triangulation}, we may apply the formula (see \eqref{eqn:formula}) for the Goldman symplectic form to show that for any pair of $(\Tmc,\Jmc)$-parallel vector fields $\Xmc$ and $\Xmc'$ on $\Hit_V(S)$ and any $[\rho]\in\Hit_V(S)$, the Goldman symplectic pairing at $[\rho]$ between $\Xmc'([\rho])$ and $\Xmc([\rho])$ depends only on $\Xi\circ\Psi(\Xmc([\rho]))$ and $\Xi\circ\Psi(\Xmc'([\rho]))$, which are vectors in $\mathscr{W}$ that do not depend on $[\rho]$. Theorem~\ref{thm:motive} follows from this.\\

%Informally, by making some choices, we found a systematic, explicit, and concrete description of every tangent vector in $T_{[\rho]}\Hit_V(S)$ as a $\Tmc$-admissible labelling at $\rho$. This description allows us to compute the Goldman symplectic pairing on $T_{[\rho]}\Hit_V(S)$ by using the cup product formula from simplicial cohomology. Executing this computation involves choosing a triangulation of $S$ that is well-adapted to the pair $(\Tmc,\Jmc)$, see Section \ref{sec:triangulation} for more details.

%More precisely, for every $L\in\mathscr{A}(\rho,\Tmc)$, we define the \emph{coefficients} of $L$ (see Definition \ref{def:special}). These are a collection of real numbers that determine $L$ once we are given the $\rho$-equivariant Frenet curve. We then used Theorem \ref{thm:intro iso} to compute that for any $L_1,L_2\in\Amc(\rho,r,\Tmc)$, $\omega([\mu_{L_1}],[\mu_{L_2}])$ depends only on the coefficients of $L_1$ and $L_2$. Theorem \ref{thm:motive} then follows from the observation that the coefficients of the admissible labellings corresponding to tangent vectors to any $(\Tmc,\Jmc)$-parallel flow are constant on $\Hit_V(S)$. For more details, see the proof of Theorem \ref{thm:constant}.

The rest of this paper is organized as follows. In Section \ref{sec: background}, we discuss the relationship between Frenet curves and Hitchin representations, as well as some well-known projective invariants such as the cross ratio and triple ratio. Then in Section~\ref{sec:topological constructions}, we formally define ideal triangulations, barriers systems, and bridge systems. Using this, we define the $(\rho,\Tmc,\Jmc)$-tangent cocycles and the map $\Psi=\Psi_{\rho,\Tmc,\Jmc}$ in Section \ref{sec: tangent cocycles}, and we define the map $\Xi=\Xi_{\rho,\Tmc,\Jmc}$ in Section \ref{sec: coeff and lab}. Then in Section \ref{sec:trivialization} and Section \ref{sec:symplectic basis}, we prove Theorem \ref{thm:motive}, and Theorem \ref{thm:intro2} respectively.

%In Section 7, we use $\Phi_\rho$ to construct a symplectic trivialization of $T\Hit_V(S)$, and prove Theorem \ref{thm:motive}. Then in Section 6, we define the special admissible labellings and prove Theorem \ref{thm:intro2}.

%Some of the proofs in this article require long but elementary computations, which are completely written up in the appendices attached. Also, one should note that the results in this article depend only on the first five sections (but not Section 6) of the companion article \cite{SunWienhardZhang}.

{\bf Acknowledgements:} The authors thank Sara Maloni, Sam Ballas, Frederic Palesi, Fran\c{c}ois Labourie, and Francis Bonahon for many useful conversations that helped them develop their understanding of this subject. They especially thank Bill Goldman for asking the question that motivated this project. They also thank Anna Wienhard, their collaborator in the companion paper, for suggesting that they implement their computational methods explicitly.

This project started when the first author visited the second author at the California Institute of Technology in the spring of 2016. The bulk of the work in this project was done when both authors visited the Institute for Mathematical Sciences in the National University of Singapore during the summer of 2016, and also during the second author's visit to the Yau Mathematical Sciences Center in Tsinghua University during the spring and summer of 2017. The authors are grateful towards these institutions for their hospitality. The first author would also like to thank Yi Huang for some financial support for this project.

%%%%%%%%%%%%%%%%%%%%%%%%%%%%%%%%%%%%%%%%%%%%%%%%%%%%%%%%%%%%%%%%%%%%%%%%%%%%%%%%%%%%%%%%%%%%%%%%%%%%
\section{Features of Hitchin representations and the Hitchin component}\label{sec: background}
%%%%%%%%%%%%%%%%%%%%%%%%%%%%%%%%%%%%%%%%%%%%%%%%%%%%%%%%%%%%%%%%%%%%%%%%%%%%%%%%%%%%%%%%%%%%%%%%%%%%
In this section, we recall the definition of $\PGL(V)$-Hitchin representations and the $\PGL(V)$-Hitchin component. We first explain results by Labourie  \cite{Labourie2006} and Guichard \cite{Guichard} relating $\PGL(V)$-Hitchin representations to Frenet curves, as well as results due to Fock and Goncharov \cite{FockGoncharov} regarding the positive nature of $\PGL(V)$-Hitchin representations. Then, we  use a regularity result of Bridgeman, Canary, Labourie, and Sambarino \cite{BCLS} to give an explicit description of the real-analytic structure on the $\PGL(V)$-Hitchin component. Finally, we recall Weil's \cite{Weil} cohomological description of the tangent space to the $\PGL(V)$-Hitchin component, as well as the definition of the Goldman symplectic form \cite{Goldmansymplectic} on the $\PGL(V)$-Hitchin component.

%%%%%%%%%%%%%%%%%%%%%%%%%%%%%%%%%%%%%%%%%%%%%%%%%%
\subsection{Frenet curves and $\PGL(V)$-Hitchin representations}\label{sec:Hitchin}
%%%%%%%%%%%%%%%%%%%%%%%%%%%%%%%%%%%%%%%%%%%%%%%%%%
Recall that $V$ denotes an $n$-dimensional real vector space, $S$ denotes a closed, connected, oriented topological surface of genus $g\geq 2$, $\Gamma$ denotes the fundamental group of $S$, and $\partial\Gamma$ denotes the Gromov boundary of $\Gamma$. It is well-known that $\partial\Gamma$ is topologically a circle, and the orientation on $S$ induces a (clockwise) cyclic ordering on $\partial\Gamma$. 

\begin{definition}\label{def:Hitchin component}
A representation $\rho:\Gamma\to\PGL(V)$ is a $\PGL(V)$-\emph{Hitchin representation} if it can be continuously deformed in $\Hom(\Gamma,\PGL(V))$ to a faithful representation whose image is a discrete subgroup that lies in the image of an irreducible representation from $\PSL_2(\Rbbb)$ to $\PGL(V)$.
\end{definition}

\begin{remark}
Since $\PSL_2(\Rbbb)$ is connected, the image of any $\PGL(V)$-Hitchin representation necessarily lies in the identity component of $\PGL(V)$, namely $\PSL(V)$. As such, these are also sometimes called $\PSL(V)$-Hitchin representations.
\end{remark}

Let $\widetilde{\Hit}_V(S)$ denotes the set of Hitchin representations in $\Hom(\Gamma,\PGL(V))$. Since $\PGL(V)$ is a real algebraic group, $\Hom(\Gamma,\PGL(V))$ is naturally a real algebraic variety. It is clear that in the the compact-open topology on $\Hom(\Gamma,\PGL(V))$, $\widetilde{\Hit}_V(S)$ is a union of connected components of $\Hom(\Gamma,\PGL(V))$. Furthermore, since the condition for being a $\PGL(V)$-Hitchin representation is invariant under conjugation by elements in $\PGL(V)$, we can consider the quotient $\Hit_V(S):=\widetilde{\Hit}_V(S)/\PGL(V)$. This was first studied by Hitchin \cite{Hitchin}, who used Higgs bundle techniques to prove that $\Hit_V(S)$ is homeomorphic to a cell of dimension $(2g-2)(n^2-1)$. In particular, $\Hit_V(S)$ is connected, and is commonly known as the $\PGL(V)$-\emph{Hitchin component}.

\begin{remark}
The quotient $\widetilde{\Hit}_V(S)/\PSL(V)$ has two connected components when $n$ is even, but is connected when $n$ is odd. Since we want to work with one Hitchin component, it is more convenient to take the quotient $\widetilde{\Hit}_V(S)/\PGL(V)$.
\end{remark}

Note that $\Hit_{\Rbbb^2}(S)$ is exactly the space of discrete and faithful representations from $\Gamma$ to $\PSL_2(\Rbbb)$ considered up to conjugation in $\PGL_2(\Rbbb)$. Every such conjugacy class is the holonomy of a unique hyperbolic structure on $S$, so $\Hit_{\Rbbb^2}(S)$ can be naturally identified with the deformation space of hyperbolic structures on $S$, also known as the Fricke-Teichm\"uller space. This phenomena was generalized by Choi-Goldman \cite{ChoiGoldman}, who proved that the $\PGL(3,\Rbbb)$-Hitchin representations are exactly holonomies of convex real projective structures on $S$. However, the geometric properties of $\PGL(V)$-Hitchin representations in general were poorly understood until a breakthrough by Labourie \cite{Labourie2006}, who used Frenet curves to study them. We will now give a brief account of Labourie's result.

A \emph{(complete) flag} $F$ in $V$ is a nested sequence of subspaces 
\[F^{(1)}\subset F^{(2)}\subset\dots\subset F^{(n-1)}\subset V\] 
so that $\dim F^{(i)}=i$. Let $\Fmc(V)$ denote the space of flags in $V$. The space $\Fmc(V)$ naturally has the structure of a real-analytic manifold, and $\PGL(V)$ acts transitively and real-analytically on $\Fmc(V)$. We say that a $k$-tuple of flags $(F_1,\dots,F_k)$ in $\Fmc(V)^k$ is \emph{generic} if $F_1^{(i_1)}+\dots+F_k^{(i_k)}=V$ for all non-negative integers $i_1,\dots,i_k$ that sum to $n$. Let $\Fmc(V)^{[k]}$ denote the space of generic $k$-tuples of flags. Note that this is an open set in $\Fmc(V)^k$, and so naturally inherits a real-analytic structure which descends to a real-analytic structure on $\Fmc(V)^{[k]}/\PGL(V)$. 

\begin{remark}\label{rem:normalization}
We will repeatedly use the following two observations. 
\begin{enumerate}
\item Let $(F,G), (F',G')$ be generic pairs of flags in $\Fmc(V)$, and let $p,p'\in\Pbbb(V)$ so that 
\[p+F^{(i)}+G^{(n-i-1)}=V=p'+F'^{(i)}+G'^{(n-i-1)}\] 
for all $i=0,\dots,n-1$. Then there is a unique projective transformation $g\in\PGL(V)$ such that $g\cdot F=F'$, $g\cdot G=G'$ and $g\cdot p=p'$.
\item Let $F,G,G'$ be flags in $\Fmc(V)$ such that $(F,G)$ and  $(F,G')$ are generic. Then there is a unique unipotent projective transformation $u\in\PGL(V)$ such that $u\cdot F=F$ and $u\cdot G=G'$.
\end{enumerate}
\end{remark}

\begin{definition}\label{def:Frenet}
A continuous map $\xi:S^1\to\Fmc(V)$ is \emph{Frenet} if the following hold for all $k=1,\dots,n$:
\begin{itemize}
\item If $(x_1,\dots,x_k)$ is a pairwise distinct $k$-tuple of points in $S^1$, then 
\[\big(\xi(x_1),\dots,\xi(x_k)\big)\] 
is a generic $k$-tuple of flags in $\Fmc(V)$.
\item Let $\{(x_{1,i},\dots,x_{k,i})\}_{i=1}^\infty$ be a sequence of pairwise distinct $k$-tuples in $S^1$, such that $\lim_{i\to\infty}x_{j,i}=x\in S^1$ for all $j=1,\dots,k$. Then for any positive integers $n_1,\dots,n_k$ so that $n_1+\dots+n_k=m\leq n$, we have
\[\lim_{i\to\infty}\xi(x_{1,i})^{(n_1)}+\dots+\xi(x_{k,i})^{(n_k)}=\xi^{(m)}(x).\]
\end{itemize}
\end{definition}

\begin{remark}\label{rem:Frenet normalization}
As a consequence of Remark \ref{rem:normalization}(1), given any two Frenet curves $\xi,\xi':S^1\to\Fmc(V)$ and any triple of pairwise distinct points $x_1,x_2,x_3\in S^1$, there is a unique projective transformation $g\in\PGL(V)$ such that 
\[g\cdot\left(\xi(x_1),\xi(x_2),\xi^{(1)}(x_3)\right)=\left(\xi'(x_1),\xi'(x_2),{\xi'}^{(1)}(x_3)\right).\]
In particular, if $\xi$ and $\xi'$ are projectively equivalent, then $g\cdot\xi=\xi'$.
\end{remark}

Since $\partial\Gamma$ is a topological circle, it makes sense to say whether a continuous map $\xi:\partial\Gamma\to\Fmc(V)$ is Frenet. Labourie proved that every $\PGL(V)$-Hitchin representation $\rho:\Gamma\to\PGL(V)$ preserves a unique $\rho$-equivariant Frenet map $\xi_\rho:\partial\Gamma\to\Fmc(V)$. Using this, he deduces that every $\PGL(V)$-Hitchin representation is an irreducible quasi-isometric embedding (with respect to any word metric on $\Gamma$ and any left-invariant Riemannian metric on $\PGL(V)$). He also conjectured that for any representation $\rho:\Gamma\to\PGL(V)$, the existence of such a $\rho$-equivariant Frenet map implies that $\rho$ is a $\PGL(V)$-Hitchin representation. This was later proven by Guichard, thus establishing the following theorem.

\begin{thm}\cite[Theorem 1]{Guichard}, \cite[Theorem 1.4]{Labourie2006} \label{thm:Labourie,Guichard}
A representation $\rho:\Gamma\to\PGL(V)$ is a $\PGL(V)$-\emph{Hitchin representation} if and only if there exists a $\rho$-equivariant Frenet map $\xi_\rho:\partial\Gamma\to\Fmc(V)$. If such a $\rho$-equivariant Frenet map exists, then it is unique. Furthermore, for any pair $\rho,\rho'$ of $\PGL(V)$-Hitchin representations, $\xi_\rho=\xi_{\rho'}$ if and only if $\rho=\rho'$.
\end{thm}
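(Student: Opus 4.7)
The plan is to split the biconditional into its two implications, and to handle uniqueness of $\xi_\rho$ together with the injectivity of the map $\rho\mapsto\xi_\rho$ at the end.

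For the ``only if'' direction, I would first verify the conclusion on the \emph{Fuchsian locus}, i.e.\ for $\rho=\iota\circ\rho_0$ with $\rho_0:\Gamma\to\PSL(2,\Rbbb)$ discrete faithful and $\iota:\PSL(2,\Rbbb)\hookrightarrow\PSL(V)$ irreducible. Here the Frenet curve is the classical boundary map $\partial\rho_0:\partial\Gamma\to\Pbbb(\Rbbb^2)$ followed by the osculating-flag map of the degree $n-1$ Veronese embedding, and both Frenet axioms reduce to non-vanishing of Vandermonde-type determinants. To propagate this to an arbitrary Hitchin $\rho$, I would first show that every Hitchin representation is Anosov with respect to the minimal parabolic of $\PSL(V)$. \textbf{This is the main obstacle}, and is the heart of Labourie's original argument: via a variational/energy argument on a flat bundle over the unit tangent bundle of $S$ (equipped with an auxiliary hyperbolic structure), one establishes an exponential contraction/dilation dichotomy along the geodesic flow. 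The Anosov property produces continuous $\rho$-equivariant limit curves $\xi^{(1)}:\partial\Gamma\to\Pbbb(V)$ and $\xi^{(n-1)}:\partial\Gamma\to\Pbbb(V^*)$ that are transverse and depend continuously on $\rho$. I would then assemble the intermediate subspaces $\xi^{(k)}$ by taking osculating limits of $\xi^{(1)}$ along boundary sequences, and invoke openness of transversality together with connectedness of $\Hit_V(S)$ and the verified Fuchsian case to conclude that $\xi=(\xi^{(1)},\dots,\xi^{(n-1)})$ is Frenet for every Hitchin $\rho$.

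For the ``if'' direction, let $\Ymc\subset\Hom(\Gamma,\PSL(V))$ denote the set of representations admitting a $\rho$-equivariant Frenet curve. I would show that $\Ymc$ is both open and closed in $\Hom(\Gamma,\PSL(V))$, hence a union of connected components, which together with the Fuchsian case already treated will force $\Ymc\supset\widetilde{\Hit}_V(S)$. Openness uses structural stability: a cross-ratio/contraction argument shows that the existence of a Frenet curve implies an Anosov-type dynamical condition that is open in $\Hom(\Gamma,\PSL(V))$ with limit curves varying continuously. Closedness uses equicontinuity: if $\rho_i\to\rho$ with Frenet curves $\xi_{\rho_i}$, then a subsequence converges uniformly to a continuous $\rho$-equivariant map $\xi$, and the transversality and osculation conditions pass to the limit provided $\xi$ remains injective, which follows from uniform eigenvalue-gap estimates along the sequence.

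Finally, for uniqueness of $\xi_\rho$ and injectivity of $\rho\mapsto\xi_\rho$: any $\rho$-equivariant Frenet curve must send the attracting fixed point $\gamma^+\in\partial\Gamma$ of an infinite-order $\gamma\in\Gamma$ to the unique \emph{attracting flag} of $\rho(\gamma)$, namely the flag whose $k$-th subspace is the span of the eigenspaces of the $k$ largest-modulus eigenvalues. The fact that these eigenvalues are real, simple, and all distinct follows from $\rho(\gamma)$-invariance together with transversality of $\xi_\rho(\gamma^+)$ and $\xi_\rho(\gamma^-)$. Since the set of such $\gamma^+$ is dense in $\partial\Gamma$, continuity determines $\xi_\rho$ globally, giving uniqueness. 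For the last sentence of the theorem, if $\xi_\rho=\xi_{\rho'}$, then $\rho(\gamma)$ and $\rho'(\gamma)$ share all eigenflags for every infinite-order $\gamma$; combined with Zariski density of $\rho(\Gamma)$ in $\PSL(V)$ (a separate known feature of Hitchin representations), this forces $\rho=\rho'$.
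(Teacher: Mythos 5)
The theorem is cited in the paper from Labourie and Guichard without an in-paper proof, so I evaluate your sketch on its own merits. Your outline of the ``only if'' direction (Labourie's theorem) is essentially right in broad strokes: verify the Frenet property on the Fuchsian locus via the Veronese osculating flag, establish the Anosov property for all Hitchin representations (the real content, by an energy/variational argument on a flat bundle over $T^1\Sigma$), produce the limit maps, assemble the flag curve, and propagate the full Frenet condition by openness and connectedness of $\Hit_V(S)$. Your uniqueness argument via density of attracting fixed points is also correct. For injectivity of $\rho\mapsto\xi_\rho$, the appeal to Zariski density is unnecessary and, as written, leaves a small gap: sharing the eigenflags of each $\rho(\gamma)$ only pins $\rho(\gamma)^{-1}\rho'(\gamma)$ down to the common Cartan, not to the identity. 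The cleaner observation is that $\rho(\gamma)^{-1}\rho'(\gamma)$ fixes \emph{every} flag in $\xi(\partial\Gamma)$, and a generic transverse triple of flags has trivial pointwise stabilizer in $\PSL(V)$, which forces $\rho(\gamma)=\rho'(\gamma)$ directly.

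Your sketch of the ``if'' direction (Guichard's theorem), however, has a genuine logical gap: the conclusion you draw, $\Ymc \supset \widetilde{\Hit}_V(S)$, is the wrong inclusion. The ``if'' direction asserts that every representation admitting an equivariant Frenet curve is Hitchin, i.e.\ $\Ymc \subset \widetilde{\Hit}_V(S)$. Knowing that $\Ymc$ is open, closed, and meets the Fuchsian locus only shows that $\Ymc$ \emph{contains} some Hitchin component; it says nothing about whether $\Ymc$ could also contain representations in other, non-Hitchin components of $\Hom(\Gamma,\PSL(V))$. To close the gap one must additionally show that $\Ymc$ is connected (so that it equals exactly the component of the Fuchsian locus), and that connectedness step is essentially the whole content of Guichard's theorem. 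Guichard's actual proof supplies it by a direct deformation argument on the space of equivariant Frenet (hyperconvex) curves: given any such $\xi$, he constructs an explicit path of equivariant Frenet curves joining $\xi$ to the Veronese curve of a Fuchsian representation, exploiting the positivity of the flag configurations along $\xi$. Your structural-stability/equicontinuity ideas serve the openness and closedness assertions, but they cannot by themselves yield the needed connectedness, so as written the ``if'' direction is not proved.
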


In particular, this establishes a natural bijection between $\Hit_V(S)$ and the set of projective classes of Frenet curves $\xi:\partial\Gamma\to\Fmc(V)$ that are $\rho$-equivariant for some representation $\rho:\Gamma\to\PGL(V)$. At about the same time, Fock and Goncharov \cite{FockGoncharov} also gave a more algebraic version of the criterion in Theorem \ref{thm:Labourie,Guichard}, where they replace the Frenet property with the notion of a ``positive map". 

As a consequence of Theorem \ref{thm:Labourie,Guichard}, Labourie deduces the following.

\begin{cor}\cite[Proposition 3.2]{Labourie2006}\label{cor:Labourie}
Let $\rho:\Gamma\to\PGL(V)$ be a $\PGL(V)$-Hitchin representations with $\xi_\rho:\partial\Gamma\to\Fmc(V)$ as its $\rho$-equivariant map. Then for any $\gamma\in\Gamma$ that is not the identity, $\rho(\gamma)$ has $\xi_\rho(\gamma^-)$ and $\xi_\rho(\gamma^+)$ as its attracting and repelling fixed point in $\Fmc(V)$, where $\gamma^-$ and $\gamma^+$ are the attracting and repelling fixed point of $\gamma$ in $\partial\Gamma$. Furthermore, any (some) linear representative of $\rho(\gamma)$ is diagonalizable over $\Rbbb$ with pairwise distinct eigenvalues, all of which have the same sign.
\end{cor}

%%%%%%%%%%%%%%%%%%%%%%%%%%%%%%%%%%%%%%%%%%%%%%%%%%
\subsection{Positivity of $\PGL(V)$-Hitchin representations}\label{sec:positivity}
%%%%%%%%%%%%%%%%%%%%%%%%%%%%%%%%%%%%%%%%%%%%%%%%%%
The description of Hitchin representations via Frenet curves allows one to prove a striking positivity feature for Hitchin representations, which was first observed by Fock and Goncharov \cite{FockGoncharov}. To describe this, we will define two well-known projective invariants associated to transverse pairs and triples of flags in $\Fmc(V)$.

Let $F_1,F_2,F_3$ be a triple of flags in $\Fmc(V)$. For $m=1,2,3$, choose a basis $\{f_{m,1},\dots,f_{m,n}\}$ of $V$ such that $F_m^{(j)}=\Span_\Rbbb(f_{m,1},\dots,f_{m,j})$ for all $j=1,\dots,n-1$. For each triple $\mathbf i=(i_1,i_2,i_3)$ of non-negative integers that are weakly between $0$ and $n-1$, and that sum to $n$, define 
\[F_1^{(i_1)}\wedge F_2^{(i_2)}\wedge F_3^{(i_3)}:=\det(f_{1,1},\dots,f_{1,i_1},f_{2,1},\dots,f_{2,i_2},f_{3,1},\dots,f_{3,i_3}).\]
If $(F_1,F_2,F_3)$ is a generic triple of flags in $\Fmc(V)$, then $F_1^{(i_1)}\wedge F_2^{(i_2)}\wedge F_3^{(i_3)}\neq 0$. 

\begin{definition}\label{def:ratio} \ %Let $\Amc$ denote the set of pairs of positive integers that sum to $n$, and let $\Bmc$ denote the set of triples of positive integers that sum to $n$.
\begin{enumerate} 
\item Let $F_1,G_1,F_2,G_2$ be flags in $\Fmc(V)$ so that $(F_1,G_1,F_2)$ and $(F_1,F_2,G_2)$ are triples of generic flags in $\Fmc(V)$. For any pair $\mathbf k=(k_1,k_2)$ of positive integers that sum to $n$, define the \emph{cross ratio} by
\[C^\mathbf k(F_1,G_1,F_2,G_2):=\frac{F_1^{(k_1)}\wedge F_2^{(k_2-1)}\wedge G_2^{(1)}}{F_1^{(k_1)}\wedge F_2^{(k_2-1)}\wedge G_1^{(1)}}\cdot\frac{F_1^{(k_1-1)}\wedge F_2^{(k_2)}\wedge G_1^{(1)}}{F_1^{(k_1-1)}\wedge F_2^{(k_2)}\wedge G_2^{(1)}}.\]
\item Let $(F_1,F_2,F_3)$ be a triple of generic flags in $\Fmc(V)$. For any triple $\mathbf i=(i_1,i_2,i_3)$ of positive integers that sum to $n$, define the \emph{triple ratio}
\begin{align*}
T^\mathbf i(F_1,F_2,F_3):=\frac{F_1^{(i_1+1)}\wedge F_2^{(i_2)}\wedge F_3^{(i_3-1)}}{F_1^{(i_1+1)}\wedge F_2^{(i_2-1)}\wedge F_3^{(i_3)}}\cdot&\frac{F_1^{(i_1-1)}\wedge F_2^{(i_2+1)}\wedge F_3^{(i_3)}}{F_1^{(i_1)}\wedge F_2^{(i_2+1)}\wedge F_3^{(i_3-1)}}\\
&\cdot\frac{F_1^{(i_1)}\wedge F_2^{(i_2-1)}\wedge F_3^{(i_3+1)}}{F_1^{(i_1-1)}\wedge F_2^{(i_2)}\wedge F_3^{(i_3+1)}}.
\end{align*}
\end{enumerate}
\end{definition}

The definition of $F_1^{(i_1)}\wedge F_2^{(i_2)}\wedge F_3^{(i_3)}$ depends on the bases $\{f_{m,1},\dots,f_{m,n}\}$ for $m=1,2,3$, but one can easily verify that the cross ratio and triple ratio do not. Another important but easily verified property of the cross ratio and triple ratio is their projective invariance, i.e. for any $g\in\PGL(V)$, 
\begin{itemize}
\item $C^\mathbf k(F_1,G_1,F_2,G_2)=C^\mathbf k(g\cdot F_1,g\cdot G_1,g\cdot F_2,g\cdot G_2)$ for all $\mathbf k$, and 
\item $T^\mathbf i(F_1,F_2,F_3)=T^\mathbf i(g\cdot F_1,g\cdot G_2,g\cdot F_3)$ for all $\mathbf i$.
\end{itemize}
It is also easy to check the following symmetries from the definition:
\begin{itemize}
\item For any pair of positive integers $\mathbf k=(k_1,k_2)$ that sum to $n$, let $\bar{\mathbf k}:=(k_2,k_1)$. Then 
\begin{align}\label{eqn: cross ratio symmetry}
C^\mathbf{k} (F_1,G_1,F_2,G_2)=C^{\bar{\mathbf{k}}}(F_2,G_2,F_1,G_1)=\frac{1}{C^{\mathbf k}(F_1,G_2,F_2,G_1)}.
\end{align}
\item For any triple $\mathbf i=(i_1,i_2,i_3)$ of positive integers that sum to $n$, let $\mathbf i_+:=(i_2,i_3,i_1)$, let $\mathbf i_-:=(i_3,i_1,i_2)$, and let $\bar{\mathbf i}:=(i_3,i_2,i_1)$, where the arithmetic in the subscripts are done modulo $3$. Then
\begin{align}\label{eqn: triple ratio symmetry}
T^{\mathbf i}(F_1,F_2,F_3)=T^{\mathbf i_+}(F_2,F_3,F_1)=T^{\mathbf i_-}(F_3,F_1,F_2)=\frac{1}{T^{\bar{\mathbf i}}(F_3,F_2,F_1)}.
\end{align}
\end{itemize}

For any pairwise distinct triple $\mathbf x=(x_1,x_2,x_3)$ in $S^1$ and any Frenet curve $\xi:S^1\to\Fmc(V)$, $\xi(\mathbf x)$ is a generic triple of flags in $\Fmc(V)$. Thus, we can evaluate cross ratios and triple ratios along the image of $\xi$. The following theorem is due to Fock-Goncharov \cite{FockGoncharov} (also see \cite[Appendix B]{Labourie-McShane} or \cite[Proposition 2.5.7]{Zhang_thesis}). 

\begin{thm}\label{thm:Zhang}
Let $\xi:S^1\to\Fmc(V)$ be a Frenet curve, and equip $S^1$ with the clockwise cyclic order.
\begin{enumerate}
\item For any pairwise distinct triple of points $\mathbf x=(x_1,x_2,x_3)$ in $S^1$, and for any triple $\mathbf i=(i_1,i_2,i_3)$ of positive integers that sum to $n$, we have
\[T^\mathbf i\big(\xi(\mathbf x)\big)>0.\]
\item For any pairwise distinct quadruple of points $r_1,s_1,r_2,s_2\in S^1$ such that $r_1<s_1<r_2<s_2<r_1$ in the cyclic ordering on $S^1$, and for any pair $\mathbf k=(k_1,k_2)$ of positive integers that sum to $n$, we have
\[C^\mathbf k\big(\xi(r_1),\xi(s_1),\xi(r_2),\xi(s_2)\big)<0.\]
\end{enumerate}
\end{thm}

%%%%%%%%%%%%%%%%%%%%%%%%%%%%%%%%%%%%%%%%%%%%%%%%%%
\subsection{Real-analytic structure of the $\PGL(V)$-Hitchin representations}\label{sec:real-analytic}
%%%%%%%%%%%%%%%%%%%%%%%%%%%%%%%%%%%%%%%%%%%%%%%%%%

It is a straightforward consequence of Theorem \ref{thm:Labourie,Guichard} that the centralizer of every $\PGL(V)$-Hitchin representation is trivial, so by Goldman \cite[Proposition~1.2]{Goldmansymplectic}, $\widetilde{\Hit}_V(S)$ lies in the smooth locus of the real algebraic variety $\Hom(\Gamma,\PGL(V))$. As such, $\widetilde{\Hit}_V(S)$ is naturally equipped with a real-analytic structure. This real-analytic structure has the defining property that for every $\gamma\in\Gamma$, the map 
\[\widetilde{\Hit}_V(S)\to G\]
given by $\rho\mapsto\rho(\gamma)$ is real-analytic. 

Theorem \ref{thm:Labourie,Guichard} also implies that every $\PGL(V)$-Hitchin representation is irreducible, see \cite[Proposition 14]{Guichard}. Thus, by Johnson-Millson \cite[Section 1]{Johnson1987} (also see Goldman \cite[Section~1.3]{Goldmansymplectic}), the $\PGL(V)$-action on $\widetilde{\Hit}_V(S)$ by conjugation is real-analytic and proper, so $\Hit_V(S)$ has the structure of a real-analytic manifold with the defining property that the quotient map $\pi:\widetilde{\Hit}_V(S)\to\Hit_V(S)$ is real-analytic.  With this real-analytic structure, Hitchin's proof \cite{Hitchin} in fact shows that $\Hit_V(S)$ is real-analytically diffeomorphic to $\Rbbb^{(2g-2)(n^2-1)}$ (also see an independent proof due to Bonahon-Dreyer \cite{BonahonDreyer1}).

We will now describe the real-analytic structure on $\Hit_V(S)$ explicitly by specifying an explicit real-analytic embedded submanifold of $\Hmc\subset\widetilde{\Hit}_V(S)$ (depending on some choices), such that $\pi|_\Hmc:\Hmc\to\Hit_V(S)$ restricted to $\Hmc$ is a real-analytic diffeomorphism onto $\Hit_V(S)$. This will also be used later in the description of the tangent cocycle in Section~\ref{sec: tangent}.

Our description of the real-analytic structure on $\Hit_V(S)$ relies on the specialization of a result of Bridgeman, Canary, Labourie, and Sambarino \cite{BCLS} to $\PGL(V)$-Hitchin representations. %Recall that for every $\PGL(V)$-Hitchin representation $\rho:\Gamma\to\PGL(V)$, $\xi_\rho:\partial\Gamma\to\Fmc(V)$ denotes the $\rho$-equivariant Frenet curve. 

\begin{thm}\label{thm: BCLS} \cite[Theorem 6.1]{BCLS} For every $x\in\partial\Gamma$, the map
\[\widetilde{\Hit}_V(S)\to\Fmc(V)\]
given by $\rho\mapsto\xi_\rho(x)$ is real-analytic. 
\end{thm}

We say that a triple $(F,G,p)\in\Fmc(V)^2\times\Pbbb(V)$ is a \emph{normalizing triple} if $(F,G)$ is a transverse pair of flags and 
\[p+F^{(i)}+G^{(n-i-1)}=V\]
for all $i=0,\dots,n-1$. For any normalizing triple $(F,G,p)$ and any triple ${\bf x}=(x_1,x_2,x_3)$ of pairwise distinct points in $\partial\Gamma$, let 
\begin{align}\label{eqn: submanifold}
\Hmc((F,G,p),{\bf x}):=\left\{\rho\in\widetilde{\Hit}_V(S):\left(\xi_\rho(x_1),\xi_\rho(x_2),\xi_\rho^{(1)}(x_3)\right)=(F,G,p)\right\}.
\end{align}
For any $g\in\PGL(V)$, let $c_g:\PGL(V)\to\PGL(V)$ be the map given by $h\mapsto ghg^{-1}$. The following is a corollary of Theorem \ref{thm: BCLS}.

\begin{cor}\label{cor: BCLS1}Fix a normalizing triple $(F,G,p)$ and any triple ${\bf x}=(x_1,x_2,x_3)$ of pairwise distinct points in $\partial\Gamma$, and let $\Hmc:=\Hmc((F,G,p),{\bf x})$.
\begin{enumerate}
\item $\Hmc\subset\widetilde{\Hit}_V(S)$ is an embedded real-analytic submanifold.
\item The map $\Itd:\Hmc\times \PGL(V)\to\widetilde{\Hit}_V(S)$ given by $\Itd:(\rho,g)\mapsto c_g\circ\rho$ is a real-analytic diffeomorphism. In particular, the obvious quotient map $\pi:\widetilde{\Hit}_V(S)\to\Hit_V(S)$ restricts to a real-analytic diffeomorphism $\pi|_{\Hmc}:\Hmc\to\Hit_V(S)$.
\end{enumerate}
\end{cor}

\begin{proof}
Define the surjective map
\begin{equation*}\label{eqn: smooth f}
f:\widetilde{\Hit}_V(S)\to\PGL(V)
\end{equation*}
with the defining property that $f(\rho)\cdot(F,G,p)=\left(\xi_\rho(x_1),\xi_\rho(x_2),\xi_\rho^{(1)}(x_3)\right)$. It follows from Theorem~\ref{thm: BCLS} that $f$ is real-analytic. 

(1) Since $\Hmc=f^{-1}(\id)$, by the constant rank level set theorem, it suffices to show that $f$ is regular, i.e. ${\rm d}f_\rho:T_{\rho}\widetilde{\Hit}_V(S)\to T_{f(\rho)}\PGL(V)$ is surjective for every $\rho\in\widetilde{\Hit}_V(S)$. Let $X\in T_{f(\rho)}\PGL(V)$, and let $t\mapsto g_t$ be a smooth path in $\PGL(V)$ such that $g_0=f(\rho)$ and $\frac{d}{dt}|_{t=0}g_t=X$. Note that $t\mapsto \rho_t:=c_{g_tg_0^{-1}}\circ\rho$ is a smooth path in $\widetilde{\Hit}_V(S)$, and $\xi_{\rho_t}=g_tg_0^{-1}\cdot\xi_\rho$ for all $t$. Thus, 
\begin{align*}
f(\rho_t)\cdot(F,G,p)&=\left(\xi_{\rho_t}(x_1),\xi_{\rho_t}(x_2),\xi_{\rho_t}^{(1)}(x_3)\right)=g_tg_0^{-1}\cdot\left(\xi_\rho(x_1),\xi_\rho(x_2),\xi_\rho^{(1)}(x_3)\right)\\
&=g_t\cdot (F,G,p),
\end{align*}
so $f(\rho_t)=g_t$. This means that ${\rm d}f_\rho(\frac{d}{dt}|_{t=0}\rho_t)=\frac{d}{dt}|_{t=0}f(\rho_t)=X$, which proves that $f$ is regular.

(2) Observe that $\Itd^{-1}$ is given by $\Itd^{-1}(\rho):=(c_{f(\rho)^{-1}}\circ\rho,f(\rho))$. Since $f$ is real-analytic, it follows that $\Itd$ is a real-analytic diffeomorphism. For the second claim, note that $\Itd$ is $\PGL(V)$-equivariant, where the $\PGL(V)$ action on $\Hmc\times\PGL(V)$ is given by the trivial action on the first factor and left multiplication in the second factor, while the $\PGL(V)$ action on $\widetilde{\Hit}_V(S)$ is by conjugation. Thus, $\Itd$ descends to a real-analytic diffeomorphism
\begin{align}\label{eqn: I}
I:\Hmc\to\Hit_V(S).
\end{align}
Observe that $I$ associates every $\PGL(V)$-Hitchin representation in $\Hmc$ to its conjugacy class in $\Hit_V(S)$, so $I=\pi|_{\Hmc}$. 
\end{proof}

Since $\Hmc((F,G,p),{\bf x})$ is a real-analytic embedded submanifold of $\widetilde{\Hit}_V(S)$, the following corollary is immediate from Theorem \ref{thm: BCLS}.

\begin{cor}\label{cor: smooth}Fix a normalizing triple $(F,G,p)$ and any triple ${\bf x}=(x_1,x_2,x_3)$ of pairwise distinct points in $\partial\Gamma$, and let $\Hmc:=\Hmc((F,G,p),{\bf x})$.
\begin{enumerate}
\item For any triple ${\bf y}=(y_1,y_2,y_3)$ of pairwise distinct points in $\partial\Gamma$ and $\rho\in\Hmc$, let $g(\rho)\in\PGL(V)$ be defined by
\[g(\rho)\cdot(F,G,p)=\left(\xi_\rho(y_1),\xi_\rho(y_2),\xi_\rho^{(1)}(y_3)\right).\] 
Then the map $g:\Hmc\to\PGL(V)$ is real-analytic.
\item For any point $r$ in $\partial\Gamma$ distinct from $x_1$  and $\rho\in\Hmc$, let $u(\rho)\in\PGL(V)$ be the unique unipotent projective transformation defined by
\[u(\rho)\cdot \left(F,G\right)=\left(\xi_\rho(x_1),\xi_\rho(r)\right).\]
Then the map $u:\Hmc\to\PGL(V)$ is real-analytic.
\end{enumerate}
\end{cor}

%\begin{proof}
%Since $t\mapsto[\rho_t]$ is a smooth path in $\Hit_V(S)$, by definition, there exists a smooth path $t\mapsto\rho'_t$ in $\widetilde\Hit_V(S)$ such that $\rho'_t$ is a representative of the conjugacy class $[\rho_t]$ for all $t$. By conjugating, we may assume that $\rho_0'=\rho_0$. Let $\xi'_t$ denote the $\rho'_t$-equivariant Frenet curve. Then by Theorem \ref{thm: BCLS}, for every $x\in\partial\Gamma$, the map $t\mapsto\xi'_t(x)$ is a smooth path in $\Fmc(V)$. It follows that if $a(t)$ and $b(t)$ are the elements in $\PGL(V)$ defined by
%\[a(t)\cdot\left(\xi_0(x_1),\xi_0(x_2),\xi_0^{(1)}(x_3)\right)=\left(\xi'_t(x_1),\xi'_t(x_2),{\xi'_t}^{(1)}(x_3)\right)\] 
%and
%\[b(t)\cdot\left(\xi_0(y_1),\xi_0(y_2),\xi_0^{(1)}(y_3)\right)=\left(\xi'_t(y_1),\xi'_t(y_2),{\xi'_t}^{(1)}(y_3)\right)\]
%(see Remark~\ref{rem:normalization}(1)), then $t\mapsto a(t)$ and $t\mapsto b(t)$ are smooth paths in $\PGL(V)$. By Remark \ref{rem:Frenet normalization}, $a(t)^{-1}\cdot\xi_t'=\xi_t$, so the first claim of the lemma holds.

%Note that $g_{{\bf x},{\bf y}}(t)$ in (1) is well-defined by Remark~\ref{rem:normalization}(1). Then (1) holds because $g_{{\bf x},{\bf y}}(t)=a(t)^{-1}b(t)$, while (2) is a consequence of (1) and the observation that $\rho_t(\gamma)=g_{{\bf x},\gamma\cdot{\bf x}}(t)\rho_0(\gamma)$ for all $\gamma\in\Gamma$.
%\end{proof}

%%%%%%%%%%%%%%%%%%%%%%%%%%%%%%%%%%%%%%%%%%%%%%%%%%
\subsection{The tangent space to $\Hit_V(S)$}\label{sec: tangent}
%%%%%%%%%%%%%%%%%%%%%%%%%%%%%%%%%%%%%%%%%%%%%%%%%%
We now recall a remarkable observation by Weil \cite{Weil} (also see Goldman  \cite[Section 1]{Goldmansymplectic}) that gives a chomological description of the tangent space to $\Hit_V(S)$ at any point $[\rho]\in\Hit_V(S)$. This is essential in order to define the Goldman symplectic form. Although this chomological interpretation holds in much greater generality, we will only describe this for $\Hit_V(S)$.

Let $\mathfrak{sl}(V)$ denote the Lie algebra of $\PGL(V)$ and let $\rho:\Gamma\to\PGL(V)$ be a Hitchin representation. We denote the first (group) cohomology of $\Gamma$ and the first (singular) cohomology of $S$, both with coefficients in $\mathfrak{sl}(V)$ twisted by $\Ad\circ\rho$, by 
\[H^1(\Gamma,\smf\lmf(V)_{\Ad\circ\rho})\,\text{ and }\,H^1(S,\smf\lmf(V)_{\Ad\circ\rho})\] 
respectively. We also denote the corresponding real vector spaces of $1$-cocycles by 
\[C^1(\Gamma,\smf\lmf(V)_{\Ad\circ\rho})\,\text{ and }\,C^1(S,\smf\lmf(V)_{\Ad\circ\rho})\] 
respectively. 

Weil observed that there is a canonical isomorphism
\begin{equation}\label{eqn: Weil 1}
T_{\rho}\widetilde{\Hit}_V(S)\simeq C^1(\Gamma,\smf\lmf(V)_{\Ad\circ\rho}).
\end{equation}
This isomorphism can be described explicitly: Given a tangent vector $X\in T_{\rho}\widetilde{\Hit}_V(S)$, let $t\mapsto\rho_t$ be a smooth path in $\widetilde{\Hit}_V(S)$ such that $\rho_0=\rho$ and $\frac{d}{dt}|_{t=0}\rho_t=X$. Then for any $\gamma\in\Gamma$, $t\mapsto\rho_t(\gamma)\rho(\gamma)^{-1}$ is a smooth path in $\PGL(V)$. Since $\rho_t$ is a homomorphism for all $t$, one can verify that the map $\mu_X:\Gamma\to\smf\lmf(V)$ defined by 
\[\mu_X:\gamma\mapsto\frac{d}{dt}\bigg|_{t=0}\rho_t(\gamma)\rho(\gamma)^{-1}\in\mathfrak{sl}(V)\] 
is a $1$-cocycle in $C^1(\Gamma,\smf\lmf(V)_{\Ad\circ\rho})$, i.e. 
\[\mu_X(\gamma_1\gamma_2)=\mu_X(\gamma_1)+\Ad\circ\rho(\gamma_1)\cdot\mu_X(\gamma_2)\] 
for all $\gamma_1,\gamma_2\in\Gamma$. Then the map $T_{\rho}\widetilde{\Hit}_V(S)\to C^1(\Gamma,\smf\lmf(V)_{\Ad\circ\rho})$ given by $X\mapsto\mu_X$ is the required isomorphism. 

Furthermore, via the isomorphism \eqref{eqn: Weil 1}, the coboundaries in $C^1(\Gamma,\smf\lmf(V)_{\Ad\circ\rho})$, i.e. $1$-cocycles of the form 
\[\gamma\mapsto A-\Ad\circ\rho(\gamma)\cdot A\] 
for some $A\in\mathfrak{sl}(V)$, are identified with the tangent vectors in $T_{\rho}\widetilde{\Hit}_V(S)$ to paths of the form $t\mapsto c_{g_t}\circ\rho$ for some smooth path $t\mapsto g_t$ in $\PGL(V)$ with $g_0=\id$. As such, \eqref{eqn: Weil 1} descends to an isomorphism 
\begin{equation}\label{eqn: Weil 2}
T_{[\rho]} \Hit_V(S)\simeq H^1(\Gamma,\smf\lmf(V)_{\Ad\circ\rho}).
\end{equation}

%Fix a triple ${\bf x}=(x_1,x_2,x_3)$ of pairwise distinct points in $\partial\Gamma$, and let 
%\[\Hmc_{\rho,{\bf x}}:=\left\{\rho'\in\widetilde{\Hit}_V(S):\left(\xi_{\rho'}(x_1),\xi_{\rho'}(x_1),\xi_{\rho'}^1(x_1)\right)=\left(\xi_{\rho}(x_1),\xi_{\rho}(x_1),\xi_{\rho}^1(x_1)\right)\right\}.\] 
%Note that $\rho\in\Hmc_{\rho,{\bf x}}$. By Corollary \ref{cor: BCLS1}, $\Hmc_{\rho,{\bf x}}$ is an embedded submanifold of $\widetilde{\Hit}_V(S)$, and if $\pi:\widetilde{\Hit}_V(S)\to\Hit_V(S)$ is the obvious quotient map, then
%\[d\pi|_{T_\rho\Hmc}:T_\rho\Hmc\to T_{[\rho]}\Hit_V(S)\]
%is an isomorphism. In other words, the choice of ${\bf x}$ determines a cocycle representative in $T_\rho\Hmc\subset T_\rho\widetilde\Hit_V(S)\simeq C^1(\Gamma,\mathfrak{sl}(V)_{\Ad\circ\rho})$ for each cohomology class in $T_{[\rho]}\Hit_V(S)\simeq H^1(\Gamma,\smf\lmf(V)_{\Ad\circ\rho})$.

At the same time, there is a well-known isomorphism
\begin{equation}\label{eqn: Weil 3}
H^1(\Gamma,\smf\lmf(V)_{\Ad\circ\rho})\simeq H^1(S,\smf\lmf(V)_{\Ad\circ\rho})
\end{equation}
which can also be described explicitly. Let $\nu\in C^1(S,\smf\lmf(V)_{\Ad\circ\rho})$ be a $1$-cocycle in the cohomology class $[\nu]\in H^1(S,\smf\lmf(V)_{\Ad\circ\rho})$, and let $\widetilde{\nu}: C_1(\Std,\Zbbb)\to\smf\lmf(V)$ be its $\Ad\circ\rho$-equivariant lift, where $\widetilde{S}$ is the universal cover of $S$. Pick any point $o\in\Std$. For any $\gamma\in\Gamma$, let $h_\gamma:[0,1]\to\Std$ be a $1$-simplex such that $h_\gamma(0)=o$ and $h_\gamma(1)=\gamma\cdot o$. Then define $\bar{\nu}:\Gamma\to\smf\lmf(V)$ by $\bar{\nu}:\gamma\mapsto\widetilde{\nu}(h_\gamma)$. One can verify that $\bar{\nu}$ is a well-defined $1$-cocycle in $C^1(\Gamma,\smf\lmf(V)_{\Ad\circ\rho})$, and that the map $H^1(S,\smf\lmf(V)_{\Ad\circ\rho})\to H^1(\Gamma,\smf\lmf(V)_{\Ad\circ\rho})$ given by $[\nu]\mapsto[\bar{\nu}]$ is an isomorphism that does not depend on the choice of $o$.

The composite of the isomorphisms \eqref{eqn: Weil 2} and \eqref{eqn: Weil 3} give an isomorphism 
\begin{align}\label{eqn: F}
F_\rho:T_{[\rho]}\Hit_V(S)\to H^1(S,\smf\lmf(V)_{\Ad\circ\rho}).
\end{align}
This is the required cohomological description of $T_{[\rho]}\Hit_V(S)$.

%%%%%%%%%%%%%%%%%%%%%%%%%%%%%%%%%%%%%%%%%%%%%%%%%%
\subsection{The Goldman symplectic form}\label{sec:Goldman symplectic form}
%%%%%%%%%%%%%%%%%%%%%%%%%%%%%%%%%%%%%%%%%%%%%%%%%%

Recall that the Lie algebra $\smf\lmf(V)$ of $\PGL(V)$ is naturally identified with the traceless endomorphisms of $V$. Hence, one can define the \emph{trace form}
\[B:\smf\lmf(V)\times\smf\lmf(V)\to\Rbbb\]
by $B(X,Y):=\tr(XY)$. This pairing is well-known to be bilinear, symmetric, non-degenerate, and $\Ad(\PGL(V))$-invariant, i.e. is invariant under conjugation by group elements in $\PGL(V)$. Using the trace form, we apply a general construction by Goldman \cite{Goldmansymplectic} to define a symplectic form on $\Hit_V(S)$, called the \emph{Goldman symplectic form}. 

\begin{definition} \label{def:Goldman}
Let $[\rho]\in\Hit_V(S)$.
\begin{enumerate}
\item For any  be a representative $\rho$ in $[\rho]$, define the pairing 
\[\omega_\rho:H^1(S,\smf\lmf(V)_{\Ad\circ\rho})\times H^1(S,\smf\lmf(V)_{\Ad\circ\rho})\to H^2(S,\Rbbb)\to\Rbbb\]
where the first map is the cup product on cohomology while pairing the $\smf\lmf(V)_{\Ad\circ\rho}$ coefficients using the trace form, and the second map is taking the cap product with the fundamental class $[S]\in H_2(S,\Rbbb)$ (given by the orientation on $S$). 
\item The \emph{Goldman symplectic form} is the pull-back pairing 
\[\omega_{[\rho]}:=F_\rho^*(\omega_\rho):T_{[\rho]}\Hit_V(S)\times T_{[\rho]}\Hit_V(S)\to\Rbbb.\]
\end{enumerate}
\end{definition}

Using the fact that the trace form $B$ is $\Ad(\PGL(V))$-invariant, one can verify that $\omega_\rho$ is well-defined and $\omega_{[\rho]}$ does not depend on the choice of representative $\rho$ in $[\rho]$. Also, Goldman \cite[Theorem 1.7]{Goldmansymplectic} proved that $\omega$ is a symplectic form on $\Hit_V(S)$, i.e if we view $\omega:[\rho]\mapsto\omega_{[\rho]}$ as a $2$-form on $\Hit_V(S)$, then $d\omega=0$.

\begin{remark}From the work of Atiyah-Bott \cite{AtiyahBott}, one also has a description of this symplectic form in the language of flat bundles over $S$. In the case when $\dim(V)=2$, Goldman  \cite[Proposition 2.5]{Goldmansymplectic} showed that this symplectic form agrees with (a multiple of) the Weil-Petersson symplectic form on the Teichm\"uller space of $S$. 
\end{remark}

One can use singular homology and cohomology to compute the Goldman symplectic form explicitly. To do so, one needs an explicit $2$-cycle that represents the fundamental class $[S]$ in $H_2(S,\mathbb{R})$. A standard way to specify such a $2$-cycle is to choose a finite triangulation $\Tbbb$ of $S$ such that the three vertices of every triangle in $\Tbbb$ are pairwise distinct, and to choose an enumeration $v_1,\dots,v_k$ of the vertices of $\Tbbb$. Then for any triangle $\widehat{\delta}$ in $\Tbbb$, let $v_a$, $v_b$, and $v_c$ be the vertices of $\widehat\delta$ such that $a<b<c$, let $\sigma_{\widehat\delta}$ denote the $2$-simplex $[v_a,v_b,v_c]$ in $S$, let $e_{\widehat\delta,1}$ denote the $1$-simplex $[v_a,v_b]$, and let $e_{\widehat\delta,2}$ denote the $1$-simplex $[v_b,v_c]$. One can then verify \cite[Chapter 1]{munkres2018elements} that the fundamental class $[S]$ in $H_2(S,\mathbb{R})$ is represented by the $2$-cycle
\begin{equation*}
\sum_{\widehat\delta\in\Tbbb}\sgn(\widehat\delta)\cdot\sigma_{\widehat\delta},
\end{equation*}
where $\sgn(\widehat\delta)$ is $1$ if the enumeration of the vertices of $\widehat\delta$ is increasing in the anti-clockwise order (in the orientation on $S$), and is $-1$ otherwise. 

Now, for each $X\in T_{[\rho]}\Hit_V(S)$, choose a $1$-cocycle representative 
\[\nu_X\in C^1(S,\smf\lmf(V)_{\Ad\circ\rho})\] 
of the cohomology class $F_{\rho}(X)$, and let $\widetilde{\nu}_X:C_1(\Std,\Zbbb)\to\smf\lmf(V)$ be the $\Ad\circ\rho$-equivariant lift of $\nu_X$. For each triangle $\widehat\delta$ in $\mathbb{T}$, choose a lift $\delta$ to $\widetilde S$ of $\widehat\delta$, and let $\Tbbb_{\rm lift}$ denote the set of all such lifts, one for each triangle in $\Tbbb$. Then for each triangle $\delta$ in $\Tbbb_{\rm lift}$, let $e_{\delta,1}$ and $e_{\delta,2}$ denote the oriented edges of $\delta$ such that $\pi_S(e_{\delta,1})=e_{\widehat\delta,1}$ and $\pi_S(e_{\delta,2})=e_{\widehat\delta,2}$, where $\pi_S:\widetilde S\to S$ is the covering map. In light of the $2$-cycle given above that represents the fundamental class in $H_2(S,\mathbb{R})$, we see that the Goldman symplectic form is computed by
\begin{equation}\label{eqn:formula}
\omega_{[\rho]}\big(X_1,X_2\big)=\sum_{\delta\in\Tbbb_{\rm lift}}\sgn(\delta)\cdot\tr\big(\widetilde{\nu}_{X_1}(e_{\delta,1})\cdot\widetilde{\nu}_{X_2}(e_{\delta,2})\big),
\end{equation}
for any pair of vectors $X_1,X_2\in T_{[\rho]}\Hit_V(S)$, where $\sgn(\delta)$ is $1$ if the enumeration of the vertices of $\pi_S(\delta)$ is increasing in the anti-clockwise order, and is $-1$ otherwise. 

%%%%%%%%%%%%%%%%%%%%%%%%%%%%%%%%%%%%%%%%%%%%%%%%%%%%%%%%%%%%%%%%%%%%%%%%%%%%%%%%%%%%%%%%%%%%%%%%%%%%
\section{Topological constructions}\label{sec:topological constructions}
%%%%%%%%%%%%%%%%%%%%%%%%%%%%%%%%%%%%%%%%%%%%%%%%%%%%%%%%%%%%%%%%%%%%%%%%%%%%%%%%%%%%%%%%%%%%%%%%%%%%

To compute the Goldman symplectic pairing using \eqref{eqn:formula}, one needs to choose, for each $X\in T_{[\rho]}\Hit_V(S)$, a cocycle representative in $C^1(S,\smf\lmf(V)_{\Ad\circ\rho})$ for the cohomology class $F_\rho(X)\in H^1(S,\smf\lmf(V)_{\Ad\circ\rho})$ (the map $F_\rho$ was defined by \eqref{eqn: F}). In general, there is no canonical way to choose such cocycle representatives. Thus, in order to determine a cocycle representative for each $F_{\rho}(X)$, we will have to make some topological choices on $S$. In this section we will describe these topological choices. 

Fix once and for all a hyperbolic metric on $S$. We will recall the notion of an (oriented and unoriented) ideal triangulation $\Tmc$ on $S$. Using this, we then define the (oriented and unoriented) barrier systems on $S$ associated to $\Tmc$, as well as the bridge systems on $S$ compatible with $\Tmc$. Even though we chose a hyperbolic metric on $S$, this is for purely expository reasons; all the notions we define here can be described using only the topology of $S$. However, we do not give these topological descriptions because they are not needed for our purposes.

%%%%%%%%%%%%%%%%%%%%%%%%%%%%%%%%%%%%%%%%%%%%%%%%%%
\subsection{Ideal triangulations on $S$}\label{sec:ideal triangulations}
%%%%%%%%%%%%%%%%%%%%%%%%%%%%%%%%%%%%%%%%%%%%%%%%%%
First, we recall the notion of an ideal triangulation on $S$. For more details, see \cite{Thurstonbook, Cassonbook, Pennerbook, Bo01}. 

An \emph{ideal triangulation} on $S$ is a collection $\Tmc$ of FINITELY many pairwise non-intersecting simple geodesics in $S$, whose union is a closed subset of $S$ that cuts $S$ into finitely many open sets, each of which is a hyperbolic ideal triangle. We refer to the geodesics in $\Tmc$ as the \emph{edges} in $\Tmc$, and refer to the open sets in the complement of $\Tmc$ as the \emph{ideal triangles} of $\Tmc$.

There are two possible kinds of edges in $\Tmc$, the bi-infinite ones and the closed ones. The bi-infinite edges are isolated in $S$, i.e. for every bi-infinite edge $e$, there is an open set $U\subset S$ containing $e$, such that $U$ does not intersect any other edge in $\Tmc$. However, they are not closed as subsets in $S$; they accumulate onto closed geodesics in $\Tmc$ in both directions. On the other hand, the closed edges in $S$ are closed as subsets of $S$, but they are not isolated; in fact, for every closed edge $e$ and every collar neighborhood $U\subset S$ containing $e$, each connected component of $U\setminus e$ intersects an isolated edge in $\Tmc$. So that these notions still make sense in $\widetilde{S}$, we will refer to the bi-infinite edges in $S$ as \emph{isolated edges}, and the closed edges in $S$ as \emph{non-isolated edges}. Define $\widetilde{\Tmc}$ to be the set of geodesics in $\widetilde{S}$ that are mapped via the covering map $\pi_S:\widetilde{S}\to S$ to a geodesic in $\Tmc$. We say that an edge in $\widetilde{\Tmc}$ is \emph{isolated} (resp. \emph{non-isolated}) if its projection to $S$ is isolated (resp. non-isolated). We also refer to the open sets in the complement of $\widetilde{\Tmc}$ as the \emph{ideal triangles} of $\widetilde{\Tmc}$.

Note that the three edges of every ideal triangle of $\Tmc$ are necessarily isolated edges. As a consequence of the Gauss-Bonnet theorem, if $g$ is the genus of $S$, then there are $4g-4$ ideal triangles given by $\Tmc$, $6g-6$ isolated edges in $\Tmc$, and (weakly) between $1$ and $3g-3$ non-isolated edges in $\Tmc$.

Given an ideal triangulation $\Tmc$ on $S$ (resp. $\widetilde{\Tmc}$ on $\widetilde{S}$), we define the \emph{associated oriented ideal triangulation}, denoted $\Tmc^o$ (resp. $\widetilde{\Tmc}^o$), to be the set of oriented geodesics in $S$ (resp. $\widetilde{S}$) whose underlying non-oriented geodesic lies in $\Tmc$ (resp. $\widetilde{\Tmc}$).

%%%%%%%%%%%%%%%%%%%%%%%%%%%%%%%%%%%%%%%%%%%%%%%%%%
\subsection{Barrier systems}\label{sec:barriers}
%%%%%%%%%%%%%%%%%%%%%%%%%%%%%%%%%%%%%%%%%%%%%%%%%%
Next, we define the notion of a barrier system associated to an ideal triangulation $\Tmc$ on $S$. Note that for each ideal triangle $T$ of $\widetilde\Tmc$ and each vertex $x$ of $T$, there is a unique geodesic ray in $\widetilde S$ from the centroid of $T$ to $x$. Let $\widetilde\Mmc=\widetilde{\Mmc}_{\widetilde\Tmc}$ denote the collection of all such geodesic rays, and define $\Mmc$ to be the set of geodesic rays in $S$ that lift to a geodesic ray in $\widetilde{\Mmc}$. We will refer to the geodesic rays in $\Mmc$ (resp. $\widetilde{\Mmc}$) as \emph{non-edge barriers associated to $\Tmc$ (resp. $\widetilde{\Tmc}$)}, see Figure \ref{fig:topological}.

\begin{figure}[ht]
\centering
\includegraphics[scale=0.7]{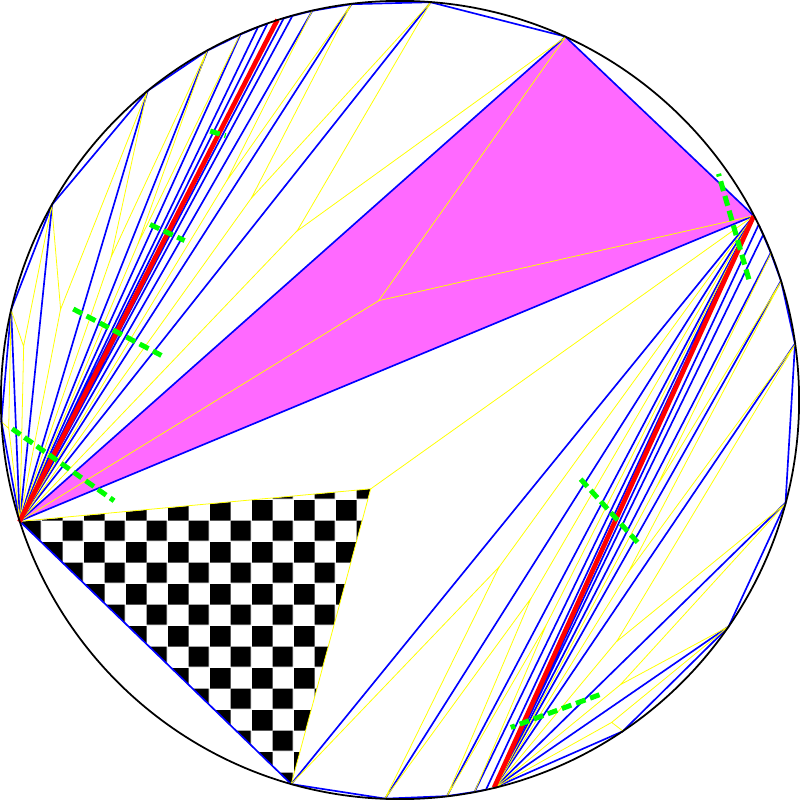}
\small
\caption{This is a drawing in the Klein model of $\widetilde{S}$ of a barrier system $\widetilde{\Dmc}$ associated to an ideal triangulation $\widetilde{\Tmc}$ and a bridge system $\widetilde{\Jmc}$ compatible to $\widetilde{\Tmc}$. The non-isolated edges in $\widetilde{\Tmc}$ are drawn in thick red, the isolated edges in $\widetilde{\Tmc}$ are drawn in blue, and the non-edge barriers in $\widetilde{\Mmc}$ are drawn in thin yellow. The bridges in $\widetilde{\Jmc}$ are realized by geodesic segments drawn in dotted green. One ideal triangle of $\widetilde{\Tmc}$ is shaded violet, and one fragmented ideal triangle is shaded in checkerboard.}\label{fig:topological}
\end{figure}

\begin{definition}\label{def:barrier} 
\begin{enumerate}
\item The \emph{barrier system} in $\widetilde{S}$ (resp. $S$) associated to $\widetilde{\Tmc}$ (resp. $\Tmc$)  is the set $\widetilde{\Dmc}:=\widetilde{\Tmc}\cup\widetilde{\Mmc}$ (resp. $\Dmc:=\Tmc\cup\Mmc$). The elements in $\Dmc$ and $\widetilde{\Dmc}$ are called \emph{barriers}.
\item The \emph{oriented barrier system} associated to $\Tmc$ (resp. $\widetilde{\Tmc}$) is the set  $\Tmc^o\cup\Mmc$ (resp. $\widetilde{\Tmc}^o\cup\widetilde{\Mmc}$), where $\Tmc^o$ (resp. $\widetilde{\Tmc}^o$) is the oriented ideal triangulation associated to $\Tmc$ (resp. $\widetilde{\Tmc}$), see Section \ref{sec:ideal triangulations}.
\end{enumerate}
\end{definition}

%%%%%%%%%%%%%%%%%%%%%%%%%%%%%%%%%%%%%%%%%%%%%%%%%%
\subsection{Bridge systems}\label{sec:bridges}
%%%%%%%%%%%%%%%%%%%%%%%%%%%%%%%%%%%%%%%%%%%%%%%%%%

Finally, we define the notion of a bridge system compatible with an ideal triangulation $\Tmc$ of $S$. To describe this, we use the following notion.

\begin{definition} \label{def:fragmented}
A \emph{fragmented ideal triangle} of $\widetilde{\Dmc}$ is a connected component of $\widetilde S\setminus\widetilde{\Dmc}$, see Figure~\ref{fig:topological}. We also refer to a connected component of $S\setminus\Dmc$ as a \emph{fragmented ideal triangle} of $\Dmc$. 
\end{definition}

Note that every fragmented ideal triangle $\mathfrak{T}$ of $\widetilde{\Dmc}$ lies in a unique ideal triangle $T$ of $\widetilde{\Tmc}$. Furthermore, the two vertices of $\mathfrak{T}$ that lie in $\partial\widetilde{S}$ are vertices of $T$, and the vertex of $\mathfrak{T}$ that lies in $\widetilde{S}$ is the centroid of $T$. 

Given a non-isolated edge $e$, we say that a fragmented ideal triangle $\mathfrak{T}$ of $\widetilde{\Dmc}$ \emph{faces} $e$ if 
\begin{itemize}
\item one of the vertices of $\mathfrak{T}$ is an endpoint of $e$, and 
\item the geodesic between the two vertices of $\mathfrak{T}$ that lie in $\partial\widetilde S$ separates $\widetilde S$ into two connected components, one of which contains $e$ and the other contains the third vertex of $\mathfrak{T}$.
\end{itemize}
With this, we define the notion of a bridge.

\begin{definition} \label{def:bridge} 
\begin{enumerate}
\item For any non-isolated edge $e$ in $\widetilde{\Tmc}$, a \emph{bridge} across $e$ is a geodesic segment $J$ that intersects $e$ transversely, and whose endpoints both lie in the interiors of fragmented ideal triangles that face $e$, see Figure \ref{fig:topological}.

\item A \emph{bridge system compatible with $\widetilde{\Tmc}$} is a minimal, $\Gamma$-invariant collection of bridges $\widetilde{\Jmc}$ such that every non-isolated edge in $\widetilde{\Tmc}$ has at least (equivalently, exactly) one bridge in $\widetilde{\Jmc}$ across it. If  $\widetilde{\Jmc}$ is a bridge system compatible with $\widetilde{\Tmc}$, then $\Jmc:=\widetilde{\Jmc}/\Gamma$ is a \emph{bridge system compatible with $\Tmc$}.
\end{enumerate}
\end{definition}

While the barrier system $\Dmc$ associated to $\Tmc$ is determined by $\Tmc$, there are infinitely many bridge systems that are compatible with $\Tmc$.

%%%%%%%%%%%%%%%%%%%%%%%%%%%%%%%%%%%%%%%%%%%%%%%%%%%%%%%%%%%%%%%%%%%%%%%%%%%%%%%%%%%%%%%%%%%%%%%%%%%%
\section{Tangent cocycles for the Hitchin component}\label{sec: tangent cocycles}
%%%%%%%%%%%%%%%%%%%%%%%%%%%%%%%%%%%%%%%%%%%%%%%%%%%%%%%%%%%%%%%%%%%%%%%%%%%%%%%%%%%%%%%%%%%%%%%%%%%%

Recall that we have fixed once and for all a hyperbolic metric on $S$. Given a $\PGL(V)$-Hitchin representation $\rho:\Gamma\to\PGL(V)$, an ideal triangulation $\Tmc$ on $S$, and a compatible bridge system $\Jmc$, the goal of this section is to define a natural injective linear map
\begin{equation*}%\label{eqn: Psi}
\Psi=\Psi_{\rho,\Tmc,\Jmc}:T_{[\rho]}\Hit_V(S)\to C^1(S,\smf\lmf(V)_{\Ad\circ\rho})
\end{equation*}
with the property that for every $X\in T_{[\rho]}\Hit_V(S)$, the cocycle $\Psi(X)$ is a representative of the cohomology class $F_\rho(X)\in H^1(S,\mathfrak{sl}(V)_{\Ad\circ\rho})$ (the map $F_\rho$ was defined by \eqref{eqn: F}). For every $X\in T_{[\rho]}\Hit_V(S)$, we will refer to  
\[\nu_X=\nu_{X,\rho,\Tmc,\Jmc}:=\Psi(X)\in C^1(S,\smf\lmf(V)_{\Ad\circ\rho})\] 
as the \emph{$(\rho,\Tmc,\Jmc)$-tangent cocycle} associated to $X$, and denote the set of $(\rho,\Tmc,\Jmc)$-tangent cocycles (i.e. the image of $\Psi$) by 
\[\mathscr{T}(\rho,\Tmc,\Jmc)\subset C^1(S,\mathfrak{sl}(V)_{\Ad\circ\rho}).\] 
These $(\rho,\Tmc,\Jmc)$-tangent cocycles are the cocycle representatives that we will use in the formula \eqref{eqn:formula} to calculate the Goldman symplectic pairing.

%%%%%%%%%%%%%%%%%%%%%%%%%%%%%%%%%%%%%%%%%%%%%%%%%%
\subsection{Preliminaries to define the tangent cocycle}\label{sec: preliminaries cocycle}
%%%%%%%%%%%%%%%%%%%%%%%%%%%%%%%%%%%%%%%%%%%%%%%%%%
First, we set up some notation and prove some preliminary results needed to define the tangent cocycle.

Fix a Hitchin representation $\rho:\Gamma\to\PGL(V)$, and let $\xi_\rho$ denote the $\rho$-equivariant Frenet curve. For any triple of pairwise distinct points ${\bf x}=(x_1,x_2,x_3)$ in $\partial\Gamma$, let 
\begin{align}\label{eqn: submanifold}
\Hmc_{\rho,{\bf x}}:=\left\{\sigma\in\widetilde{\Hit}_V(S):\left(\xi_\sigma(x_1),\xi_\sigma(x_2),\xi_\sigma^1(x_3)\right)=\left(\xi_{\rho}(x_1),\xi_{\rho}(x_2),\xi_{\rho}^1(x_3)\right)\right\}.
\end{align} 
Note that $\rho\in\Hmc_{\rho,{\bf x}}$. By Corollary \ref{cor: BCLS1}, $\Hmc_{\rho,{\bf x}}$ is an embedded real-analytic submanifold of $\widetilde{\Hit}_V(S)$, and if $\pi:\widetilde{\Hit}_V(S)\to\Hit_V(S)$ is the obvious quotient map, then
\[\pi|_{\Hmc_{\rho,{\bf x}}}:\Hmc_{\rho,{\bf x}}\to \Hit_V(S)\]
is a real-analytic diffeomorphism. In particular,
\[{\rm d}(\pi|_{\Hmc_{\rho,{\bf x}}}^{-1})_{[\rho]}:T_{[\rho]}\Hit_V(S)\to T_{\rho}\Hmc_{\rho,{\bf x}}\] 
is a linear isomorphism. 

For any triples ${\bf x}=(x_1,x_2,x_3)$ and ${\bf y}=(y_1,y_2,y_3)$ of pairwise distinct points in $\partial\Gamma$, we may define the map
\begin{align}\label{eqn: g def}
g_{{\bf x},{\bf y}}:\Hmc_{\rho,{\bf x}}\to\PGL(V)
\end{align}
by $g_{{\bf x},{\bf y}}(\sigma)\cdot\left(\xi_\rho(y_1),\xi_\rho(y_2),\xi_\rho^{(1)}(y_3)\right)=\left(\xi_{\sigma}(y_1),\xi_{\sigma}(y_2),\xi_{\sigma}^{(1)}(y_3)\right)$. Similarly, for any triple   ${\bf x}=(x_1,x_2,x_3)$ of pairwise distinct points in $\partial\Gamma$ and any $q\in\partial\Gamma$ distinct from $x_1$, we may define the map
\begin{align}\label{eqn: u def}
u_{{\bf x},q}:\Hmc_{\rho,{\bf x}}\to\PGL(V)
\end{align}
with the defining property that $u_{{\bf x},q}$ is unipotent and $u_{{\bf x},q}(\sigma)\cdot\left(\xi_\rho(x_1),\xi_\rho(q)\right)=\left(\xi_{\sigma}(x_1),\xi_{\sigma}(q)\right)$. It follows from Corollary \ref{cor: smooth} that $g_{{\bf x},{\bf y}}$ and $u_{{\bf x},q}$ are real-analytic, and it is clear that $g_{{\bf x},{\bf y}}(\rho)=\id=u_{{\bf x},q}(\rho)$. The following lemma gives some basic properties of the map $g_{{\bf x},{\bf y}}$ and $u_{{\bf x},q}$. 

\begin{lem}\label{lem: g properties}
Let ${\bf x}=(x_1,x_2,x_3)$, ${\bf y}=(y_1,y_2,y_3)$ and ${\bf z}=(z_1,z_2,z_3)$ be triples of pairwise distinct points in $\partial\Gamma$, let $q\in\partial\Gamma$ be distinct from $x_1$, and let $\gamma\in\Gamma$. 
\begin{enumerate}
\item  
If $\sigma\in\Hmc_{\rho,{\bf x}}$ and $\sigma'\in\Hmc_{\rho,{\bf y}}$ are conjugate, then
\[g_{{\bf x},{\bf z}}(\sigma)=g_{{\bf x},{\bf y}}(\sigma)g_{{\bf y},{\bf z}}(\sigma').\]
In particular, $g_{{\bf x},{\bf x}}(\sigma)=\id$ and $g_{{\bf x},{\bf y}}(\sigma)=g_{{\bf y},{\bf x}}(\sigma')^{-1}$.
\item If $\sigma\in\Hmc_{\rho,{\bf x}}$ and $\sigma'\in\Hmc_{\rho,\gamma\cdot {\bf x}}$ are conjugate, then 
\[g_{\gamma\cdot {\bf x},\gamma\cdot {\bf y}}(\sigma')=\rho(\gamma)g_{{\bf x},{\bf y}}(\sigma) \rho(\gamma)^{-1}.\]
\item If $\sigma\in\Hmc_{\rho,{\bf x}}$ and $\sigma'\in\Hmc_{\rho,\gamma\cdot {\bf x}}$ are conjugate, then 
\[u_{\gamma\cdot {\bf x},\gamma\cdot q}(\sigma')=\rho(\gamma) u_{{\bf x},q}(\sigma) \rho(\gamma)^{-1}.\]
%\item $g_{{\bf x},{\bf y}}=g_{(x_2,x_1,x_3),{\bf y}}=g_{{\bf x},(y_2,y_1,y_3)}$.
\end{enumerate}
\end{lem}

\begin{proof}
First, we prove (1). Since $\left(\xi_{\rho}(y_1),\xi_{\rho}(y_2),\xi_{\rho}^{(1)}(y_3)\right)=\left(\xi_{\sigma'}(y_1),\xi_{\sigma'}(y_2),\xi_{\sigma'}^{(1)}(y_3)\right)$, it follows from the definition of $g_{{\bf x},{\bf y}}(\sigma)$ that
\[\left(\xi_{\sigma}(y_1),\xi_{\sigma}(y_2),\xi_{\sigma}^{(1)}(y_3)\right)=g_{{\bf x},{\bf y}}(\sigma)\cdot\left(\xi_{\sigma'}(y_1),\xi_{\sigma'}(y_1),\xi_{\sigma'}^{(1)}(y_1)\right),\]
which implies that $\xi_{\sigma}=g_{{\bf x},{\bf y}}(\sigma)\cdot\xi_{\sigma'}$ because $\sigma$ is conjugate to $\sigma'$. 
Thus,
\begin{eqnarray*}
g_{{\bf x},{\bf z}}(\sigma)\cdot\left(\xi_\rho(z_1),\xi_\rho(z_2),\xi_\rho^{(1)}(z_3)\right)&=&\left(\xi_{\sigma}(z_1), \xi_{\sigma}(z_2),\xi_{\sigma}^{(1)}(z_3) \right)\\
&=&g_{{\bf x},{\bf y}}(\sigma)\cdot\left(\xi_{\sigma'}(z_1), \xi_{\sigma'}(z_2),\xi_{\sigma'}^{(1)}(z_3) \right)\\
&=&g_{{\bf x},{\bf y}}(\sigma)g_{{\bf y},{\bf z}}(\sigma')\cdot\left( \xi_{\rho}(z_1),  \xi_{\rho}(z_2),\xi_{\rho}^{(1)}(z_3)\right),
\end{eqnarray*}
so (1) holds.

To prove (2) and (3), observe that 
\[\xi_{\sigma'}(\gamma\cdot{\bf x})=\xi_{\rho}(\gamma\cdot{\bf x})=\rho(\gamma)\cdot\xi_\rho({\bf x})=\rho(\gamma)\cdot\xi_{\sigma}({\bf x})=\rho(\gamma)\sigma(\gamma)^{-1}\cdot\xi_{\sigma}(\gamma\cdot{\bf x}).\]
Since $\sigma$ is conjugate to $\sigma'$, this implies that $\xi_{\sigma'}=\rho(\gamma)\sigma(\gamma)^{-1}\cdot\xi_{\sigma}$. Thus,
\begin{eqnarray*}
&&g_{\gamma\cdot{\bf x},\gamma\cdot{\bf y}}(\sigma')\cdot\left( \xi_{\rho}(\gamma\cdot y_1),  \xi_{\rho}(\gamma\cdot y_2),\xi_{\rho}^{(1)}(\gamma\cdot y_3)\right)\\
&=&\left( \xi_{\sigma'}(\gamma\cdot y_1),  \xi_{\sigma'}(\gamma\cdot y_2),\xi_{\sigma'}^{(1)}(\gamma\cdot y_3)\right)\\
%&=&\rho(\gamma)\rho_t(\gamma)^{-1}\cdot\left( \xi_{\rho_t}(\gamma\cdot y_1),  \xi_{\rho_t}(\gamma\cdot y_2),\xi_{\rho_t}^{(1)}(\gamma\cdot y_3)\right)\\
&=&\rho(\gamma)\cdot\left( \xi_{\sigma}(y_1),  \xi_{\sigma}(y_2),\xi_{\sigma}^{(1)}(y_3)\right)\\
&=&\rho(\gamma)g_{{\bf x},{\bf y}}(\sigma)\cdot\left( \xi_{\rho}(y_1),  \xi_{\rho}(y_2),\xi_{\rho}^{(1)}(y_3)\right)\\
&=&\rho(\gamma)g_{{\bf x},{\bf y}}(\sigma)\rho(\gamma)^{-1}\cdot\left( \xi_{\rho}(\gamma\cdot y_1),  \xi_{\rho}(\gamma\cdot y_2),\xi_{\rho}^{(1)}(\gamma\cdot y_3)\right),
\end{eqnarray*}
so (2) holds. Similarly,
\begin{eqnarray*}
u_{\gamma\cdot{\bf x},\gamma\cdot q}(\sigma')\cdot\left( \xi_{\rho}(\gamma\cdot x_1),  \xi_{\rho}(\gamma\cdot q)\right)&=&\left( \xi_{\sigma'}(\gamma\cdot x_1),  \xi_{\sigma'}(\gamma\cdot q)\right)\\
%&=&\rho(\gamma)\rho_t(\gamma)^{-1}\cdot\left( \xi_{\rho_t}(\gamma\cdot y_1),  \xi_{\rho_t}(\gamma\cdot y_2),\xi_{\rho_t}^{(1)}(\gamma\cdot y_3)\right)\\
&=&\rho(\gamma)\cdot\left( \xi_{\sigma}(x_1),  \xi_{\sigma}(q)\right)\\
&=&\rho(\gamma)u_{{\bf x},q}(\sigma)\cdot\left( \xi_{\rho}(x_1),  \xi_{\rho}(q)\right)\\
&=&\rho(\gamma)u_{{\bf x},q}(\sigma)\rho(\gamma)^{-1}\cdot\left( \xi_{\rho}(\gamma\cdot x_1),  \xi_{\rho}(\gamma\cdot q)\right),
\end{eqnarray*}
so (3) holds.
\end{proof}

Now, suppose that $r_1$ and $r_2$ are distinct points in $\partial\Gamma$. Let ${\bf x}=(x_1,x_2,x_3)$ and ${\bf x}'=(x_1',x_2',x_3')$ be triples of pairwise distinct points such that both $x_1$ and $x_1'$ lie in $\{r_1,r_2\}$, and let $q$ and $q'$ denote the points in $\{r_1,r_2\}$ that are not $x_1$ and $x_1'$ respectively. Then define 
\begin{align}\label{eqn: d def}
d_{{\bf x},{\bf x}',q,q'}:\Hmc_{\rho,{\bf x}}\to\PGL(V)
\end{align}
by $d_{{\bf x},{\bf x}',q,q'}(\sigma):=u_{{\bf x},q}(\sigma)^{-1} g_{{\bf x},{\bf x}'}(\sigma) u_{{\bf x}',q'}\left(\sigma'\right)$, where $\sigma'$ is the representation in $\Hmc_{\rho,{\bf x}'}$ that is conjugate to $\sigma$, i.e.
\[\sigma':=\pi|_{\Hmc_{\rho,{\bf x}'}}^{-1}\circ\pi|_{\Hmc_{\rho,{\bf x}}}(\sigma).\] 
By Corollary \ref{cor: smooth}, $d_{{\bf x},{\bf x}',q,q'}$ is real-analytic, and it is clear that $d_{{\bf x},{\bf x}',q,q'}(\rho)=\id$. The next lemma gives some basic properties of $d_{{\bf x,}{\bf x}',q,q'}$.

\begin{lem}\label{lem: d properties}
Let $r_1$ and $r_2$ be distinct points in $\partial\Gamma$, let ${\bf x}=(x_1,x_2,x_3)$ and ${\bf x}'=(x_1,x_2,x_3)$ be triples of pairwise distinct points such that both $x_1$ and $x_1'$ lie in $\{r_1,r_2\}$, and let $q$ and $q'$ be the points in $\{r_1,r_2\}$ that are not $x_1$ and $x_1'$ respectively.
\begin{enumerate}
\item For any $\sigma\in\Hmc_{\rho,{\bf x}}$, $d_{{\bf x},{\bf x'},q,q'}(\sigma)$ fixes both $\xi_\rho(r_1)$ and $\xi_\rho(r_2)$. In particular, $d_{{\bf x},{\bf x'},q,q'}(\sigma)^{\frac{1}{2}}$ is well-defined.
\item If $\sigma\in\Hmc_{\rho,{\bf x}}$ and $\sigma'\in\Hmc_{\rho,{\bf x}'}$ are conjugate, then
\[d_{{\bf x},{\bf x}',q,q'}(\sigma)^{-1}=d_{{\bf x}',{\bf x},q',q}(\sigma').\]
In particular, $g_{{\bf x},{\bf x}'}(\sigma)=\left(u_{{\bf x},q}(\sigma)d_{{\bf x},{\bf x}',q,q'}(\sigma)^{\frac{1}{2}}\right)\left(u_{{\bf x}',q'}\left(\sigma'\right)d_{{\bf x}',{\bf x},q',q}(\sigma')^{\frac{1}{2}}\right)^{-1}$.
\item If $\sigma\in\Hmc_{\rho,{\bf x}}$ and $\sigma'\in\Hmc_{\rho,\gamma\cdot {\bf x}}$ are conjugate, then
\[d_{\gamma\cdot {\bf x},\gamma\cdot {\bf x}',\gamma\cdot q,\gamma\cdot q'}(\sigma')=\rho(\gamma)d_{{\bf x},{\bf x}',q,q'}(\sigma)\rho(\gamma)^{-1}.\]
\end{enumerate}
\end{lem}

\begin{proof}
%Let $q$ and $q'$ be the points in $\{r_1,r_2\}$ that are not $x_1$ and $x_1'$ respectively.
(1) Let $\sigma'$ be the representation in $\Hmc_{\rho,{\bf x}'}$ that is conjugate to $\sigma$. Then
\[\left(\xi_\rho(x_1'),\xi_\rho(x_2'),\xi_\rho^{(1)}(x_3')\right)=\left(\xi_{\sigma'}(x_1'),\xi_{\sigma'}(x_2'),\xi_{\sigma'}^{(1)}(x_3')\right),\] 
so by the definition of $g_{{\bf x},{\bf x}'}(\sigma)$,
\begin{align*}\label{eqn: infinitesimal shearing}
d_{{\bf x},{\bf x}',q,q'}(\sigma) u_{{\bf x}',q'}\left(\sigma'\right)^{-1}&\cdot \left(\xi_{\sigma'}(x_1'),\xi_{\sigma'}(x_2'),\xi_{\sigma'}^{(1)}(x_3')\right)\\
&=u_{{\bf x},q}(\sigma)^{-1}\cdot \left(\xi_{\sigma}(x_1'),\xi_{\sigma}(x_2'),\xi_{\sigma}^{(1)}(x_3')\right).
\end{align*}
This implies that  $d_{{\bf x},{\bf x}',q,q'}(\sigma) u_{{\bf x}',q'}\left(\sigma'\right)^{-1}\cdot \xi_{\sigma'}=u_{{\bf x},q}(\sigma)^{-1}\cdot\xi_{\sigma}$ because $\sigma$ and $\sigma'$ are conjugate. By definition, for both $m=1,2$, 
\[u_{{\bf x}',q'}\left(\sigma'\right)^{-1}\cdot \xi_{\sigma'}(r_m)=\xi_\rho(r_m)=u_{{\bf x},q}(\sigma)^{-1}\cdot\xi_{\sigma}(r_m),\] 
so the first claim of (1) follows. Since $\xi_\rho(r_1)$ and $\xi_\rho(r_2)$ are transverse, this first claim implies that $d_{{\bf x},{\bf x}',q,q'}(\sigma)$ is diagonalizable, so the second claim of (1) holds.

(2) follows immediately from Lemma \ref{lem: g properties}(1), and (3) is a straight forward consequence of Lemma~\ref{lem: g properties}(2) and Lemma~\ref{lem: g properties}(3).
\end{proof}

%%%%%%%%%%%%%%%%%%%%%%%%%%%%%%%%%%%%%%%%%%%%%%%%%%
\subsection{Defining the tangent cocycle in special cases}\label{sec:tangent cocycle}
%%%%%%%%%%%%%%%%%%%%%%%%%%%%%%%%%%%%%%%%%%%%%%%%%%

Given $X\in T_{[\rho]}\Hit_V(S)$, we will specify the $(\rho,\Tmc,\Jmc)$-tangent cocycle associated to $X$, denoted $\nu_X\in C^1(S,\mathfrak{sl}(V)_{\Ad\circ\rho})$, by specifying its $\Ad\circ\rho$-equivariant lift $\widetilde{\nu}_X:C_1(\Std,\Zbbb)\to\smf\lmf(V)$. To do so, we will first define $\widetilde{\nu}_X(h)$ when the $1$-simplex $h:[0,1]\to\widetilde{S}$ is in the three special cases labelled as Cases 1 to 3 below. Then, in Section \ref{sec:tangent cocycle general}, we will use these special cases to define $\widetilde{\nu}_X(h)$ for general $h$.

Let $\widetilde{\Tmc}$ and $\widetilde{\Jmc}$ be the lifts to $\Std$ of $\Tmc$ and $\Jmc$ respectively, and let $\widetilde{\Dmc}$ be the barrier system associated to $\widetilde{\Tmc}$. For any $1$-simplex $h:[0,1]\to\widetilde{S}$, we let $\bar{h}:[0,1]\to\widetilde{S}$ denote the $1$-simplex defined by $\bar{h}(t):=h(1-t)$.

{\bf Case 1: The endpoints of the image of $h$ do not lie in the non-isolated edges of $\widetilde{\Tmc}$.} We will define $\widetilde{\nu}_X(h)$ in this case via three steps, each with increasing generality. The first step specifies $\widetilde{\nu}_X(h)$ is the generic situation when the endpoints of the image of $h$ lie in the interiors of fragmented ideal triangles of $\widetilde{\Dmc}$. The second and third steps specify $\widetilde{\nu}_X(h)$ in general via an averaging process.

\begin{figure}[ht]
\centering
\includegraphics[scale=0.5]{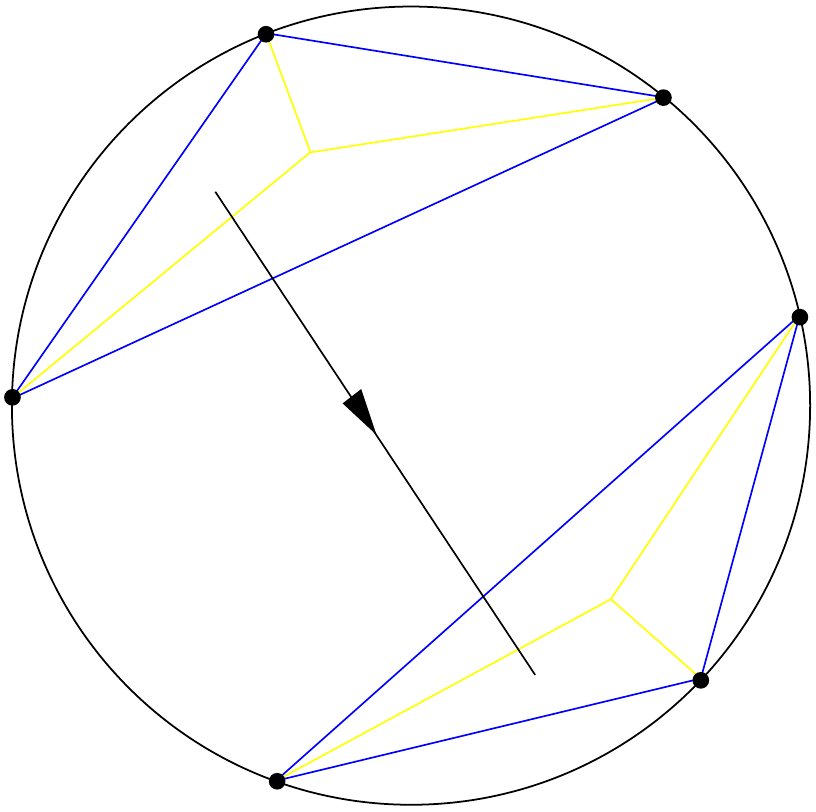}
\small
\put (-205, 100){$x_1$}
\put (-143, 190){$x_2$}
\put (-33, 172){$x_3$}
\put (-28, 24){$y_1$}
\put (-137, 1){$y_2$}
\put (-1, 117){$y_3$}
\put (-105, 97){$h$}
%\put (-143, 160){$T'_{h(0)}$}
%\put (-56, 35){$T'_{h(1)}$}
\caption{$1$-simplices considered in the first step of Case 1.}\label{fig:step1}
\end{figure}

For the first step, we define $\widetilde{\nu}_X(h)$ when the endpoints of the image of $h$ both lie in the interiors of fragmented ideal triangles of the barrier system $\widetilde{\Dmc}$. For such $h$, let ${\bf x}=(x_1,x_2,x_3)$ be the vertices of the ideal triangle of $\widetilde{\Tmc}$ that contains $h(0)$, such that $x_3$ is not a vertex of the fragmented ideal triangle of $\widetilde{\Dmc}$ that contains $h(0)$. Similarly, let ${\bf y}=(y_1,y_2,y_3)$ be the vertices of the ideal triangle of $\widetilde{\Tmc}$ that contains $h(1)$, such that $y_3$ is not a vertex of the fragmented ideal triangle of $\widetilde{\Dmc}$ that contains $h(1)$, see Figure \ref{fig:step1}. Then define 
\begin{equation}\label{eqn: tangent equation 1}
\widetilde{\nu}_X(h):={\rm d}\left(g_{{\bf x},{\bf y}}\circ\pi|_{\Hmc_{\rho,{\bf x}}}^{-1}\right)_{[\rho]}(X)\in\mathfrak{sl}(V),
\end{equation}
where $g_{{\bf x},{\bf y}}$ is the map defined by \eqref{eqn: g def}. Note that this is well-defined because switching the roles of $x_1$ and $x_2$ or switching the roles of $y_1$ and $y_2$ leave $g_{{\bf x},{\bf y}}$ invariant. Equivalently, if $t\mapsto \rho_t$ is a smooth path in $\Hmc_{\rho,{\bf x}}$ such that $\rho_0=\rho$ and ${\rm d}\pi\left(\frac{d}{dt}|_{t=0}\rho_t\right)=X$, then
\begin{equation}\label{eqn: tangent equation 1'}
\widetilde{\nu}_X(h)=\frac{d}{dt}\bigg|_{t=0}g_{{\bf x},{\bf y}}(\rho_t)\in\mathfrak{sl}(V).
\end{equation}

\begin{figure}[ht]
\centering
\includegraphics[scale=1]{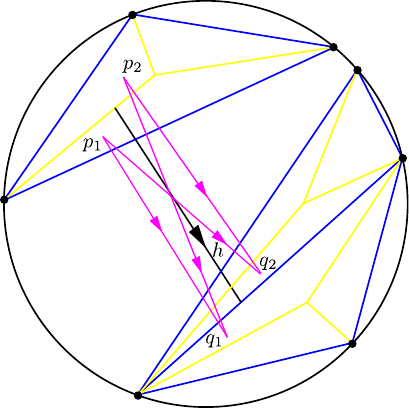}
\small
\caption{$1$-simplices considered in the second step of Case 1.}\label{fig:step2}
\end{figure}

For the second step, we define $\widetilde{\nu}_X(h)$ when both endpoints of the image of $h$ lie either in the interiors of fragmented ideal triangles of $\widetilde{\Dmc}$, in isolated edges of $\widetilde{\Tmc}$, or in the interiors of non-edge barriers associated to $\widetilde{\Tmc}$. For such $h$, let $h_1,\dots,h_4:[0,1]\to\widetilde{S}$ be $1$-simplices such that
\begin{itemize}
\item $h_1(0)=h_2(0)=:p_1$, $h_3(0)=h_4(0)=:p_2$, $h_1(1)=h_3(1)=:q_1$, $h_2(1)=h_4(1)=:q_2$,
\item if $h(0)$ lies in the interior of a fragmented ideal triangle, then $p_1=p_2=h(0)$,
\item if $h(0)$ lies in a isolated edge or in the interior of a non-edge barrier, then $p_1$ and $p_2$ are points in the interiors of the two fragmented ideal triangles that contain $h(0)$ in their closures,
\item if $h(1)$ lies in the interior of a fragmented ideal triangle, then $q_1=q_2=h(1)$,
\item if $h(1)$  lies in a isolated edge or in the interior of a non-edge barrier, then $q_1$ and $q_2$ are points in the interiors of the two fragmented ideal triangles that contain $h(1)$ in their closures,
\end{itemize}
see Figure \ref{fig:step2}. Note that for all $m=1,\dots,4$, the endpoints of the image of $h_m$ lie in the interiors of fragmented ideal triangles, so we have already defined $\widetilde{\nu}_X(h_m)$ in the previous step, see \eqref{eqn: tangent equation 1}. Thus, we may define
\begin{equation}\label{eqn: Case 1}\widetilde{\nu}_X(h):=\frac{1}{4}\sum_{m=1}^4\widetilde{\nu}_X(h_m).\end{equation}
Observe that $\widetilde{\nu}_X(h)$ does not depend on the choice of $h_1,\dots,h_4$. Also, if $h$ satisfies the conditions of the previous step, then the definition of $\widetilde{\nu}_X(h)$ given by \eqref{eqn: Case 1} agrees with the definition of $\widetilde{\nu}_X(h)$ given by \eqref{eqn: tangent equation 1}.

\begin{figure}[ht]
\centering
\includegraphics[scale=1]{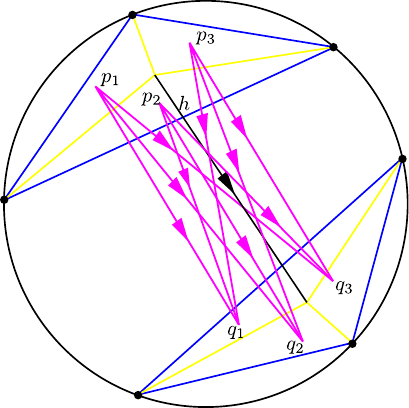}
\small
\caption{$1$-simplices considered in Step 3 when $h(0)$ and $h(1)$ are the centroids of ideal triangles of $\widetilde{\Tmc}$.}\label{fig:step3}
\end{figure}

Finally, for the third step, we define $\widetilde{\nu}(h)$ when both endpoints of the image of $h$ do not lie in the non-isolated edges of $\widetilde{\Tmc}$. For such $h$, let $h^1,\dots,h^9:[0,1]\to\widetilde{S}$ be $1$-simplices so that
\begin{itemize}
\item $h^1(0)=h^2(0)=h^3(0)=:p_1$, $h^4(0)=h^5(0)=h^6(0)=:p_2$, $h^7(0)=h^8(0)=h^9(0)=:p_3$, $h^1(1)=h^4(1)=h^7(1)=:q_1$, $h^2(1)=h^5(1)=h^8(1)=:q_2$, $h^3(1)=h^6(1)=h^9(1)=:q_3$,
\item if $h(0)$ lies in the interior of a fragmented ideal triangle, in an isolated edge, or in the interior of a non-edge barrier, then $p_1=p_2=p_3=h(0)$,
\item if $h(0)$ is the centroid of an ideal triangle of $\widetilde{\Tmc}$, then $p_1$, $p_2$, $p_3$ are points in the interiors of the three fragmented ideal triangles that contain $h(0)$ in its closure,
\item if $h(1)$ lies in the interior of a fragmented ideal triangle, in an isolated edge, or in the interior of a non-edge barrier, then $q_1=q_2=q_3=h(1)$,
\item if $h(1)$ is the centroid of an ideal triangle of $\widetilde{\Tmc}$, then $q_1$, $q_2$, $q_3$ are points in the interiors of the three fragmented ideal triangles that contain $h(1)$ in its closure,
\end{itemize}
see Figure \ref{fig:step3}. Note that for $m=1,\dots,9$, the endpoints of the image of $h^m$ both lie in either the interior of fragmented ideal triangles, in an isolated edge, or in the interior of a non-edge barrier, so we have already defined $\widetilde{\nu}_X(h^m)$ in the previous step. Thus, we can define
\begin{equation}\label{eqn: Case 2}\widetilde{\nu}_X(h):=\frac{1}{9}\sum_{m=1}^9\widetilde{\nu}(h^m).\end{equation}
Again observe that $\widetilde{\nu}_X(h)$ does not depend on the choice of $h^1,\dots,h^9$. Furthermore, if $h$ satisfies the conditions of the previous step, then the definition of $\widetilde{\nu}_X(h)$ given by \eqref{eqn: Case 2} specializes to the definition of $\widetilde{\nu}_X(h)$ given by \eqref{eqn: Case 1}. 

The following lemma gives some properties of $\widetilde{\nu}_X$ as defined in \eqref{eqn: tangent equation 1}. 

\begin{lem} \label{lem:endpoints}Let $h$ and $h'$ be $1$-simplices satisfying the conditions of Case 1. 
\begin{enumerate}
%\item If $h(0)$ and $h'(0)$ lie in the interior of the same fragmented ideal triangle, and $h(1)$ and $h'(1)$ lie in the interior of the same fragmented ideal triangle, then $\widetilde{\nu}_X(h)=\widetilde{\nu}_X(h')$. 
\item If $h(1)=h'(0)$ and $h''$ is the concatenation of $h$ and $h'$, then $\widetilde{\nu}_X(h)+\widetilde{\nu}_X(h')=\widetilde{\nu}_X(h'')$. In particular, $\widetilde{\nu}_X(h)=0$ if $h(0)=h(1)$, and $\widetilde{\nu}_X(h)=-\widetilde{\nu}_X(\bar{h})$.
\item For any $\gamma\in\Gamma$, $\widetilde{\nu}_X(\gamma\cdot h)=\Ad\circ\rho(\gamma)\cdot \widetilde{\nu}_X(h)$.
\end{enumerate}
\end{lem}

\begin{proof}
By the definition of $\widetilde{\nu}_X(h)$, it is sufficient to verify (1) and (2) when the endpoints of the image of $h$ and $h'$ lie in the interiors of fragmented ideal triangles in $\widetilde{\Dmc}$, in which case $\widetilde{\nu}_X(h)$ and $\widetilde{\nu}_X(h')$ are given by \eqref{eqn: tangent equation 1'}. For such $h$ and $h'$, (1) and (2) are straightforward consequences of Lemma \ref{lem: g properties}(1) and Lemma \ref{lem: g properties}(2) respectively. 
\end{proof}

{\bf Case 2: There is a bridge $J$ in $\widetilde\Jmc$ across a non-isolated edge $e$ in $\widetilde{\Tmc}$ such that one of the endpoints of the image of $h$ is $e\cap J$, while the other is an endpoint of $J$.} First, suppose that $h(1)=e\cap J$ and $h(0)$ is an endpoint of $J$. Let ${\bf x}=(x_1,x_2,x_3)$ be the vertices of the ideal triangle $T$ of $\widetilde{\Tmc}$ that contains $h(0)$, such that $x_1$ is an endpoint of $e$ and $x_3$ is not a vertex of the fragmented ideal triangle of $\widetilde{\Dmc}$ that contains $h(0)$, see Figure \ref{fig:cocycleclosededge}. Similarly, let ${\bf x}'=(x_1',x_2',x_3')$ be the vertices of the ideal triangle $T'$ of $\widetilde{\Tmc}$ that contains the other endpoint of $J$, such that $x_1'$ is an endpoint of $e$ and $x_3'$ is not a vertex of the fragmented ideal triangle of $\widetilde{\Dmc}$ that contains the other endpoint of $J$. Let $q$ and $q'$ be the endpoints of $e$ that are not $x_1$ and $x_1'$ respectively. Then define
\begin{equation}\label{eqn: tangent equation 3}
\widetilde{\nu}_X(h):={\rm d}\left(u_{{\bf x},q}\circ \pi|_{\Hmc_{\rho,{\bf x}}}^{-1}\right)_{[\rho]}(X)+\frac{1}{2}{\rm d}\left(d_{{\bf x},{\bf x}',q,q'}\circ \pi|_{\Hmc_{\rho,{\bf x}}}^{-1}\right)_{[\rho]}(X)\in\smf\lmf(V),
\end{equation}
where $u_{{\bf x},q}$ and $d_{{\bf x},{\bf x}',q,q'}$ are the maps defined by \eqref{eqn: u def} and \eqref{eqn: d def} respectively. Equivalently, if $t\mapsto \rho_t$ is a smooth path in $\Hmc_{\rho,{\bf x}}$ such that $\rho_0=\rho$ and ${\rm d}\pi\left(\frac{d}{dt}|_{t=0}\rho_t\right)=X$, then
\begin{equation}\label{eqn: tangent equation 3'}
\widetilde{\nu}_X(h):=\frac{d}{dt}\bigg|_{t=0}u_{{\bf x},q}(\rho_t)+\frac{1}{2}\cdot\frac{d}{dt}\bigg|_{t=0}d_{{\bf x},{\bf x}',q,q'}(\rho_t)\in\smf\lmf(V).
\end{equation}

\begin{figure}[ht]
\centering
\includegraphics[scale=0.8]{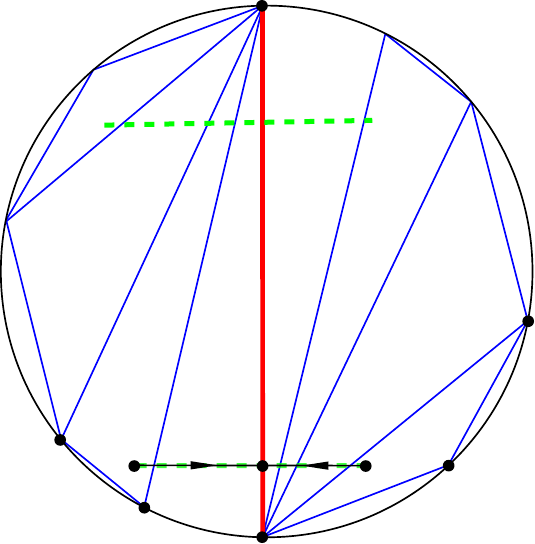}
\small
\put (-85, 34){$\bar{h}'$}
\put (-133, 34){$h$}
\put (-119, -6){$x_1'=q$}
\put (-119, 211){$x_1=q'$}
\put (1, 83){$x_2'$}
\put (-31, 24){$x_3'$}
\put (-160, 7){$x_2$}
\put (-192, 34){$x_3$}
\caption{Case 2.}\label{fig:cocycleclosededge}
\end{figure}

On the other hand, if $h(0)=e\cap J$ and $h(1)$ is an endpoint of $J$, then the $1$-simplex $\bar{h}$ has the property that $\bar{h}(1)=e\cap J$ and $\bar{h}(0)$ is an endpoint of $J$. Thus, $\widetilde{\nu}_X(\bar{h})$ was defined above, so we may define
\begin{equation}\label{eqn: tangent equation 4}
\widetilde{\nu}_X(h):=-\widetilde{\nu}_X(\bar{h}).
\end{equation}

 The following lemma gives the key properties of $\widetilde{\nu}_X(h)$ given by \eqref{eqn: tangent equation 3} and \eqref{eqn: tangent equation 4}.

\begin{lem}\label{lem:diag inverse}
Let $h,h':[0,1]\to\widetilde{S}$ be $1$-simplices satisfying the conditions of Case 2.
\begin{enumerate}
\item Suppose that there is some bridge $J$ in $\widetilde{\Jmc}$ across a non-isolated edge $e$ in $\widetilde{\Tmc}$ such that $h(0)$ and $h'(1)$ are the endpoints of $J$ (realized as a geodesic segment in $\widetilde{S}$) and $h(1)=h'(0)=e\cap J$. If $h''$ is the concatenation of $h$ and $h'$, then $\widetilde{\nu}_X(h)+\widetilde{\nu}_X(h')=\widetilde{\nu}_X(h'')$.
\item For any $\gamma\in\Gamma$, $\widetilde{\nu}_X(\gamma\cdot h)=\Ad\circ\rho(\gamma)\cdot \widetilde{\nu}_X(h)$.
\end{enumerate}
\end{lem}

\begin{proof}
To prove (1), first notice that $h''$ satisfies the conditions of Case 1. Then (see Figure \ref{fig:cocycleclosededge}) by Lemma \ref{lem: d properties}(2), 
\[\widetilde{\nu}_X(h)+\widetilde{\nu}_X(h')=\widetilde{\nu}_X(h)-\widetilde{\nu}_X(\bar{h}')=\widetilde{\nu}_X(h'').\]
(2) follows immediately from Lemma \ref{lem: g properties}(3) and Lemma \ref{lem: d properties}(3).
\end{proof}

\textbf{Case 3: The endpoints of the image of $h$ lie in the same non-isolated edge in $\widetilde{\Tmc}$.}  If $h(0)=h(1)$, define $\widetilde{\nu}_X(h)=0$. If $h(0)\neq h(1)$, let $e=\{r_1,r_2\}$ be the non-isolated edge that contains both $h(0)$ and $h(1)$, and let $\gamma\in\Gamma$ be the primitive group element whose axis is $e$, such that $\gamma$ translates along $e$ in the direction from $h(0)$ to $h(1)$. Let $n_1(h)$ be the number of $\langle\gamma\rangle$-translates of $J$ that intersect the interior of $h$, let $n_2(h)$ be the number of $\langle\gamma\rangle$-translates of $J$ that contain the endpoints of $h$, and let 
\[\#h:=n_1+\frac{n_2}{2}.\]

Choose any point $r_3\in\partial\Gamma$ that is not $r_2$ nor $r_2$, let ${\bf r}:=(r_1,r_2,r_3)$, and define the map
\[f_{\bf r}:\Hmc_{\rho,{\bf r}}\to\PGL(V)\]
by $f_{\bf r}:\sigma\mapsto \sigma(\gamma)\rho(\gamma)^{-1}$. Note that $f_{\bf r}$ is smooth and sends $\rho$ to $\id$. Furthermore, the map $f_{\bf r}\circ \pi|_{\Hmc_{\rho,{\bf r}}}^{-1}:\Hit_V(S)\to\PGL(V)$ depends only on $\gamma$ (and not on the choice of ${\bf r}$), so we may define
\begin{equation}\label{eqn: tangent equation 2}
\widetilde{\nu}_X(h):=\#h\cdot {\rm d}\left(f_{\bf r}\circ \pi|_{\Hmc_{\rho,{\bf r}}}^{-1}\right)_{[\rho]}(X)\in\smf\lmf(V).
\end{equation} 
Equivalently, if $t\mapsto \rho_t$ be a smooth path in $\Hmc_{\rho,{\bf r}}$ such that $\rho_0=\rho$ and ${\rm d}\pi\left(\frac{d}{dt}|_{t=0}\rho_t\right)=X$,
then
\begin{equation}\label{eqn: tangent equation 2'}
\widetilde{\nu}_X(h):=\#h\cdot\frac{d}{dt}\bigg|_{t=0}\rho_t(\gamma)\rho(\gamma)^{-1}\in\smf\lmf(V).
\end{equation} 

The following lemma follows easily from the definitions.

\begin{lem}\label{lem:Case 3.1}
Let $h,h':[0,1]\to\widetilde{S}$ be $1$-simplices satisfying the conditions of Case 3.
\begin{enumerate}
\item If $h(1)=h'(0)$ and $h''$ is the concatenation of $h$ and $h'$, then $\widetilde{\nu}_X(h)+\widetilde{\nu}_X(h')=\widetilde{\nu}_X(h'')$.
\item For any $\gamma\in\Gamma$, $\widetilde{\nu}_X(\gamma\cdot h)=\Ad\circ\rho(\gamma)\cdot \widetilde{\nu}_X(h)$.
\end{enumerate}
\end{lem}

Finally, we prove the prove the following lemma that relates Cases 1, 2, and 3.

\begin{lem}\label{lem:tangent cocycle 1}
Let $e$ be a non-isolated edge in $\widetilde{\Tmc}$, let $J$ be a bridge in $\widetilde{\Jmc}$ across $e$, and let $\gamma\in\Gamma$ be an element whose axis is $e$. Let $s,s':[0,1]\to \widetilde{S}$ be $1$-simplices such that $s(1)=s'(1)=e\cap J$, and $s(0)$ and $s'(0)$ are the two endpoints of $J$. Let $m,l,l':[0,1]\to\widetilde{S}
$ be $1$-simplices such that $m(0)=s(1)$, $m(1)=\gamma\cdot s(1)$, $l(0)=s(0)$, $l(1)=\gamma\cdot s(0)$, $l'(0)=s'(0)$, $l'(1)=\gamma\cdot s(0)$, see Figure~\ref{fig:cocycleproof1}. Then
\begin{enumerate}
\item $\widetilde{\nu}_X(s)+\widetilde{\nu}_X(m)=\widetilde{\nu}_X(l)+\widetilde{\nu}_X(\gamma\cdot s)$.
\item $\widetilde{\nu}_X(m)+\widetilde{\nu}_X(\gamma\cdot\bar{s})=\widetilde{\nu}_X(\bar{s})+\widetilde{\nu}_X(l)$.
\item $\widetilde{\nu}_X(m)+\widetilde{\nu}_X(\gamma\cdot \bar{s})=\widetilde{\nu}_X(\bar{s}')+\widetilde{\nu}_X(l')$.
\end{enumerate}
\end{lem}

\begin{figure}[ht]
\centering
\includegraphics[scale=0.8]{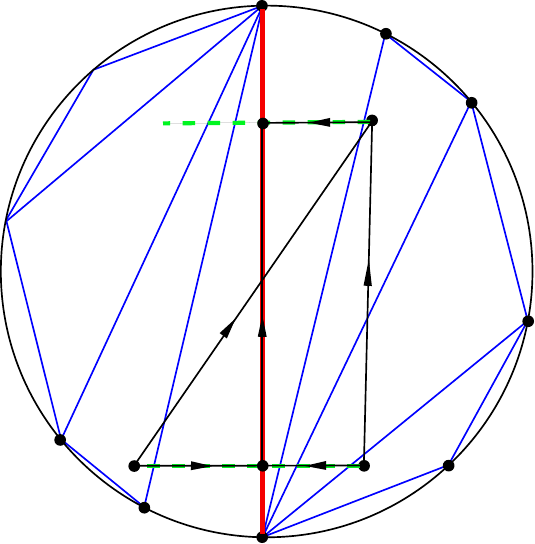}
\put (-90, 21){$s$}
\put (-133, 20){$s'$}
\put (-70, 100){$l$}
\put (-93, 167){$\gamma\cdot s$}
\put (-102, 100){$m$}
\put (-130, 75){$l'$}
\put (-140, -7){$x_1=\gamma\cdot x_1=q'$}
\put (-123, 211){$x_1'=q$}
\put (-55, 199){$\gamma\cdot x_2$}
\put (-22, 170){$\gamma\cdot x_3$}
\put (1, 82){$x_2$}
\put (-30, 25){$x_3$}
\put (-163, 5){$x_2'$}
\put (-193, 30){$x_3'$}
\caption{Lemma \ref{lem:tangent cocycle 1}.}\label{fig:cocycleproof1}
\end{figure}

\begin{proof}
First, we prove (1). Let ${\bf x}=(x_1,x_2,x_3)$ be the vertices of the ideal triangle of $\widetilde{\Tmc}$ that contains $h(0)$, so that $x_1$ is an endpoint of $e$ and $x_3$ is not a vertex of the fragmented ideal triangle of $\widetilde{\Dmc}$ that contains $h(0)$. Similarly, let ${\bf x}'=(x_1',x_2',x_3')$ be the vertices of the ideal triangle of $\widetilde{\Tmc}$ that contains the other endpoint of $J$, so that $x_1'$ is an endpoint of $e$ and $x_3'$ is not a vertex of the fragmented ideal triangle of $\widetilde{\Dmc}$ that contains the other endpoint of $J$. Let $q$ and $q'$ be the endpoints of $e$ that are not $x_1$ and $x_1'$ respectively.

Let $t\mapsto \rho_t$ be a smooth path in $\Hmc_{\rho,{\bf x}}$ such that $\rho_0=\rho$ and ${\rm d}\pi\left(\frac{d}{dt}|_{t=0}\rho_t\right)=X$. By \eqref{eqn: tangent equation 3'},
\[\widetilde{\nu}_X(s)=\frac{d}{dt}\bigg|_{t=0}u_{{\bf x},q}(\rho_t)+\frac{1}{2}\frac{d}{dt}\bigg|_{t=0}d_{{\bf x},{\bf x}',q,q'}(\rho_t).\]
Lemma \ref{lem: d properties}(1) implies that $d_{{\bf x},{\bf x}',q,q'}(\rho_t)$ commutes with $\rho(\gamma)$ for all $t$, so by Lemma \ref{lem:diag inverse}(2), 
\begin{eqnarray*}
\widetilde{\nu}_X(\gamma\cdot s)=\frac{d}{dt}\bigg|_{t=0} \rho(\gamma)u_{{\bf x},q}(\rho_t)\rho(\gamma)^{-1}+\frac{1}{2}\frac{d}{dt}\bigg|_{t=0}d_{{\bf x},{\bf x}',q,q'}(\rho_t).
\end{eqnarray*}
Since $\left(\xi_{\rho}(x_1),\xi_{\rho}(x_2),\xi_{\rho}^{(1)}(x_3)\right)=\left(\xi_{\rho_t}(x_1),\xi_{\rho_t}(x_2),\xi_{\rho_t}^{(1)}(x_3)\right)$ for all $t$, it follows that
\[\rho_t(\gamma)\rho(\gamma)^{-1}\cdot\left(\xi_{\rho}(\gamma\cdot x_1),\xi_{\rho}(\gamma\cdot x_2),\xi_{\rho}^{(1)}(\gamma\cdot x_3)\right)=\left(\xi_{\rho_t}(\gamma\cdot x_1),\xi_{\rho_t}(\gamma\cdot x_2),\xi_{\rho_t}^{(1)}(\gamma\cdot x_3)\right),\] 
so $\rho_t(\gamma)\rho(\gamma)^{-1}=g_{{\bf x},\gamma\cdot{\bf x}}(\rho_t)$. Then by \eqref{eqn: tangent equation 1'}
\[\widetilde{\nu}_X(l)=\frac{d}{dt}\bigg|_{t=0}\rho_t(\gamma)\rho(\gamma)^{-1}.\]
Finally, if we set $\rho_t':=c_{u_{{\bf x},q}(\rho_t)^{-1}}\circ\rho_t$, then $\rho_t'(\gamma)$ fixes both $\xi_\rho(x_1)$ and $\xi_\rho(q)$. Therefore, by \eqref{eqn: tangent equation 2'},
\begin{eqnarray*}
\widetilde{\nu}_X(m)&=&\frac{d}{dt}\bigg|_{t=0}u_{{\bf x},q}(\rho_t)^{-1}\rho_t(\gamma)u_{{\bf x},q}(\rho_t)\rho(\gamma)^{-1}\\
&=&-\frac{d}{dt}\bigg|_{t=0}u_{{\bf x},q}(\rho_t)+\frac{d}{dt}\bigg|_{t=0}\rho_t(\gamma)\rho(\gamma)^{-1}+\frac{d}{dt}\bigg|_{t=0}\rho(\gamma)u_{{\bf x},q}(\rho_t)\rho(\gamma)^{-1}.
\end{eqnarray*}
It now follows that (1) holds.

(2) follows immediately from (1). Since (2) holds, to prove (3), one only needs to verify that 
\[\widetilde{\nu}_X(\bar{s})+\widetilde{\nu}_X(l)=\widetilde{\nu}_X(\bar{s}')+\widetilde{\nu}_X(l').\]
By Lemma \ref{lem:endpoints}(1) and Lemma \ref{lem:diag inverse}(1), if $s''$ is the concatenation of $s'$ and $\bar{s}$, then
\[\widetilde{\nu}_X(s')+\widetilde{\nu}_X(\bar{s})+\widetilde{\nu}_X(l)=\widetilde{\nu}_X(s'')+\widetilde{\nu}_X(l)=\widetilde{\nu}_X(l'),\]
so the required equality holds.
\end{proof}

%%%%%%%%%%%%%%%%%%%%%%%%%%%%%%%%%%%%%%%%%%%%%%%%%%
\subsection{Defining the tangent cocycle in general}\label{sec:tangent cocycle general}
%%%%%%%%%%%%%%%%%%%%%%%%%%%%%%%%%%%%%%%%%%%%%%%%%%
Next, using Cases 1, 2, and 3 above, we define $\widetilde{\nu}_X(h)$ for general $h$. Given a general $1$-simplex $h:[0,1]\to\widetilde{S}$, let 
\[h_{--},\,\, h_-,\,\, \hat{h},\,\, h_+,\,\, h_{++}:[0,1]\to\widetilde{S}\]
be $1$-simplices such that the following holds, see Figure \ref{fig:cocyclecase4}:
\begin{itemize}
\item $h(0)=h_{--}(0)$, $h_{--}(1)=h_-(0)$, $h_-(1)=\hat{h}(0)$, $\hat{h}(1)=h_+(0)$, $h_+(1)=h_{++}(0)$, and $h_{++}(0)=h(1)$,
\item if $h(0)$ does not lie in a non-isolated edge of $\widetilde{\Tmc}$, then 
\[h(0)=h_{--}(0)=h_{--}(1)=h_-(0)=h_-(1)=\hat{h}(0),\]
\item if $h(0)$ lies in a non-isolated edge $e\in\widetilde{\Tmc}$, then
$h_{--}(1)=h_-(0)=J\cap e$ for some bridge $J\in\widetilde{\Jmc}$ across $e$, and $h_-(1)=\hat{h}(0)$ is an endpoint of $J$.
\item if $h(1)$ does not lie in a non-isolated edge of $\widetilde{\Tmc}$, then 
\[\hat{h}(1)=h_+(0)=h_+(1)=h_{++}(0)=h_{++}(0)=h(1).\]
\item if $h(1)$ lies in a non-isolated edge $e\in\widetilde{\Tmc}$, then
$h_{++}(0)=h_+(1)=J\cap e$ for some bridge $J\in\widetilde{\Jmc}$ across $e$, and $h_+(0)=\hat{h}(1)$ is an endpoint of $J$.
\end{itemize}

\begin{figure}[ht]
\centering
\includegraphics[scale=0.8]{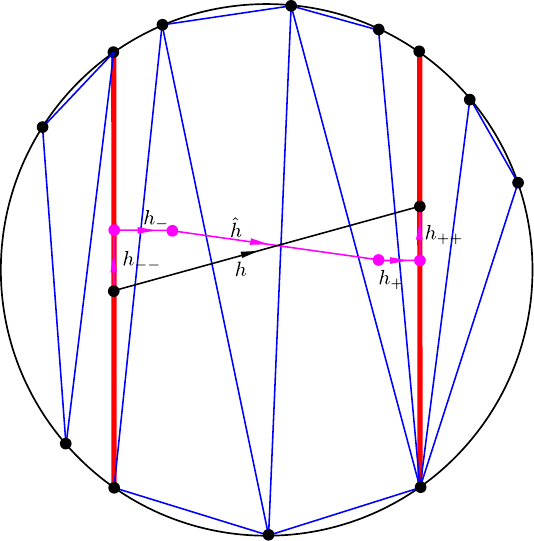}
\caption{Standard quintuple when $h(0)$ and $h(1)$ lie in $\mathfrak D$.}
\label{fig:cocyclecase4}
\end{figure}

\begin{definition}\label{def:standard5}
For any $1$-simplex $h:[0,1]\to\widetilde{S}$, we refer to 
\[(h_{--},\,\, h_-,\,\,\hat{h},\,\,  h_+,\,\, h_{++})\]
defined above as a \emph{standard quintuple} homotopic to $h$.
\end{definition}

Note that each of the five $1$-simplices in a standard quintuple of any $1$-simplex $h$ satisfy the conditions of either Case 1, Case 2, or Case 3. Thus, we may define 
\begin{equation}\label{eqn:welldefined}\widetilde{\nu}_X(h):=\widetilde{\nu}_X(h_{--})+\widetilde{\nu}_X(h_-)+\widetilde{\nu}_X(\hat{h})+\widetilde{\nu}_X(h_+)+\widetilde{\nu}_X(h_{++}),\end{equation}

\begin{thm}\label{thm:tangent cocycle}
For any $X\in T_{[\rho]}\Hit_V(S)$, the map $\widetilde{\nu}_X:C_1(\Std,\Zbbb)\to\smf\lmf(V)$ defined above is a well-defined, $\Ad\circ\rho$-equivariant cocycle. 
\end{thm}

\begin{proof}
First, we verify that $\widetilde{\nu}_X$ is well-defined, i.e. we check that 
\begin{enumerate}
\item $\widetilde{\nu}_X(h)$ as defined by (\ref{eqn:welldefined}) does not depend on the choice of standard quintuple homotopic to $h$, and 
\item $\widetilde{\nu}_X(h)$ as defined by (\ref{eqn:welldefined}) restricts to the definitions of $\widetilde{\nu}_X(h)$ given by 
\begin{itemize}
\item \eqref{eqn: Case 2} when $h$ satisfies the conditions of Case 1,
\item \eqref{eqn: tangent equation 3} or \eqref{eqn: tangent equation 4} when $h$ satisfies the conditions of Case 2,
\item \eqref{eqn: tangent equation 2} when $h$ satisfies the conditions of Case 3.
\end{itemize}
\end{enumerate}
We will only verify (1); once we know that (1) is true, the proof of (2) is a straightforward observation. 

To verify (1), pick any two standard quintuples
\[(h_{--},\,\, h_-,\,\, \hat{h},\,\, h_+,\,\, h_{++}),\,\,\text{ and }\,\, ( h_{--}',\,\, h_-',\,\,\hat{h}',\,\, h_+',\,\, h_{++}')\] 
homotopic to $h$. Let $k:[0,1]\to\widetilde{S}$ be a $1$-simplex such that $k(0)=\hat{h}'(0)$ and $k(1)=\hat{h}(1)$. It suffices to prove that
\begin{equation}\label{eqn: quin1}
\widetilde{\nu}_X(h_{--})+\widetilde{\nu}_X(h_-)+\widetilde{\nu}_X(\hat{h})=\widetilde{\nu}_X(h_{--}')+\widetilde{\nu}_X(h_-')+\widetilde{\nu}_X(k)
\end{equation}
and
\begin{equation}\label{eqn: quin2}
\widetilde{\nu}_X(\hat{h}')+\widetilde{\nu}_X(h_+')+\widetilde{\nu}_X(h_{++}')=\widetilde{\nu}_X(k)+\widetilde{\nu}_X(h_+)+\widetilde{\nu}_X(h_{++}).
\end{equation}
Indeed, \eqref{eqn: quin1} and \eqref{eqn: quin2} together imply that
\begin{align*}
\widetilde{\nu}_X(h_{--})+\widetilde{\nu}_X(h_-)+&\widetilde{\nu}_X(\hat{h})+\widetilde{\nu}_X(h_+)+\widetilde{\nu}_X(h_{++})\\
&=\widetilde{\nu}_X(h_{--}')+\widetilde{\nu}_X(h_-')+\widetilde{\nu}_X(\hat{h}')+\widetilde{\nu}_X(h_+')+\widetilde{\nu}_X(h_{++}').
\end{align*}
We will only give the proof of \eqref{eqn: quin1}; the proof of \eqref{eqn: quin2} is similar.

%\begin{equation}\label{eq:well-defined}
%\widetilde{\nu}(\hat{h})+\sum_{j=1}^2\sum_{m=0}^1\widetilde{\nu}(h_{m,j})=\widetilde{\nu}(\hat{h}')+\sum_{j=1}^2\sum_{m=0}^1\widetilde{\nu}(h_{m,j}').
%\end{equation}

\begin{figure}[ht]
\centering
\includegraphics[scale=0.37]{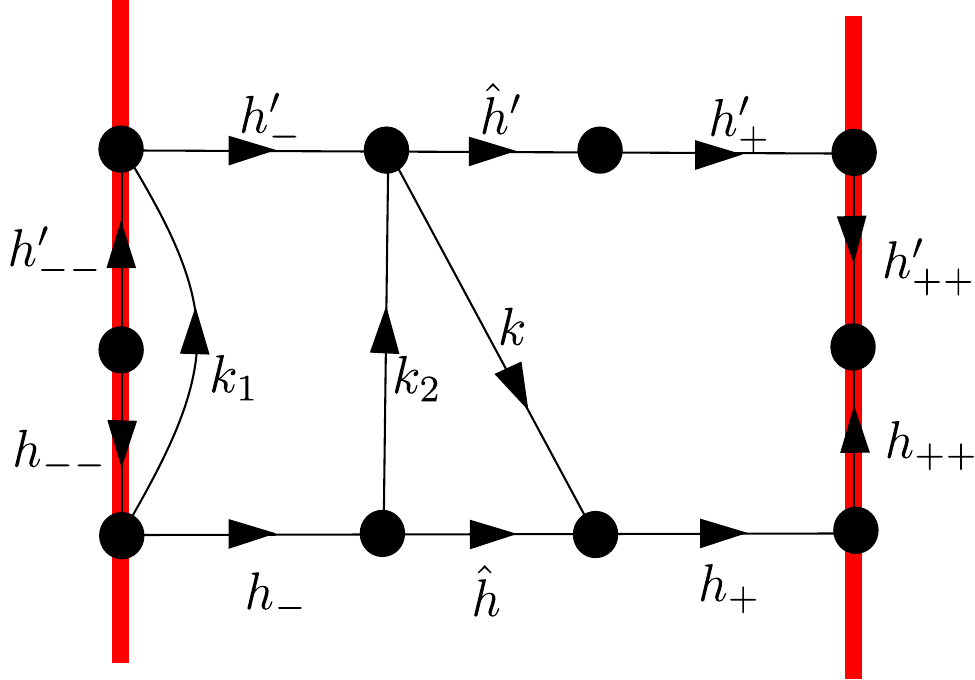}
\caption{$k_{0,1}$, $k_{0,2}$, $k_{1,1}$, and $k_{1,2}$.}\label{fig:doublequin}
\end{figure}

If $h(0)$ does not lie in a non-isolated edge of $\widetilde{\Tmc}$, then the endpoints of the images of $h_{--}$, $h_-$, $h_{--}'$ and $h_-'$, as well as the backward endpoints of $\hat{h}$ and $k$, are all $h(0)$. Hence, Lemma \ref{lem:endpoints}(1) implies that
\[\widetilde{\nu}_X(h_{--})=\widetilde{\nu}_X(h_-)=\widetilde{\nu}_X(h_{--}')=\widetilde{\nu}_X(h_-')=0\,\,\text{ and }\,\,\widetilde{\nu}_X(\hat{h})=\widetilde{\nu}_X(k),\]
so \eqref{eqn: quin1} holds. On the other hand, if $h(0)$ lies in a non-isolated edge $e$ of $\widetilde{\Tmc}$. Let $k_1,k_2:[0,1]\to\widetilde{S}$ be $1$-simplices such that 
\begin{itemize}
\item $k_1(0)=h_{--}(1)=h_-(0)$, 
\item $k_1(1)=h_{--}'(1)=h_-'(0)$, 
\item $k_2(0)=h_-(1)=\hat{h}(0)$,
\item $k_2(1)=h_-'(1)=\hat{h}'(0)=k(0)$, 
\end{itemize}
see Figure \ref{fig:doublequin}. Since the endpoints of $k_1$, $h_{--}$ and $h_{--}'$ all lie in the same non-isolated edge in $\widetilde{\Tmc}$, Lemma~\ref{lem:Case 3.1}(1) implies that 
\[\widetilde{\nu}_X(k_1)+\widetilde{\nu}_X(h_{--})=\widetilde{\nu}_X(h_{--}').\] 
Also, Lemma \ref{lem:endpoints}(1) implies that 
\[\widetilde{\nu}_X(k_2)+\widetilde{\nu}_X(k)=\widetilde{\nu}_X(\hat{h})\] 
and parts (2) and (3) of Lemma \ref{lem:tangent cocycle 1} imply that 
\[\widetilde{\nu}_X(k_1)+\widetilde{\nu}_X(h_-')=\widetilde{\nu}_X(h_-)+\widetilde{\nu}_X(k_2).\] 
Combining these together proves \eqref{eqn: quin1}.

Next, we will prove that $\widetilde{\nu}_X$ is a cocycle, i.e. 
\begin{equation}\label{eqn:cocycle proof} \widetilde{\nu}_X(h)+\widetilde{\nu}_X(h')=\widetilde{\nu}_X(h'').\end{equation}
for all $1$-simplices $h,h',h'':[0,1]\to\widetilde{S}$ such that $h(1)=h'(0)$ and $h''$ is the concatenation of $h$ with $h'$. To do so, choose standard quintuples
\[( h_{--},\,\,h_-,\,\,  \hat{h},\,\,h_+,\,\, h_{++}),\,\, (h_{--}',\,\,h_-',\,\,  \hat{h}',\,\, h_+',\,\, h_{++}'),\,\, (h_{--}'',\,\,h_-'',\,\,  \hat{h}'',\,\,  h_+'',\,\, h_{++}'')\] 
of $h$, $h'$, $h''$ respectively, such that $h''_{--}=h_{--}$, $h''_-=h_-$,  $h''_{++}=h'_{++}$, $h''_{+}=h'_{+}$, $\bar{h}_{++}=h'_{--}$, and $\bar{h}_+=h'_-$. With these choices, $\hat{h}''$ is a concatenation of $\hat{h}$ and $\hat{h}'$, so \eqref{eqn:cocycle proof} holds by Lemma~\ref{lem:endpoints}(1). The $\Ad\circ\rho$-equivariance of $\widetilde{\nu}$ is a consequence of Lemma \ref{lem:endpoints}(2), Lemma \ref{lem:diag inverse}(2), and  Lemma~\ref{lem:Case 3.1}(2).
\end{proof}

%%%%%%%%%%%%%%%%%%%%%%%%%%%%%%%%%%%%%%%%%%%%%%%%%%
\subsection{Properties of $\Psi$}\label{sec:tangent cocycle vector space}
%%%%%%%%%%%%%%%%%%%%%%%%%%%%%%%%%%%%%%%%%%%%%%%%%% 

By Theorem \ref{thm:tangent cocycle}, we may now define the required linear map 
\[\Psi=\Psi_{\rho,\Tmc,\Jmc}:T_{[\rho]}\Hit_V(S)\to C^1(S,\smf\lmf(V)_{\Ad\circ\rho})\] 
by $\Psi(X)=\nu_X$. The next proposition proves that $\Psi$ satisfies the properties stated at the start of this section.

\begin{prop}\label{prop:tangentcocyclebij}
The map $\Psi$ is a linear injection and satisfies $[\Psi(X)]=F_\rho(X)\in H^1(S,\mathfrak{sl}(V)_{\Ad\circ\rho})$ for all $X\in T_{[\rho]}\Hit_V(S)$. In particular, the set $\mathscr{T}(\rho,\Tmc,\Jmc)$ of $(\rho,\Tmc,\Jmc)$-tangent cocycles is a linear subspace of $C^1(S,\mathfrak{sl}(V)_{\Ad\circ\rho})$ whose dimension is $(2g-2)(n^2-1)$. 
\end{prop}

\begin{proof}
In Section \ref{sec: tangent}, we described the canonical isomorphism
\[H^1(\Gamma,\smf\lmf(V)_{\Ad\circ\rho})\simeq H^1(S,\smf\lmf(V)_{\Ad\circ\rho}).\] 
We will verify that $F_\rho(X)=[\Psi(X)]$ by showing that $F_\rho(X)$ and $[\Psi(X)]=[\nu_X]$ are identified via the above isomorphism with the same cohomology class in $H^1(\Gamma,\smf\lmf(V)_{\Ad\circ\rho})$. Choose any point $o$ that lies in the interior of a fragmented ideal triangle of the barrier system $\widetilde{\Dmc}$, and let ${\bf x}=(x_1,x_2,x_3)$ be the vertices of the ideal triangle of $\widetilde{\Tmc}$ that contains $o$, enumerated so that $x_3$ is not a vertex of the fragmented ideal triangle containing $o$. 

First, we describe the group cocycle that represents $[\nu_X]$. Let $\widetilde{\nu}_X:C_1(\Std,\Zbbb)\to\smf\lmf(V)$ be the $\Ad\circ\rho$-equivariant lift of the cocycle $\nu_X$. By the description of the isomorphism $H^1(\Gamma,\smf\lmf(V)_{\Ad\circ\rho})\simeq H^1(S,\smf\lmf(V)_{\Ad\circ\rho})$ in Section \ref{sec: tangent}, the map 
\[\bar{\nu}_X:\Gamma\to\mathfrak{sl}(V)\] 
given by $\bar{\nu}_X:\gamma\mapsto\widetilde{\nu}_X(h_\gamma)$, where $h_\gamma:[0,1]\to\widetilde{S}$ is a $1$-simplex with $h_\gamma(0)=o$ and $h_\gamma(1)=\gamma\cdot o$, is a group cocycle that represents $[\nu_X]$.

Next, we describe the group cocycle that represents $F_\rho(X)$. Let $t\mapsto\rho_t$ be a smooth path in $\Hmc_{\rho,{\bf x}}$ such that $\rho_0=\rho$, and 
\[{\rm d}\pi\left(\frac{d}{dt}\bigg|_{t=0}\rho_t\right)=X.\] 
Then by the description of the isomorphism
\[T_{[\rho]}\Hit_V(S)\simeq H^1(\Gamma,\smf\lmf(V)_{\Ad\circ\rho})\]
given in Section \ref{sec: tangent}, the map $\mu_X:\Gamma\to\mathfrak{sl}(V)$ given by $\mu_X:\gamma\mapsto \frac{d}{dt}\big|_{t=0}\rho_t(\gamma)\rho(\gamma)^{-1}$, is a group cocycle that represents $F_{\rho}(X)$.

To prove $F_\rho(X)=[\Psi(X)]$, it now suffices to show that $\mu_X=\bar{\nu}_X$. To see this, observe that 
\[\rho_t(\gamma)\rho(\gamma)^{-1}\cdot\left(\xi_\rho(\gamma\cdot x_1),\xi_\rho(\gamma\cdot x_2),\xi_\rho^{(1)}(\gamma\cdot x_3)\right)=\left(\xi_{\rho_t}(\gamma\cdot x_1),\xi_{\rho_t}(\gamma\cdot x_2),\xi_{\rho_t}^{(1)}(\gamma\cdot x_3))\right),\]
so $\rho_t(\gamma)\rho(\gamma)^{-1}=g_{{\bf x},\gamma\cdot{\bf x}}(\rho_t)$. Thus,
\[\bar{\nu}_X(\gamma)=\widetilde{\nu}_X(h_\gamma)=\frac{d}{dt}\bigg|_{t=0}g_{{\bf x},\gamma\cdot{\bf x}}(\rho_t)=\frac{d}{dt}\bigg|_{t=0}\rho_t(\gamma)\rho(\gamma)^{-1}=\mu_X(\gamma).\]

Next, we prove that $\Psi$ is an injective linear map. To check that $\Psi$ is injective, let $X_1$ and $X_2$ be vectors in $T_{[\rho]}\Hit_V(S)$. If $\nu_{X_1}=\nu_{X_2}$, then we have proven that $F_\rho(X_1)=[\nu_{X_1}]=[\nu_{X_2}]=F_\rho(X_2)$, which means that $X_1=X_2$ because $F_\rho:T_{[\rho]}\Hit_V(S)\to H^1(S,\smf\lmf(V)_{\Ad\circ\rho})$ is an isomorphism. Thus, $\Psi$ is injective. That fact that $\Psi$ is linear follows immediately from the observation that when $h$ satisfies the conditions of Cases 1, 2, or 3 in Section \ref{sec:tangent cocycle}, the map
\[T_{[\rho]}\Hit_V(S)\to\smf\lmf(V)\] 
given by $X\mapsto\widetilde{\nu}_X(h)$ (see \eqref{eqn: tangent equation 1}, \eqref{eqn: tangent equation 3}, and  \eqref{eqn: tangent equation 2}), is linear. 
\end{proof}

%\begin{remark}
%We show in Theorem \ref{thm:iso} that $\mathscr{T}$ is a linear subspace of $C^1(S,\smf\lmf(V)_{\Ad\circ\rho})$. It follows immediately from this that $\Psi$ is a linear isomorphism.
%\end{remark}

%%%%%%%%%%%%%%%%%%%%%%%%%%%%%%%%%%%%%%%%%%%%%%%%%%%%%%%%%%%%%%%%%%%%%%%%%%%%%%%%%%%%%%%%%%%%%%%%%%%%
\section{The $(\Tmc,\Jmc)$-trivialization of $T\Hit_V(S)$}\label{sec: coeff and lab}
%%%%%%%%%%%%%%%%%%%%%%%%%%%%%%%%%%%%%%%%%%%%%%%%%%%%%%%%%%%%%%%%%%%%%%%%%%%%%%%%%%%%%%%%%%%%%%%%%%%%

Fix an ideal triangulation $\Tmc$ on $S$ and a compatible bridge system $\Jmc$. Using the pair $(\Tmc,\Jmc)$, Bonahon and Dreyer \cite{BonahonDreyer1} gave an explicit, real-analytic parameterization of $\Hit_V(S)$ by an open convex cone $ \mathscr{C}= \mathscr{C}_{\Tmc}$ in a $(n^2-1)(2g-2)$-dimensional subspace $ \mathscr{W}= \mathscr{W}_{\Tmc}$ of $\Rbbb^{\Tmc^o\times\Amc}\times\Rbbb^{\Mmc\times\Bmc}$, where $\Amc$ denotes the set of pairs of positive integers that sum to $n$, and $\Bmc$ denotes the set of triples of positive integers that sum to $n$. (Recall that $\Tmc^o$ is the oriented ideal triangulation associated to $\Tmc$ and $\Mmc$ is the set of non-edge barriers in the barrier system $\Dmc$ associated to $\Tmc$.) In the companion paper \cite{SunWienhardZhang}, we defined a ($n^2-1)(2g-2)$-dimensional family of pairwise commuting vector fields on $\Hit_V(S)$, called the \emph{$(\Tmc,\Jmc)$-parallel vector fields}. We further showed that these can be described easily using a reparamaterization of Bonahon and Dreyer's parameterization of $\Hit_V(S)$ by $\mathscr{C}$.

\begin{thm}\cite[Theorem 1.1]{SunWienhardZhang}\label{thm: par}
There is a real-analytic diffeomorphism 
\[\Omega=\Omega_{\Tmc,\Jmc}:\Hit_V(S)\to \mathscr{C}\]
such that the $(\Tmc,\Jmc)$-parallel vector fields on $\Hit_V(S)$ are exactly the vector fields that are identified via 
\[{\rm d}\Omega:T\Hit_V(S)\to T\mathscr{C}\cong\mathscr{C}\times\mathscr{W}\] 
to the constant vector fields on $\mathscr{C}$. 
\end{thm}

The trivialization ${\rm d}\Omega$ is called the \emph{$(\Tmc,\Jmc)$-trivialization} of $T\Hit_V(S)$.

The goal of this section is to relate the $(\Tmc,\Jmc)$-trivialization to the tangent cocycles described in Section \ref{sec: tangent}. More precisely, fix $[\rho]\in\Hit_V(S)$ and a representative $\rho:\Gamma\to\PGL(V)$. Let $\xi_\rho:\partial\Gamma\to\Fmc(V)$ denote the $\rho$-equivariant Frenet curve. Recall that in Section \ref{sec: tangent cocycles}, we defined a linear embedding
\[\Psi=\Psi_{\rho,\Tmc,\Jmc}:T_{[\rho]}\Hit_V(S)\to C^1(S,\mathfrak{sl}(V)_{\Ad\circ\rho})\]
whose image $\mathscr{T}=\mathscr{T}(\rho,\Tmc,\Jmc)$ is the space of $(\rho,\Tmc,\Jmc)$-tangent cocycles. We will explicitly define a linear map
\[\Xi=\Xi_{\rho,\Tmc,\Jmc}:\mathscr{T}\to\Rbbb^{\Tmc^o\times\Amc}\times\Rbbb^{\Mmc\times\Bmc}\]
so that the following theorem holds.

\begin{thm}\label{thm: symplectic trivialization}
The map $\Xi$ is a linear embedding whose image is the subspace $ \mathscr{W}$. Furthermore, 
\[{\rm d}\Omega_{[\rho]}=\Xi\circ\Psi:T_{[\rho]}\Hit_V(S)\to T_{\Omega([\rho])} \mathscr{C}\cong  \mathscr{W}.\]
\end{thm}

Theorem \ref{thm: symplectic trivialization} will later be used in Section \ref{sec:trivialization} to show that the $(\Tmc,\Jmc)$-trivialization is symplectic.

The linear map $\Xi$ will be defined in several steps. Let $\nu$ be a $(\rho,\Tmc,\Jmc)$-tangent cocycle, and let $\widetilde{\nu}:C_1(\widetilde S,\Zbbb)\to\mathfrak{sl}(V))$ be its $\Ad\circ\rho$-equivariant lift. First, in Section \ref{sec:eruption}, we introduce the notion of an \emph{eruption endomorphism}, and use it to specify the set of possible values of $\widetilde{\nu}(h)$ when $h$ is a $1$-simplex that crosses exactly one non-edge barrier in the barrier system $\widetilde{\Dmc}$ associated to $\widetilde{\Tmc}$. Then, in Section \ref{sec:shearing}, we introduce the notion of a \emph{shearing endomorphism}, and use it to specify the set of possible values of $\widetilde{\nu}(h)$ when $h$ is $1$-simplex that crosses exactly one isolated edge in $\widetilde{\Tmc}$, and also the set of possible values of the ``diagonal part" of $\widetilde{\nu}(h)$ when $h$ is a $1$-simplex whose image is a bridge in $\widetilde{\Jmc}$. Using these, we define $\Xi$ in Section \ref{sec: Xi}  and prove the first statement in Theorem \ref{thm: symplectic trivialization}, i.e. that $\Xi$ is injective and its image $\mathscr{W}$. Finally, in Section \ref{sec:admissibility linear independence}, we recall the description of the map $\Omega$, and combine the definition of $\Xi$ with results from the companion paper \cite{SunWienhardZhang} to prove the second statement of Theorem \ref{thm: symplectic trivialization}.

%%%%%%%%%%%%%%%%%%%%%%%%%%%%%%%%%%%%%%%%%%%%%%%%%%
\subsection{Eruption endomorphisms}\label{sec:eruption}
%%%%%%%%%%%%%%%%%%%%%%%%%%%%%%%%%%%%%%%%%%%%%%%%%%
Given any non-edge barrier $\mathbf b$ in $\widetilde{\Dmc}$, let $x_1$, $x_2$, and $x_3$ be the vertices of the ideal triangle $T$ containing ${\bf b}$ so that $x_1<x_2<x_3<x_1$ and $x_1$ is the terminal endpoint of ${\bf b}$. We say that ${\bf x}:=(x_1,x_2,x_3)$ is the \emph{triple associated to ${\bf b}$}, see Figure \ref{fig:eruptionbases}. Let $\mathfrak{T}_2$ (resp. $\mathfrak{T}_3$) be the fragmented ideal triangle of $\widetilde{\Dmc}$ in $T$ that does not have $x_2$ (resp. $x_3$) as a vertex. We say that a $1$-simplex $h:[0,1]\to\widetilde S$ \emph{passes from the left to the right of ${\bf b}$} if $h(0)$ and $h(1)$ lie in the interiors of $\mathfrak{T}_2$ and $\mathfrak{T}_3$ respectively, see Figure \ref{fig:eruptionbases}.

\begin{figure}[ht]
\centering
\includegraphics[scale=0.6]{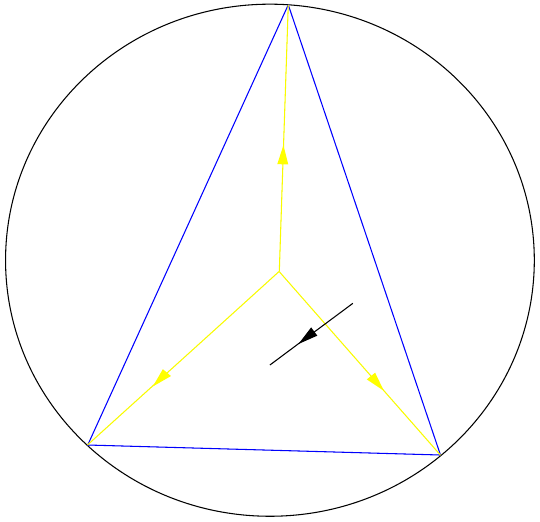}
\put (-70, 46){\tiny $h$}
\put (-55, 40){\tiny ${\bf b}$}
\put (-106, 39){\tiny ${\bf b}_+$}
\put (-85, 102){\tiny ${\bf b}_-$}
\put (-28, 16){\tiny $x_1$}
\put (-138, 19){\tiny $x_2$}
\put (-77, 151){\tiny $x_3$}
\put (-95, 75){\tiny ${\mathfrak{T}_1}$}
\put (-65, 75){\tiny ${\mathfrak{T}_2}$}
\put (-80, 35){\tiny ${\mathfrak{T}_3}$}
\caption{$h$ passes from the left to the right of ${\bf b}$.}\label{fig:eruptionbases}
\end{figure}

Let $\nu$ be a $(\rho,\Tmc,\Jmc)$-tangent cocycle, and let $\widetilde{\nu}:C_1(\widetilde S,\Zbbb)\to\mathfrak{sl}(V))$ be its $\Ad\circ\rho$-equivariant lift. We will now describe the possible values of $\widetilde{\nu}(h)$, where $h:[0,1]\to\widetilde S$. To do so, we use the notion of an eruption endomorphism.

\begin{definition}\label{def:eruption endomorphism}
Let $\mathbf i=(i_1,i_2,i_3)$ be a triple of positive integers that sum to $n$, and let $\mathbf F=(F_1,F_2,F_3)$ be a generic triple of flags in $\Fmc(V)$. The \emph{$\mathbf i$-eruption endomorphism with respect to $\mathbf F$} is the endomorphism $A^{\mathbf i}_{\mathbf F}\in\smf\lmf(V)$ with eigenspaces $F_1^{(i_1)}$ and $F_2^{(i_2)}+F_3^{(i_3)}$ corresponding to eigenvalues $\frac{i_2+i_3}{n}$ and $-\frac{i_1}{n}$ respectively.
\end{definition}

%\begin{remark}
%The eruption endomorphisms are motivated by results in the companion paper \cite{SunWienhardZhang}: Let $\mathbf x=(x_1,x_2,x_3)$ be a triple of pairwise distinct points in $S^1$, and let ${\bf i}=(i_1,i_2,i_3)$ be a triple of positive integers that sum to $n$. Set $\mathbf x_+:=(x_2,x_3,x_1)$, $\mathbf x_-:=(x_3,x_1,x_2)$, $\mathbf i_+:=(i_2,i_3,i_1)$, and $\mathbf i_-:=(i_3,i_1,i_2)$. Then for any Frenet map $\xi:S^1\to\Fmc(V)$, $\xi(\mathbf x)$ is a generic triple of flags, so we can define
%\[a^{\mathbf i_m}_{\xi(\mathbf x_m)}(t):=\exp\left(tA^{\mathbf i_m}_{\xi(\mathbf x_m)}\right)\in\PGL(V)\] 
%for all $t\in\Rbbb$. By \cite[Theorem 3.6]{SunWienhardZhang}, if $x_1<x_2<x_3<x_1$ in the clockwise order on $S^1$, then the map $\xi_t:S^1\to\Fmc(V)$ defined by
%\[\xi_t(p)=\left\{
%\begin{array}{ll}
%\xi(p)&\text{if }x_3\leq p\leq x_1;\\
%a^{\mathbf i_1}_{\xi(\mathbf x_1)}(t)\cdot \xi(p)&\text{if }x_1\leq p\leq x_2;\\
%a^{\mathbf i_3}_{\xi(\mathbf x_3)}(t)^{-1}\cdot \xi(p)&\text{if }x_2\leq p\leq x_3
%\end{array}\right.\]
%is a well-defined and Frenet. Also, if $\xi$ and $\xi'$ are projectively equivalent, then so are $\xi_t$ and $\xi_t'$ \cite[Proposition 3.13]{SunWienhardZhang}. In other words, the choice of ${\bf x}$ and ${\bf i}$ defines a flow on the space of projective classes of Frenet maps, called the \emph{$\mathbf i$-elementary eruption flow with respect to $\mathbf x$}, and $A^{\mathbf i_m}_{\xi(\mathbf x_m)}$ can be thought of as the ``derivative information" of this flow at the projective class $[\xi]$.
%\end{remark}

For any triple $\mathbf i=(i_1,i_2,i_3)$ of positive integers that sum to $n$, and any generic triple of flags $\mathbf F=(F_1,F_2,F_3)$ in $\Fmc(V)$, observe that 
\[A^{\mathbf i}_{\mathbf F}+A^{\mathbf i_+}_{\mathbf F_+}+A^{\mathbf i_-}_{\mathbf F_-}=0,\]
where $\mathbf i_+:=(i_2,i_3,i_1)$, $\mathbf i_-:=(i_3,i_1,i_2)$, $\mathbf F_+:=(F_2,F_3,F_1)$, and $\mathbf F_-:=(F_3,F_1,F_2)$. Also, if 
\[\{f_{m,1},\dots,f_{m,n}\}\] 
is a basis of $V$ such that $F_m^{(j)}=\Span_\Rbbb\{f_{m,1},\dots,f_{m,j}\}$ for all $m=1,2,3$ and $j=1,\dots, n$, then the genericity of $\mathbf F$ implies that
\begin{align}\label{eqn: basis}
\{f_{1,1},\dots,f_{1,i_1},f_{2,1},\dots,f_{2,i_2},f_{3,1},\dots,f_{3,i_3}\}
\end{align} 
is a basis of $V$. In this basis, we can write $A^{\mathbf i}_{\mathbf F}$ as the matrix
\[
A^{\mathbf i}_{\mathbf F}=\left[\begin{array}{cc}\frac{i_2+i_3}{n}\cdot\id_{i_1}&0\\0&-\frac{i_1}{n}\cdot\id_{i_2+i_3}\end{array}\right],
\]
where $\id_j$ is the $j\times j$ identity matrix. 

The next proposition states some important properties of $A^{\mathbf i}_{\mathbf F}$. 

\begin{prop}\label{prop:linearly independent triple}
Let $\mathbf F:=(F_1,F_2,F_3)$ be a generic triple of flags in $\Fmc(V)$. 
\begin{enumerate} 
\item For all triples ${\bf i}=(i_1,i_2,i_3)$ of positive integers that sum to $n$, $A^{\mathbf i}_{\mathbf F}\cdot F_1^{(j)}\subset F_1^{(j)}$ for all $j=1,\dots,n-1$, but $A^{\mathbf i}_{\mathbf F}\cdot F_2^{(i_2+1)}\not\subset F_2^{(i_2+1)}$ and $A^{\mathbf i}_{\mathbf F}\cdot F_3^{(i_3+1)}\not\subset F_3^{(i_3+1)}$.
%\item $\left\{A^{\mathbf i}_{\mathbf F}:i_1+i_2+i_3=n\right\}$ is linearly independent in $\smf\lmf(V)$.
\item Let $G_\mathbf F\subset\PGL(V)$ be the Lie subgroup defined by
\[G_\mathbf F:=\{g\in\PGL(V):g\cdot F_1=F_1,\,\,g\cdot F_2^{(1)}=F_2^{(1)},\,\,\text{ and }\,\,g\cdot F_3^{(1)}=F_3^{(1)}\}.\]
Then the set $\left\{A^{\mathbf i}_{\mathbf F}:i_1+i_2+i_3=n\right\}$ is a basis for the Lie algebra $\mathfrak g_\mathbf F$ of $G_\mathbf F$.
\end{enumerate}
\end{prop}

\begin{proof}
We first compute $A_{\mathbf F}^{\mathbf i}\cdot v$ for any vector $v\in V$. Observe that we may write $v$ uniquely as
\[ v=\sum_{j=1}^{i_1} a_{1,j}f_{1,j}+\sum_{j=1}^{i_2}a_{2,j}f_{2,j}+\sum_{j=1}^{i_3}a_{3,j}f_{3,j}\]
for some constants $a_{m,j}\in\Rbbb$, where $\{f_{1,1},\dots,f_{1,i_1},f_{2,1},\dots,f_{2,i_2},f_{3,1},\dots,f_{3,i_3}\}$ is the basis given by \eqref{eqn: basis}. Then
\begin{eqnarray}\label{eqn:basis move}
A_{\mathbf F}^{\mathbf i}\cdot v&=&\sum_{j=1}^{i_1} \frac{i_2+i_3}{n}a_{1,j}f_{1,j}-\sum_{j=1}^{i_2}\frac{i_1}{n}a_{2,j}f_{2,j}-\sum_{j=1}^{i_3}\frac{i_1}{n}a_{3,j}f_{3,j}\\
&=&\sum_{j=1}^{i_1} a_{1,j}f_{1,j}-\frac{i_1}{n}v.\nonumber
\end{eqnarray}
%In particular, $A_{\mathbf F}^{\mathbf i}\cdot v \in F_1^{(i_1)}+\Span_\Rbbb\{v\}$. 

First, we prove (1). It follows immediately from \eqref{eqn:basis move} implies that $A^{\bf i}_{\bf F}\cdot F_1^{(j)}\subset F_1^{(j)}$. On the other hand, if $v\in F_2^{(i_2+1)}\setminus F_2^{(i_2)}$, then the assumption that ${\bf F}$ is generic implies that $a_{1,i_1}\neq 0$, and that $F_1^{(i_1)}$ is transverse to $F_2^{(i_2+1)}$. Thus, by \eqref{eqn:basis move} that $A_{\mathbf F}^{\mathbf i}\cdot v\notin F_2^{(i_2+1)}$, which implies that $A^{\mathbf i}_{\mathbf F}\cdot F_2^{(i_2+1)}\not\subset F_2^{(i_2+1)}$. Similarly, $A^{\mathbf i}_{\mathbf F}\cdot F_3^{(i_3+1)}\not\subset F_3^{(i_3+1)}$.

To prove (2), first observe that $\Span_\Rbbb\left\{A^{\mathbf i}_{\mathbf F}:i_1+i_2+i_3=n\right\}\subset\mathfrak g_{\mathbf F}$:
Indeed, it follows from the definition of $A^\mathbf i_\mathbf F$ that $\exp(tA^\mathbf i_\mathbf F)$ fixes $F_2^{(1)}$ and $F_3^{(1)}$ for all $t\in\Rbbb$, and (1) implies that $\exp(tA^\mathbf i_\mathbf F)$ fixes $F_1$ for all $t\in\Rbbb$. It is also a straightforward calculation using the genericity of $\mathbf F$ that the dimension of $\mathfrak g_{\mathbf F}$ is also $\frac{(n-1)(n-2)}{2}$. Thus, it suffices to prove that the dimension of $\Span_\Rbbb\left\{A^{\mathbf i}_{\mathbf F}:i_1+i_2+i_3=n\right\}$ is $\frac{(n-1)(n-2)}{2}$, i.e. $\left\{A^{\mathbf i}_{\mathbf F}:i_1+i_2+i_3=n\right\}$ is a linearly independent collection of endomorphisms. 

Suppose for contradiction that there is a non-empty collection $\Bmc'$ of triples of positive integers that sum to $n$, such that
\begin{equation}\label{eqn:triple}
\sum_{\mathbf i\in\Bmc'}\alpha_{\mathbf i}A^{\mathbf i}_{\mathbf F}=0
\end{equation}
and $\alpha_{\mathbf i}\in\Rbbb$ are non-zero for all $\mathbf i=(i_1,i_2,i_3)\in\Bmc'$. Let $k:=\min\{i_2:\mathbf i\in\Bmc'\}$, let 
\[\Bmc'':=\{\mathbf i\in\Bmc':i_2=k\},\] 
and let $\mathbf j:=(j_1,j_2,j_3)\in\Bmc''$ be the triple whose first coordinate $j_1$ is the largest among all triples $\mathbf i\in\Bmc''$. By the definition of $A^\mathbf i_\mathbf F$, 
\[A^\mathbf i_\mathbf F\cdot f_{2,k+1}=-\frac{i_1}{n}f_{2,k+1}\in\Span_\Rbbb\{f_{2,k+1}\}\] 
for all $\mathbf i\in\Bmc'\setminus\Bmc''$. Also, (\ref{eqn:basis move}) implies that 
\[A^\mathbf i_\mathbf F\cdot f_{2,k+1}\in F_1^{(i_1)}+\Span_\Rbbb\{f_{2,k+1}\}\subset F_1^{(j_1-1)}+\Span_\Rbbb\{f_{2,k+1}\}\]
for all $\mathbf i\in\Bmc''\setminus\{\mathbf j\}$. As a consequence, we see that
\[\sum_{\mathbf i\in\Bmc'\setminus\{\mathbf j\}}\alpha_{\mathbf i}A^{\mathbf i}_{\mathbf F}\cdot f_{2,k+1}\in F_1^{(j_1-1)}+\Span_\Rbbb\{f_{2,k+1}\}.\]

On the other hand, if we write
\[f_{2,k+1}=f_{2,j_2+1}=\sum_{j=1}^{j_1} a_{1,j}f_{1,j}+\sum_{j=1}^{j_2}a_{2,j}f_{2,j}+\sum_{j=1}^{j_3}a_{3,j}f_{3,j}\]
for some $a_{m,j}\in\Rbbb$, then the genericity of $\mathbf F$ implies that $a_{1,j_1}\neq 0$. It then follows from (\ref{eqn:basis move}) that $A^\mathbf j_\mathbf F\cdot f_{2,k+1}$ lies in $F_1^{(j_1)}+\Span_\Rbbb\{f_{2,k+1}\}$, but not in $F_1^{(j_1-1)}+\Span_\Rbbb\{f_{2,k+1}\}$. Thus, 
\[\sum_{\mathbf i\in\Bmc'}\alpha_{\mathbf i}A^{\mathbf i}_{\mathbf F}\cdot f_{2,k+1}=\sum_{\mathbf i\in\Bmc'\setminus\{\mathbf j\}}\alpha_{\mathbf i}A^{\mathbf i}_{\mathbf F}\cdot f_{2,k+1}+A^{\mathbf j}_{\mathbf F}\cdot f_{2,k+1}\neq 0,\]
which contradicts (\ref{eqn:triple}).
\end{proof}

%Let $\rho:\Gamma\to\PGL(V)$ be a Hitchin representation, let $\Tmc$ be an ideal triangulation on $S$, let $\Jmc$ be a compatible bridge system, and let $S$ be $S$ equipped with a hyperbolic metric. 
We now relate eruption endomorphisms to $\widetilde{\nu}$.

\begin{prop}\label{prop:eruption justification}
Let ${\bf b}$ be a non-edge barrier in $\widetilde{\Dmc}$, let ${\bf x}=(x_1,x_2,x_3)$ be the triple associated to ${\bf b}$, and let $h:[0,1]\to\widetilde{S}$ be a $1$-simplex that passes from the left to the right of ${\bf b}$. Then 
for each triple ${\bf i}=(i_1,i_2,i_3)$ of positive integers that sum to $n$, there is a unique $\alpha^{{\bf i}}_{{\bf b}}=\alpha^{{\bf i}}_{{\bf b}}(\nu)\in\Rbbb$ such that
\[\widetilde{\nu}(h)=\sum_{i_1+i_2+i_3=n}\alpha^{\mathbf{i}}_{\mathbf{b}}\cdot A^{\mathbf{i}}_{\xi_\rho(\mathbf{x})}.\]
Furthermore, 
\begin{enumerate}
\item if ${\bf i}_+:=(i_2,i_3,i_1)$, ${\bf i}_-:=(i_3,i_1,i_2)$, and ${\bf b}_+$ (resp. ${\bf b}_-$) is the non-edge barrier in $\widetilde{\Dmc}$ with the same source as ${\bf b}$ but whose terminal endpoint is $x_2$ (resp. $x_3$), see Figure \ref{fig:eruptionbases}, then
\[\alpha^{{\bf i}}_{{\bf b}}=\alpha^{{\bf i}_+}_{{\bf b}_+}=\alpha^{{\bf i}_-}_{{\bf b}_-}.\]
\item if $\gamma\in\Gamma$, then
\[\alpha^{{\bf i}}_{{\bf b}}=\alpha^{{\bf i}}_{\gamma\cdot{\bf b}}.\]
\end{enumerate}
\end{prop}

\begin{proof}
Let $\Hmc_{\rho,{\bf x}}$ be the real-analytic submanifold of $\widetilde{\Hit}_V(S)$ defined by \eqref{eqn: submanifold}, let ${\bf x}_-:=(x_3,x_1,x_2)$ and ${\bf x}_+:=(x_2,x_3,x_1)$, and let $g_{{\bf x},{\bf x}_-}:\Hmc_{\rho,{\bf x}}\to\PGL(V)$ be the real-analytic map defined by \eqref{eqn: g def}.
%\begin{align*}
%g_{{\bf x},{\bf x}_3}:\Hmc_{\rho,{\bf x}}\to\PGL(V)
%\end{align*}
Observe from the definitions that $g_{{\bf x},{\bf x}_-}(\sigma)\in G_{\xi_\rho(\bf x)}$ for all $\sigma\in\Hmc_{\rho,{\bf x}}$ and $g_{{\bf x},{\bf x}_-}(\rho)=\id$. By \eqref{eqn: tangent equation 1'} and Lemma \ref{lem:endpoints}(1), $\widetilde{\nu}(h)=-\widetilde{\nu}(\bar{h})=\frac{d}{dt}|_{t=0}g_{{\bf x},{\bf x}_-}(\rho_t)$ for some smooth path $t\mapsto \rho_t$ in $\Hmc_{\rho,{\bf x}}$ with $\rho_0=\rho$. Proposition \ref{prop:linearly independent triple}(2) then implies the first claim of the proposition. 

To prove (1), choose a basis $\{f_1,\dots,f_n\}$ such that $f_j\in\xi_\rho^{(j)}(x_2)\cap\xi_\rho^{(n+1-j)}(x_3)$ for all $j=1,\dots,n$. For any $\mathbf i\in\Bmc$, Proposition \ref{prop:linearly independent triple}(1) implies that $A^{\mathbf i_+}_{\xi_\rho(\mathbf x_+)}$ and $A^{\mathbf i_-}_{\xi_\rho(\mathbf x_-)}$ are given by an upper triangular and lower triangular matrix in this basis respectively. Furthermore, neither $A^{\mathbf i_+}_{\xi_\rho(\mathbf x_+)}$ nor $A^{\mathbf i_-}_{\xi_\rho(\mathbf x_-)}$ is given by a diagonal matrix in this basis. This, together with Proposition~\ref{prop:linearly independent triple}(2), imply that 
\[\left\{A^{\mathbf i_+}_{\xi_\rho(\mathbf x_+)}:i_1+i_2+i_3=n\right\}\cup\left\{A^{\mathbf i_-}_{\xi_\rho(\mathbf x_-)}:i_1+i_2+i_3=n\right\}\]
is a linearly independent collection of endomorphisms in $\smf\lmf(V)$.

Now, suppose for the purpose of contradiction that $\alpha^{\mathbf i}_{\mathbf b}\neq \alpha^{\mathbf i_+}_{\mathbf b_+}$ or $\alpha^{\mathbf i}_{\mathbf b}\neq \alpha^{\mathbf i_-}_{\mathbf b_-}$ for some $\mathbf i\in\Bmc$. Let $h,h_+,h_-:[0,1]\to\widetilde{S}$ be $1$-simplices that pass from the left to the right of ${\bf b}$, ${\bf b}_+$, and ${\bf b}_-$ respectively, $h(1)=h_+(0)$, $h_+(1)=h_-(0)$, and $h_-(1)=h(0)$, see Figure \ref{fig:eruptionbases}. Then Proposition~\ref{prop:eruption justification} implies
\begin{eqnarray*}
0&=&\widetilde{\nu}(h)+\widetilde{\nu}(h_+)+\widetilde{\nu}(h_-)\\
&=&\sum_{\mathbf i\in\Bmc}\left(\alpha^{\mathbf i}_{\mathbf b}\cdot A^{\mathbf i}_{\xi_\rho(\mathbf x)}+\alpha^{\mathbf i_+}_{\mathbf b_+}\cdot A^{\mathbf i_+}_{\xi_\rho(\mathbf x_+)}+\alpha^{\mathbf i_-}_{\mathbf b_-}\cdot A^{\mathbf i_-}_{\xi_\rho(\mathbf x_-)}\right)\\
&=&\sum_{\mathbf i\in\Bmc}\left((\alpha^{\mathbf i_+}_{\mathbf b_+}-\alpha^{\mathbf i}_{\mathbf b})\cdot A^{\mathbf i_+}_{\xi_\rho(\mathbf x_+)}+(\alpha^{\mathbf i_-}_{\mathbf b_-}-\alpha^{\mathbf i}_{\mathbf b})\cdot A^{\mathbf i_-}_{\xi_\rho(\mathbf x_-)}\right)\\
\end{eqnarray*}
where the third equality is the observation that $A^{\mathbf i}_{\xi_\rho(\mathbf x)}+A^{\mathbf i_+}_{\xi_\rho(\mathbf x_+)}+A^{\mathbf i_-}_{\xi_\rho(\mathbf x_-)}=0$. This is impossible because
\[\left\{A^{\mathbf i_+}_{\xi_\rho(\mathbf x_+)}:i_1+i_2+i_3=n\right\}\cup\left\{A^{\mathbf i_-}_{\xi_\rho(\mathbf x_-)}:i_1+i_2+i_3=n\right\}\]
is a linearly independent collection.

(2) is an immediate consequence of the $\Ad\circ\rho$-equivariance of $\widetilde{\nu}$, and the observation that 
\[\Ad\circ\rho(\gamma)\cdot A^{\mathbf{i}}_{\xi_\rho(\mathbf{x})}=A^{\mathbf{i}}_{\xi_\rho(\gamma\cdot \mathbf{x})}\]
for all $\gamma\in\Gamma$.
\end{proof}

By Proposition \ref{prop:eruption justification}(2), for all triples of positive integers ${\bf i}$ that sum to $n$ and all non-edge barriers $\widehat{\bf b}$ in $\Dmc$, we may define 
\begin{equation}\label{eqn: coefficient 1}
\alpha^{\bf i}_{\widehat{\bf b}}(\nu):=\alpha^{\bf i}_{\bf b}(\nu)
\end{equation}
where ${\bf b}\in \widetilde{\Dmc}$ is some (equiv. any) lift of $\widehat{\bf b}$ to $\widetilde{S}$.

%%%%%%%%%%%%%%%%%%%%%%%%%%%%%%%%%%%%%%%%%%%%%%%%%%
\subsection{Shearing endomorphisms}\label{sec:shearing}
%%%%%%%%%%%%%%%%%%%%%%%%%%%%%%%%%%%%%%%%%%%%%%%%%%
Given any oriented edge ${\bf e}$ in $\widetilde{\Tmc}^o$, let $r_1$ and $r_2$ be the backward and forward endpoints of ${\bf e}$. We say that $\mathbf r:=(r_1,r_2)$ is the \emph{pair associated to ${\bf e}$}. If ${\bf e}$ is isolated, let $\mathfrak{T}_1$ (resp. $\mathfrak{T}_2$) be the fragmented ideal triangle of $\widetilde{\Dmc}$ that has ${\bf e}$ as an edge and lies to the left (resp. right) of ${\bf e}$. We say that a $1$-simplex $h:[0,1]\to\widetilde{S}$ \emph{passes from the left to the right of ${\bf e}$} if $h(0)$ and $h(1)$ lie in the interiors of $\mathfrak{T}_1$ and $\mathfrak{T}_2$ respectively, see Figure \ref{fig:shearingbases}. On the other hand, if ${\bf e}$ is non-isolated and $J$ is a bridge across ${\bf e}$, we say that a $1$-simplex $h:[0,1]\to\widetilde{S}$ \emph{passes from the left to the right of ${\bf e}$ along $J$} if $h(0)$ and $h(1)$ are the endpoints of $J$ that lie to the left and right of ${\bf e}$ respectively. 

\begin{figure}[ht]
\centering
\includegraphics[scale=0.6]{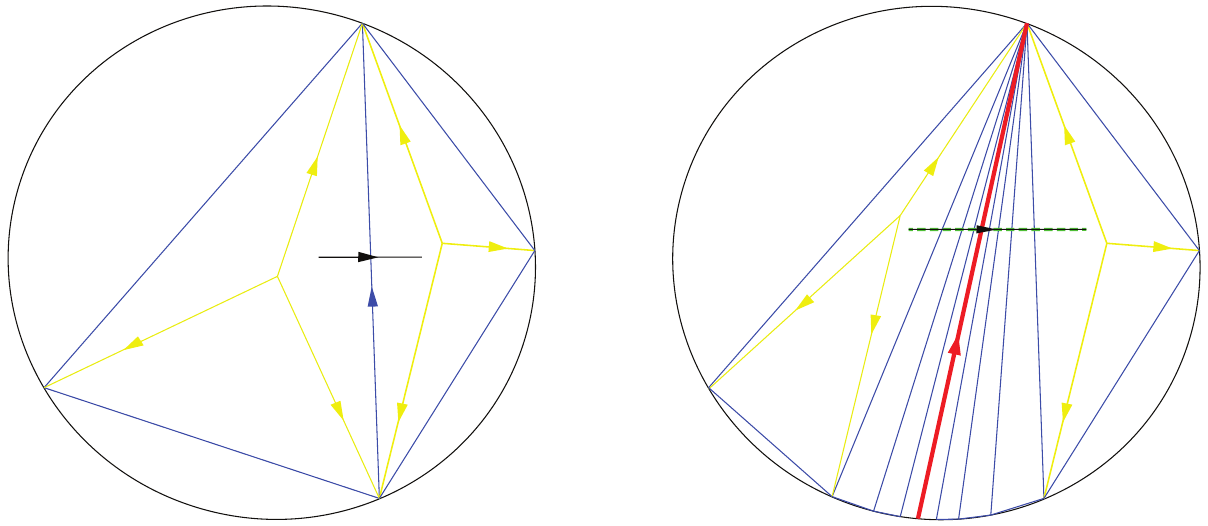}
\put (-247, 65){\tiny ${\bf e}$}
\put (-73, 45){\tiny ${\bf e}$}
\put (-255, 79){\tiny $h$}
\put (-45, 87){\tiny $h$}
\put (-45, 79){\tiny $J$}
\put (-241, 3){\tiny $r_1$}
\put (-245, 147){\tiny $r_2$}
\put (-255, 95){\tiny $\mathfrak T_1$}
\put (-240, 95){\tiny $\mathfrak T_2$}
\caption{$h$ passes from the left to the right of ${\bf e}$ when ${\bf e}$ is isolated (on the left) and when ${\bf e}$ is not isolated (on the right).}\label{fig:shearingbases}
\end{figure}

Again, let $\nu$ be a $(\rho,\Tmc,\Jmc)$-tangent cocycle, and let $\widetilde{\nu}:C_1(\widetilde S,\Zbbb)\to\mathfrak{sl}(V))$ be its $\Ad\circ\rho$-equivariant lift.  We will now describe the possible values of $\widetilde{\nu}(h)$ when $h$ passes from the left to the right of an isolated edge ${\bf e}$ in $\widetilde{\Tmc}^o$, and the possible values of the ``diagonal part" of $\widetilde{\nu}(h)$ with respect to $\xi_\rho({\bf r})$ when $h$ passes from the left to the right of an isolated edge ${\bf e}$ in $\widetilde{\Tmc}^o$ along  a bridge $J$ across ${\bf e}$. For that purpose, we define the shearing endomorphisms.

\begin{definition}\label{def:shearing endomorphism}
Let $\mathbf k:=(k_1,k_2)$ be a pair of positive integers that sum to $n$ and let $\mathbf E:=(E_1,E_2)$ be a generic pair of flags in $\Fmc(V)$. 
The $\mathbf k$-\emph{shearing endomorphism} with respect to $\mathbf E$ is the endomorphism $D^{\mathbf k}_{\mathbf E}\in\smf\lmf(V)$ with eigenspaces $E_1^{(k_1)}$ and $E_2^{(k_2)}$ corresponding to eigenvalues $\frac{k_2}{2n}$ and $-\frac{k_1}{2n}$ respectively. 
%that has $B^{i,n-i}_{F,G}$ as an eigenbasis, where $\frac{n-i}{n}$ is the eigenvalue of $f_1,\dots,f_i$ and $-\frac{i}{n}$ is the eigenvalue for $g_1,\dots,g_j$. 
\end{definition}

%\begin{remark}
%Again, the shearing endomorphisms are motivated by results in the companion paper \cite{SunWienhardZhang}: Let $\mathbf r=(r_1,r_2)$ be a distinct pair of points in $S^1$, and let ${\bf k}=(k_1,k_2)$ be a pair of positive integers that sum to $n$. For any $m=1,2$, let $\mathbf r_m:=(r_m,r_{m+1})$ and $\mathbf k_m:=(k_m,k_{m+1})$, where arithmetic in the subscripts are done modulo $2$. Then $\xi(\mathbf r_m)$ is a generic pair of flags for any Frenet map $\xi:S^1\to\Fmc(V)$, so we can define
%\[d^{\mathbf k_m}_{\xi(\mathbf r_m)}(t):=\exp\left(tD^{\mathbf k_m}_{\xi(\mathbf r_m)}\right)\in\PGL(V)\] 
%for all $t\in\Rbbb$. By \cite[Theorem 3.12]{SunWienhardZhang}, the map $\xi_t:S^1\to\Fmc(V)$ defined by
%\[\xi_t(p)=\left\{
%\begin{array}{ll}
%d^{\mathbf k_{2}}_{\xi(\mathbf r_{2})}(t)\cdot \xi(p)&\text{if }r_2\leq p\leq r_1;\\
%d^{\mathbf k_1}_{\xi(\mathbf r_1)}(t)\cdot \xi(p)&\text{if }r_1\leq p\leq r_{2},
%\end{array}\right. \]
%is well-defined and Frenet. Also, by \cite[Proposition 3.13]{SunWienhardZhang}, if $\xi$ and $\xi'$ are projectively equivalent, then so are $\xi_t$ and $\xi_t'$. In other words, the choice of ${\bf r}$ and ${\bf i}$ defines a flow on the space of projective classes of Frenet maps, called the \emph{$\mathbf k$-elementary shearing flow with respect to $\mathbf r$}, and $D^{\mathbf k_m}_{\xi(\mathbf r_m)}$ can be thought of as the ``derivative information" of this flow at the projective class $[\xi]$.
%\end{remark}

For any pair ${\bf k}=(k_1,k_2)$ of positive integers that sum to $n$ and any generic pair of flags $(E_1,E_2)$ in $\Fmc(V)$, observe that 
\[D^{\mathbf k}_{\mathbf E}=-D^{\bar{\mathbf k}}_{\bar{\mathbf E}},\]
where $\bar{\mathbf k}:=(k_2,k_1)$ and $\bar{\mathbf E}:=(E_2,E_1)$. Also, if 
\[\{f_1,\dots,f_n\}\] 
is a basis for $V$ such that $f_j\in E_1^{(j)}\cap E_2^{(n+1-j)}$ for all $j=1,\dots, n$, then in this basis, the endomorphism $D^{\mathbf k}_{\mathbf E}$ is written as the matrix
\[D^{\mathbf k}_{\mathbf E}=\left[\begin{array}{cc}\frac{k_2}{2n}\cdot\id_{k_1}&0\\0&-\frac{k_1}{2n}\cdot\id_{k_2}\end{array}\right].\]
Furthermore, it is clear that the following proposition holds.

\begin{prop}\label{prop:linearly independent pair}
Let $\mathbf E:=(E_1,E_2)$ be a generic pair of flags in $\Fmc(V)$. Then
\begin{enumerate} 
\item For all pairs $\mathbf k:=(k_1,k_2)$ of positive integers that sum to $n$, $D^{\mathbf k}_{\mathbf E}$ fixes both $E_1$ and $E_2$.
\item Let $G_\mathbf E\subset\PGL(V)$ be the subgroup defined by
\[G_\mathbf E:=\{g\in\PGL(V):g\cdot E_1=E_1\text{ and }g\cdot E_2=E_2\}.\]
Then $\left\{D^{\mathbf k}_{\mathbf E}:k_1+k_2=n\right\}$ is a basis of the Lie algebra $\mathfrak{g}_E$ of $G_E$.
\end{enumerate}
\end{prop}

The next pair of propositions relate shearing endomorphisms to $\widetilde{\nu}$. 

\begin{prop}\label{prop:shearing justification}
Let ${\bf e}$ be an oriented, isolated edge in $\widetilde{\Tmc}^o$, let $\mathbf r:=(r_1,r_2)$ be the pair associated to ${\bf e}$, and let $h$ be a $1$-simplex that passes from the left to the right of ${\bf e}$. Then for each pair ${\bf k}=(k_1,k_2)$ of positive integers that sum to $n$, there is a unique $\alpha^{{\bf k}}_{{\bf e}}=\alpha^{{\bf k}}_{{\bf e}}(\nu)\in\Rbbb$ such that
\[\widetilde{\nu}(h)=-\sum_{k_1+k_2=n}2\alpha^{\mathbf{k}}_{\mathbf{e}}\cdot D^{\mathbf{k}}_{\xi_\rho(\mathbf{r})}.\]
Furthermore, 
\begin{enumerate}
\item if $\bar{\bf k}:=(k_2,k_1)$ and $\bar{\bf e}$ is the edge in $\widetilde{\Tmc}^o$ whose backward and forward endpoints are $r_2$ and $r_1$ respectively, then
\[\alpha^{\bf k}_{\bf e}=\alpha^{\bar{\bf k}}_{\bar{\bf e}}.\]
\item if $\gamma\in\Gamma$, then $\alpha^{\bf k}_{\bf e}=\alpha^{\bf k}_{\gamma\cdot \bf e}$.
\end{enumerate}
\end{prop}

\begin{proof}
For both $m=1,2$, let $q_m$ be the vertex of the ideal triangle containing $\mathfrak{T}_m$ that is neither $r_1$ nor $r_2$. Let ${\bf x}:=(r_2,r_1,q_1)$, let ${\bf y}:=(r_1,r_2,q_2)$, let $\Hmc_{\rho,{\bf x}}$ be the real-analytic submanifold of $\widetilde{\Hit}_V(S)$ defined by \eqref{eqn: submanifold}, and let $g_{{\bf x},{\bf y}}:\Hmc_{\rho,{\bf x}}\to\PGL(V)$ be the real-analytic map defined by \eqref{eqn: g def}.
%\begin{align*}
%g_{{\bf x},{\bf x}_3}:\Hmc_{\rho,{\bf x}}\to\PGL(V)
%\end{align*}
Observe from the definitions that $g_{{\bf x},{\bf y}}(\sigma)\in G_{\bf E}$ for all $\sigma\in\Hmc_{\rho,{\bf x}}$ and $g_{{\bf x},{\bf y}}(\rho)=\id$. By \eqref{eqn: tangent equation 1'}, $\widetilde{\nu}(h)=\frac{d}{dt}|_{t=0}g_{{\bf x},{\bf y}}(\rho_t)$ for some smooth path $t\mapsto \rho_t$ in $\Hmc_{\rho,{\bf x}}$ with $\rho_0=\rho$, so the first claim follows from Proposition \ref{prop:linearly independent pair}(2).

(1) follows from the observations that $\widetilde{\nu}(h)=-\widetilde{\nu}(\bar{h})$ and $D^{\mathbf k}_{\xi_\rho(\mathbf r)}=-D^{\bar{\mathbf k}}_{\xi_\rho(\bar{\mathbf r})}$ for all ${\bf k}$, where $\bar{\bf r}:=(r_2,r_1)$, while (2) is an immediate consequence of the $\Ad\circ\rho$-equivariance of $\widetilde{\nu}$, and the observation that 
\[\Ad\circ\rho(\gamma)\cdot D^{\mathbf{k}}_{\xi_\rho(\mathbf{r})}=D^{\mathbf{k}}_{\xi_\rho(\gamma\cdot \mathbf{r})}\]
for all $\gamma\in\Gamma$.
\end{proof}

To state the next proposition, we need to define formally specify the ``diagonal part" of endomorphisms, we observe the following. Let ${\bf r}:=(r_1,r_2)$ be the pair associated to an oriented, non-isolated edge in $\widetilde{\Tmc}^o$. If $M\in\mathfrak{sl}(V)$ is an endomorphism for which there is some $m=1,2$ such that
\[M\cdot \xi_\rho^{(i)}(r_m)\subset \xi_\rho^{(i)}(r_m)\] 
for all $i\in\{1,\dots,n-1\}$, then we may decompose 
\begin{equation}\label{eqn: decompose}
M=M_{{\bf r},{\rm diag}}+M_{{\bf r},{\rm nil}},
\end{equation}
where $M_{{\bf r},{\rm diag}}\cdot \xi_\rho^{(i)}(r_1)\subset  \xi_\rho^{(i)}(r_1)$ and $M_{{\bf r},{\rm diag}}\cdot  \xi_\rho^{(i)}(r_2)\subset  \xi_\rho^{(i)}(r_2)$ for all $i\in\{1,\dots,n-1\}$, while $M_{{\bf r},{\rm nil}}$ is nilpotent and satisfies $M_{{\bf r},{\rm nil}}\cdot  \xi_\rho^{(i)}(r_m)\subset  \xi_\rho^{(i)}(r_m)$ for all $i\in\{1,\dots,n-1\}$. We refer to $M_{{\bf r},{\rm diag}}$ (resp. $M_{{\bf r},{\rm nil}}$) as the \emph{diagonal part (resp. nilpotent part) of $M$ with respect to ${\bf r}$}.

\begin{prop}\label{prop:shearing justification non-isolated}
Let ${\bf e}$ be an oriented, non-isolated edge in $\widetilde{\Tmc}^o$, let $\mathbf r:=(r_1,r_2)$ be the pair associated to ${\bf e}$, let $J$ be a bridge across ${\bf e}$, and let $h:[0,1]\to\widetilde{S}$ be a $1$-simplex such that $h(0)$ is the endpoint of $J$ that lies to the left of ${\bf e}$ and $h(1)$ is the intersection of ${\bf e}$ and $J$. If $\widetilde{\nu}(h)_{{\bf r},{\rm diag}}$ denotes the diagonal part of $\widetilde{\nu}(h)$ with respect to $\xi_\rho({\bf r})$, see \eqref{eqn: decompose}, then for each pair ${\bf k}=(k_1,k_2)$ of positive integers that sum to $n$, there is a unique $\alpha^{{\bf k}}_{{\bf e}}=\alpha^{{\bf k}}_{{\bf e}}(\nu)\in\Rbbb$ such that
\[\widetilde{\nu}(h)_{{\bf r},{\rm diag}}=-\sum_{k_1+k_2=n}\alpha^{\mathbf{k}}_{\mathbf{e}}\cdot D^{\mathbf{k}}_{\xi_\rho(\mathbf{r})}.\]
Furthermore, 
\begin{enumerate}
\item if $\bar{\bf k}:=(k_2,k_1)$ and $\bar{\bf e}$ is the edge in $\widetilde{\Tmc}^o$ whose backward and forward endpoints are $r_2$ and $r_1$ respectively, then
\[\alpha^{\bf k}_{\bf e}=\alpha^{\bar{\bf k}}_{\bar{\bf e}}.\]
\item if $\gamma\in\Gamma$, then $\alpha^{\bf k}_{\bf e}=\alpha^{\bf k}_{\gamma\cdot \bf e}$.
\end{enumerate}
\end{prop}

\begin{proof}
The first claim is a consequence of the definition of $\widetilde{\nu}(h)_{{\bf r},{\rm diag}}$ and Proposition \ref{prop:linearly independent pair}(2). To prove (1), let $h':[0,1]\to\widetilde{S}$ be a $1$-simplex such that $h'(0)$ is the endpoint of $J$ that lies to the left of $\bar{\bf e}$ and $h'(1)=h(1)$. By Lemma \ref{lem: d properties}(2) and the definition of $\widetilde{\nu}(h)$ and $\widetilde{\nu}(h')$, see \eqref{eqn: tangent equation 3}, $\displaystyle\widetilde{\nu}(h)_{{\bf r},{\rm diag}}=-\widetilde{\nu}(h')_{{\bf r},{\rm diag}}$. Thus
\begin{eqnarray*}
-\sum_{k_1+k_2=n}\alpha^{\mathbf{k}}_{\mathbf{e}}\cdot D^{\mathbf{k}}_{\xi_\rho(\mathbf{r})}&=&\widetilde{\nu}(h)_{{\bf r},{\rm diag}}=-\widetilde{\nu}(h')_{{\bf r},{\rm diag}}=-\widetilde{\nu}(h')_{\bar{\bf r},{\rm diag}}\\
&=&\sum_{k_1+k_2=n}\alpha^{\bar{\bf k}}_{\bar{\bf e}}\cdot D^{\bar{\bf k}}_{\xi_\rho(\bar{\bf r})}=-\sum_{k_1+k_2=n}\alpha^{\bar{\bf k}}_{\bar{\bf e}}\cdot D^{{\bf k}}_{\xi_\rho({\bf r})}
\end{eqnarray*}
(1) now follows from Proposition \ref{prop:linearly independent pair}(2). (2) is an immediate consequence of the $\Ad\circ\rho$-equivariance of $\widetilde{\nu}$, and the observation that 
\[\Ad\circ\rho(\gamma)\cdot D^{\mathbf{k}}_{\xi_\rho(\mathbf{r})}=D^{\mathbf{k}}_{\xi_\rho(\gamma\cdot \mathbf{r})}\]
for all $\gamma\in\Gamma$.
\end{proof}

By Proposition \ref{prop:shearing justification}(2) and Proposition \ref{prop:shearing justification non-isolated}(2), for all pairs of positive integers ${\bf k}$ that sum to $n$ and all oriented edges $\widehat{\bf e}$ in $\Tmc^o$, we may define 
\begin{equation}\label{eqn: coefficient 2}
\alpha^{\bf k}_{\widehat{\bf e}}(\nu):=\alpha^{\bf k}_{\bf e}(\nu).
\end{equation}
where ${\bf e}\in \widetilde{\Tmc}^o$ is some (equiv. any) lift $\widehat{\bf e}$ of to $\widetilde{S}$.

%%%%%%%%%%%%%%%%%%%%%%%%%%%%%%%%%%%%%%%%%%%%%%%%%%%%%%%%%
\subsection{The linear map $\Xi$}\label{sec: Xi}
%%%%%%%%%%%%%%%%%%%%%%%%%%%%%%%%%%%%%%%%%%%%%%%%%%%%%%%%%
We may now define the required linear map 
\begin{eqnarray}\label{eqn: Xi}
\Xi=\Xi_{\rho,\Tmc,\Jmc}:\mathscr{T}&\to&\Rbbb^{\Tmc^o\times\Amc}\times\Rbbb^{\Mmc\times\Bmc},\\
\nu&\mapsto&\left(\left(\alpha^{\bf k}_{\widehat{\bf e}}(\nu)\right)_{(\widehat{\bf e},{\bf k})\in \Tmc^o\times\Amc},\left(\alpha^{\bf i}_{\widehat{\bf b}}(\nu)\right)_{(\widehat{\bf b},{\bf i})\in \Mmc\times\Bmc}\right),\nonumber
\end{eqnarray}
where $\alpha^{\bf i}_{\widehat{\bf b}}(\nu)$ and $\alpha^{\bf k}_{\widehat{\bf e}}(\nu)$ are the real numbers defined by \eqref{eqn: coefficient 1} and \eqref{eqn: coefficient 2} respectively. To describe the image of $\Xi$, we use the following notation. 

\begin{notation}\label{not:admissible}
\begin{itemize}
\item For every oriented, non-isolated edge ${\bf e}$ in $\widetilde{\Tmc}^o$ and every bridge $J$ across ${\bf e}$, let ${\bf r}:=(r_1,r_2)$ be the pair associated to ${\bf e}$, i.e. $r_1$ and $r_2$ are the backward and forward endpoints of ${\bf e}$ respectively, and let $T$ and $T'$ be the ideal triangles of $\widetilde{\Tmc}$ that contain the endpoints of $J$ and respectively lie to the left and right of ${\bf e}$. Let $x$ (resp. $x'$) be the vertex of $T$ (resp. $T'$) that is an endpoint of ${\bf e}$, and let $\gamma=\gamma({\bf e})$ (resp. $\gamma'=\gamma'({\bf e})$) be the primitive element in $\Gamma$ that fixes both $r_1$ and $r_2$, and has $x$ (resp. $x'$) as its repelling fixed point. 

\item Let $\{T_j\}_{j\in\Zbbb}$ (resp. $\{T_j'\}_{j\in\Zbbb}$) be the bi-infinite sequence of pairwise distinct ideal triangles of $\widetilde{\Tmc}$ that lie to the left (resp. right) of ${\bf e}$ such that the following hold:
\begin{itemize}
\item $T=T_0$ (resp. $T=T_0'$),
\item $T_j$ (resp. $T_j'$) has $x$ (resp. $x'$) as a vertex for all $j$,
\item $T_j$ and $T_{j+1}$ (resp. $T_j'$ and $T_{j+1}'$)share an edge for all $j$,
\item there is some positive integer $H=H({\bf e})$ (resp. $H'=H'({\bf e})$) such that $T_{H+j}=\gamma\cdot T_j$ (resp. $T_{H'+j}'=\gamma'\cdot T_j'$) for all integers $j$. 
\end{itemize}
\item For each integer $j$, let ${\bf e}_j={\bf e}_j({\bf e},J)$ (resp. ${\bf e}_j'={\bf e}_j'({\bf e},J)$) denote the oriented common edge of $T_j$ and $T_{j+1}$ (resp. $T_j'$ and $T_{j+1}'$) whose forward endpoint is $x$ (resp. $x'$).
\item For each integer $j$, let ${\bf b}_j={\bf b}_j({\bf e},J)$ (resp. ${\bf b}_j'={\bf b}_j'({\bf e},J)$) denote the non-edge barrier in $T_j$ (resp. $T_j'$) whose forward endpoint is $x$ (resp. $x'$).
\item For each integer $j$, let $\mathfrak{T}_{j,-}$ and $\mathfrak T_{j,+}$ (resp. $\mathfrak{T}_{j,-}'$ and $\mathfrak T_{j,+}'$) denote the fragmented ideal triangles of $\widetilde{\Dmc}$ in $T_j$ (resp. $T_j'$) that respectively have ${\bf e}_j$ and ${\bf e}_{j+1}$ (resp. ${\bf e}_j'$ and ${\bf e}_{j+1}'$) as edges. 
%\item For every oriented edge $\widehat{\bf e}$ in $\Tmc^o$, let $\widehat{\bar{\bf e}}$ be the other oriented edge in $\Tmc^o$ so that $\widehat{\bar{\bf e}}$ and $\widehat{\bf e}$ project to the same edge in $\Tmc$.
\item For every non-isolated edge $\widehat{\bf e}$ of $\Tmc^o$, let ${\bf e}$ be a oriented, non-isolated edge of $\widetilde{\Tmc}$ that projects to $\widehat{\bf e}$ and let $J$ be a bridge across ${\bf e}$.  Then let $H(\widehat{\bf e}):=H({\bf e})$, let $H'(\widehat{\bf e}):=H'({\bf e})$, and for each integer $j\in\{1,\dots,H({\bf e})\}$ (resp. $j\in\{1,\dots,H'({\bf e})\}$), let $\widehat{\bf e}_j(\widehat{\bf e})$ and $\widehat{\bf b}_j(\widehat{\bf e})$ (resp. $\widehat{\bf e}'_j(\widehat{\bf e})$  and $\widehat{\bf b}'_j(\widehat{\bf e})$) be the projections in $\Dmc$ of ${\bf e}_j({\bf e},J)$ and ${\bf b}_j({\bf e},J)$ (resp. ${\bf e}_j'({\bf e},J)$ and ${\bf b}_j'({\bf e},J)$) respectively. Observe that $H(\widehat{\bf e})$, $H'(\widehat{\bf e})$, $\widehat{\bf e}_j(\widehat{\bf e})$, $\widehat{\bf b}_j(\widehat{\bf e})$, $\widehat{\bf e}'_j(\widehat{\bf e})$ and $\widehat{\bf b}'_j(\widehat{\bf e})$) do not depend on the choice of ${\bf e}$ and $J$.
\item For each integer $j$, let $l_j:[0,1]\to\widetilde{S}$ (resp. $l_j':[0,1]\to\widetilde{S}$) be a $1$-simplex such that $l_j(0)$ and $l_j(1)$ (resp. $l_j'(0)$ and $l_j'(1)$) lie in the interiors of $\mathfrak T_{j,-}$ and $\mathfrak T_{j,+}$ (resp. $\mathfrak T_{j,-}'$ and $\mathfrak T_{j,+}'$) respectively. Then let $m_j:[0,1]\to\widetilde S$ (resp. $m_j':[0,1]\to\widetilde S$) be a $1$-simplex such that $m_j(0)=l_j(1)$ and $m_j(1)=l_{j+1}(0)$ (resp. $m_j'(0)=l_j'(1)$ and $m_j'(1)=l_{j+1}'(0)$). 
%Similarly, let $l_j':[0,1]\to\widetilde{S}$ be a $1$-simplex such that $l_j'(0)$ and $l_j'(1)$ lie in the interiors of $\mathfrak T_{j,-}'$ and $\mathfrak T_{j,+}'$ respectively, and let $m_j':[0,1]\to\widetildeS$ be a $1$-simplex such that $m_j'(0)=l_j'(1)$ and $m_j'(1)=l_{j+1}'(0)$. 
\item Let $s,s',m,m':[0,1]\to\widetilde S$ be $1$-simplices such that $s(0)$ is the endpoint of $J$ that lies in $T$, $s'(0)$ is the endpoint of $J$ that lies in $T'$, $s(1)=s'(1)=e\cap J=m(0)=m'(0)$, $m(1)=\gamma\cdot m(0)$, and $m'(1)=\gamma'\cdot m'(0)$.
\end{itemize}
\end{notation}

\begin{figure}[ht]
\centering
\includegraphics[scale=0.8]{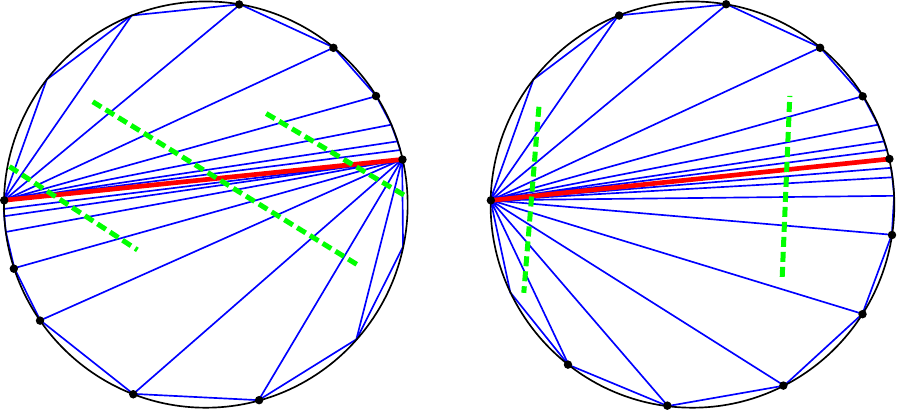}
\tiny
\put (-300, 132){$T_1'$}
\put (-270, 127){$T_2'$}
\put (-250, 118){$\gamma\cdot T_1'$}
\put (-323, 50){$\gamma\cdot T_1$}
\put (-290, 32){$T_3$}
\put (-255, 22){$T_2$}
\put (-223, 29){$T_1$}
\put (-140, 120){$T_1'$}
\put (-112, 135){$T_2'$}
\put (-82, 130){$T_3'$}
\put (-62, 120){$\gamma\cdot T_1'$}
\put (-148, 45){$T_1$}
\put (-131, 32){$T_2$}
\put (-105, 30){$T_3$}
\put (-72, 35){$T_4$}
\put (-60, 60){$\gamma\cdot T_1$}
\put (-354, 80){$r_2$}
\put (-188, 95){$r_1$}
\put (-166, 80){$r_2$}
\put (0, 96){$r_1$}
\caption{In both diagrams above, the non-isolated edge $e$ with endpoints $r_1$ and $r_2$ is drawn in thick red, and the $\langle\gamma_1\rangle=\langle\gamma_2\rangle$ orbit of the bridge $J$ is drawn in dotted green. On the left, $H=3$ and $H'=2$. On the right, $H=4$ and $H'=3$.}\label{fig:closededge}
\end{figure}

Since the images of $m$ and $m'$ lie in ${\bf e}$, $\widetilde{\nu}(m)\cdot\xi_\rho^{(i)}(r_k)\subset\xi_\rho^{(i)}(r_k)$ and $\widetilde{\nu}(m')\cdot\xi_\rho^{(i)}(r_k)\subset\xi_\rho^{(i)}(r_k)$ for both $k=1,2$ and all $i\in\{1,\dots,n-1\}$, see \eqref{eqn: tangent equation 2}.
Also, by definition, if $h=s$, $l_j$ or $m_j$ for some $j$, then 
\[\widetilde{\nu}(h)\cdot\xi^{(i)}(x)\subset\xi^{(i)}(x)\] 
for all $i\in\{1,\dots,n-1\}$. In particular, 
we may decompose 
\begin{align*}
\widetilde{\nu}(s)&=\widetilde{\nu}(s)_{{\bf r},{\rm diag}}+\widetilde{\nu}(s)_{{\bf r},{\rm nil}},\\
\widetilde{\nu}(l_j)&=\widetilde{\nu}(l_j)_{{\bf r},{\rm diag}}+\widetilde{\nu}(l_j)_{{\bf r},{\rm nil}},\\
\widetilde{\nu}(m_j)&=\widetilde{\nu}(m_j)_{{\bf r},{\rm diag}}+\widetilde{\nu}(m_j)_{{\bf r},{\rm nil}},
\end{align*} 
see \eqref{eqn: decompose}. Similarly, we may decompose 
\begin{align*}
\widetilde{\nu}(s')&=\widetilde{\nu}(s')_{{\bf r},{\rm diag}}+\widetilde{\nu}(s')_{{\bf r},{\rm nil}},\\
\widetilde{\nu}(l_j')&=\widetilde{\nu}(l_j')_{{\bf r},{\rm diag}}+\widetilde{\nu}(l_j')_{{\bf r},{\rm nil}},\\
\widetilde{\nu}(m_j')&=\widetilde{\nu}(m_j')_{{\bf r},{\rm diag}}+\widetilde{\nu}(m_j')_{{\bf r},{\rm nil}}.
\end{align*} 

\begin{prop}\label{prop: infinite box}\
\begin{enumerate}
\item $\displaystyle\widetilde{\nu}(m)=\sum_{j=1}^H\widetilde{\nu}(l_j)_{{\bf r},{\rm diag}}+\widetilde{\nu}(m_j)_{{\bf r},{\rm diag}}$. 
\item $\displaystyle\widetilde{\nu}(s)_{{\bf r},{\rm nil}}=\sum_{j=1}^\infty \widetilde{\nu}(l_j)_{{\bf r},{\rm nil}}+\widetilde{\nu}(m_j)_{{\bf r},{\rm nil}}$.
\end{enumerate}
%In particular, if we are given the pair of flags $\xi_\rho({\bf r})$, then the endomorphisms 
%\[\widetilde{\nu}(l_1),\dots,\widetilde{\nu}(l_H),\widetilde{\nu}(m_1),\dots,\widetilde{\nu}(m_H),\widetilde{\nu}(l_1'),\dots,\widetilde{\nu}(l_{H'}'),\widetilde{\nu}(m_1'),\dots,\widetilde{\nu}(m_{H'}'),\widetilde{\nu}(s)_{{\bf r},{\rm diag}}\]
%determine $\widetilde{\nu}(s)$, $\widetilde{\nu}(s')$, and $\widetilde{\nu}(m)$.
\end{prop}

\begin{proof}
Let $l:[0,1]\to\widetilde S$ be the concatenation $m_1\cdot l_1\cdot m_2\cdot l_2\cdot \ldots\cdot m_H\cdot l_H$. We may assume without loss of generality that $l(0)=s(0)$ and $l(1)=\gamma\cdot l(0)$. Then by the finite additivity of $\widetilde{\nu}$,
\begin{equation}\label{eqn: diagonal nilpotent}
\widetilde{\nu}(l)=\sum_{j=1}^H\left(\widetilde{\nu}(l_j)_{{\bf r},{\rm diag}}+\widetilde{\nu}(m_j)_{{\bf r},{\rm diag}}\right)+\sum_{j=1}^H\left(\widetilde{\nu}(l_j)_{{\bf r},{\rm nil}}+\widetilde{\nu}(m_j)_{{\bf r},{\rm nil}}\right).
\end{equation}
At the same time, by Lemma \ref{lem:tangent cocycle 1}(1),
\begin{equation}\label{eqn: diagonal nilpotent'}
\widetilde{\nu}(l)=\widetilde{\nu}(m)+\widetilde{\nu}(s)-\widetilde{\nu}(\gamma\cdot s)=\widetilde{\nu}(m)+\widetilde{\nu}(s)_{{\bf r},{\rm nil}}-\widetilde{\nu}(\gamma\cdot s)_{{\bf r},{\rm nil}},
\end{equation}
where the second equality holds because the $\Ad\circ\rho$-equivariance of $\widetilde{\nu}$ and the fact that $\gamma$ fixes $r_1$ and $r_2$ imply
\[\widetilde{\nu}(\gamma\cdot s)_{{\bf r},{\rm diag}}=\Ad\circ\rho(\gamma)\cdot \widetilde{\nu}(s)_{{\bf r},{\rm diag}}=\widetilde{\nu}(s)_{{\bf r},{\rm diag}}.\] 
By comparing the diagonal parts of \eqref{eqn: diagonal nilpotent} and \eqref{eqn: diagonal nilpotent'} we see that (1) holds.

Next, we prove (2). By Corollary \ref{cor:Labourie}, $\rho(\gamma)$ has $\xi_\rho(x)$ as a repelling fixed flag, and
\[0<K:=\max\left\{\frac{\lambda_a(\rho(\gamma))}{\lambda_b(\rho(\gamma))}:a>b\right\}<1,\] 
where $\lambda_1(\rho(\gamma))>\dots>\lambda_n(\rho(\gamma))$ are the eigenvalues of some (any) linear representative of $\rho(\gamma)$. Observe that for all positive integers $t$, 
\[\Ad\circ\rho(\gamma^t)(\widetilde\nu(l_j)_{{\bf r},{\rm nil}})=\widetilde\nu(\gamma^t\cdot l_j)_{{\bf r},{\rm nil}}=\widetilde\nu(l_{j+tH})_{{\bf r},{\rm nil}}.\]
Similarly, 
\[\Ad\circ\rho(\gamma^t)(\widetilde\nu(m_j)_{{\bf r},{\rm nil}})=\widetilde\nu(m_{j+tH})_{{\bf r},{\rm nil}}\,\,\text{ and }\,\,\Ad\circ\rho(\gamma^t)(\widetilde\nu(s)_{{\bf r},{\rm nil}})=\widetilde\nu(\gamma^t\cdot s)_{{\bf r},{\rm nil}}.\] 
Since the nilpotent endomorphisms $\widetilde\nu(l_j)_{{\bf r},{\rm nil}}$, $\widetilde\nu(m_j)_{{\bf r},{\rm nil}}$, and $\widetilde{\nu}(s)_{{\bf r},{\rm nil}}$ are all represented by upper triangular matrices if $x=r_1$, and lower triangular matrices if $x=r_2$, so it follows that
\[\|\widetilde\nu(l_{j+tH})_{{\bf r},{\rm nil}}\|\leq K^t\|\widetilde\nu(l_j)_{{\bf r},{\rm nil}}\|,\,\,\|\widetilde\nu(m_{j+tH})_{{\bf r},{\rm nil}}\|\leq K^t\|\widetilde\nu(m_{j})_{{\bf r},{\rm nil}}\|,\]
and
\[\|\widetilde\nu(\gamma^t\cdot s)_{{\bf r},{\rm nil}}-\widetilde\nu(\gamma^{t+1}\cdot s)_{{\bf r},{\rm nil}}\|\leq K^t\|\widetilde\nu(s)_{{\bf r},{\rm nil}}-\widetilde\nu(\gamma\cdot s)_{{\bf r},{\rm nil}}\|\]
for all positive integers $t$, where $\|\cdot\|$ is the Euclidean norm on the space of $n\times n$ matrices. %Similarly, there is some $C'>0$ such that for all $j$,
%\[\|N_{j+H}'\|\leq C'\|N_j'\|\,\,\text{ and }\,\,\|M_{j+H}'\|\leq C'\|M_j'\|.\]
In particular, $\sum_{j=1}^\infty \widetilde{\nu}(l_j)_{{\bf r},{\rm nil}}+\widetilde{\nu}(m_j)_{{\bf r},{\rm nil}}$ and $\sum_{t=0}^\infty\widetilde{\nu}(\gamma^t\cdot s)_{{\bf r},{\rm nil}}-\widetilde{\nu}(\gamma^{t+1}\cdot s)_{{\bf r},{\rm nil}}$ both converge absolutely. Thus, 
\begin{eqnarray*}
\sum_{j=1}^\infty \widetilde{\nu}(l_j)_{{\bf r},{\rm nil}}+\widetilde{\nu}(m_j)_{{\bf r},{\rm nil}}&=&\sum_{t=0}^\infty\Ad\circ\rho(\gamma^t)\cdot\left(\sum_{j=1}^H\widetilde{\nu}(l_j)_{{\bf r},{\rm nil}}+\widetilde{\nu}(m_j)_{{\bf r},{\rm nil}}\right)\\
&=&\sum_{t=0}^\infty\Ad\circ\rho(\gamma^t)\cdot\left(\widetilde{\nu}(s)_{{\bf r},{\rm nil}}-\widetilde{\nu}(\gamma\cdot s)_{{\bf r},{\rm nil}}\right)\\
&=&\sum_{t=0}^\infty\widetilde{\nu}(\gamma^t\cdot s)_{{\bf r},{\rm nil}}-\widetilde{\nu}(\gamma^{t+1}\cdot s)_{{\bf r},{\rm nil}}=\widetilde{\nu}(s)_{{\bf r},{\rm nil}},
\end{eqnarray*}
where the second equality comes from comparing the nilpotent parts of \eqref{eqn: diagonal nilpotent} and \eqref{eqn: diagonal nilpotent'}. This proves (2).
\end{proof}

\begin{notation}\label{not:+_}
Let $\pi_S:\Std\to S$ be the covering map. 
\begin{enumerate}
\item Given an oriented edge $\widehat{\bf e}$ in $\Tmc^o$, let ${\bf e}$ be an oriented edge in $\widetilde{\Tmc}^o$ such that $\pi_S({\bf e})=\widehat{\bf e}$, and denote $\widehat{\bar{\bf e}}:=\pi_S(\bar{\bf e})$. \item Given a non-edge barrier $\widehat{\bf b}$ of $\Mmc$, let ${\bf b}$ be a non-edge barrier in $\widetilde{\Mmc}$ such that $\pi_S({\bf b})=\widehat{\bf b}$, and let $\widehat{\bf b}_+:=\pi_S({\bf b}_+)$ and $\widehat{\bf b}_-:=\pi_S({\bf b}_-)$. 
\end{enumerate}
\end{notation}

Recall that for every ${\bf k}=(k_1,k_2)\in\Amc$, we denote $\bar{\bf k}:=(k_2,k_1)$, and for every ${\bf i}=(i_1,i_2,i_3)\in\Bmc$, we denote ${\bf i}_+:=(i_2,i_3,i_1)$ and ${\bf i}_-:=(i_3,i_1,i_2)$. Also, let $\Bmc_k:=\{(i_1,i_2,i_3)\in\Bmc:i_1=k\}$ for every $k\in\{1,\dots,n-1\}$. Then let
\[\mathscr{W}=\mathscr{W}_{\Tmc}\subset\Rbbb^{\Tmc^o\times\Amc}\times\Rbbb^{\Mmc\times\Bmc}\]
denote the subspace consisting of all the vectors 
\[\alpha=\left(\left(\alpha^{\bf k}_{\widehat{\bf e}}\right)_{(\widehat{\bf e},{\bf k})\in \Tmc^o\times\Amc},\left(\alpha^{\bf i}_{\widehat{\bf b}}\right)_{(\widehat{\bf b},{\bf i})\in \Mmc\times\Bmc}\right)\]
that satisfy the following linear conditions:
\begin{itemize}
\item[(I)] (Symmetry for triples) For each $\widehat{\bf b}\in \Mmc$ and each ${\bf i}=(i_1,i_2,i_3)\in \Bmc$, 
\[\alpha^{\bf i}_{\widehat{\bf b}}=\alpha^{\bf i_+}_{\widehat{\bf b}_+}=\alpha^{\bf i_-}_{\widehat{\bf b}_-},\]  
\item[(II)] (Symmetry for pairs) For each $\widehat{\bf e}\in \Tmc^o$ and each ${\bf k}=(k_1,k_2)\in \Amc$, 
\[\alpha^{\bf k}_{\widehat{\bf e}}=\alpha^{\bar{\bf k}}_{\widehat{\bar{\bf e}}}.\] 
\item[(III)] (Closed leaf equalities) For each non-isolated edge $\widehat{\bf e}\in\Tmc^o$ and each ${\bf k}=(k_1,k_2)\in\Amc$, 
\[\sum_{j=1}^{H}\left(\alpha^{\bar{\mathbf k}}_{\widehat{\mathbf e}_j}+\sum_{\mathbf i\in\Bmc_{k_1}}\alpha^{\mathbf i}_{\widehat{\mathbf b}_j}\right)=\left\{\begin{array}{ll}
\displaystyle\sum_{j=1}^{H'}\left(\alpha^{\mathbf k}_{\widehat{\mathbf e}'_j}+\sum_{\mathbf i\in\Bmc_{k_2}}\alpha^{\mathbf i}_{\widehat{\mathbf b}'_j}\right)&\text{if }x\neq x';\\
\displaystyle-\sum_{j=1}^{H'}\left(\alpha^{\bar{\mathbf k}}_{\widehat{\mathbf e}'_j}+\sum_{\mathbf i\in\Bmc_{k_1}}\alpha^{\mathbf i}_{\widehat{\mathbf b}'_j}\right)&\text{if }x= x'.
\end{array}\right.\]
where $H=H(\widehat{\bf e})$, $H'=H'(\widehat{\bf e})$, $\widehat{\bf e}_j=\widehat{\bf e}_j(\widehat{\bf e})$, $\widehat{\bf b}_j=\widehat{\bf b}_j(\widehat{\bf e})$, $\widehat{\bf e}'_j=\widehat{\bf e}'_j(\widehat{\bf e})$ and $\widehat{\bf b}'_j=\widehat{\bf b}'_j(\widehat{\bf e})$.
\end{itemize}
It is a result of Bonahon and Dreyer \cite[Section 4]{BonahonDreyer1} that 
\[\dim\mathscr{W}=(n^2-1)(2g-2)=\dim\Hit_V(S)\] 
(the equalities specified in (III) above are called the \emph{closed leaf equalities} in \cite{BonahonDreyer1}). 

\begin{prop}\label{prop: Xi injective}
$\Xi$ is a linear embedding whose image is $\mathscr{W}$.
\end{prop}

\begin{proof}
The linearity of $\Xi$ is immediate from its definition. We first prove that $\Xi$ is injective. Suppose that $\nu_1$ and $\nu_2$ are $(\rho,\Tmc,\Jmc)$-tangent cocycles such that $\Xi(\nu_1)=\Xi(\nu_2)$. For both $m=1,2$, let $\widetilde{\nu}_m:C_1(\widetilde{S},\Zbbb)\to\mathfrak{sl}(V)$ denote the $\Ad\circ\rho$-equivariant lift of $\nu_m$. Observe that every $1$-simplex in $\widetilde S$ is homotopic to the concatenation of finitely many $1$-simplices, each of which satisfies one of the following:
\begin{enumerate}
\item[(i)] its image intersects exactly one barrier in $\widetilde{\Dmc}$, which is either an isolated edge or a non-edge barrier,
\item[(ii)] there is a bridge $J$ in $\widetilde{\Jmc}$ across a non-isolated edge $e$ in $\widetilde{\Tmc}$ such that one of the endpoints of its image is $e\cap J$ and the other is an endpoint of $J$,
\item[(iii)] the endpoints of its image lie in the same non-isolated edge in $\widetilde{\Tmc}$.
\end{enumerate}
To prove that $\Xi$ is injective, it thus suffices to verify that $\widetilde{\nu}_1(h)=\widetilde{\nu}_2(h)$ for all $1$-simplices $h:[0,1]\to\widetilde{S}$ that satisfies (i), (ii), or (iii). 

First suppose that $h$ satisfies (i). Then $h$ satisfies Case 1 in Section \ref{sec:tangent cocycle}, so by definition, it suffices to only consider the situation where both endpoints of $h$ do not lie in the barrier that $h$ intersects. If the barrier in $\widetilde{\Dmc}$ that $h$ intersects is a non-edge barrier ${\bf b}$, let ${\bf x}$ be the triple associated to ${\bf b}$. Then either $h$ or $\bar{h}$ passes from the left to the right of ${\bf b}$, so by definition (see Proposition \ref{prop:eruption justification}),
\begin{align*}
\widetilde{\nu}_1(h)&=\left\{\begin{array}{ll}
\displaystyle\sum_{{\bf i}\in\Bmc}\alpha^{\bf i}_{\bf b}(\nu_1)\cdot A^{\mathbf{i}}_{\xi_\rho(\mathbf{x})}&\text{if }h\text{ passes from the left to the right of }{\bf b};\\
\displaystyle-\sum_{{\bf i}\in\Bmc}\alpha^{\bf i}_{\bf b}(\nu_1)\cdot A^{\mathbf{i}}_{\xi_\rho(\mathbf{x})}&\text{if }\bar{h}\text{ passes from the left to the right of }{\bf b}
\end{array}\right.\\
&=\left\{\begin{array}{ll}
\displaystyle\sum_{{\bf i}\in\Bmc}\alpha^{\bf i}_{\bf b}(\nu_2)\cdot A^{\mathbf{i}}_{\xi_\rho(\mathbf{x})}&\text{if }h\text{ passes from the left to the right of }{\bf b};\\
\displaystyle-\sum_{{\bf i}\in\Bmc}\alpha^{\bf i}_{\bf b}(\nu_2)\cdot A^{\mathbf{i}}_{\xi_\rho(\mathbf{x})}&\text{if }\bar{h}\text{ passes from the left to the right of }{\bf b}
\end{array}\right.\\
&=\widetilde{\nu}_2(h).
\end{align*}
On the other hand, if the barrier in $\widetilde{\Dmc}$ that $h$ intersects is an isolated edge $e$ in $\widetilde{\Tmc}$, let ${\bf e}$ be $e$ equipped with the orientation so that $h$ passes from the left to the right of ${\bf e}$. Then let ${\bf r}=(r_1,r_2)$ be the pair associated to ${\bf e}$, i.e. $r_1$ and $r_2$ are the backward and forward endpoints of ${\bf e}$ respectively. With these choices (see Proposition \ref{prop:shearing justification}),
\begin{eqnarray*}
\widetilde{\nu}_1(h)=\sum_{{\bf k}\in\Amc}2\alpha^{\bf k}_{\bf e}(\nu_1)\cdot A^{\mathbf{k}}_{\xi_\rho(\mathbf{r})}=\sum_{{\bf k}\in\Amc}2\alpha^{\bf k}_{\bf e}(\nu_2)\cdot A^{\mathbf{k}}_{\xi_\rho(\mathbf{r})}=\widetilde{\nu}_2(h).
\end{eqnarray*}

Suppose now that $h$ satisfies (ii). Choose an orientation ${\bf e}$ on $e$ so that $h(0)$ lies to the left of ${\bf e}$, and let ${\bf r}:=(r_1,r_2)$ be the pair associated to ${\bf e}$. Then by definition (see Proposition \ref{prop:shearing justification non-isolated}),
\begin{eqnarray*}
\widetilde{\nu}_1(h)_{{\bf r},{\rm diag}}=\sum_{{\bf k}\in\Amc}\alpha^{\bf k}_{\bf e}(\nu_1)\cdot A^{\mathbf{k}}_{\xi_\rho(\mathbf{r})}=\sum_{{\bf k}\in\Amc}\alpha^{\bf k}_{\bf e}(\nu_2)\cdot A^{\mathbf{k}}_{\xi_\rho(\mathbf{r})}=\widetilde{\nu}_2(h)_{{\bf r},{\rm diag}}.
\end{eqnarray*}
Then the fact that $\widetilde{\nu}_1(h)=\widetilde{\nu}_2(h)$ is an immediate consequence of Proposition \ref{prop: infinite box}(2) and the previous paragraph. Finally, if $h$ satisfies (iii), then the fact that $\widetilde{\nu}_1(h)=\widetilde{\nu}_2(h)$ is an immediate consequence of Proposition \ref{prop: infinite box}(1) and the previous paragraph. This finishes the proof that $\Xi$ is injective.

Next, we prove that the image of $\Xi$ is $\mathscr W$. Since $\dim\mathscr{T}=\dim\Hit_V(S)=\dim\mathscr{W}$, it suffices to show that the image of $\Xi$ lies in $\mathscr{W}$, i.e. if $\nu\in\mathscr{T}$, then 
\[\Xi(\nu)=\left(\left(\alpha^{\bf k}_{\widehat{\bf e}}(\nu)\right)_{{\bf k}\in\Amc,\widehat{\bf e}\in \Tmc^o},\left(\alpha^{\bf i}_{\widehat{\bf b}}(\nu)\right)_{{\bf i}\in\Bmc,\widehat{\bf b}\in \Mmc}\right)=\left(\left(\alpha^{\bf k}_{\widehat{\bf e}}\right)_{{\bf k}\in\Amc,\widehat{\bf e}\in \Tmc^o},\left(\alpha^{\bf i}_{\widehat{\bf b}}\right)_{{\bf i}\in\Bmc,\widehat{\bf b}\in \Mmc}\right)\]
satisfy the symmetry for triples, the symmetry of pairs, and the closed leaf equalities (conditions (I), (II), and (III) above). The fact that $\Xi(\nu)$ satisfies the symmetry of triples holds is immediate from Proposition~\ref{prop:eruption justification}(1), and the symmetry of pairs holds by Proposition \ref{prop:shearing justification}(1) and Proposition~\ref{prop:shearing justification non-isolated}(1). 

To see that the closed leaf equalities hold, fixed an oriented non-isolated edge ${\bf e}$ in $\widetilde{\Tmc}^o$ and a bridge $J$ in $\widetilde\Jmc$ across ${\bf e}$. Let ${\bf r}=(r_1,r_2)$ be the pair associated to ${\bf e}$, and let $H$, $x$, $\gamma$, ${\bf b}_j$, ${\bf e}_j$, $\widehat{\bf b}_j$, $\widehat{\bf e}_j$, $l_j$, $m_j$, $m$, $H'$, $x'$, $\gamma'$, ${\bf b}'_j$, ${\bf e}'_j$, $\widehat{\bf b}_j'$, $\widehat{\bf e}_j'$ $l_j'$, $m_j'$, and $m'$ be as defined in Notation \ref{not:admissible}. Then let ${\bf r}_j$ and ${\bf r}'_j$ be the pairs associated to ${\bf e}_j$ and ${\bf e}'_j$ respectively, and let ${\bf x}_j$ and ${\bf x}'_j$ be the triples associated to ${\bf b}_j$ and ${\bf b}'_j$ respectively. Observe that $l_j$ passes from the left to the right of ${\bf b}_j$ if and only if $x=r_2$, and $l_j'$ passes from the left to the right of ${\bf b}'_j$ if and only if $x'=r_1$. Similarly, $m_j$ passes from the left to the right of ${\bf e}_j$ if and only if $x=r_2$, and $m_j'$ passes from the left to the right of ${\bf e}'_j$ if and only if $x'=r_1$. Thus, by definition (see Proposition~\ref{prop:eruption justification}, \eqref{eqn: coefficient 1}, Proposition~\ref{prop:shearing justification}, and \eqref{eqn: coefficient 2}),
\[\widetilde{\nu}(l_j)=\left\{\begin{array}{ll}
\displaystyle-\sum_{{\bf i}\in \Bmc }\alpha^{\mathbf{i}}_{\widehat{\mathbf{b}}_j}\cdot A^{\mathbf{i}}_{\xi_\rho(\mathbf{x}_j)}&\text{if }x=r_1;\\
\displaystyle\sum_{{\bf i}\in \Bmc}\alpha^{\mathbf{i}}_{\widehat{\mathbf{b}}_j}\cdot A^{\mathbf{i}}_{\xi_\rho(\mathbf{x}_j)}&\text{if }x=r_2,
\end{array}\right.\]
\[\widetilde{\nu}(m_j)=\left\{\begin{array}{ll}
\displaystyle\sum_{{\bf k}\in\Amc}2\alpha^{\mathbf{k}}_{\widehat{\mathbf{e}}_j}\cdot D^{\mathbf{k}}_{\xi_\rho(\mathbf{r}_j)}&\text{if }x=r_1;\\
\displaystyle-\sum_{{\bf k}\in\Amc}2\alpha^{\mathbf{k}}_{\widehat{\mathbf{e}}_j}\cdot D^{\mathbf{k}}_{\xi_\rho(\mathbf{r}_j)}&\text{if }x=r_2,
\end{array}\right.\]
\[\widetilde{\nu}(l_j')=\left\{\begin{array}{ll}
\displaystyle\sum_{{\bf i}\in \Bmc}\alpha^{\mathbf{i}}_{\widehat{\mathbf{b}}_j'}\cdot A^{\mathbf{i}}_{\xi_\rho(\mathbf{x}_j')}&\text{if }x'=r_1;\\
\displaystyle-\sum_{{\bf i}\in \Bmc}\alpha^{\mathbf{i}}_{\widehat{\mathbf{b}}_j'}\cdot A^{\mathbf{i}}_{\xi_\rho(\mathbf{x}_j')}&\text{if }x'=r_2,
\end{array}\right.\]
\[\widetilde{\nu}(m_j')=\left\{\begin{array}{ll}
\displaystyle-\sum_{{\bf k}\in\Amc}2\alpha^{\mathbf{k}}_{\widehat{\mathbf{e}}_j'}\cdot D^{\mathbf{k}}_{\xi_\rho(\mathbf{r}_j')}&\text{if }x'=r_1;\\
\displaystyle\sum_{{\bf k}\in\Amc}2\alpha^{\mathbf{k}}_{\widehat{\mathbf{e}}_j'}\cdot D^{\mathbf{k}}_{\xi_\rho(\mathbf{r}_j')}&\text{if }x'=r_2.
\end{array}\right.\]

For any $k\in\{1,\dots,n-1\}$, let $\Bmc_{k}:=\{(i_1,i_2,i_3)\in\Bmc:i_1=k\}$. The next lemma allows us to write down the diagonal part of $\widetilde{\nu}(l_j)$, $\widetilde{\nu}(m_j)$, $\widetilde{\nu}(l_j')$, and $\widetilde{\nu}(m_j')$ with respect to ${\bf r}$.

\begin{lem}\label{lem: diagonal part}
Let ${\bf k}=(k_1,k_2)\in\Amc$. If $B^{\mathbf k}\in\smf\lmf(V)$ is one of the following:
\begin{itemize}
\item $\frac{1}{2}A^{\mathbf i}_{\xi_\rho(\mathbf x_j)}$ for some $\mathbf i\in\Bmc_{k_1}$ and $j=1,\dots,H$, or
\item $-D^{\bar{\mathbf k}}_{\xi_\rho(\mathbf r_j)}$ for some $j=1,\dots,H$, where $\bar{\bf k}:=(k_2,k_1)$ for all ${\bf k}=(k_1,k_2)\in\Amc$, 
\end{itemize}
then $B^{\bf k}\cdot\xi_\rho^{(i)}(x)\subset\xi_\rho^{(i)}(x)$, so we may decompose $B^{\mathbf k}=B^{\mathbf k}_{{\bf r},{\rm diag}}+B^{\mathbf k}_{{\bf r},{\rm nil}}$. Furthermore,
\[B^{\mathbf k}_{{\bf r},{\rm diag}}=\left\{\begin{array}{ll}
D^{\mathbf k}_{\xi_\rho(\mathbf r)}&\text{if }x=r_1,\\ 
D^{\mathbf k}_{\xi_\rho(\bar{\mathbf r})}&\text{if }x=r_2,
\end{array}\right.\]
where $\bar{\bf r}:=(r_2,r_1)$. 
\end{lem}

\begin{proof}
The first statement of the lemma follows from Proposition \ref{prop:linearly independent triple}(1) and Proposition \ref{prop:linearly independent pair}(1). To second statement follows from the observation that $B^{\mathbf k}$ has an eigenbasis whose corresponding eigenvalues are either $\frac{k_2}{2n}$ or $-\frac{k_1}{2n}$, and it scales $\xi_\rho^{(k_1)}(x)$ by $\frac{k_2}{2n}$. 
\end{proof}

Since $D^{\mathbf k}_{\xi_\rho(\mathbf r)}=-D^{\bar{\mathbf k}}_{\xi_\rho(\bar{\mathbf r})}$ for all ${\bf k}\in\Amc$, by Lemma \ref{lem: diagonal part}, 
\begin{align*}
\widetilde{\nu}(l_j)_{{\bf r},{\rm diag}}&=\left\{\begin{array}{ll}
\displaystyle-2\sum_{\mathbf k\in\Amc}\sum_{\mathbf i\in\Bmc_{k_1}}\alpha^{\mathbf i}_{\widehat{\mathbf b}_j}\cdot D^{\mathbf k}_{\xi_\rho(\mathbf r)}&\text{if }x=r_1;\\
\displaystyle 2\sum_{\mathbf k\in\Amc}\sum_{\mathbf i\in\Bmc_{\mathbf k_1}}\alpha^{\mathbf i}_{\widehat{\mathbf b}_j}\cdot D^{\mathbf k}_{\xi_\rho(\bar{\mathbf r})}&\text{if }x=r_2,
\end{array}\right.\\
&=\left\{\begin{array}{ll}
\displaystyle-2\sum_{\mathbf k\in\Amc}\sum_{\mathbf i\in\Bmc_{k_1}}\alpha^{\mathbf i}_{\widehat{\mathbf b}_j}\cdot D^{\mathbf k}_{\xi_\rho(\mathbf r)}&\text{if }x=r_1;\\
\displaystyle-2\sum_{\mathbf k\in\Amc}\sum_{\mathbf i\in\Bmc_{\mathbf k_2}}\alpha^{\mathbf i}_{\widehat{\mathbf b}_j}\cdot D^{\mathbf k}_{\xi_\rho(\mathbf r)}&\text{if }x=r_2,
\end{array}\right.
\end{align*}
and
\begin{align*}
\widetilde{\nu}(m_j)_{{\bf r},{\rm diag}}&=\left\{\begin{array}{ll}
\displaystyle -2\sum_{\mathbf k\in\Amc}\alpha^{\bar{\mathbf k}}_{\widehat{\mathbf e}_j}\cdot D^{\mathbf k}_{\xi_\rho(\mathbf r)}&\text{if }x=r_1;\\
\displaystyle 2\sum_{\mathbf k\in\Amc}\alpha^{\bar{\mathbf k}}_{\widehat{\mathbf e}_j}\cdot D^{\mathbf k}_{\xi_\rho(\bar{\mathbf r})}&\text{if }x=r_2,
\end{array}\right.\\
&=\left\{\begin{array}{ll}
\displaystyle -2\sum_{\mathbf k\in\Amc}\alpha^{\bar{\mathbf k}}_{\widehat{\mathbf e}_j}\cdot D^{\mathbf k}_{\xi_\rho(\mathbf r)}&\text{if }x=r_1;\\
\displaystyle -2\sum_{\mathbf k\in\Amc}\alpha^{\mathbf k}_{\widehat{\mathbf e}_j}\cdot D^{\mathbf k}_{\xi_\rho(\mathbf r)}&\text{if }x=r_2,
\end{array}\right.
\end{align*}
Similarly,
\[\widetilde{\nu}(l_j')_{{\bf r},{\rm diag}}=\left\{\begin{array}{ll}
\displaystyle2\sum_{\mathbf k\in\Amc}\sum_{\mathbf i\in\Bmc_{k_1}}\alpha^{\mathbf i}_{\widehat{\mathbf b}_j'}\cdot D^{\mathbf k}_{\xi_\rho(\mathbf r)}&\text{if }x'=r_1;\\
\displaystyle2\sum_{\mathbf k\in\Amc}\sum_{\mathbf i\in\Bmc_{\mathbf k_2}}\alpha^{\mathbf i}_{\widehat{\mathbf b}_j'}\cdot D^{\mathbf k}_{\xi_\rho(\mathbf r)}&\text{if }x'=r_2,
\end{array}\right.\]
and
\[ \widetilde{\nu}(m_j')_{{\bf r},{\rm diag}}=\left\{\begin{array}{ll}
\displaystyle 2\sum_{\mathbf k\in\Amc}\alpha^{\bar{\mathbf k}}_{\widehat{\mathbf e}_j'}\cdot D^{\mathbf k}_{\xi_\rho(\mathbf r)}&\text{if }x'=r_1;\\
\displaystyle 2\sum_{\mathbf k\in\Amc}\alpha^{\mathbf k}_{\widehat{\mathbf e}_j'}\cdot D^{\mathbf k}_{\xi_\rho(\mathbf r)}&\text{if }x'=r_2.
\end{array}\right.\]

By definition, 
\[\widetilde{\nu}(m)=\left\{\begin{array}{ll}
-\widetilde{\nu}(m')&\text{if }x\neq x';\\
\widetilde{\nu}(m')&\text{if }x=x'.
\end{array}\right.\]
It then follows from Proposition \ref{prop: infinite box} that
\[\sum_{j=1}^H\widetilde{\nu}(l_j)_{{\bf r},{\rm diag}}+\sum_{j=1}^H\widetilde{\nu}(m_j)_{{\bf r},{\rm diag}}=\left\{\begin{array}{ll}
\displaystyle-\sum_{j=1}^{H'}\widetilde{\nu}(l_j')_{{\bf r},{\rm diag}}-\sum_{j=1}^{H'}\widetilde{\nu}(m_j')_{{\bf r},{\rm diag}}&\text{if }x\neq x';\\
\displaystyle\sum_{j=1}^{H'}\widetilde{\nu}(l_j')_{{\bf r},{\rm diag}}+\sum_{j=1}^{H'}\widetilde{\nu}(m_j')_{{\bf r},{\rm diag}}&\text{if }x= x'.
\end{array}\right.\] 
Proposition \ref{prop:linearly independent pair}(2) implies that we may compare coefficients to deduce that
\[\sum_{j=1}^{H}\left(\alpha^{\bar{\mathbf k}}_{\widehat{\mathbf e}_j}+\sum_{\mathbf i\in\Bmc_{k_1}}\alpha^{\mathbf i}_{\widehat{\mathbf b}_{j}}\right)=\left\{\begin{array}{ll}
\displaystyle\sum_{j=1}^{H'}\left(\alpha^{\mathbf k}_{\widehat{\mathbf e}_j'}+\sum_{\mathbf i\in\Bmc_{k_2}}\alpha^{\mathbf i}_{\widehat{\mathbf b}_j'}\right)&\text{if }x\neq x';\\
\displaystyle-\sum_{j=1}^{H'}\left(\alpha^{\bar{\mathbf k}}_{\widehat{\mathbf e}_j'}+\sum_{\mathbf i\in\Bmc_{k_1}}\alpha^{\mathbf i}_{\widehat{\mathbf b}_j'}\right)&\text{if }x= x'.
\end{array}\right.\]
for all ${\bf k}=(k_1,k_2)\in\Amc$. This proves that $\Xi(\nu)$ satisfies the closed leaf equalities.
\end{proof}

%\subsection{The $(\Tmc,\Jmc)$-trivialization is symplectic}
%By Proposition \ref{prop: Xi injective}, we may now define, for every $[\rho]\in\Hit_V(S)$, the linear isomorphism
%\[\Phi_{[\rho],\Tmc,\Jmc}:=\Xi_{\rho,\Tmc,\Jmc}\circ\Psi_{\rho,\Tmc,\Jmc}:T_{[\rho]}\Hit_V(S)\to \mathscr{W}_\Tmc,\]
%where 
%\[\Psi_{\rho,\Tmc,\Jmc}:T_{[\rho]}\Hit_V(S)\to C^1(S,\smf\lmf(V)_{\Ad\circ\rho})\] 
%is the linear embedding defined in Section \ref{sec: tangent cocycles} whose image is $\mathscr{T}(\rho,\Tmc,\Jmc)$.

%\begin{prop}
%The map $\Phi_{[\rho],\Tmc,\Jmc}$ does not depend on the choice of representative $\rho$ of $[\rho]$.
%\end{prop}

%%%%%%%%%%%%%%%%%%%%%%%%%%%%%%%%%%%%%%%%%%%%%%%%%%
\subsection{Description of $\Omega$ and the $(\Tmc,\Jmc)$-parallel vector fields}\label{sec:admissibility linear independence}
%%%%%%%%%%%%%%%%%%%%%%%%%%%%%%%%%%%%%%%%%%%%%%%%%%
Next, we describe the map $\Omega$. First, if ${\bf b}$ is a non-edge barrier in $\widetilde\Dmc$, let ${\bf x}=(x_1,x_2,x_3)$ be the triple associated to ${\bf b}$. Then for any non-edge barrier ${\bf b}$ in $\widetilde{\Dmc}$ and any $\mathbf i\in\Bmc$, define the function 
\[\tau^{\mathbf i}_{\mathbf b}:\Hit_V(S)\to\Rbbb\] 
by $\tau^{\mathbf i}_{\mathbf b}[\rho]:=\log T^{\mathbf i}(\xi_\rho(\mathbf x))$ and ${\bf x}$ is the triple associated to ${\bf b}$. This is well-defined by the projective invariance of the triple ratio and Theorem \ref{thm:Zhang}(1). The projective invariance of the triple ratio also implies that $\tau^{\mathbf i}_{\mathbf b}[\rho]=\tau^{\mathbf i}_{\gamma\cdot\mathbf b}[\rho]$ for all $\gamma\in\Gamma$, so for every non-edge barrier $\widehat{\bf b}$ in $\Dmc$, we may define
\[\tau^{\mathbf i}_{\widehat{\mathbf b}}[\rho]:=\tau^{\mathbf i}_{\mathbf b}[\rho]\]
for some (any) lift ${\bf b}$ in $\widetilde{\Dmc}$ of $\widehat{\bf b}$. 

Similarly, for any oriented edge $\mathbf e\in\widetilde{\Tmc}^o$, let ${\bf r}=(r_1,r_2)$ be the pair associated to ${\bf e}$. If ${\bf e}$ is isolated, let $T$ and $T'$ be the two ideal triangles of $\widetilde{\Tmc}$ that lie to the left and right of ${\bf e}$ respectively, and let $w$ and $w'$ be respectively the vertices of $T$ and $T'$ that are neither $r_1$ nor $r_2$. Then for any $\mathbf k\in\Amc$, define the function 
\[\sigma^{\mathbf k}_{\mathbf e}:\Hit_V(S)\to\Rbbb\] 
by $\sigma^{\mathbf k}_{\mathbf e}[\rho]:=\log\left(-C^\mathbf k\big(\xi_\rho(r_1),\xi_\rho(w),\xi_\rho(r_2),\xi_\rho(w')\big)\right)$. This is well-defined by Theorem \ref{thm:Zhang}(2) and the projective invariance of the cross ratio.  On the other hand, if ${\bf e}$ is non-isolated, let $J$ be a bridge in $\widetilde{\Jmc}$ across $\mathbf e$, and let $T$ and $T'$ be the ideal triangles of $\widetilde{\Tmc}$ that contain the endpoints of $J$, so that $T$ and $T'$ lie to the left and right of $\mathbf e$ respectively. Let ${\bf x}=(x_1,x_2,x_3)$ and ${\bf x}'=(x_1',x_2',x_3')$ be the vertices of $T$ and $T'$ respectively, so that $x_1$ and $x_1'$ are endpoints of ${\bf e}$, while $x_3$ and $x_3'$ are not vertices of the fragmented ideal triangles that contain the endpoints of $J$. Let $q$ and $q'$ be the endpoint of ${\bf e}$ that is not $x_1$ and $x_1'$ respectively. Then for any $\mathbf k\in\Amc$, define the function 
\[\alpha^{\mathbf k}_{\mathbf e}:\Hit_V(S)\to\Rbbb\] 
by $\alpha^{\mathbf k}_{\mathbf e}[\rho]:=\log\left(-C^\mathbf k\big(\xi_\rho(r_1),u\cdot\xi_\rho(x_3),\xi_\rho(r_2),u'\cdot\xi_\rho(x_3')\big)\right)$, where $u\in\PGL(V)$ (resp. $u'\in\PGL(V)$) is the unipotent projective transformations that fixes $\xi(x_1)$ (resp. $\xi(x_1')$) and sends $\xi(x_2)$ to $\xi(q)$ (resp. $\xi(x_2')$ to $\xi(q')$). This is well-defined by \cite[Proposition 4.11]{SunWienhardZhang}. The projective invariance of the cross ratio then implies that regardless of whether ${\bf e}$ is isolated or non-isolated, we have $\sigma^{\mathbf k}_{\mathbf e}[\rho]=\sigma^{\mathbf k}_{\gamma\cdot \mathbf e}[\rho]$ for all $\gamma\in\Gamma$, so for every oriented edge $\widehat{\bf e}\in\Tmc^o$, we may define
\[\sigma^{\mathbf k}_{\widehat{\mathbf e}}[\rho]=\sigma^{\mathbf k}_{\mathbf e}[\rho].\]

With this, we can describe the parameterization map $\Omega$ in Theorem \ref{thm: par}: It is given by
\[\Omega=\Omega_{\Tmc,\Jmc}:[\rho]\mapsto\left(\left(\sigma_{\widehat{\mathbf e}}^{\mathbf k}[\rho]\right)_{(\mathbf k,\widehat{\mathbf e})\in\Amc\times\Tmc^o},\left(\tau^{\mathbf i}_{\widehat{\mathbf b}}[\rho]\right)_{(\mathbf i,\widehat{\mathbf b})\in\Bmc\times\Mmc}\right).\]

\begin{remark}
\begin{enumerate}
\item The convex cone $\mathscr{C}$ is an open cone in $\mathscr{W}$ cut out by a family of explicit inequalites called the \emph{closed leaf inequalities} (see \cite[Section 4]{BonahonDreyer1}). However, the explicit description of $\mathscr{C}$ is not needed in this work, so we do not give it here.
\item Bonahon-Dreyer \cite{BonahonDreyer1} previously constructed a similar parameterization of $\Hit_V(S)$. The only difference between this parameterization and the one given by Bonahon-Dreyer parameterization is that when $\widehat{\bf e}$ is a non-isolated edge in $\Tmc^o$, the invariant $\sigma^{\bf k}_{\widehat{\bf e}}$ give here is different from the ones used in Bonahon-Dreyer. While the invariants used in Bonahon-Dreyer are simpler, Theorem \ref{thm: par} implies that the ones used here are better for describing the $(\Tmc,\Jmc)$-trivialization, and as we will see later, are more suitable for studying the symplectic structure on $\Hit_V(S)$.
\end{enumerate}
\end{remark} 

Using the parameterization $\Omega$, we can define the $(\Tmc,\Jmc)$-parallel vector fields.

\begin{definition}\label{def: parallel vector fields}
For every $\mu\in\mathscr{W}$, the \emph{$(\Tmc,\Jmc)$-parallel vector field on $\Hit_V(S)$ associated to $\mu$} is a vector field $\Xmc_\mu$ on $\Hit_V(S)$ such that ${\rm d}\Omega_{[\rho]}(\Xmc_\mu([\rho]))=\mu$ for all $[\rho]\in\Hit_V(S)$. 
\end{definition}

%Recall that $\Hmc_{\rho,{\bf x}}$ is the real-analytic submanifold of $\widetilde{\Hit}_V(S)$ defined by \eqref{eqn: submanifold}, and $g_{{\bf x},{\bf y}}:\Hmc_{\rho,{\bf x}}\to\PGL(V)$ is the real-analytic map defined by \eqref{eqn: g def}. Also, recall that $\pi:\widetilde{\Hit}_V(S)\to\Hit_V(S)$ is the obvious projection, and $\pi|_{\Hmc_{\rho,{\bf x}}}:\Hmc_{\rho,{\bf x}}\to\Hit_V(S)$ is a real-analytic diffeomorphism. 
Fix 
\[\mu=\left(\left(\mu_{\widehat{\mathbf e}}^{\mathbf k}\right)_{(\mathbf k,\widehat{\mathbf e})\in\Amc\times\Tmc^o},\left(\mu^{\mathbf i}_{\widehat{\mathbf b}}\right)_{(\mathbf i,\widehat{\mathbf b})\in\Bmc\times\Mmc}\right)\in\mathscr{W},\] 
and let $\Xmc_\mu$ be the $(\Tmc,\Jmc)$-parallel vector field on $\Hit_V(S)$ associated to $\mu$. Then for any $\rho\in\Hit_V(S)$, let $\nu_\mu^\rho$ be the $(\rho,\Tmc,\Jmc)$-tangent cocycle associated to $\Xmc_\mu([\rho])$. We will now state two properties of the $\Ad\circ\rho$-equivariant lift $\widetilde\nu_\mu^\rho:C_1(\Std,\Zbbb)\to\smf\lmf(V)$ of $\nu_\mu^\rho$. The first is an immediate consequence of \cite[Proposition 7.12]{SunWienhardZhang} and the second follows easily from \cite[Proposition 7.13]{SunWienhardZhang}. 

\begin{prop}\label{prop:derivative1}
Let $\mathbf x:=(x_1,x_2,x_3)$ be the vertices of an ideal triangle of $\widetilde{\Tmc}$, let $\mu\in\mathscr{W}$, and let $t\mapsto\rho_t$ be the path in $\Hmc_{\rho,{\bf x}}$ such that $[\rho_t]=\Omega^{-1}(\Omega([\rho])+t\mu)$ (since $\mathscr{C}\subset\mathscr{W}$ is open, this is well-defined for sufficiently small $t$). %Then let $g(t)\in\PGL(V)$ be the projective transformation such that 
%\[\left(\xi_t(y_1),\xi_t(y_2),\xi_t^{(1)}(y_3)\right)=g(t)\cdot\left(\xi(y_1),\xi(y_2),\xi^{(1)}(y_3)\right)\]
\begin{enumerate}
\item Let ${\bf y}=(y_1,y_2,y_3)$ be the vertices of an ideal triangle of $\widetilde{\Tmc}$ such that $x_1=y_2$, $x_2=y_1$, and $y_3\neq x_3$. Let ${\bf e}$ be the oriented edge in $\widetilde{\Tmc}^o$ whose backward and forward endpoints are $x_1$ and $x_2$, let $\mathbf r$ be the pair associated to ${\bf e}$ (i.e. ${\bf r}:=(x_1,x_2)$), let $\widehat{\bf e}:=\pi_S({\bf e})$, and let $h$ be a $1$-simplex that passes from the left to the right of ${\bf e}$ (see Figure \ref{fig:shearingbases}). Then 
\[\widetilde\nu_\mu^\rho(h)=\frac{d}{dt}\bigg|_{t=0}g_{{\bf x},{\bf y}}(\rho_t)=-2\sum_{\mathbf k\in\Amc}\mu^{\mathbf k}_{\widehat{\mathbf e}}D^{\mathbf k}_{\xi_\rho(\mathbf r)}.\]
\item Let ${\bf y}=(y_1,y_2,y_3)$ be the vertices of an ideal triangle of $\widetilde{\Tmc}$ such that $x_2=y_1$, $x_3=y_2$ and $x_1=y_3$. Let ${\bf b}$ be the non-edge barrier in $\widetilde{\Dmc}$ such that ${\bf y}$ is associated to ${\bf b}$, let $\widehat{\bf b}:=\pi_S({\bf b})$, and let $h$ be a $1$-simplex that passes from the left to the right of ${\bf b}$ (see Figure~\ref{fig:eruptionbases}). Then
\[\widetilde\nu_\mu^\rho(h)=\frac{d}{dt}\bigg|_{t=0}g_{{\bf x},{\bf y}}(\rho_t)=\sum_{\mathbf i\in\Bmc}\mu^{\mathbf i}_{\widehat{\mathbf b}}A^{\mathbf i}_{\xi_\rho(\mathbf y)}.\]
\end{enumerate}
\end{prop}

To describe the second, recall that if ${\bf e}$ is an oriented, non-isolated edge in $\widetilde{\Tmc}^o$, ${\bf r}=(r_1,r_2)$ is the pair associated to ${\bf e}$, and $J\in\widetilde{\Jmc}$ is a bridge across ${\bf e}$, then $H$, $x$, $\gamma$, ${\bf b}_j$, ${\bf e}_j$, $\widehat{\bf b}_j$, $\widehat{\bf e}_j$, $l_j$, $m_j$, $m$, $H'$, $x'$, $\gamma'$, ${\bf b}'_j$, ${\bf e}'_j$, $\widehat{\bf b}'_j$, $\widehat{\bf e}'_j$, $l_j'$, $m_j'$, and $m'$ were defined in Notation \ref{not:admissible}. Let ${\bf r}_j$ and ${\bf r}'_j$ be the pairs associated to ${\bf e}_j$ and ${\bf e}'_j$ respectively, and let ${\bf x}_j$ and ${\bf x}'_j$ be the triples associated to ${\bf b}_j$ and ${\bf b}'_j$ respectively. Then set
\[A_j(\mu):=\left\{\begin{array}{ll}
\displaystyle-\sum_{{\bf i}\in \Bmc }\mu^{\mathbf{i}}_{\widehat{\mathbf{b}}_j}\cdot A^{\mathbf{i}}_{\xi_\rho(\mathbf{x}_j)}&\text{if }x=r_1;\\
\displaystyle\sum_{{\bf i}\in \Bmc}\mu^{\mathbf{i}}_{\widehat{\mathbf{b}}_j}\cdot A^{\mathbf{i}}_{\xi_\rho(\mathbf{x}_j)}&\text{if }x=r_2,
\end{array}\right.\]
\[D_j(\mu):=\left\{\begin{array}{ll}
\displaystyle-\sum_{{\bf k}\in\Amc}2\mu^{\mathbf{k}}_{\widehat{\mathbf{e}}_j}\cdot D^{\mathbf{k}}_{\xi_\rho(\mathbf{r}_j)}&\text{if }x=r_1;\\
\displaystyle\sum_{{\bf k}\in\Amc}2\mu^{\mathbf{k}}_{\widehat{\mathbf{e}}_j}\cdot D^{\mathbf{k}}_{\xi_\rho(\mathbf{r}_j)}&\text{if }x=r_2,
\end{array}\right.\]
\[A_j'(\mu):=\left\{\begin{array}{ll}
\displaystyle\sum_{{\bf i}\in \Bmc}\mu^{\mathbf{i}}_{\widehat{\mathbf{b}}_j'}\cdot A^{\mathbf{i}}_{\xi_\rho(\mathbf{x}_j')}&\text{if }x'=r_1;\\
\displaystyle-\sum_{{\bf i}\in \Bmc}\mu^{\mathbf{i}}_{\widehat{\mathbf{b}}_j'}\cdot A^{\mathbf{i}}_{\xi_\rho(\mathbf{x}_j')}&\text{if }x'=r_2,
\end{array}\right.\]
\[D_j'(\mu):=\left\{\begin{array}{ll}
\displaystyle\sum_{{\bf k}\in\Amc}2\mu^{\mathbf{k}}_{\widehat{\mathbf{e}}_j'}\cdot D^{\mathbf{k}}_{\xi_\rho(\mathbf{r}_j')}&\text{if }x'=r_1;\\
\displaystyle-\sum_{{\bf k}\in\Amc}2\mu^{\mathbf{k}}_{\widehat{\mathbf{e}}_j'}\cdot D^{\mathbf{k}}_{\xi_\rho(\mathbf{r}_j')}&\text{if }x'=r_2,
\end{array}\right.\]
and set
\begin{align}\label{eqn: N}
N_\infty(\mu,{\bf r},J):=\sum_{j=1}^HA_j(\mu)_{{\bf r},{\rm nil}}+\sum_{j=1}^HD_j(\mu)_{{\bf r},{\rm nil}}
\end{align}
and
\begin{align*}
N_\infty'(\mu,{\bf r},J):=\sum_{j=1}^HA_j'(\mu)_{{\bf r},{\rm nil}}+\sum_{j=1}^HD_j'(\mu)_{{\bf r},{\rm nil}}
\end{align*}

\begin{prop}\label{prop:derivative2}
Let ${\bf e}$ be an oriented, non-isolated edge in $\widetilde{\Tmc}^o$, let $\widehat{\bf e}:=\pi_S({\bf e})$, let ${\bf r}$ be the pair associated to ${\bf e}$, let $J\in\widetilde{\Jmc}$ be a bridge across ${\bf e}$, and let $T$ and $T'$ be the ideal triangles of $\Tmc$ that contain the endpoints of $J$ and lie to the left and right of ${\bf e}$ respectively. Let ${\bf x}=(x_1,x_2,x_3)$ and ${\bf x}'=(x_1',x_2',x_3')$ be the vertices of $T$ and $T'$ respectively, so that $x_1$ and $x_1'$ are endpoints of ${\bf e}$, while $x_3$ and $x_3'$ are not vertices of the fragmented ideal triangles that contain the endpoints of $J$. Let $h:[0,1]\to\widetilde{S}$ be the $1$-simplex such that $h(0)$ and $h(1)$ are the endpoints of $J$ that lie in $T$ and $T'$ respectively. If $t\mapsto\rho_t$ is the path in $\Hmc_{\rho,{\bf x}}$ such that $[\rho_t]=\Omega^{-1}(\Omega([\rho])+t\mu)$, then
\[\widetilde\nu_\mu^\rho(h)=\frac{d}{dt}\bigg|_{t=0}g_{{\bf x},{\bf x}'}(\rho_t)=N_\infty(\mu,\mathbf r,J)-N'_\infty(\mu,\mathbf r,J)-2\sum_{{\bf k}\in\Amc}\mu^{\mathbf k}_{\widehat{\mathbf e}}D^{\mathbf k}_{\xi_\rho(\mathbf r)}.\]
\end{prop}

We may now prove Theorem \ref{thm: symplectic trivialization}.

\begin{proof}[Proof of Theorem \ref{thm: symplectic trivialization}]
By Proposition \ref{prop: Xi injective}, we know that $\Xi$ is an embedding whose image is $\mathscr{W}$. It follows from Propositions \ref{prop:derivative1} and~\ref{prop:derivative2} that the $\Ad\circ\rho$-equivariant lift $\widetilde\nu_\mu^\rho$ of the $(\rho,\Tmc,\Jmc)$-tangent cocycle $\nu_\mu^\rho:=\Psi(\Xmc_\mu([\rho]))$ satisfies the following:
\begin{itemize}
\item If ${\bf b}$ is a non-edge barrier in $\widetilde{\Dmc}$, $\widehat{\bf b}:=\pi_S({\bf b})$, ${\bf x}$ is the triple associated to ${\bf b}$, and $h$ is a $1$-simplex that passes from the left to the right ${\bf b}$, then
\[\widetilde\nu_\mu^\rho(h)=\sum_{\mathbf i\in\Bmc}\mu^{\mathbf i}_{\widehat{\mathbf b}}A^{\mathbf i}_{\xi(\mathbf x)}.\]
\item If ${\bf e}$ is an oriented, isolated edge in $\widetilde{\Tmc}^o$, $\widehat{\bf e}:=\pi_S({\bf e})$, ${\bf r}$ is the pair associated to ${\bf e}$, and $h$ is a $1$-simplex that passes from the left to the right of ${\bf e}$, then
\[\widetilde\nu_\mu^\rho(h)=-2\sum_{\mathbf k\in\Amc}\mu^{\mathbf k}_{\widehat{\mathbf e}}D^{\mathbf k}_{\xi_\rho(\mathbf r)}.\]
\item If ${\bf e}$ is an oriented, non-isolated edge in $\widetilde{\Tmc}^o$, $\widehat{\bf e}:=\pi_S({\bf e})$, ${\bf r}$ is the pair associated to ${\bf e}$, $J$ is a bridge across ${\bf e}$, and $h$ is a $1$-simplex such that $h(0)$ is the endpoint of $J$ that lies to the left of ${\bf e}$ and $h(1)=e\cap J$, then
\[\widetilde\nu_\mu^\rho(h)_{{\bf r},{\rm diag}}=-2\sum_{{\bf k}\in\Amc}\mu^{\mathbf k}_{\widehat{\mathbf e}}D^{\mathbf k}_{\xi_\rho(\mathbf r)}.\]
\end{itemize}
Then from the definition of $\Xi$, it follows that $\Xi\circ\Phi(\Xmc_\mu([\rho]))=\mu$.
\end{proof}

%%%%%%%%%%%%%%%%%%%%%%%%%%%%%%%%%%%%%%%%%%%%%%%%%%
%%%%%%%%%%%%%%%%%%%%%%%%%%%%%%%%%%%%%%%%%%%%%%%%%%
\section{The $(\Tmc,\Jmc)$-trivialization of $\Hit_V(S)$ is symplectic}\label{sec:trivialization}
%%%%%%%%%%%%%%%%%%%%%%%%%%%%%%%%%%%%%%%%%%%%%%%%%%
%%%%%%%%%%%%%%%%%%%%%%%%%%%%%%%%%%%%%%%%%%%%%%%%%%

In this section, we will apply Theorem \ref{thm: symplectic trivialization} to show that the $(\Tmc,\Jmc)$-trivialization of $T\Hit_V(S)$ is symplectic.

\begin{thm}\label{thm:constant}
There is a symplectic bilinear form $\omega_{\mathscr{W}}$ on the vector space $\mathscr{W}$ for which the $(\Tmc,\Jmc)$-trivialization ${\rm d}\Omega:T\Hit_V(S)\to T\mathscr{C}\cong\mathscr{C}\times\mathscr{W}$ is symplectic.

\end{thm}
Theorem \ref{thm:constant} yields the following corollary.

\begin{cor}\label{cor:Hamiltonian}
Every $(\Tmc,\Jmc)$-parallel vector field on $\Hit_V(S)$ is a Hamiltonian vector field.
\end{cor}

\begin{proof}
By definition, every $(\Tmc,\Jmc)$-parallel vector field is identified via ${\rm d}\Omega$ with a constant vector field on $T\mathscr{C}$. Since constant vector fields on an open set in a symplectic vector space are Hamiltonian vector fields, the corollary follows from Theorem \ref{thm:constant}.
\end{proof}

The proof of Theorem \ref{thm:constant} will be given in Section \ref{sec:triangulation} and Section \ref{sec:cup product}. 

%%%%%%%%%%%%%%%%%%%%%%%%%%%%%%%%%%%%%%%%%%%%%%%%%%
\subsection{A triangulation of $S$}\label{sec:triangulation}
%%%%%%%%%%%%%%%%%%%%%%%%%%%%%%%%%%%%%%%%%%%%%%%%%%

In order to prove Theorem \ref{thm:constant}, we need to compute the Goldman symplectic form using (\ref{eqn:formula}), and to do so, we need to choose a finite triangulation $\Tbbb$ of the surface that is well adapted to $\Tmc$ and $\Jmc$. We now describe this choice.

%Informally, $\Tbbb$ is constructed as follows. First, for every ideal triangle of $\Tmc$, we construct an associated \emph{internal triangle} in $\widetilde{\Sigma}$. The internal triangles have the property that if we remove (the closure of) all the internal triangles from $S$, each connected component of the remaining surface is a cylinder that deformation retracts onto a non-isolated edge in $\Tmc$. Then, using the bridge system $\Jmc$, we specify a way to cut each of the remaining cylinders into \emph{external triangles}. The external triangles have the property that they respect the non-isolated edges in $\Tmc$, i.e. the non-isolated edges do not intersect (the interior of) any external triangle.

Choose a point $q_{e}$ on each isolated edge $e\in\widetilde{\Tmc}$, such that $\gamma\cdot q_e=q_{\gamma\cdot e}$ for all $\gamma\in\Gamma$. Then for each ideal triangle $T$ of $\widetilde{\Tmc}$, let $e_1$, $e_2$ and $e_3$ be the edges of $T$, and let $\delta(T)$ be the geodesic triangle in $T$ whose vertices are $q_{e_1},q_{e_2},q_{e_3}$, see Figure \ref{fig:Tbbb}. (Each triangle $\delta(T)$ is an open set in $\widetilde{\Sigma}$.) Observe that $\gamma\cdot\delta(T)=\delta(\gamma\cdot T)$ for all $\gamma\in\Gamma$. Thus, we may denote $\delta(\widehat{T}):=\pi_S(\delta(T))$, where $\pi_S:\widetilde{\Sigma}\to\Sigma$ is the covering map and $\widehat{T}:=\pi_S(T)$.

\begin{figure}[ht]
\centering
\includegraphics[scale=0.7]{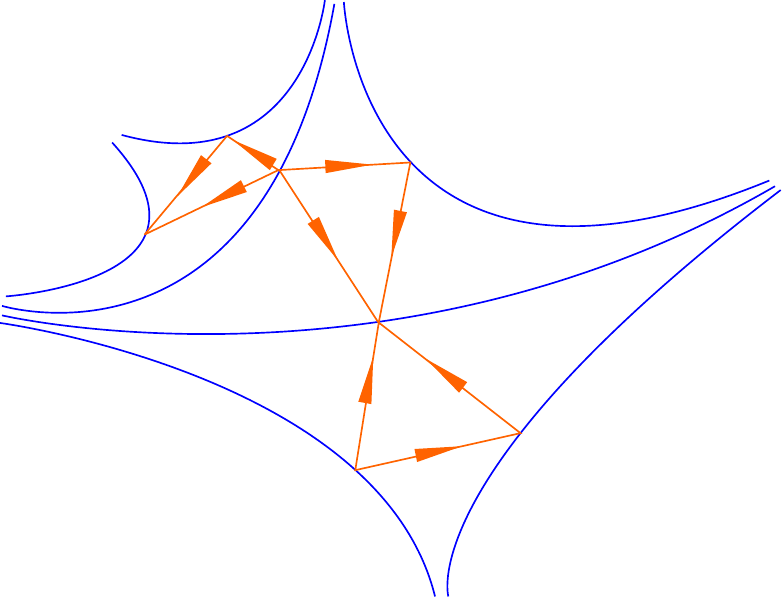}
\footnotesize
\put (-152, 203){$r_1$}
\put (-3, 142){$r_2$}
\put (-269, 97){$p$}
\put (-152, 153){$e_{\delta,1}$}
\put (-128, 122){$e_{\delta,2}$}
\put (-175, 58){$e_1$}
\put (-153, 40){$q_{e_1}$}
\put (-178, 92){$e_2$}
\put (-152, 98){$q_{e_2}$}
\put (-103, 28){$e_3$}
\put (-85, 55){$q_{e_3}$}
\put (-90, 85){$T$}
\put (-129, 63){$\delta(T)$}
\caption{$T$ and $\delta(T)$.}\label{fig:Tbbb}
\end{figure}

Let $\widetilde{\Pmc}$ (resp. $\Pmc$) denote the set of non-isolated edges in $\widetilde\Tmc$ (resp. $\Tmc$), and let $\widetilde{\Theta}$ (resp. $\Theta$) denote the set of ideal triangles of $\widetilde\Tmc$ (resp. $\Tmc$). 

\begin{definition}\
\begin{enumerate}
\item A \emph{boundary cylinder} is a connected component of
\[\Sigma\bigg\backslash\left(\bigcup_{\widehat{T}\in\Theta}\overline{\delta(\widehat{T})}\cup\bigcup_{\widehat{e}\in\Pmc}\widehat{e}\right).\] 
\item A \emph{lifted boundary cylinder} is a connected component of 
\[\widetilde\Sigma\bigg\backslash\left(\bigcup_{T\in\widetilde\Theta}\overline{\delta(T)}\cup\bigcup_{e\in\widetilde\Pmc}e\right).\]
\end{enumerate}
\end{definition}

Observe that the boundary cylinders are topological cylinders. Also, every lifted boundary cylinder $C$ has two boundary components in $\widetilde{\Sigma}$, one of which is a non-isolated edge $e_C$, and the other is a piecewise geodesic, where each of the geodesic pieces are edges of $\delta(T)$ for some ideal triangle $T$ of $\widetilde{\Tmc}$. Furthermore, for every lifted boundary cylinder $C$, $\pi_S|_C:C\to\widehat{C}$ is a covering map from $C$ to some boundary cylinder $\widehat{C}:=\pi_S(C)$. 

\begin{definition}Let $C$ be a lifted boundary cylinder.
\begin{enumerate}
\item For every bridge $J\in\widetilde{\Jmc}$ across $e_C$, let $q_J:=J\cap e_C$. Also, let $T=T_{C,J}$ be the ideal triangle of $\widetilde{\Tmc}$ that contains an endpoint of $J$, such that the (unique) edge $e=e_{C,J}$ of $T$ that intersects $J$ lies in the boundary of $C$. The \emph{bridge parallel edge} $s_{C,J}$ in $C$ corresponding to $J$ is the geodesic segment whose endpoints are $q_J$ and $q_e$, see Figure~\ref{fig:cylinderabstract}. 
\item A \emph{fundamental block} in a lifted boundary cylinder $C$ is a connected component of 
\[C-\left(\bigcup_{\{J\in\widetilde{\Jmc}:J\cap e_C\neq\emptyset\}}s_{C,J}\right).\]
\end{enumerate}
\end{definition}

Note that the boundary $\partial D$ in $\widetilde{S}$ of any fundamental block $D$ is a topological circle that contains two bridge parallel edges, and the complement in $\partial D$ of these two bridge parallel edges are two connected components. One of these components lies in $e_C$, while the other is a piecewise geodesic segment $f_D$ with finitely many pieces, each of which is the edge of $\delta(T)$ for some ideal triangle $T$ of $\widetilde{\Tmc}$.

\begin{figure}[ht]
\centering
\includegraphics[scale=1.4]{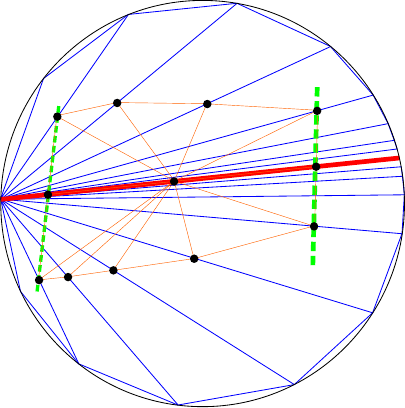}
\footnotesize
\put (-13, 160){$e_C$}
%\put (-228, 47){$e_1$}
\put (-202, 239){$e_{C,J}$}
%\put (-248, 79){$q_{e_1}$}
%\put (-235, 189){$q_{e_2}$}
%\put (-262, 94){$T_1$}
\put (-241, 215){$T_{C,J}$}
\put (-241, 136){$q_J$}
%\put (-57, 157){$a_2$}
%\put (-59, 187){$s_2$}
\put (-158, 142){$b_D$}
\put (-238, 164){$s_{C,J}$}
%\put (-195, 151){$t_1$}
%\put (-110, 160){$t_2$}
\caption{The isolated edges of $\widetilde{\Tmc}$ are drawn in blue, the non-isolated edge $e\in\widetilde{\Tmc}$ is drawn in thick red, and the bridges across $e$ is drawn in dotted green. The thin orange lines are the edges of the triangulation $\widetilde{\Tbbb}$ that lie in two fundamental blocks that share a common boundary segment in $e$.}\label{fig:cylinderabstract}
\end{figure}

For each fundamental block $D$ of a lifted boundary cylinder $C$, choose a point $b_D$ in the interior of the subsegment of $\partial D$ that lies in $e_C$, so that $b_{\gamma\cdot D}=\gamma\cdot b_D$ for all $\gamma\in\Gamma$, and $b_D=b_{D'}$ if $D$ and $D'$ are fundamental blocks whose closures intersect along a subsegment of $e_C$, see Figure~\ref{fig:cylinderabstract}. Then let $\Vbbb_D$ denote the set of endpoints of the geodesic segments in $f_D$, and let $\Fbbb_D$ be the set of geodesic segments in $\widetilde{\Sigma}$ that have $b_D$ and a point in $\Vbbb_D$ as endpoints. Observe that the geodesic segments in $\Fbbb_D$ define a triangulation of $D$. Also, these geodesic segments, together with the bridge parallel edges in $C$, define a triangulation of $C$ that is invariant under the deck group, and thus defines a triangulation of the boundary cylinder $\widehat{C}$. Let $\Tbbb(D)$, $\Tbbb(C)$, and $\Tbbb(\widehat{C})$ denote the set of triangles in this triangulation of $D$, $C$, and $\widehat{C}$ respectively.

Let $\Cmc$ (resp. $\widetilde{\Cmc}$) denote the set of boundary cylinders in $\Sigma$ (resp. lifted boundary cylinders in $\widetilde{\Sigma}$). Recall that $\Theta$ (resp. $\widetilde{\Theta}$) denote the set of ideal triangles of $\Tmc$ (resp. $\widetilde{\Tmc}$). It is clear from our definitions that 
\[\widetilde{\Tbbb}:=\{\delta(T):T\in\widetilde{\Theta}\}\cup\bigcup_{C\in\widetilde{\Cmc}}\Tbbb(C)\]
is a $\Gamma$-invariant triangulation of $\widetilde{\Sigma}$, which descends to the triangulation 
\[\Tbbb:=\{\delta(\widehat{T}):\widehat{T}\in\Theta\}\cup\bigcup_{\widehat{C}\in\Cmc}\Tbbb(\widehat{C})\]
of $\Sigma$. We refer to any triangulation $\Tbbb$ constructed above as a triangulation of $\Sigma$ that is \emph{well adapted} to $(\Tmc,\Jmc)$.

%Note that every bridge parallel edge is a crossing edge. The internal edges are exactly the edges of internal triangles, and the external edges are exactly the ones that lie in non-isolated edges. Note that if a crossing edge lies in the boundary of any fundamental block, then it is necessarily a bridge parallel edge. Also, the boundary $\partial D$ of every fundamental block $D$ contains exactly two bridge parallel edges and two external edges.

%%%%%%%%%%%%%%%%%%%%%%%%%%%%%%%%%%%%%%%%%%%%%%%%%%
\subsection{The cup product of parallel vector fields} \label{sec:cup product}
%%%%%%%%%%%%%%%%%%%%%%%%%%%%%%%%%%%%%%%%%%%%%%%%%%
We will use the following terminology in the proof of Theorem \ref{thm:constant}.

\begin{definition}\label{def: internal and external}
Let $\Tbbb$ be a triangulation of $\Sigma$ that is well adapted to $(\Tmc,\Jmc)$, and let $\widetilde{\Tbbb}$ denote its lift to $\widetilde{\Sigma}$.
\begin{enumerate}
\item An \emph{internal triangle} of $\Tbbb$ (resp. $\widetilde{\Tbbb}$) is a triangle of the form $\delta(\widehat{T})$ (resp. $\delta(T)$) for some ideal triangle $\widehat{T}$ of $\Tmc$ (resp. $T$ of $\widetilde{\Tmc}$).
\item An \emph{external triangle} of $\Tbbb$ (resp. $\widetilde{\Tbbb}$) is a triangle in $\Tbbb(\widehat{C})$ (resp. $\Tbbb(C)$) for some boundary cylinder $\widehat{C}$ in $\Sigma$ (resp. lifted boundary cylinder $C$ in $\widetilde\Sigma$). 
\item
An edge of a triangle in $\Tbbb$ (or $\widetilde{\Tbbb}$) is
\begin{itemize}
\item an \emph{internal edge} if both of its vertices are vertices of internal triangles.
\item an \emph{external edge} if both of its vertices lie in a non-isolated edge.
\item a \emph{crossing edge} if one of its vertices is a vertex of an internal triangle, and the other vertex lies in a non-isolated edge.
\end{itemize}
\end{enumerate}
\end{definition}

Note that every edge of a triangle in $\Tbbb$ is either an internal edge, an external edge, or a crossing edge. Also, all bridge parallel edges are crossing edges, and the boundary of every fundamental block contains two bridge parallel edges. 

Recall that $\Amc$ denotes the set of pairs of positive integers that sum to $n$ and $\Bmc$ denotes the set of triples of positive integers that sum to $n$. Also, recall that for any generic pair of flags $\mathbf E:=(E_1,E_2)$ in $\Fmc(V)$ and $\mathbf k\in\Amc$, we defined the shearing endomorphism $D^\mathbf k_\mathbf E$, see Definition~\ref{def:shearing endomorphism}. Similarly, for any generic triple of flags $\mathbf F:=(F_1,F_2,F_3)$ in $\Fmc(V)$ and any ${\bf i}\in\Bmc$, we defined the eruption endomorphisms $A^\mathbf i_\mathbf F$, see Definition \ref{def:eruption endomorphism}. To prove Theorem \ref{thm:constant}, it is also convenient to use the following notation.

\begin{notation}\label{not:8'} 
Let $\overline{\Bmc}$ denote the set of triples of integers $\mathbf i:=(i_1,i_2,i_3)$ that sum to $n$, and satisfy $0\leq i_1,i_2,i_3\leq n-1$. 
\begin{enumerate}
\item We can extend the eruption endomorphisms from $\Bmc$ to all of $\overline{\Bmc}$ by setting
\begin{itemize}
\item $A^{(i_1,i_2,0)}_\mathbf F$ to be the endomorphism whose eigenspaces are $F_1^{(i_1)}$ and $F_2^{(i_2)}$ corresponding to the eigenvalues $\frac{i_2}{n}$ and $-\frac{i_1}{n}$ respectively, i.e. $A^{(i_1,i_2,0)}_\mathbf F=2D^{(i_1,i_2)}_{(F_1,F_2)}$,
\item $A^{(0,i_2,i_3)}_\mathbf F$ to be the zero endomorphism,
\item $A^{(i_1,0,i_3)}_\mathbf F$ to be the endomorphism whose eigenspaces are $F_1^{(i_1)}$ and $F_3^{(i_3)}$ corresponding to the eigenvalues $\frac{i_3}{n}$ and $-\frac{i_1}{n}$ respectively,  i.e. $A^{(i_1,0,i_3)}_\mathbf F=2D^{(i_1,i_3)}_{(F_1,F_3)}$.
\end{itemize}
\item Given a non-edge barrier $\widehat{\bf b}$ of $\Dmc$, let ${\bf b}$ be a non-edge barrier in $\widetilde{\Dmc}$ such that $\pi_S({\bf b})=\widehat{\bf b}$, and let ${\bf x}=(x_1,x_2,x_3)$ be the triple associated to ${\bf b}$. For all $l=1,2,3$, let ${\bf e}_l$ be the oriented edge in $\widetilde{\Tmc}^o$ whose backward and forward endpoints are $x_{l+1}$ and $x_{l-1}$, and let $\widehat{\bf e}_l:=\pi_S({\bf e}_l)$. For any $\mathbf k:=(k_1,k_2)\in\Amc$, set $\mathbf k_1:=(0,k_1,k_2)$, $\mathbf k_2:=(k_2,0,k_1)$, and $\mathbf k_3:=(k_1,k_2,0)$. Then for any vector
\[\mu=\left(\left(\mu_{\widehat{\mathbf e}}^{\mathbf k}\right)_{(\mathbf k,\widehat{\mathbf e})\in\Amc\times\Tmc^o},\left(\mu^{\mathbf i}_{\widehat{\mathbf b}}\right)_{(\mathbf i,\widehat{\mathbf b})\in\Bmc\times\Mmc}\right)\in\mathscr{W},\]
set 
\[\mu_{\widehat{\mathbf b}}^{\mathbf k_l} := \mu_{\widehat{\mathbf e}_l}^{\mathbf k}\]
for all $l=1,2,3$. 
\end{enumerate}
In this notation, if $h:[0,1]\to\widetilde{S}$ is the $1$-simplex such that $h(0)$ lies in ${\bf e}_2$ and $h(1)$ lies in ${\bf e}_3$, then 
\begin{align}\label{eqn: simplify not}
\widetilde{\nu}_\mu^\rho(h)&=\sum_{\mathbf k\in\Amc}\left(-\mu^\mathbf k_{\widehat{\mathbf e}_2}D^{\mathbf k}_{\xi_\rho(x_3,x_1)}-\mu^\mathbf k_{\widehat{\bar{\mathbf e}}_3}D^{\mathbf k}_{\xi_\rho(x_2,x_1)}\right)+\sum_{\mathbf i\in\Bmc}\mu^\mathbf i_{\widehat{\mathbf b}}A^{\mathbf i}_{\xi_\rho(\mathbf x)}\nonumber\\
&=\sum_{\mathbf k\in\Amc}\left(\mu^{\bar{\mathbf k}}_{\widehat{\mathbf e}_2}D^{\mathbf k}_{\xi_\rho(x_1,x_3)}+\mu^{\mathbf k}_{\widehat{\mathbf e}_3}D^{\mathbf k}_{\xi_\rho(x_1,x_2)}\right)+\sum_{\mathbf i\in\Bmc}\mu^\mathbf i_{\widehat{\mathbf b}}A^{\mathbf i}_{\xi_\rho(\mathbf x)}\\
&=\frac{1}{2}\sum_{\mathbf i\in\overline{\Bmc}\setminus\Bmc}\mu^\mathbf i_{\widehat{\mathbf b}}A^{\mathbf i}_{\xi_\rho(\mathbf x)}+\sum_{\mathbf i\in\Bmc}\mu^\mathbf i_{\widehat{\mathbf b}}A^{\mathbf i}_{\xi_\rho(\mathbf x)},\nonumber
\end{align}
where $\bar{\bf k}:=(k_2,k_1)$ and $\widehat{\bar{\mathbf e}}_3$ is the $\widehat{\mathbf e}_3$ with its orientation reversed. Here, the first equality holds by definition (see \eqref{eqn: Case 1}) and Proposition~\ref{prop:derivative1}, and the second equality holds because $\mu^\mathbf k_{\widehat{\bar{\mathbf e}}_3}=\mu^{\bar{\mathbf k}}_{\widehat{\mathbf e}_3}$ (see Proposition \ref{prop:shearing justification}(1)) $D^{\mathbf k}_{{\bf E}}=-D^{\bar{\mathbf k}}_{\bar{\bf E}}$  for all ${\bf k}=(k_1,k_2)\in\Amc$ and all transverse pairs of flags ${\bf E}=(E_1,E_2)$, and .  and $\bar{\bf E}:=(E_2,E_1)$
\end{notation}

\begin{proof}[Proof of Theorem \ref{thm:constant}]
Fix a triangulation $\Tbbb$ of $\Sigma$ that is well adapted to $(\Tmc,\Jmc)$, and choose an enumeration on the set of vertices $\{v_1,\dots,v_{6g-6+2|\Pmc|}\}$ of $\Tbbb$, where $g$ is the genus of $S$. This induces an orientation on every edge of $\Tbbb$ in the following way: if $v_i$ and $v_j$ are the endpoints of an edge of $\Tbbb$, then $v_i$ is the backward endpoint and $v_j$ is the forward endpoint if and only if $i<j$.

Now, for any triangle $\widehat{\delta}\in\Tbbb$, choose a triangle $\delta\in\widetilde{\Tbbb}$ such that $\pi_S(\delta)=\widehat{\delta}$. Then let $\Tbbb_{\rm lift}$ denote the set of all such triangles in $\widetilde{\Sigma}$, one for each triangle in $\Tbbb$. For any triangle $\delta\in\Tbbb_{\rm lift}$, let $v_a,v_b,v_c$ be the vertices of $\pi_S(\delta)$ such that $a<b<c$, and let $e_{\delta,1}$ and $e_{\delta,2}$ denote the oriented edges of $\delta$ such that the backward and forward endpoints of $\pi_S(e_{\delta,1})$ are $v_a$ and $v_b$ respectively, and the backward and forward endpoints of $\pi_S(e_{\delta,2})$ are $v_b$ and $v_c$ respectively. 

Let $\mu$ and $\mu'$ be a pair of vectors in $\mathscr{W}$, and let $\Xmc_\mu$ and $\Xmc_{\mu'}$ be respectively the corresponding $(\Tmc,\Jmc)$-parallel vector fields on $\Hit_V(S)$. Let $\rho:\Gamma\to\PGL(V)$ is a Hitchin representation. Then let $\nu_\mu^\rho$ and $\nu_{\mu'}^\rho$ be the $(\rho,\Tmc,\Jmc)$-tangent cocycles of the tangent vectors $\Xmc_\mu([\rho])$ and $\Xmc_{\mu'}([\rho])$ in $T_{[\rho]}\Hit_V(S)$ respectively, and let $\widetilde{\nu}_\mu^\rho$ and $\widetilde{\nu}_{\mu'}^{\rho}$ be the $\Ad\circ\rho$-equivariant lift of $\nu_\mu^\rho$ and $\nu_{\mu'}^\rho$ respectively. By (\ref{eqn:formula}),
\begin{equation}\label{eqn:labelling formula}
\omega_{[\rho]}\big(\Xmc_\mu([\rho]),\Xmc_{\mu'}([\rho])\big)=\sum_{\delta\in\widehat{\Tbbb}}\sgn(\delta)\tr\big(\widetilde{\nu}_\mu^\rho(e_{\delta,1})\cdot\widetilde{\nu}_{\mu'}^\rho(e_{\delta,2})\big).
\end{equation}
Thus, it suffices to show that for all $\delta\in\Tbbb_{\rm lift}$ the function $\widetilde\Hit_V(S)\to\Rbbb$ given by 
\[\rho\mapsto\tr\big(\widetilde{\nu}_\mu^\rho(e_{\delta,1})\cdot\widetilde{\nu}_{\mu'}^\rho(e_{\delta,2})\big)\] 
is constant. Indeed, if this were true, then by \eqref{eqn:labelling formula}, the function $\Hit_V(S)\to\Rbbb$ given by 
\[[\rho]\mapsto\omega_{[\rho]}\big(\Xmc_\mu([\rho]),\Xmc_{\mu'}([\rho])\big)\] 
is constant, so we may define the required symplectic bilinear form $\omega_{\mathscr{W}}$ on $\mathscr{W}$ by 
\[\omega_{\mathscr{W}}(\mu,\mu'):=\omega_{[\rho]}\left(\Xmc_\mu([\rho]),\Xmc_{\mu'}([\rho])\right)\]
for some (any) $[\rho]\in\Hit_V(S)$.

The following lemma is the key computation needed to prove that for any $\delta\in\Tbbb_{\rm lift}$, $\rho\mapsto\tr\left(\widetilde{\nu}_\mu^\rho(e_{\delta,1})\cdot\widetilde{\nu}_{\mu'}^\rho(e_{\delta,2})\right)$ is constant. 

\begin{lem}\label{lem:constant0}
Let $\mathbf F:=(F_1,F_2,F_3)$ be a transverse triple of flags in $\Fmc(V)$ and $\mathbf i:=(i_1,i_2,i_3)$ and $\mathbf j:=(j_1,j_2,j_3)$ in $\overline{\Bmc}$. Then the following hold:
\begin{enumerate}
\item $\displaystyle\tr\left(A^{\mathbf i}_{\mathbf F}\cdot A^{\mathbf j_+}_{\mathbf F_+}\right)=[\min\{i_1-j_1,j_2-i_2\}]_+-\frac{i_1j_2}{n}$,
%\item $\displaystyle\tr\left(A^{i,j,k}_{F,G,H}\cdot A^{l,n-l}_{G,F}\right)=[i+l-n]_+-\frac{il}{n}$.
%\item $\displaystyle\tr\left(A^{i,j,k}_{F,G,H}\cdot A^{l,n-l}_{G,H}\right)=[\min\{l-j,i\}]_+-\frac{il}{n}$.
\item $\displaystyle\tr\left(A^{\mathbf i}_{\mathbf F}\cdot A^{\mathbf j}_{\mathbf F}\right)=\min\{i_1,j_1\}-\frac{i_1j_1}{n}$.
%\item $\displaystyle\tr\left(A^{l,n-l}_{F,G}\cdot A^{l',n-l'}_{F,H}\right)=\min\{l,l'\}-\frac{ll'}{n}$.
\end{enumerate}
where $\mathbf j_+:=(j_2,j_3,j_1)$, $\mathbf F_+:=(F_2,F_3,F_1)$, and $[c]_+:=\max\{0,c\}$ for any $c\in\Rbbb$. In particular, $\tr\left(A^{\mathbf i}_{\mathbf F}\cdot A^{\mathbf j_+}_{\mathbf F_+}\right)$ and $\tr\left(A^{\mathbf i}_{\mathbf F}\cdot A^{\mathbf j}_{\mathbf F}\right)$ do not depend on $\mathbf F$.
\end{lem}

%The proof of this lemma is an elementary computation that we perform in Appendix \ref{app:Lemma 5.6}.

\begin{proof}
For $m=1,2,3$, let $\{f_{m,1},\dots,f_{m,n}\}$ be a basis of $V$ such that 
\[F_m^{(j)}=\Span\{f_{m,1},\dots,f_{m,j}\}\] 
for all $j=1,\dots,n-1$.

(1) For any $\mathbf i:=(i_1,i_2,i_3)\in\overline{\Bmc}$, let $E^{\mathbf i}_{\mathbf F}:=A^{\mathbf i}_{\mathbf F}+\frac{i_1}{n}\cdot\id\in\End(V)$, and note that $E^{\mathbf i}_{\mathbf F}$ has $F_1^{(i_1)}$ and $F_2^{(i_2)}+F_3^{(i_3)}$ as eigenspaces corresponding to eigenvalues $1$ and $0$ respectively. This implies that 
\begin{equation}\label{eqn:simplify}
\tr\left(A^{\mathbf i}_{\mathbf F}\cdot A^{\mathbf j_+}_{\mathbf F_+}\right)=\tr\left(E^{\mathbf i}_{\mathbf F}\cdot E^{\mathbf j_+}_{\mathbf F_+}.\right)-\frac{i_1j_2}{n}\end{equation}
for all $\mathbf i:=(i_1,i_2,i_3)$ and $\mathbf j:=(j_1,j_2,j_3)$ in $\overline{\Bmc}$.

The proof of (1) will be divided into the following four cases:
\begin{enumerate}
\item[(i)] $i_1\leq j_1$,
\item[(ii)] $j_2\leq i_2$,
\item[(iii)] $i_1> j_1$, $j_2> i_2$ and $i_3\geq j_3$,
\item[(iv)] $i_1> j_1$, $j_2> i_2$ and $j_3\geq i_3$.
\end{enumerate}
We show that in all four cases, $\tr\left(E^{\mathbf i}_{\mathbf F}\cdot E^{\mathbf j_+}_{\mathbf F_+}.\right)=[\min\{i_1-j_1,j_2-i_2\}]_+$. This, together with (\ref{eqn:simplify}), proves (1).

If (i) holds, then every vector in the basis
\[\{f_{1,1},\dots,f_{1,i_1},f_{2,1},\dots,f_{2,i_2},f_{3,1},\dots,f_{3,i_3}\}\] 
lies in the kernel of $E^{\mathbf j_+}_{\mathbf F_+}\cdot E^{\mathbf i}_{\mathbf F}$, so $E^{\mathbf j_+}_{\mathbf F_+}\cdot E^{\mathbf i}_{\mathbf F}=0$. Similarly, if (ii) holds, then every vector in the basis
\[\{f_{1,1},\dots,f_{1,j_1},f_{2,1},\dots,f_{2,j_2},f_{3,1},\dots,f_{3,j_3}\}\] 
lies in the kernel of $E^{\mathbf i}_{\mathbf F}\cdot E^{\mathbf j_+}_{\mathbf F_+}$, so $E^{\mathbf i}_{\mathbf F}\cdot E^{\mathbf j_+}_{\mathbf F_+}=0$. Thus, if (i) or (ii) holds, then
\[\tr\left(E^{\mathbf i}_{\mathbf F}\cdot E^{\mathbf j_+}_{\mathbf F_+}\right)=\tr\left(E^{\mathbf j_+}_{\mathbf F_+}\cdot E^{\mathbf i}_{\mathbf F}\right)=0=[\min\{i_1-j_1,j_2-i_2\}]_+.\]

Suppose that (iii) holds. In the basis $\{f_{1,1},\dots,f_{1,i_1},f_{2,1},\dots,f_{2,i_2},f_{3,1},\dots,f_{3,i_3}\} $, the endomorphism $E^{\mathbf i}_{\mathbf F}$ can be written as the diagonal matrix
\[E^{\mathbf i}_{\mathbf F}=\left(\begin{array}{cc}
\id_{i_1}&0\\
0&0
\end{array}\right).\]
For any $k$ such that $j_1+1\leq k\leq i_1$, we may write %$f_{1,k}$ in the basis $\{f_{1,1},\dots,f_{1,j_1},f_{2,1},\dots,f_{2,j_2},f_{3,1},\dots,f_{3,j_3}\}$, i.e. 
\[f_{1,k}=\sum_{s=1}^{j_1}a_{1,s}f_{1,s}+\sum_{s=1}^{j_2}a_{2,s}f_{2,s}+\sum_{s=1}^{j_3}a_{3,s}f_{3,s}\]
for some $a_{m,s}\in\Rbbb$. One can compute that
\[E^{\mathbf j_+}_{\mathbf F_+}\cdot f_{1,k}=f_{1,k}-\left(\sum_{s=1}^{j_1}a_{1,s}f_{1,s}+\sum_{s=1}^{j_3}a_{3,s}f_{3,s}\right).\]
Similarly, for any $l$ such that $j_3+1\leq l\leq i_3$, we also have
\[E^{\mathbf j_+}_{\mathbf F_+}\cdot f_{3,l}=f_{3,l}-\left(\sum_{s=1}^{j_1}b_{1,s}f_{1,s}+\sum_{s=1}^{j_3}b_{3,s}f_{3,s}\right)\]
for some $b_{m,s}\in\Rbbb$. This implies that in the basis 
\[\{f_{1,1},\dots,f_{1,i_1},f_{2,1},\dots,f_{2,i_2},f_{3,1},\dots,f_{3,i_3}\},\] 
the endomorphism $E^{\mathbf j_+}_{\mathbf F_+}$ can be written as the matrix
\[E^{\mathbf j_+}_{\mathbf F_+}=\left(\begin{array}{ccccc}
0\cdot\id_{j_1}&*&0&0&*\\
0&\id_{i_1-j_1}&0&0&0\\
0&0&\id_{i_2}&0&0\\
0&*&0&0\cdot\id_{j_3}&*\\
0&0&0&0&\id_{i_3-j_3}
\end{array}\right).\]
It is then a straightforward computation to show that
\[\tr\left(E^{\mathbf i}_{\mathbf F}\cdot E^{\mathbf j_+}_{\mathbf F_+}\right)=i_1-j_1.\]
Furthermore, $i_3\geq j_3$ implies that $i_1-j_1\leq j_2-i_2$, which implies that
\[\tr\left(E^{\mathbf i}_{\mathbf F}\cdot E^{\mathbf j_+}_{\mathbf F_+}\right)=[\min\{i_1-j_1,j_2-i_2\}]_+.\]

By switching the roles of $E^{\mathbf i}_{\mathbf F}$ and $E^{\mathbf j_+}_{\mathbf F_+}$, and using the basis
\[\{f_{1,1},\dots,f_{1,j_1},f_{2,1},\dots,f_{2,j_2},f_{3,1},\dots,f_{3,j_3}\}\] 
in place of 
\[\{f_{1,1},\dots,f_{1,i_1},f_{2,1},\dots,f_{2,i_2},f_{3,1},\dots,f_{3,i_3}\}\] 
in the argument in the previous paragraph, we see that if (iv) holds, then 
\[\tr\left(E^{\mathbf i_1}_{\mathbf F_1}\cdot E^{\mathbf j_2}_{\mathbf F_2}\right)=j_2-i_2=[\min\{i_1-j_1,j_2-i_2\}]_+.\]

(2) By Proposition \ref{prop:linearly independent triple}(1), both $A^{\mathbf i}_{\mathbf F}$ and $A^{\mathbf j}_{\mathbf F}$ fix the flag $F$. Thus, in the basis $\{f_{1,1},\dots,f_{1,n}\}$, they can be represented by the matrices
\[A^{\mathbf i}_{\mathbf F}=\left(\begin{array}{cc}
\frac{i_2+i_3}{n}\id_{i_1}&*\\
0&U_{i_2+i_3}
\end{array}\right)\text{ and }
A^{\mathbf j}_{\mathbf F}=\left(\begin{array}{cc}
\frac{j_2+j_3}{n}\id_{j_1}&*\\
0&U_{j_2+j_3}
\end{array}\right)\]
where $U_{i_2+i_3}$ (resp.  $U_{j_2+j_3}$) is a $(i_2+i_3)\times (i_2+i_3)$ (resp. $(j_2+j_3)\times (j_2+j_3)$) upper triangular matrix whose diagonal entries are all $-\frac{i_1}{n}$ (resp. $-\frac{j_1}{n}$). Then
\begin{align*}
\tr\left(A^{\mathbf i}_{\mathbf F}\cdot A^{\mathbf j}_{\mathbf F}\right)
&=\tr\left(\left[\begin{array}{cc}\frac{i_2+i_3}{n}\id_{i_1}&0\\0&-\frac{i_1}{n}\id_{i_2+i_3}\end{array}\right]\cdot \left[\begin{array}{cc}\frac{j_2+j_3}{n}\id_{j_1}&0\\0&-\frac{j_1}{n}\id_{j_2+j_3}\end{array}\right]\right)\\
&=\tr\left(\left(\left[\begin{array}{cc}\id_{i_1}&0\\0&0\end{array}\right]-\frac{i_1}{n}\id\right)\cdot \left(\left[\begin{array}{cc}\id_{j_1}&0\\0&0\end{array}\right]-\frac{j_1}{n}\id\right)\right)\\
&=\min\{i_1,j_1\}-\frac{i_1j_1}{n}.\qedhere
\end{align*}
\end{proof}

Recall that any $\delta\in\Tbbb_{\rm lift}$ is either an internal triangle or an external triangle. We deal with these two cases separately in the following pair of lemmas.

\begin{lem}\label{lem:constant2}
If $\delta\in\widehat{T}$ is an external triangle, then the function 
\[\rho\mapsto\tr\left(\widetilde{\nu}_\mu^\rho(e_{\delta,1})\cdot\widetilde{\nu}_{\mu'}^\rho(e_{\delta,2})\right)\]
is constant.
\end{lem}

\begin{proof}
Let $D$ denote the fundamental block containing $\delta$, let ${\bf e}$ be the non-isolated edge in $\widetilde{\Tmc}$ that intersects $\partial D$, oriented so that $D$ lies to the left of ${\bf e}$, and let ${\bf r}=(r_1,r_2)$ be the pair associated to ${\bf e}$, i.e. $r_1$ and $r_2$ are respectively the backward and forward endpoints of ${\bf e}$. Let $s$ be the bridge parallel edge in $\partial D$ so that $\gamma\cdot s$ also lies in $\partial D$, where $\gamma$ is as defined in Notation \ref{not:admissible}. Then let $H$, $x$, ${\bf b}_j$, ${\bf e}_j$, $\widehat{\bf b}_j$, $\widehat{\bf e}_j$, $l_j$, $m_j$, and $m$ be as defined in Notation \ref{not:admissible}. Let $t_1$ and $t_2$ be the two external edges in $\partial D$, and let $\mathbb{G}$ denote the set of internal edges in the boundary of $D$. Since all of these edges in $\Tbbb(D)$ are oriented according to the enumeration of the vertices of $\Tbbb$, we may view them as $1$-simplices in $\widetilde{\Sigma}$, in which case they can all  be written as a linear combination of the $1$-simplices in $\Gbbb\cup\big\{t_1,t_2, s\big\}$. Thus, the endomorphism $\widetilde{\nu}_\mu^\rho(e_{\delta,l})$ (resp. $\widetilde{\nu}_{\mu'}^\rho(e_{\delta,l})$) is a linear combination of endomorphisms in 
\[\left\{\widetilde{\nu}_\mu^\rho(h):h\in\Gbbb\cup\big\{t_1,t_2, s\big\}\right\} \quad \left(\text{resp.}\quad \left\{\widetilde{\nu}_{\mu'}^\rho(h):h\in\Gbbb\cup\big\{t_1,t_2, s\big\}\right\}\right),\]
where the coefficients do not depend on $\rho$. It now suffices to show that the map
\[\rho\mapsto \tr\left(\widetilde{\nu}_\mu^\rho(h_1)\cdot\widetilde{\nu}_{\mu'}^\rho(h_2)\right)\]
is constant for all $h_1,h_2\in\Gbbb\cup\big\{t_1,t_2, s\big\}$.

Fix a basis $\{f_1,\dots,f_n\}$ of $\Rbbb^n$ such that $f_i\in \xi^{(i)}(r_1)\cap\xi^{(n-i+1)}(r_2)$ for all $i\in\{1,\dots,n\}$. Observe that by definition (see \eqref{eqn: tangent equation 1}), for all integers $j$, $\widetilde{\nu}_\mu^\rho(m_j)$, $\widetilde{\nu}_\mu^\rho(l_j)$, $\widetilde{\nu}_{\mu'}^\rho(m_j)$ and $\widetilde{\nu}_{\mu'}^\rho(l_j)$ are all represented in this basis by upper triangular matrices if $x=r_1$ and lower triangular matrices if $x=r_2$. Thus, we have decompositions
\[\widetilde{\nu}_\mu^\rho(h)=\widetilde{\nu}_\mu^\rho(h)_{{\bf r},{\rm diag}}+\widetilde{\nu}_\mu^\rho(h)_{{\bf r},{\rm nil}}\,\,\text{ and }\,\,\widetilde{\nu}_{\mu'}^\rho(h)=\widetilde{\nu}_{\mu'}^\rho(h)_{{\bf r},{\rm diag}}+\widetilde{\nu}_{\mu'}^\rho(h)_{{\bf r},{\rm nil}}\]
into the diagonal and nilpotent part with respect to ${\bf r}$ (see \eqref{eqn: decompose}). Then:
\begin{itemize}
\item By definition (see \eqref{eqn: Case 1}), for all $h\in\Gbbb$, there is some integer $j$ such that
\[\widetilde{\nu}_\mu^\rho(h)=\epsilon_h\left(\frac{1}{2}\widetilde{\nu}_\mu^\rho(l_{j-1})+\widetilde{\nu}_\mu^\rho(m_j)+\frac{1}{2}\widetilde{\nu}_{\mu}^\rho(l_j)\right)\]
and
\[\widetilde{\nu}_{\mu'}^\rho(h)=\epsilon_h\left(\frac{1}{2}\widetilde{\nu}_{\mu'}^\rho(l_{j-1})+\widetilde{\nu}_{\mu'}^\rho(m_j)+\frac{1}{2}\widetilde{\nu}_{\mu'}^\rho(l_j)\right),\]
where 
\[\epsilon_h:=\left\{\begin{array}{ll}
1&\text{ if $h$ passes from the left to the right of $m_j$},\\ 
-1&\text{ otherwise}.
\end{array}\right.\]
\item By definition (see \eqref{eqn: tangent equation 2}) and Proposition~\ref{prop: infinite box}(1), for both $l=1,2$,
\[\widetilde{\nu}_\mu^\rho(t_l)=\frac{\epsilon_l}{2}\sum_{j=1}^H\widetilde{\nu}_\mu^\rho(l_j)_{{\bf r},{\rm diag}}+\widetilde{\nu}_\mu^\rho(m_j)_{{\bf r},{\rm diag}}\]
and
\[\widetilde{\nu}_{\mu'}^\rho(t_l)=\frac{\epsilon_l}{2}\sum_{j=1}^H\widetilde{\nu}_{\mu'}^\rho(l_j)_{{\bf r},{\rm diag}}+\widetilde{\nu}_{\mu'}^\rho(m_j)_{{\bf r},{\rm diag}},\]
where 
\[\epsilon_l:=\left\{\begin{array}{ll}
1&\text{ if the orientation on $t_l$ agrees with the orientation on $m$},\\ 
-1&\text{ otherwise}.
\end{array}\right.\]
\item By definition (see \eqref{eqn: Case 1}) and Proposition~\ref{prop: infinite box}(2),
\[\widetilde{\nu}_\mu^\rho(s)_{{\bf r},{\rm nil}}=-\frac{1}{2}\widetilde{\nu}_\mu^\rho(l_1)_{{\bf r},{\rm nil}}+\sum_{j=1}^\infty \left(\widetilde{\nu}_\mu^\rho(l_j)_{{\bf r},{\rm nil}}+\widetilde{\nu}_\mu^\rho(m_j)_{{\bf r},{\rm nil}}\right)\]
and 
\[\widetilde{\nu}_{\mu'}^\rho(s)_{{\bf r},{\rm nil}}=-\frac{1}{2}\widetilde{\nu}_{\mu'}^\rho(l_1)_{{\bf r},{\rm nil}}+\sum_{j=1}^\infty \left(\widetilde{\nu}_{\mu'}^\rho(l_j)_{{\bf r},{\rm nil}}+\widetilde{\nu}_{\mu'}^\rho(m_j)_{{\bf r},{\rm nil}}\right).\]
\end{itemize}
Thus, for all $h\in\Gbbb\cup\big\{t_1,t_2, s\big\}$, $\widetilde{\nu}_\mu^\rho(h)$ is also represented in the basis $\{f_1,\dots,f_n\}$ by upper triangular matrices if $x=r_1$ and lower triangular matrices if $x=r_2$. It thus suffices to show that the map
\[\rho\mapsto \tr\left(\widetilde{\nu}_\mu^\rho(h_1)_{{\bf r},{\rm diag}}\cdot\widetilde{\nu}_{\mu'}^\rho(h_2)_{{\bf r},{\rm diag}}\right)\]
is constant for all $h_1,h_2\in\Gbbb\cup\big\{t_1,t_2, s\big\}$.

By \eqref{eqn: simplify not}, if $h\in\Gbbb$, then there is some $j\in\{1,\dots,H\}$ such that $\widetilde{\nu}_\mu^\rho(h)$ is a linear combination of $\{A^{\mathbf i}_{\xi_\rho(\mathbf x_j)}:{\bf i}\in\overline{\Bmc}\}$ where the coefficients depend only on $\mu$. Then by Lemma \ref{lem: diagonal part}, $\widetilde{\nu}_\mu^\rho(h)_{{\bf r},{\rm diag}}$ is a linear combination of $\{D^{\mathbf{k}}_{\xi_\rho(\mathbf{r})}:{\bf k}\in\Amc\}$, where the coefficients depend only on $\mu$. %Similarly, $\widetilde{\nu}_{\mu'}^\rho(h)_{{\bf r},{\rm diag}}$  is a linear combination of $\{D^{\mathbf{k}}_{\xi_\rho(\mathbf{r})}:{\bf k}\in\Amc\}$, where the coefficients depend only on $\mu'$.
At the same time, since $\widetilde{\nu}_\mu^\rho(t_1)$ and $\widetilde{\nu}_\mu^\rho(t_2)$ are linear combinations of the diagonal parts (with respect to ${\bf r}$) of $\widetilde{\nu}_\mu^\rho(l_j)$ and $\widetilde{\nu}_\mu^\rho(m_j)$ over all $j\in\{1,\dots,H\}$, it follows from Proposition \ref{prop:derivative1} that both $\widetilde{\nu}_\mu^\rho(t_1)$ and $\widetilde{\nu}_\mu^\rho(t_2)$ are linear combinations of $\{D^{\mathbf{k}}_{\xi_\rho(\mathbf{r})}:{\bf k}\in\Amc\}$, where the coefficients depend only on $\mu$. %Similarly, both $\widetilde{\nu}_{\mu'}^\rho(t_1)$ and $\widetilde{\nu}_{\mu'}^\rho(t_2)$ are linear combinations of $\{D^{\mathbf{k}}_{\xi_\rho(\mathbf{r})}:{\bf k}\in\Amc\}$, where the coefficients depend only on $\mu'$.
%
%By Proposition \ref{prop:derivative1}, we have
%\begin{align*}
%\widetilde{\nu}_\mu^\rho(t_l)%&=\frac{\epsilon_l}{2}\sum_{j=1}^H\widetilde{\nu}_\mu^\rho(l_j)_{{\bf r},{\rm diag}}+\widetilde{\nu}_\mu^\rho(m_j)_{{\bf r},{\rm diag}}\\
%&=\left\{\begin{array}{ll}
%\displaystyle-\epsilon_l\sum_{j=1}^H\sum_{{\bf k}\in\Amc}\left(\mu^{\mathbf{k}}_{\widehat{\mathbf{e}}_j}+\sum_{{\bf i}\in\Bmc_{k_2}}\mu^{\mathbf{i}}_{\widehat{\mathbf{b}}_j}\right)\cdot D^{\mathbf{k}}_{\xi_\rho(\mathbf{r})}&\text{if }x=r_2,\\
%\displaystyle-\epsilon_l\sum_{j=1}^H\sum_{{\bf k}\in\Amc}\left(\mu^{\bar{\mathbf{k}}}_{\widehat{\mathbf{e}}_j}+\sum_{{\bf i}\in\Bmc_{k_1}}\mu^{\mathbf{i}}_{\widehat{\mathbf{b}}_j}\right)\cdot D^{\mathbf{k}}_{\xi_\rho(\mathbf{r})}&\text{if }x=r_1,\\
%\end{array}\right.
%\end{align*}
%Similarly,
%\begin{align*}
%\widetilde{\nu}_{\mu'}^\rho(t_l)=\left\{\begin{array}{ll}
%\displaystyle-\epsilon_l\sum_{j=1}^H\sum_{{\bf k}\in\Amc}\left({\mu'}^{\mathbf{k}}_{\widehat{\mathbf{e}}_j}+\sum_{{\bf i}\in\Bmc_{k_2}}{\mu'}^{\mathbf{i}}_{\widehat{\mathbf{b}}_j}\right)\cdot D^{\mathbf{k}}_{\xi_\rho(\mathbf{r})}&\text{if }x=r_2,\\
%\displaystyle-\epsilon_l\sum_{j=1}^H\sum_{{\bf k}\in\Amc}\left({\mu'}^{\bar{\mathbf{k}}}_{\widehat{\mathbf{e}}_j}+\sum_{{\bf i}\in\Bmc_{k_1}}{\mu'}^{\mathbf{i}}_{\widehat{\mathbf{b}}_j}\right)\cdot D^{\mathbf{k}}_{\xi_\rho(\mathbf{r})}&\text{if }x=r_1,\\
%\end{array}\right.
%\end{align*}
Finally, by Proposition \ref{prop:derivative2} and Lemma \ref{lem: diagonal part}, $\widetilde{\nu}_\mu^\rho(s)_{{\bf r},{\rm diag}}$ is also a linear combination of $\{D^{\mathbf{k}}_{\xi_\rho(\mathbf{r})}:{\bf k}\in\Amc\}$, where the coefficients depend only on $\mu$.
%\begin{align}\label{eqn: bridge parallel edge}
%\widetilde{\nu}_\mu^\rho(s)_{{\bf r},{\rm diag}}&=\left\{\begin{array}{ll}
%\displaystyle\sum_{{\bf k}\in\Amc}\mu^\mathbf k_{\widehat{\mathbf e}_1}\cdot D^{\mathbf k}_{\xi_\rho({\bf r}_1)}-\sum_{{\bf k}\in\Amc}\mu^{\mathbf{k}}_{\widehat{\mathbf{e}}}\cdot D^{\mathbf{k}}_{\xi_\rho(\mathbf{r})}&\text{if }x=r_2,\\
%\displaystyle-\sum_{{\bf k}\in\Amc}\mu^\mathbf k_{\widehat{\mathbf e}_1}\cdot D^{\mathbf k}_{\xi_\rho({\bf r}_1)}-\sum_{{\bf k}\in\Amc}\mu^{\mathbf{k}}_{\widehat{\mathbf{e}}}\cdot D^{\mathbf{k}}_{\xi_\rho(\mathbf{r})}&\text{if }x=r_1,\\
%\end{array}\right.\\
%&=\left\{\begin{array}{ll}
%\displaystyle\sum_{{\bf k}\in\Amc}\left(\mu^{{\mathbf k}}_{\widehat{\mathbf e}_1}-\mu^{\mathbf{k}}_{\widehat{\mathbf{e}}}\right)\cdot D^{\mathbf{k}}_{\xi_\rho(\mathbf{r})}&\text{if }x=r_2,\\
%\displaystyle\sum_{{\bf k}\in\Amc}\left(\mu^{\mathbf k}_{\widehat{\bar{\mathbf e}}_1}-\mu^{\mathbf{k}}_{\widehat{\mathbf{e}}}\right)\cdot D^{\mathbf{k}}_{\xi_\rho(\mathbf{r})}&\text{if }x=r_1,
%\end{array}\right.\nonumber
%\end{align}
%and 
%\[\widetilde{\nu}_{\mu'}^\rho(s)_{{\bf r},{\rm diag}}=\left\{\begin{array}{ll}
%\displaystyle\sum_{{\bf k}\in\Amc}\left({\mu'}^{{\mathbf k}}_{\widehat{\mathbf e}_1}-{\mu'}^{\mathbf{k}}_{\widehat{\mathbf{e}}}\right)\cdot D^{\mathbf{k}}_{\xi_\rho(\mathbf{r})}&\text{if }x=r_2,\\
%\displaystyle\sum_{{\bf k}\in\Amc}\left({\mu'}^{\mathbf k}_{\widehat{\bar{\mathbf e}}_1}-{\mu'}^{\mathbf{k}}_{\widehat{\mathbf{e}}}\right)\cdot D^{\mathbf{k}}_{\xi_\rho(\mathbf{r})}&\text{if }x=r_1.
%\end{array}\right.\\\]

To finish the proof, it now suffices to verify that the map
\[\rho\mapsto \tr\left(D^{\mathbf{k}}_{\xi_\rho(\mathbf{r})}\cdot D^{\mathbf{l}}_{\xi_\rho(\mathbf{r})}\right)\]
is constant for all ${\bf k}=(k_1,k_2)$ and ${\bf l}=(l_1,l_2)$ in $\Amc$. This is immediate from Lemma~\ref{lem:constant0}(2).
\end{proof}

\begin{lem}\label{lem:constant1}
If $\delta\in\widehat{T}$ is an internal triangle, then the function 
\[[\rho]\mapsto\tr\left(\widetilde{\nu}_\mu^\rho(e_{\delta,1})\cdot\widetilde{\nu}_{\mu'}^\rho(e_{\delta,2})\right)\]
is constant.
\end{lem}

\begin{proof}
Let $T$ be the ideal triangle of $\widetilde\Tmc$ such that contains $\delta$, and let $x_1$, $x_2$, and $x_3$ be the vertices of $T$ such that $(x_1,x_2,x_3)$ is cyclically ordered and the non-edge barrier ${\bf b}$ in $T$ with $x_1$ as its terminal endpoint intersects $e_{\delta,1}$. Then for $l=1,2,3$, let $e_l$ be the geodesic whose endpoints are $e_{l+1}$ and $e_{l-1}$. 

Recall that ${\bf b}_+$, and ${\bf b}_-$ denote the non-edge barriers in $T$ whose terminal endpoints are $x_2$, and $x_3$ respectively, and ${\bf x}_+$ and ${\bf x}_-$ denote the triples associated to ${\bf b}_+$, and ${\bf b}_-$ respectively. For both $k=1,2$, let $e_{\delta,k}^+$ and $e_{\delta,k}^-$ denote the forward and backward endpoints of $e_{\delta,k}$ respectively. Then by \eqref{eqn: simplify not}
\[\widetilde{\nu}_\mu^\rho(e_{\delta,1})=\left\{\begin{array}{ll}
\displaystyle\frac{1}{2}\sum_{\mathbf i\in\overline{\Bmc}\setminus\Bmc}\mu^\mathbf i_{\widehat{\mathbf b}}A^{\mathbf i}_{\xi_\rho(\mathbf x)}+\sum_{\mathbf i\in\Bmc}\mu^\mathbf i_{\widehat{\mathbf b}}A^{\mathbf i}_{\xi_\rho(\mathbf x)}&\text{if }e_{\delta,1}^+\in e_3\\
\displaystyle-\frac{1}{2}\sum_{\mathbf i\in\overline{\Bmc}\setminus\Bmc}\mu^\mathbf i_{\widehat{\mathbf b}}A^{\mathbf i}_{\xi_\rho(\mathbf x)}-\sum_{\mathbf i\in\Bmc}\mu^\mathbf i_{\widehat{\mathbf b}}A^{\mathbf i}_{\xi_\rho(\mathbf x)}&\text{if }e_{\delta,1}^+\in e_2
\end{array}\right.\]
and
\[\widetilde{\nu}_{\mu'}^\rho(e_{\delta,2})=\left\{\begin{array}{ll}
\displaystyle\frac{1}{2}\sum_{\mathbf i\in\overline{\Bmc}\setminus\Bmc}{\mu'}^\mathbf i_{\widehat{\mathbf b}_+}A^{\mathbf i}_{\xi_\rho(\mathbf x_+)}+\sum_{\mathbf i\in\Bmc}{\mu'}^\mathbf i_{\widehat{\mathbf b}_+}A^{\mathbf i}_{\xi_\rho(\mathbf x_+)}&\text{if }e_{\delta,2}^-\in e_3\\
\displaystyle-\frac{1}{2}\sum_{\mathbf i\in\overline{\Bmc}\setminus\Bmc}{\mu'}^\mathbf i_{\widehat{\mathbf b}_-}A^{\mathbf i}_{\xi_\rho(\mathbf x_-)}-\sum_{\mathbf i\in\Bmc}{\mu'}^\mathbf i_{\widehat{\mathbf b}_-}A^{\mathbf i}_{\xi_\rho(\mathbf x_-)}&\text{if }e_{\delta,2}^-\in e_2
\end{array}\right.\]
It thus follows from Lemma \ref{lem:constant0}(1) that $\tr\left(\widetilde{\nu}_\mu^\rho(e_{\delta,1})\cdot\widetilde{\nu}_{\mu'}^\rho(e_{\delta,2})\right)$ does not depend on $\rho$.
\end{proof}

Together, Lemma \ref{lem:constant1}, Lemma \ref{lem:constant2} finishes the proof of Theorem \ref{thm:constant}.
\end{proof}

%%%%%%%%%%%%%%%%%%%%%%%%%%%%%%%%%%%%%%%%%%%%%%%%%%
%%%%%%%%%%%%%%%%%%%%%%%%%%%%%%%%%%%%%%%%%%%%%%%%%%
\section{A symplectic basis of vector fields on $\Hit_V(S)$}\label{sec:symplectic basis}
%%%%%%%%%%%%%%%%%%%%%%%%%%%%%%%%%%%%%%%%%%%%%%%%%%%%%%%%%%%%%%%%%%%%%%%%%%%%%%%%%%%%%%%%%%%%%%%%%%%%

In this section, we consider a particular ideal triangulation and compatible bridge system $(\Tmc,\Jmc)$ on $S$ that is subordinate to a given pants decomposition on $S$, see Section \ref{sec:pants}. In the companion paper \cite{SunWienhardZhang} the authors used such a pair $(\Tmc,\Jmc)$ to construct a family of $(n^2-1)(2g-2)$ \emph{special} $(\Tmc,\Jmc)$-parallel vector fields on $\Hit_V(S)$ (see \cite[Definition 8.12]{SunWienhardZhang} and Remark \ref{rem: correction} below). They then used these special $(\Tmc,\Jmc)$-parallel fields to construct an associated global coordinate system on $\Hit_V(S)$ (see \cite[Corollary 8.15]{SunWienhardZhang}).

The goal of this section is to explicitly compute the symplectic pairing between the tangent vector fields of any pair of special $(\Tmc,\Jmc)$-parallel flows. As a consequence, we prove that the global coordinate system on $\Hit_V(S)$ constructed from the special $(\Tmc,\Jmc)$-vector fields is a global Darboux coordinate system on $\Hit_V(S)$.

%%%%%%%%%%%%%%%%%%%%%%%%%%%%%%%%%%%%%%%%%%%%%%%%%%
\subsection{Choice of ideal triangulation and compatible bridge system}\label{sec:pants}
%%%%%%%%%%%%%%%%%%%%%%%%%%%%%%%%%%%%%%%%%%%%%%%%%%
We begin by recalling what it means for an ideal triangulation $\Tmc$ and a compatible bridge system $\Jmc$ to be subordinate to a pants decomposition. 

Recall that we have fixed a hyperbolic metric on $S$. Choose a pants decomposition on $S$, i.e. a maximal collection of pairwise disjoint, simple, closed geodesics on $S$. These cut the surface into $2g-2$ pairs of pants $\mathbb{P}=\{P_1,\cdots, P_{2g-2}\}$. For each $P\in\Pbbb$, choose peripheral group elements $\alpha_{P,1},\alpha_{P,2},\alpha_{P,3}\in\pi_1(P)$ such that $\alpha_{P,1} \alpha_{P,2} \alpha_{P,3}=\id$, and $P$ lies to the right of its boundary components, oriented according to $\alpha_{P,1}$, $\alpha_{P,2}$ and $\alpha_{P,3}$ (see Figure \ref{fig:pantsidealtriangulation}).

By choosing base points, the inclusion $P\subset S$ induces an inclusion $\pi_1(P)\subset\Gamma$, so we can view $\alpha_{P,1},\alpha_{P,2},\alpha_{P,3}$ as group elements in $\Gamma$. For any $\gamma\in\Gamma$, denote the repellor and attractor of $\gamma$ in $\partial\Gamma$ by $\gamma^-$ and $\gamma^+$ respectively. Then let $e_{P,1}$, $e_{P,2}$, $e_{P,3}$, $c_{P,1}$, $c_{P,2}$, and $c_{P,3}$ be geodesics in $\widetilde{S}$ such that for all $l=1,2,3$, the endpoints of $e_{P,l}$ are $\alpha_{P,l-1}^-$ and $\alpha_{P,l+1}^-$, and the endpoints of $c_{P,l}$ are $\alpha_{P,l}^-$ and $\alpha_{P,l}^+$. Observe that 
\[\widetilde{\Tmc}:=\bigcup_{P\in\Pbbb}\Gamma\cdot\big\{e_{P,1},e_{P,2},e_{P,3},c_{P,1},c_{P,2},c_{P,3}\big\}.\]
is an ideal triangulation, does not depend on the choice of base points, and is $\Gamma$-invariant. Define $\Tmc:=\widetilde{\Tmc}/\Gamma$, and observe that the set of non-isolated edges $\Pmc$ in $\Tmc$ is canonically identified with the chosen pants decomposition on $S$.  Also, note that the set $\widetilde{\Theta}$ of ideal triangles of $\widetilde{\Tmc}$ can be described as
\[\widetilde{\Theta}=\bigcup_{P\in\Pbbb}\Gamma\cdot\big\{T_P,T_P'\big\},\]
where $T_P$ is the ideal triangle in $\widetilde{S}$ whose vertices are $\alpha_{P,1}^-$, $\alpha_{P,2}^-$, and $\alpha_{P,3}^-$, while $T_P'$ is the ideal triangle in $\widetilde{S}$ whose vertices are $\alpha_{P,1}^-$, $\alpha_{P,3}^-$, and $\alpha_{P,3}\cdot\alpha_{P,2}^-$. We also let $\widehat{T}_P:=\pi_S(T_P)$ and $\widehat{T}_P':=\pi_S(T_P')$, where $\pi_S:\widetilde{S}\to S$ is the covering map.

\begin{figure}[ht]
\centering
\includegraphics[scale=0.8]{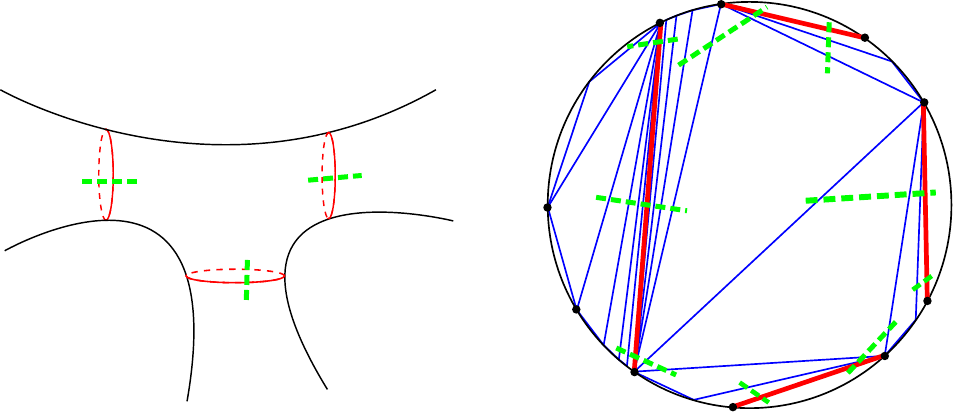}
\footnotesize
\put (-164, 7){$\alpha_{P_1,1}^-=\alpha_{P_2,1}^+$}
\put (-155, 157){$\alpha_{P_1,1}^+=\alpha_{P_2,1}^-$}
\put (-95, 164){$\alpha_{P_1,2}^-$}
\put (-33, 148){$\alpha_{P_1,2}^+$}
\put (-10, 122){$\alpha_{P_1,3}^-$}
\put (-8, 40){$\alpha_{P_1,3}^+$}
\put (-110, -7){$\alpha_{P_1,3}\cdot \alpha_{P_1,2}^+$}
\put (-30, 15){$\alpha_{P_1,3}\cdot\alpha_{P_1,2}^-$}
\put (-166, 37){$\alpha_{P_2,2}^-$}
\put (-178, 77){$\alpha_{P_2,3}^-$}
\put (-80, 100){$T_{P_1}$}
\put (-60, 50){$T_{P_1}'$}
\put (-148, 80){$T_{P_2}$}
\put (-285, 80){$P_1$}
\put (-360, 90){$P_2$}
\put (-328, 113){$[\alpha_{P_1,1}]=[\alpha_{P_2,1}^{-1}]$}
\put (-253, 113){$[\alpha_{P_1,2}]$}
\put (-257, 53){$[\alpha_{P_1,3}]$}
\caption{The two pairs of pants $P_1$ and $P_2$ drawn on the left share a common boundary edge in $\Pmc$. On the right, the non-isolated and isolated edges of $\widetilde{\Tmc}$ are drawn in thick red and blue respectively, while the bridges in $\widetilde{\Jmc}$ are drawn in dotted green. %$T_{P_1}$, $T_{P_1}'$ and $T_{P_2}$ are lifts of $\widehat{T}_{P_1}$, $\widehat{T}_{P_1}'$ and $\widehat{T}_{P_2}$ respectively.
}\label{fig:pantsidealtriangulation}
\end{figure}

Next, fix a bridge system $\widetilde{\Jmc}$ on $\widetilde{S}$ compatible with $\widetilde{\Tmc}$, such that every endpoint of every bridge in $\widetilde{\Jmc}$ lies in an ideal triangle of the form $\gamma\cdot T_P$ for some $\gamma\in\Gamma$ and some $P\in\mathbb{P}$. Then let $\Jmc:=\widetilde{\Jmc}/\Gamma$.

\begin{definition}
Any pair $(\Tmc,\Jmc)$ as described above is said to be \emph{subordinate to the pants decomposition $\Pmc$}.
\end{definition}

%%%%%%%%%%%%%%%%%%%%%%%%%%%%%%%%%%%%%%%%%%%%%%%%%%
\subsection{Special parallel vector fields}\label{sec:special}
%%%%%%%%%%%%%%%%%%%%%%%%%%%%%%%%%%%%%%%%%%%%%%%%%%

Let $(\Tmc,\Jmc)$ be an ideal triangulation and compatible bridge system on $S$ that is subordinate to a pants decomposition of $S$. Recall that $\Mmc$ denotes the set of non-edge barriers associated to $\Tmc$, $\Pmc\subset\Tmc$ denotes the set of non-isolated edges (which in this case is a pants decomposition of $S$), and $\Pmc^o\subset\Tmc^o$ denote the set of oriented, non-isolated edges. Also, recall that $\Amc$ denotes the set of pairs of positive integers that sum to $n$, $\Bmc$ denotes the set of triples of positive integers that sum to $n$, and $\overline{\Bmc}$ denotes the set of triples of integers $(i_1,i_2,i_3)$ that sum to $n$, such that $0\leq i_1,i_2,i_3\leq n-1$. 

In the companion paper \cite{SunWienhardZhang}, the special $(\Tmc,\Jmc)$-parallel vector fields were given by explicitly specifying the vector $\mu\in\mathscr{W}=\mathscr{W}_\Tmc$ that corresponds to each of them via the isomorphism ${\rm d\Omega}$, see Theorem \ref{thm: par}. We now recall these vectors $\mu$. Using Notation \ref{not:8'} to specify the vector $\mu$, we need only to specify
\[\left(\mu^{\mathbf k}_{\widehat{\bf c}}\right)_{(\mathbf k,\widehat{\bf c})\in\Amc\times\Pmc^o}\text{ and }\left(\mu^{\mathbf i}_{\widehat{\mathbf b}}\right)_{(\mathbf i,\widehat{\mathbf b})\in\overline{\Bmc}\times\Mmc}.\]
Of course, one needs to ensure that if $\widehat{\mathbf b}$ and $\widehat{\mathbf b}'$ are non-edge barriers whose associated triples $\mathbf x:=(x_1,x_2,x_3)$ and $\mathbf x':=(x_1',x_2',x_3')$ satisfy $x_1=x_1'$ and $x_2=x_3'$, then $\mu^{(k_1,k_2,0)}_{\widehat{\mathbf b}}=\mu^{(k_1,0,k_2)}_{\widehat{\mathbf b}'}$ for all ${\bf k}=(k_1,k_2)\in\Amc$.

We first describe the vectors $\mu$ associated to two types of special $(\Tmc,\Jmc)$-parallel vector fields, called the \emph{eruption vector fields} and \emph{hexagon vector fields}. These are vector fields associated to each pair of pants $P\in\Pbbb$. To do so, it is convenient to introduce the following notions.%, and their associated $\Tmc$-admissible labellings at $\rho$. Since these flows are each associated to a pair of pants $P$, we first need to label the two ideal triangles that we have cut $P$ into. 

%For each pair of pants $P$ given by $\Pmc$, the ideal triangulation $\Tmc$ cuts $P$ into two ideal triangles. Let $[\{x_1,x_2,x_3\}]$ be the bridge ending ideal triangle of $P$, and let $[\{y_1,y_2,y_3\}]$ be the other triangle in $P$. Choose representatives $\{x_1,x_2,x_3\}$ of $[\{x_1,x_2,x_3\}]$ and $\{y_1,y_2,y_3\}$ of $[\{y_1,y_2,y_3\}]$ such that $x_1=y_1$ and $x_3=y_2$, and note that $x_1=y_1<x_2<x_3=y_2<y_3<x_1=y_1$. Observe that the vertices $x_1$, $x_2$, $x_3$ of $\{x_1,x_2,x_3\}$ are repelling fixed points of three peripheral elements in $\pi_1(P)$, each of which corresponds to a boundary component of $P$. As such, the three non-edge barriers in the ideal triangle $\{x_1,x_2,x_3\}$, when viewed as ordered triples, induces three different orderings on the boundary components of $P$.

\begin{definition}
A \emph{marked pair of pants given by $\Pmc$} is a pair ${\bf P}=(P,\widehat{c})$ such that $P$ is a pair of pants in $\Pbbb$ and $\widehat{c}$ is a boundary component of $P$. The \emph{pair $(\widehat{\mathbf{b}},\widehat{\mathbf{b}}')$ of non-edge barriers associated to $\mathbf P$} are non-edge barriers $\widehat{\mathbf b}$ and $\widehat{\mathbf{b}}'$ in $\Mmc$ such that $\widehat{\bf b}$ lies in $\widehat{T}_P$, $\widehat{\bf b}'$ lies in $\widehat{T}_P'$, and $\widehat{c}$ lies in the closure of both $\widehat{\bf b}$ and $\widehat{\bf b}'$. 
\end{definition}

%Recall that $\Psi=\Psi_{\rho,\Tmc,\Jmc}:T_{[\rho]}\Hit_V(S)\to\mathscr{T}$ is the linear bijection that assigns to every tangent vector in $T_{[\rho]}\Hit_V(S)$ its associated $(\rho,\Sigma,\Tmc,\Jmc)$-tangent cocycle, and $\Xi:\mathscr{T}\to\mathscr{W}$ is the linear bijection that assigns to every $(\rho,\Tmc,\Jmc)$-tangent cocycle its associated $\Tmc$-admissible labelling at $\rho$.
To simplify notation, for any oriented edge $\widehat{\bf e}\in\Tmc^o$ and any ${\bf k}=(k_1,k_2)\in\Amc$, we denote
\[[{\bf k},\widehat{\bf e}]:=\{({\bf k},\widehat{\bf e}),(\bar{\bf k},\widehat{\bar{\bf e}})\},\]
where recall that $\bar{\bf k}:=(k_2,k_1)$ and $\widehat{\bar{\bf e}}$ was defined by Notation \ref{not:+_}(1). Similarly, for every non-edge barrier $\widehat{\bf b}\in \Mmc$ and any ${\bf i}=(i_1,i_2,i_3)\in\Bmc$, we denote
\[[{\bf i},\widehat{\bf b}]:=\{({\bf i},\widehat{\bf b}),({\bf i}_+,\widehat{\bf b}_+),({\bf i}_-,\widehat{\bf b}_-)\},\]
where ${\bf i}_+:=(i_2,i_3,i_1)$, ${\bf i}_-:=(i_3,i_1,i_2)$, and $\widehat{\bf b}_+$ and $\widehat{\bf b}_-$ were defined by Notation \ref{not:+_}(2).

\begin{definition} \label{def:eruption}
For any marked pair of pants $\mathbf P$ given by the pants decomposition $\Pmc$ and any $\mathbf i\in\Bmc$, let $(\widehat{\mathbf{b}},\widehat{\mathbf{b}}')$ be the pair of non-edge barriers associated to $\mathbf P$. Then define $\mu\in \mathscr{W}$ to be the vector such that
\begin{itemize}
\item $\mu^{\mathbf l}_{\widehat{\mathbf f}}=0$ for all $(\mathbf l,\widehat{\bf f})\in\Amc\times\Pmc^o$,
 \item $\mu^{\mathbf j}_{\widehat{\mathbf d}}=\left\{\begin{array}{ll}
 \frac{1}{2}&\text{if }(\mathbf j,\widehat{\mathbf d})\in[\mathbf i,\widehat{\mathbf b}];\\
 - \frac{1}{2}&\text{if }(\mathbf j,\widehat{\mathbf d})\in[\bar{\mathbf i},\widehat{\mathbf b}'];\\
  0&\text{otherwise}\\
 \end{array} \right.$ \\
 for all $(\mathbf j,\widehat{\mathbf d})\in\overline{\Bmc}\times\Mmc$.
 \end{itemize}
The $\mathbf i$-{\em eruption vector field} associated to $\mathbf P$, denoted $\Emc_\mathbf P^\mathbf i$, is the $(\Tmc,\Jmc)$-parallel vector field associated to $\mu$, i.e. ${\rm d}\Omega_{[\rho]}\left(\Emc_\mathbf P^\mathbf i([\rho])\right)=\mu$ for all $[\rho]\in\Hit_V(S)$, see Figure~\ref{fig:specialadmissiblelabellings}. The $\mathbf i$-{\em eruption cocycle} associated to $\mathbf P$ is the $(\rho,\Tmc,\Jmc)$-tangent cocycle $\epsilon^\mathbf i_\mathbf P:=\Psi\left(\Emc_\mathbf P^\mathbf i([\rho])\right)$.
\end{definition}

%\begin{equation}\label{eqn:eruption}
%E_{\mathbf P}^{\mathbf i}:=\frac{1}{2}\left(L_{\mathbf x}^{\mathbf i} - L_{\mathbf y}^{\overline{\mathbf i}}\right),
%\end{equation}

For any $\mathbf i=(i_1,i_2,i_3)\in\Bmc$, denote 
\[\mathbf i':=(i_1,i_3,i_2).\] 
Also, for any triple of integers $(a_1,a_3,a_3)$, denote 
\[\mathbf i(a_1,a_2,a_3):=(i_1+a_1,i_2+a_2,i_3+a_3).\]

\begin{definition} \label{def:hexagon}
For any marked pair of pants $\mathbf P$ given by the pants decomposition $\Pmc$ and any $\mathbf i\in\Bmc$, let $(\widehat{\mathbf{b}},\widehat{\mathbf{b}}')$ be the pair of non-edge barriers associated to $\mathbf P$. Then define $\upsilon,\upsilon'\in \mathscr{W}$ by
\begin{itemize}
\item $\upsilon^{\mathbf l}_{\widehat{\mathbf f}}={\upsilon'}^{\mathbf l}_{\widehat{\mathbf f}}=0$ for all $(\mathbf l,\widehat{\bf f})\in\Amc\times\Pmc^o$,
 \item $\upsilon^{\mathbf j}_{\widehat{\mathbf d}}=\left\{\begin{array}{ll}
1&\text{if }(\mathbf j,\widehat{\mathbf d})\in[\mathbf i(0,1,-1),\widehat{\mathbf b}]\cup[\mathbf i(-1,0,1),\widehat{\mathbf b}]\cup[\mathbf i(1,-1,0),\widehat{\mathbf b}]\\
-1&\text{if }(\mathbf j,\widehat{\mathbf d})\in[\mathbf i(0,-1,1),\widehat{\mathbf b}]\cup[\mathbf i(1,0,-1),\widehat{\mathbf b}]\cup[\mathbf i(-1,1,0),\widehat{\mathbf b}]\\
  0&\text{otherwise}\\
 \end{array} \right.$ \\
 for all $(\mathbf j,\widehat{\mathbf d})\in\overline{\Bmc}\times\Mmc$,
 \item ${\upsilon'}^{\mathbf j}_{\widehat{\mathbf d}}=\left\{\begin{array}{ll}
1&\text{if }(\mathbf j,\widehat{\mathbf d})\in[\mathbf i'(0,-1,1),\widehat{\mathbf b}']\cup[\mathbf i'(1,0,-1),\widehat{\mathbf b}']\cup[\mathbf i'(-1,1,0),\widehat{\mathbf b}']\\
-1&\text{if }(\mathbf j,\widehat{\mathbf d})\in[\mathbf i'(0,1,-1),\widehat{\mathbf b}']\cup[\mathbf i'(-1,0,1),\widehat{\mathbf b}']\cup[\mathbf i'(1,-1,0),\widehat{\mathbf b}']\\
 0&\text{otherwise}\\
 \end{array} \right.$ \\
 for all $(\mathbf j,\widehat{\mathbf d})\in\overline{\Bmc}\times\Mmc$,
 \end{itemize}
and define $\mu:=\upsilon+\upsilon'$. The $\mathbf i$-{\em hexagon vector field} associated to $\mathbf P$, denoted $\Hmc^\mathbf i_\mathbf P$, is the $(\Tmc,\Jmc)$-parallel vector field associated to $\mu$, i.e. ${\rm d}\Omega_{[\rho]}\left(\Hmc_\mathbf P^\mathbf i([\rho])\right)=\mu$ for all $[\rho]\in\Hit_V(S)$, see Figure~\ref{fig:specialadmissiblelabellings}. The $\mathbf i$-{\em hexagon cocycle} associated to $\mathbf P$ is the $(\rho,\Tmc,\Jmc)$-tangent cocycle $\eta^\mathbf i_\mathbf P:=\Psi\left(\Hmc_\mathbf P^\mathbf i([\rho])\right)$.
\end{definition}

%Using Lemma \ref{lem:explicit}, $H_{\mathbf P}^{\mathbf i}$ can be explicitly written as
%\begin{eqnarray}
%H_{\mathbf P}^{\mathbf i}&:=L_{\mathbf x}^{\mathbf i(0,1,-1)} - L_{\mathbf x}^{\mathbf i(-1,1,0)} + L_{\mathbf x}^{\mathbf i(-1,0,1)} - L_{\mathbf x}^{\mathbf i(0,-1,1)} + L_{\mathbf x}^{\mathbf i(1,-1,0)} - L_{\mathbf x}^{\mathbf i(1,0,-1)}\nonumber\\
%&+ L_{\mathbf y}^{\overline{\mathbf i}(0,-1,1)} - L_{\mathbf y}^{\overline{\mathbf i}(-1,0,1)} + L_{\mathbf y}^{\overline{\mathbf i}(-1,1,0)}- L_{\mathbf y}^{\overline{\mathbf i}(0,1,-1)} + L_{\mathbf y}^{\overline{\mathbf i}(1,0,-1)} - L_{\mathbf y}^{\overline{\mathbf i}(1,-1,0)},
 %\end{eqnarray}
%see Figure \ref{fig:specialadmissiblelabellings}. 

Given a marked pair of pants ${\bf P}=(P,\widehat{c})$, let $\widehat{c}_+$ and $\widehat{c}_-$ be the other two boundary components of $P$, so that both $\widehat{\bf b}_+$ and $\widehat{\bf b}'_-$ contain $\widehat{c}_+$ in their closures, and both $\widehat{\bf b}_-$ and $\widehat{\bf b}'_+$ contain $\widehat{c}_-$ in their closures. Then let ${\bf P}_+:=(P,\widehat{c}_+)$ and  ${\bf P}_-:=(P,\widehat{c}_-)$, and note that 
\[\Emc_{\mathbf P}^{\mathbf i}=\Emc_{\mathbf P_+}^{\mathbf i_+}=\Emc_{\mathbf P_-}^{\mathbf i_-}\,\,\text{ and }\,\,\Hmc_{\mathbf P}^{\mathbf i}=\Hmc_{\mathbf P_+}^{\mathbf i_+}=\Hmc_{\mathbf P_-}^{\mathbf i_-}.\] 
for all ${\bf i}\in\Amc$. Thus, every $P\in\Pbbb$ has $\frac{(n-1)(n-2)}{2}$ eruption labellings and $\frac{(n-1)(n-2)}{2}$ hexagon labellings associated to it.

\begin{figure}[ht]
\centering
\includegraphics[scale=0.5]{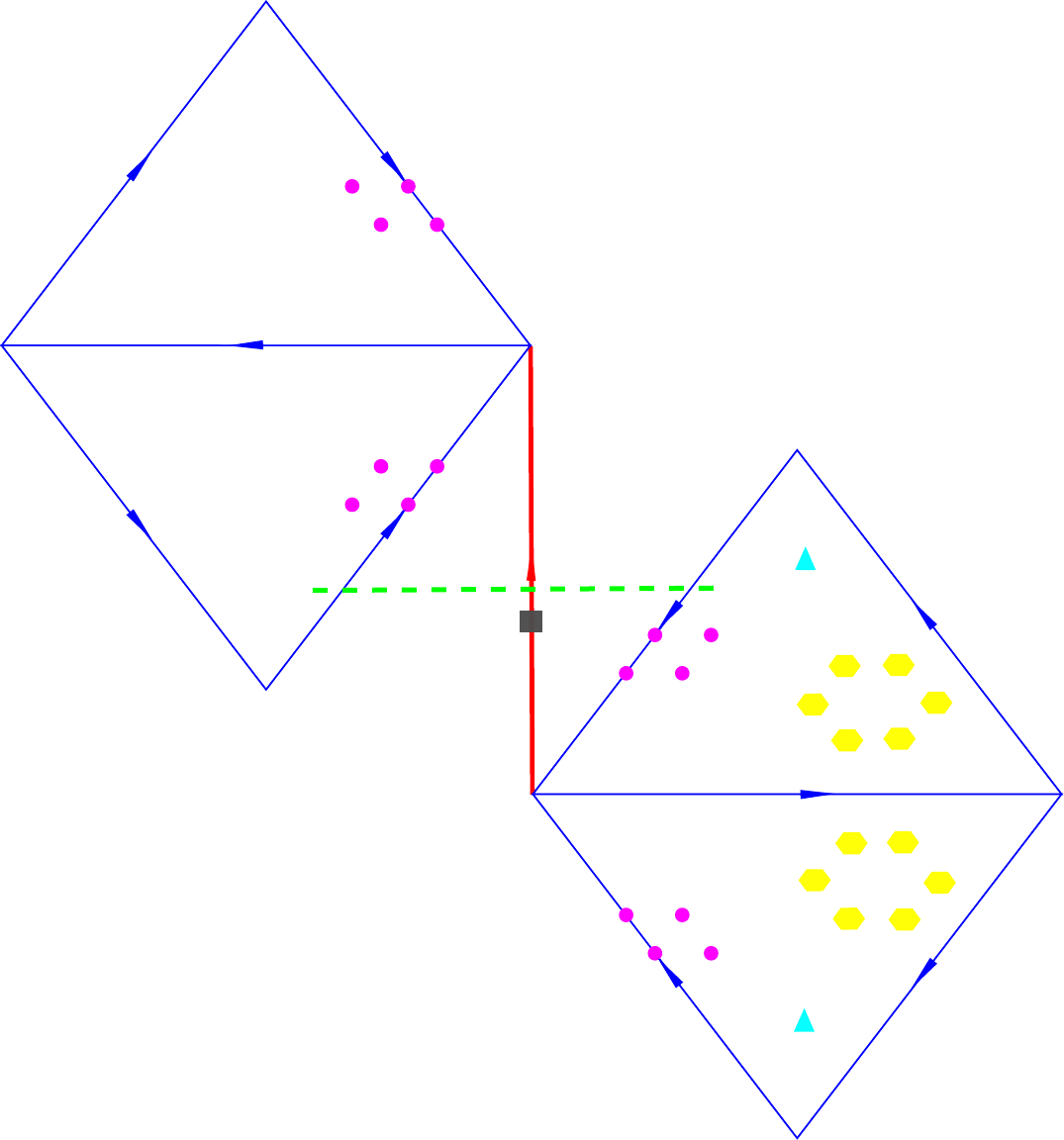}
\footnotesize
\put (-180, 84){$x_1(1)=y_1(1)$}
\put (-72, 170){$x_2(1)$}
\put (0, 84){$x_3(1)=y_2(1)$}
\put (-69, -5){$y_3(1)$}
\put (-128, 192){$x_1(2)=y_1(2)$}
\put (-200, 105){$x_2(2)$}
\put (-310, 192){$x_3(2)=y_2(2)$}
\put (-202, 280){$y_3(2)$}
\put (-66, 146){$\frac{1}{2}$}
\put (-70, 20){$-\frac{1}{2}$}
\put (-55, 119){$1$}
\put (-46, 119){$-1$}
\put (-55, 88){$1$}
\put (-46, 88){$-1$}
\put (-31, 103){$1$}
\put (-72, 103){$-1$}
\put (-55, 76){$1$}
\put (-46, 76){$-1$}
\put (-55, 46){$1$}
\put (-46, 46){$-1$}
\put (-31, 61){$1$}
\put (-72, 61){$-1$}
\put (-146, 121){$-\frac{1}{2}$}
\put (-105, 122){$1$}
\put (-84, 122){$-1$}
\put (-119, 112){$-1$}
\put (-92, 112){$1$}
\put (-105, 44){$1$}
\put (-84, 44){$-1$}
\put (-119, 53){$-1$}
\put (-92, 53){$1$}
\put (-172, 163){$1$}
\put (-151, 163){$-1$}
\put (-185, 153){$-1$}
\put (-158, 153){$1$}
\put (-172, 221){$1$}
\put (-151, 221){$-1$}
\put (-185, 231){$-1$}
\put (-158, 231){$1$}
\caption{A non-isolated edge of $\widetilde{\Tmc}$ is drawn in red, a bridge in $\widetilde{\Jmc}$ across the non-isolated edge is drawn in green, and isolated edges in $\widetilde{\Tmc}$ are draw in blue. Each dot in an ideal triangle represents a triple $\mathbf i\in\Bmc$, and each dot along an edge represents a pair $\mathbf k\in\Amc$. In the picture above, we have drawn a diagramatic representation for the eruption, hexagon, twist and lozenge labellings in turquoise (triangles), yellow (hexagons), grey (squares), and purple (discs) respectively.}\label{fig:specialadmissiblelabellings}
\end{figure}

Next, we describe the vectors $\mu$ associated to the remaining two types of special $(\Tmc,\Jmc)$-parallel vector fields, called the \emph{twist vector fields} and \emph{length vector fields}.  

\begin{definition} \label{def:twist}
For any oriented, non-isolated edge $\widehat{\bf c}$ in $\Pmc^o$ and any $\mathbf k\in\Amc$, let $\mu\in \mathscr{W}$ be the vector defined by
\begin{itemize}
\item $\mu^{\mathbf l}_{\widehat{\mathbf f}}=\left\{\begin{array}{ll}
-\frac{1}{2}&\text{if }(\mathbf l,\widehat{\mathbf f})\in[\mathbf k,\widehat{\bf c}];\\
 0&\text{otherwise}
 \end{array} \right.$\\
   for all $(\mathbf l,\widehat{\bf f})\in\Amc\times\Pmc^o$,
 \item $\mu^{\mathbf j}_{\widehat{\mathbf d}}=0$ for all $(\mathbf j,\widehat{\mathbf d})\in\overline{\Bmc}\times\Mmc$.
 \end{itemize}
The $\mathbf k$-{\em twist vector field} associated to $\widehat{\mathbf c}$, denoted $\Smc_{\widehat{\mathbf c}}^\mathbf k$, is the $(\Tmc,\Jmc)$-parallel vector field associated to $\mu$, i.e. ${\rm d}\Omega_{[\rho]}\left(\Smc_{\widehat{\mathbf c}}^\mathbf k([\rho])\right)=\mu$ for all $[\rho]\in\Hit_V(S)$, see Figure~\ref{fig:specialadmissiblelabellings}. The $\mathbf i$-{\em twist cocycle} associated to $\mathbf P$ is the $(\rho,\Tmc,\Jmc)$-tangent cocycle $\sigma_{\widehat{\mathbf c}}^\mathbf k:=\Psi\left(\Smc_{\widehat{\mathbf c}}^\mathbf k([\rho])\right)$.
\end{definition}

Recall that for any $\mathbf k=(k_1,k_2)\in\Amc$, we denote $\mathbf k_1:=(0,k_1,k_2)$, $\mathbf k_2:=(k_2,0,k_1)$, $\mathbf k_3:=(k_1,k_2,0)$, $\bar{\mathbf k}_1:=(0,k_2,k_1)$, $\bar{\mathbf k}_2:=(k_1,0,k_2)$, and $\bar{\mathbf k}_3:=(k_2,k_1,0)$. 

\begin{definition} \label{def:length}
For any oriented, non-isolated edge $\widehat{\bf c}$ in $\Pmc^o$ and any $\mathbf k\in\Amc$, let $P(1)$ and $P(2)$ be the pairs of pants in $\Pbbb$ that share $\widehat{\bf c}$ as a common boundary component (it is possible that $P(1)=P(2)$), such that $P(1)$ and $P(2)$ lie to the right and left of $\widehat{\bf c}$ respectively. Also, for both $m=1,2$, let ${\bf P}(m):=(P(m),\widehat{c})$, and let $(\widehat{\mathbf b}(m),\widehat{\mathbf b}'(m))$ be the pair of non-edge barriers associated to ${\bf P}(m)$. Then define $\upsilon,\upsilon'\in \mathscr{W}$ by
\begin{itemize}
\item $\upsilon^{\mathbf l}_{\widehat{\mathbf f}}={\upsilon'}^{\mathbf l}_{\widehat{\mathbf f}}=0$ for all $(\mathbf l,\widehat{\bf f})\in\Amc\times\Pmc^o$,
 \item $\upsilon^{\mathbf j}_{\widehat{\mathbf d}}=\left\{\begin{array}{ll}
1&\text{if }(\widehat{\bf d},{\bf j})\in[\widehat{\bf b}(1),{\bf k}_3]\cup[\widehat{\bf b}(1),{\bf k}_3(0,-1,1)];\\
1&\text{if }(\widehat{\bf d},{\bf j})\in[\widehat{\bf b}(2),\bar{\bf k}_3]\cup[\widehat{\bf b}(2),\bar{\bf k}_3(0,-1,1)];\\
-1&\text{if }(\widehat{\bf d},{\bf j})\in[\widehat{\bf b}(1),{\bf k}_3(1,-1,0)]\cup[\widehat{\bf b}(1),{\bf k}_3(-1,0,1)];\\
-1&\text{if }(\widehat{\bf d},{\bf j})\in[\widehat{\bf b}(2),\bar{\bf k}_3(1,-1,0)]\cup[\widehat{\bf b}(2),\bar{\bf k}_3(-1,0,1)];\\
 0&\text{otherwise}
 \end{array} \right.$\\
for all $(\mathbf j,\widehat{\mathbf d})\in\overline{\Bmc}\times\Mmc$,
\item ${\upsilon'}^{\mathbf j}_{\widehat{\mathbf d}}=\left\{\begin{array}{ll}
1&\text{if }(\widehat{\bf d},{\bf j})\in[\widehat{\bf b}'(1),\bar{\bf k}_2]\cup[\widehat{\bf b}'(1),\bar{\bf k}_2(0,1,-1)];\\
1&\text{if }(\widehat{\bf d},{\bf j})\in[\widehat{\bf b}'(2),{\bf k}_2]\cup[\widehat{\bf b}'(2),{\bf k}_2(0,1,-1)];\\
-1&\text{if }(\widehat{\bf d},{\bf j})\in[\widehat{\bf b}'(1),\bar{\bf k}_2(1,0,-1)]\cup[\widehat{\bf b}'(1),\bar{\bf k}_2(-1,1,0)];\\
-1&\text{if }(\widehat{\bf d},{\bf j})\in[\widehat{\bf b}'(2),{\bf k}_2(1,0,-1)]\cup[\widehat{\bf b}'(2),{\bf k}_2(-1,1,0)];\\
 0&\text{otherwise}
\end{array} \right.$\\
for all $(\mathbf j,\widehat{\mathbf d})\in\overline{\Bmc}\times\Mmc$,
 \end{itemize}
 and define $\mu:=\upsilon+\upsilon'$.
\begin{enumerate}
\item The $\mathbf k$-{\em lozenge vector field} associated to $\widehat{\mathbf c}$, denoted $\Zmc_{\widehat{\mathbf c}}^\mathbf k$, is the $(\Tmc,\Jmc)$-parallel vector field associated to $\mu$, i.e. ${\rm d}\Omega_{[\rho]}\left(\Zmc_{\widehat{\mathbf c}}^\mathbf k([\rho])\right)=\mu$ for all $[\rho]\in\Hit_V(S)$, see Figure~\ref{fig:specialadmissiblelabellings}. The $\mathbf k$-{\em lozenge cocycle} associated to $\mathbf P$ is the $(\rho,\Tmc,\Jmc)$-tangent cocycle $\zeta_{\widehat{\mathbf c}}^\mathbf k:=\Psi\left(\Zmc_{\widehat{\mathbf c}}^\mathbf k([\rho])\right)$.
\item The $\mathbf k$-{\em length vector field} associated to $\widehat{\mathbf c}$, denoted $\Ymc_{\widehat{\mathbf c}}^\mathbf k$, is the $(\Tmc,\Jmc)$-parallel vector field defined by
\[\Ymc_{\widehat{\mathbf c}}^\mathbf k:=\Zmc^\mathbf k_{\widehat{\mathbf c}}+2\Emc^{\mathbf k_3(0,-1,1)}_{\mathbf P(1)}-2\Emc^{\mathbf k_3(-1,0,1)}_{\mathbf P(1)}+2\Emc^{ \bar{\mathbf k}_3(0,-1,1)}_{\mathbf P(2)}-2\Emc^{\bar{\mathbf k}_3(-1,0,1)}_{\mathbf P(2)},\]
where the vector field $\Emc^{\mathbf k_3(0,-1,1)}_{\mathbf P(1)}$ (resp. $\Emc^{\mathbf k_3(-1,0,1)}_{\mathbf P(1)}$, $\Emc^{ \bar{\mathbf k}_3(0,-1,1)}_{\mathbf P(2)}$, and $\Emc^{\bar{\mathbf k}_3(-1,0,1)}_{\mathbf P(2)}$) is defined to be the zero vector field if $\mathbf k_3(0,-1,1)$ (resp. $\mathbf k_3(-1,0,1)$, $\bar{\mathbf k}_3(0,-1,1)$, $\bar{\mathbf k}_3(-1,0,1)$) does not lie in $\Bmc$.
The $\mathbf k$-{\em length cocycle} associated to $\widehat{\mathbf c}$ is the $(\rho,\Tmc,\Jmc)$-tangent cocycle $\gamma_{\widehat{\mathbf c}}^\mathbf k:=\Psi\left(\Ymc_{\widehat{\mathbf c}}^\mathbf k([\rho])\right)$. 
\end{enumerate}
\end{definition}

\begin{remark}\label{rem: correction}
In \cite{SunWienhardZhang}, the $\mathbf k$-length vector field associated to $\widehat{\mathbf c}$ was defined as
\[\Ymc_{\widehat{\mathbf c}}^\mathbf k:=\Zmc^\mathbf k_{\widehat{\mathbf c}}+\Emc^{\mathbf k_3(0,-1,1)}_{\mathbf P(1)}-\Emc^{\mathbf k_3(-1,0,1)}_{\mathbf P(1)}+\Emc^{ \bar{\mathbf k}_3(0,-1,1)}_{\mathbf P(2)}-\Emc^{\bar{\mathbf k}_3(-1,0,1)}_{\mathbf P(2)}.\]
Notice that this is not the same as the definition given in Definition \ref{def:length}. The corrected definition (given in Definition \ref{def:length}) is required for Theorem \ref{thm:symplectic basis} to hold.
\end{remark}

%We refer to 
%\[\left(\widehat{\mathbf b}(1),\widehat{\mathbf b}'(1),\widehat{\mathbf b}(2),\widehat{\mathbf b}'(2)\right)\] 
%as the \emph{quadruple of non-edge barriers associated to $\widehat{\bf c}$}. 

Observe that for any oriented, non-isolated edge $\widehat{\mathbf c}\in\Tmc^o$ and any $\mathbf k\in\Amc$, we have
\[\Smc_{\widehat{\mathbf c}}^{\mathbf k}=\Smc_{\widehat{\bar{\mathbf c}}}^{\bar{\mathbf k}},\quad \Zmc_{\widehat{\mathbf c}}^{\mathbf k}=\Zmc_{\widehat{\bar{\mathbf c}}}^{\bar{\mathbf k}},\,\,\text{ and }\,\,\Ymc_{\widehat{\mathbf c}}^{\mathbf k}=\Ymc_{\widehat{\bar{\mathbf c}}}^{\bar{\mathbf k}}.\] 
Thus, every $\widehat{c}\in\Pmc$ has $n-1$ twist, and length vector fields associated to it.

%By Lemma \ref{lem:explicit}, the $\mathbf k$-lozenge labelling associated to $\mathbf c$ can be written as
%\begin{eqnarray}\label{eqn:lozenge}
%Z_{\mathbf c}^\mathbf k&=&L_{\mathbf x(1)}^{\mathbf k_1^3} + L_{\mathbf x(1)}^{\mathbf k_1^3(0,-1,1)} -L_{\mathbf x(1)}^{\mathbf k_1^3(1,-1,0)} - L_{\mathbf x(1)}^{\mathbf k_1^3(-1,0,1)}\\\nonumber
%&&L_{\mathbf y(1)}^{\mathbf k_1^2} + L_{\mathbf y(1)}^{\mathbf k_1^2(0,1,-1)} -L_{\mathbf y(1)}^{\mathbf k_1^2(1,0,-1)} - L_{\mathbf y(1)}^{\mathbf k_1^2(-1,1,0)}\\\nonumber
%&& L_{\mathbf x(2)}^{\mathbf k_2^3} + L_{\mathbf x(2)}^{\mathbf k_2^3(0,-1,1)} -L_{\mathbf x(2)}^{\mathbf k_2^3(1,-1,0)}- L_{\mathbf x(2)}^{\mathbf k_2^3(-1,0,1)}\\\nonumber
%&& L_{\mathbf y(2)}^{\mathbf k_2^2} + L_{\mathbf y(2)}^{\mathbf k_2^2(0,1,-1)} -L_{\mathbf y(2)}^{\mathbf k_2^2(1,0,-1)} - L_{\mathbf y(2)}^{\mathbf k_2^2(-1,1,0)},\nonumber
%\end{eqnarray}
%see Figure \ref{fig:specialadmissiblelabellings}, and the $\mathbf k$-length labelling associated to $\mathbf c$ can be written as
%\[Y_\mathbf c^\mathbf k:=Z^\mathbf k_\mathbf c+E^{\mathbf k^3_1(0,-1,1)}_{\mathbf P(1)}-E^{\mathbf k^3_1(-1,0,1)}_{\mathbf P(1)}+E^{\mathbf k^3_2(0,-1,1)}_{\mathbf P(2)}-E^{\mathbf k^3_2(-1,0,1)}_{\mathbf P(2)}.\]

\begin{definition} \label{def:special}
Any eruption, hexagon, twist, or length vector field is a \emph{special} $(\Tmc,\Jmc)$-parallel vector field. Also, any eruption, hexagon, twist, or length cocycle is a \emph{special} $(\rho,\Tmc,\Jmc)$-tangent cocycle.
\end{definition}

There are $(n^2-1)(2g-2)$ special admissible labellings in total. Our next and final goal is to prove the following theorem. 

\begin{thm}\label{thm:symplectic basis}
Let $\Xmc$ and $\Xmc'$ be any pair of special $(\Tmc,\Jmc)$-parallel vector fields and let $[\rho]\in\Hit_V(S)$ be any point. Let $\widehat{\bf c}\in\Pmc^o$, and let $\mathbf P$ be a marked pair of pants given by $\Pmc$.
\begin{itemize}
\item If $\Xmc'=\Smc^\mathbf k_{\widehat{\mathbf c}}$, then 
\[\omega_{[\rho]}\left(\Xmc'([\rho]),\Xmc([\rho])\right)=\left\{\begin{array}{ll}
1&\text{if }\Xmc=\Ymc^\mathbf k_{\widehat{\mathbf c}};\\
0&\text{otherwise.}
\end{array}\right.\] 
\item If $\Xmc'=\Ymc^{\mathbf k}_{\widehat{\mathbf c}}$, then 
\[\omega_{[\rho]}\left(\Xmc'([\rho]),\Xmc([\rho])\right)=\left\{\begin{array}{ll}
-1&\text{if }\Xmc=\Smc^\mathbf k_{\widehat{\mathbf c}};\\
0&\text{otherwise.}
\end{array}\right.\] 
\item If $\Xmc'=\Emc^{\mathbf i}_{\mathbf P}$, then 
\[\omega_{[\rho]}\left(\Xmc'([\rho]),\Xmc([\rho])\right)=\left\{\begin{array}{ll}
1&\text{if }\Xmc=\Hmc^{\mathbf i}_{\mathbf P};\\
0&\text{otherwise.}
\end{array}\right.\] 
\item If $\Xmc'=\Hmc^{\mathbf i}_{\mathbf P}$, then 
\[\omega_{[\rho]}\left(\Xmc'([\rho]),\Xmc([\rho])\right)=\left\{\begin{array}{ll}
-1&\text{if }\Xmc=\Emc^{\mathbf i}_{\mathbf P};\\
0&\text{otherwise.}
\end{array}\right.\] 
\end{itemize}
\end{thm}

Theorem \ref{thm:symplectic basis} and Theorem \ref{thm:constant} together imply the following corollary.

\begin{cor}\label{cor:symplectic}
The set of special $(\Tmc,\Jmc)$-parallel vector fields, when evaluated at any $[\rho]\in\Hit_V(S)$, give a symplectic basis of $T_{[\rho]}\Hit_V(S)$.
\end{cor}

For any special $(\Tmc,\Jmc)$-parallel vector field $\Xmc$, let $\Xmc^*$ be the special $(\Tmc,\Jmc)$-parallel vector field that is dual to $\Xmc$, i.e. $\omega(\Xmc^*,\Xmc)=1$. By Theorem \ref{thm:symplectic basis}, $\Xmc^*$ can be described explicitly as
\[\Xmc^*=\left\{\begin{array}{ll}
\Smc^\mathbf k_{\widehat{\mathbf c}}&\text{if }\Xmc=\Ymc^\mathbf k_{\widehat{\mathbf c}};\\
-\Ymc^\mathbf k_{\widehat{\mathbf c}}&\text{if }\Xmc=\Smc^\mathbf k_{\widehat{\mathbf c}};\\
\Emc^\mathbf i_\mathbf P&\text{if }\Xmc=\Hmc^\mathbf i_\mathbf P;\\
-\Hmc^\mathbf i_\mathbf P&\text{if }\Xmc=\Emc^\mathbf i_\mathbf P;\\
\end{array}\right.\]

In the companion paper \cite[Theorem 8.18 and Theorem 8.22]{SunWienhardZhang}, the authors computed, for every special $(\Tmc,\Jmc)$-parallel vector field $\Xmc$, a real-analytic function $H(\Xmc):\Hit_V(S)\to\Rbbb$ with the defining property that the derivative of $H(\Xmc)$ is $1$ in the direction of $\Xmc^*$, and $0$ in the direction of any other special $(\Tmc,\Jmc)$-parallel vector field. These functions were written in terms of triple ratios and cross ratios of certain flags along the flag maps of Hitchin representations. As a consequence of Corollary \ref{cor:symplectic}, we have the following corollary.

\begin{cor}
For any special $(\Tmc,\Jmc)$-parallel vector field $\Xmc$, the function $H(\Xmc)$ is the Hamiltonian function of $\Xmc$. Furthermore, the collection of functions
\[\{H(\Xmc):\Xmc\text{ is a special }(\Tmc,\Jmc)\text{-parallel vector field}\}\]
is a global Darboux coordinate system on $\Hit_V(S)$, i.e.
\[\omega=\sum_{\mathbf i\in\Bmc}\sum_{\mathbf P}{\rm d}(H(\Emc^\mathbf i_\mathbf P))\wedge {\rm d}(H(\Hmc^\mathbf i_\mathbf P))+\sum_{\mathbf k\in\Amc}\sum_{\mathbf c}{\rm d}(H(\Smc^\mathbf k_\mathbf P))\wedge {\rm d}(H(\Ymc^\mathbf k_\mathbf P)).\]
\end{cor}

\subsection{Internal and external parts}\label{sec:Tbbb}
%%%%%%%%%%%%%%%%%%%%%%%%%%%%%%%%%%%%%%%%%%%%%%%%%%

Since $\omega_{[\rho]}$ is skew-symmetric, to prove Theorem \ref{thm:symplectic basis}, it is sufficient to compute $\omega_{[\rho]}\left(\Xmc'([\rho]),\Xmc([\rho])\right)$ in the following cases:
\begin{enumerate}
\item \label{case:s} $\Xmc'$ is a twist vector field and $\Xmc$ is any special $(\Tmc,\Jmc)$-parallel vector field;
\item \label{case:e} $\Xmc'$ is an eruption cocycle, $\Xmc$ is an eruption, hexagon or length vector field;
\item \label{case:h} $\Xmc'$ is a hexagon cocycle, $\Xmc$ is a hexagon or length vector field;
\item \label{case:z} $\Xmc'$ and $\Xmc$ are both length vector fields.
\end{enumerate}
%We say that a pair $(\Xmc',\Xmc)$ of special $(\Tmc,\Jmc)$-parallel vector fields is a \emph{considered pair} if it is one of the pairs described by the cases (1) to (4) above. The number (1) to (4) above is the \emph{type} of the considered pair.

The computation of $\omega_{[\rho]}\big(\Xmc',\Xmc\big)$ for the above four cases is elementary but long. The goal of the rest of this paper is to perform this computation. As such, we fix once and for all the following data:
\begin{itemize}
\item a Hitchin representation $\rho:\Gamma\to\PGL(V)$ that represents $[\rho]$,
\item a triangulation $\Tbbb$ of $\Sigma$ that is well-adapted to $(\Tmc,\Jmc)$, see Section \ref{sec:triangulation}.
\end{itemize}
We will also choose an enumeration of the vertices of $\Tbbb$ and choose a representative in $\widetilde{\Tbbb}$ for each triangle in $\Tbbb$ as follows.

Recall that $\Tbbb$ consists of two types of triangles, the internal triangles and the external triangles (see Definition \ref{def: internal and external}). Each pair of pants $P$ of the pants decomposition $\Pmc$ contains two internal triangles and twelve external triangles. Let $\widehat{\delta}_P$ be the internal triangle in $P$ such that the ideal triangle of $\Tmc$ that contains $\widehat{\delta}_P$ contains the endpoints of all the bridges that intersect $P$, and let $\widehat{\delta}_P'$ be the other internal triangle in $P$. Enumerate the vertices of $\Tbbb$ by $\{v_1,\dots,v_{12g-12}\}$ such that the following holds:
\begin{itemize}
\item If $v_a$, $v_b$, and $v_c$ are the vertices of $\widehat{\delta}_P$ with $a<b<c$, then the edges from $v_a$ to $v_b$ and  from $v_b$ to $v_c$ are in the anti-clockwise direction about $\widehat{\delta}_P$.
\item If $v_a$ is a vertex in the interior of a pair of pants in $\Pbbb$ and $v_b$ is a vertex in a non-isolated edge in $\Pmc$, then $a<b$.
\item If $v_a$ and $v_b$ are the two vertices on the same non-isolated edge in $\Pmc$ and $v_a$ is the vertex that lies in a bridge parallel edge, then $a<b$.
\end{itemize}
%This enumeration induces an orientation on the edges of $\Tbbb$ as described in Section \ref{sec:triangulation}. 

Recall that $\pi_S:\Std\to S$ denotes the covering map. Choose triangles $\delta_P$ and $\delta_P'$ of $\widetilde{\Tbbb}$ such that $\pi_S(\delta_P)=\widehat{\delta}_P$ and $\pi_S(\delta_P')=\widetilde{\delta}_P'$, so that $\delta_P$ and $\delta_P'$ share a common vertex $v$ with the property that if $v_a$, $v_b$, and $v_c$ are the vertices of $\widehat{\delta}_P$ with $a<b<c$, then the endpoints of $\pi_S(v)=v_c$. The union of these twelve external triangles in $P$ are the three boundary cylinders of $P$, each of which is a union of four external triangles. Choose a lift to $\Std$ of each of these external triangles, such that 
\begin{itemize}
\item the lifts of the four external triangles in the same boundary cylinder of $P$ lie in the same fundamental block, see Section \ref{sec:triangulation}. 
\item for any oriented, non-isolated edge $\widehat{\bf c}\in\Pmc^o$, there is a lift ${\bf c}$ of $\widehat{\bf c}$ such that if $D(1)$ is the fundamental block that contains the lifts of the four external triangles to the right of $\widehat{\bf c}$ and $D(2)$ is the fundamental block that contains the lifts of the four external triangles to the left of $\widehat{\bf c}$, then $\partial D(1)\cap{\bf c}$ and $\partial D(2)\cap{\bf c}$ are non-empty and coincide.
\end{itemize}
We have thus chosen a representative in $\widetilde{\Tbbb}$ for each triangle in $\Tbbb$. Let $\Tbbb_{\rm lift}$ be the collection of these choices, and let $\Tbbb^{\mathrm ext}$ and $\Tbbb^{\mathrm int}$ denote the set of external and internal triangles in $\Tbbb_{\rm lift}$ respectively. 

For each triangle $\delta\in\Tbbb_{\rm lift}$, the enumeration we chose on the vertices of $\Tbbb$ picks out a pair of edges $e_{\delta,1}$ and $e_{\delta,2}$ of $\delta$ that are equipped with an induced orientation described in Section \ref{sec:triangulation}. Then by (\ref{eqn:formula}) we may decompose
\begin{align*}
\omega_{[\rho]}\left(\Xmc'([\rho]),\Xmc([\rho])\right)=\sum_{\delta\in\mathbb{T}^{\mathrm{ext}}}\sgn(\delta)&\tr\big(\widetilde{\nu}'(e_{\delta,1})\cdot\widetilde{\nu}(e_{\delta,2})\big)\\
&+\sum_{\delta\in\mathbb{T}^{\mathrm{int}}}\sgn(\delta)\tr\big(\widetilde{\nu}'(e_{\delta,1})\cdot\widetilde{\nu}(e_{\delta,2})\big),
\end{align*}
where $\widetilde{\nu}'$ and $\widetilde{\nu}$ are the $\Ad\circ\rho$-equivariant lift of the $(\rho,\Tmc,\Jmc)$-tangent cocycle $\nu':=\Psi(\Xmc'([\rho]))$ and $\nu:=\Psi(\Xmc([\rho]))$ respectively.
We refer to the first and second sum on the right hand side above as the \emph{internal} and \emph{external} parts of $\omega_{[\rho]}\left(\Xmc'([\rho]),\Xmc([\rho])\right)$ respectively. We will compute these two parts separately in the following two sections.

\begin{figure}[ht]
\centering
\includegraphics[scale=0.9]{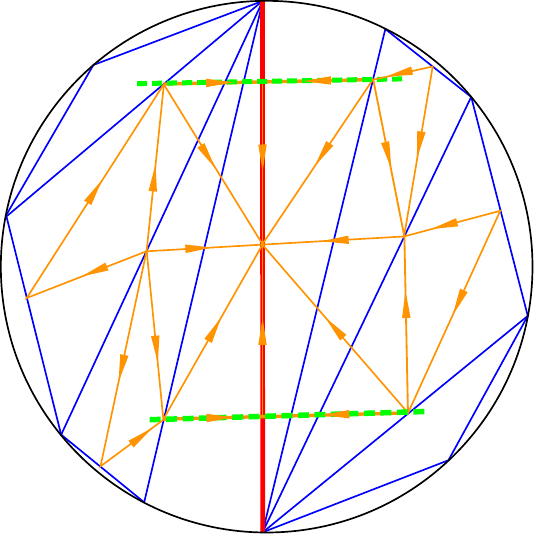}
\small
\caption{A non-isolated edge in $\widetilde{\Tmc}$ is drawn in thick red, the isolated edges in $\widetilde{\Tmc}$ are drawn in blue, and the bridges in $\widetilde{\Jmc}$ are drawn in dotted green. The edges of the $(\Tmc,\Jmc)$-triangulation $\Tbbb$ are drawn in orange. The bridge parallel edges of $\Tbbb$ are the thicker orange lines.}\label{fig:cylinder1}
\end{figure}

%%%%%%%%%%%%%%%%%%%%%%%%%%%%%%%%%%%%%%%%%%%%%%%%%%
\subsection{The internal part of $\omega_{[\rho]}\left(\Xmc'([\rho]),\Xmc([\rho])\right)$}\label{sec:computational tools}
%%%%%%%%%%%%%%%%%%%%%%%%%%%%%%%%%%%%%%%%%%%%%%%%%%
The following notion is useful to calculate internal part of $\omega_{[\rho]}\left(\Xmc'([\rho]),\Xmc([\rho])\right)$.

\begin{definition}
Let $P\in\Pbbb$ be a pair of pants. Choose a boundary curve $c$ of $P$, let ${\bf P}:=(P,c)$, and let $(\widehat{\mathbf b},\widehat{\mathbf b}')$ be the pair of non-edge barriers in $\Dmc$ associated to $\mathbf P$. A vector $\mu\in\mathscr{W}$ is {\em symmetric (resp. skew-symmetric)} in $P$ if $\mu^{\mathbf i}_{\widehat{\mathbf b}}=\mu^{\mathbf i'}_{\widehat{\mathbf b}'}$ (resp. $\mu^{\mathbf i}_{\widehat{\mathbf b}}=-\mu^{\mathbf i'}_{\widehat{\mathbf b}'}$) for all $\mathbf i\in\overline{\Bmc}$. A $(\Tmc,\Jmc)$-parallel vector field $\Xmc$ on $\Hit_V(S)$ is {\em symmetric (resp. skew-symmetric)} in $P$ if the vector $\mu:={\rm d}\Omega_{[\rho]}(\Xmc([\rho]))$ for some (any) $[\rho]\in\Hit_V(S)$ is symmetric (resp. skew-symmetric) in $P$. %$(\rho,\Sigma,\Tmc,\Jmc)$-tangent cocycle $\nu$ is {\em symmetric (resp. skew-symmetric)} in $P$ if the $\Tmc$-admissible labelling $\Xi(\nu)$ is symmetric (resp. skew-symmetric).
\end{definition}
%%draw picture

By the symmetries of $\mu$, note that the notion of being symmetric or skew-symmetic in $P$ does not depend on the choice of $\mathbf P$. Observe that the eruption and twist vector fields are skew-symmetric in every pair of pants in $\Pbbb$, and the twist, hexagon and lozenge vector fields are symmetric in every pair of pants in $\Pbbb$.

The next lemma gives an easily verified condition that is useful for calculating the internal part of $\omega_{[\rho]}\left(\Xmc'([\rho]),\Xmc([\rho])\right)$ in many cases. 

\begin{lem}\label{lem:internal}
Let $P$ be a pair of pants in $\Pbbb$, let $\Xmc'$ and $\Xmc$ be $(\Tmc,\Jmc)$-parallel vector fields, and let $\widetilde{\nu}'$ and $\widetilde{\nu}$ be the $\Ad\circ\rho$-equivariant lift of the $(\rho,\Tmc,\Jmc)$-tangent cocycles $\nu':=\Psi(\Xmc'([\rho]))$ and $\nu:=\Psi(\Xmc([\rho]))$ respectively. 
\begin{enumerate}
\item If $\Xmc'$ and $\Xmc$ are either both symmetric or both skew-symmetric in $P$, then
\[\tr\big(\widetilde{\nu}'(e_{\delta_P,1})\cdot\widetilde{\nu}(e_{\delta_P,2})\big)=\tr\big(\widetilde{\nu}'(e_{\delta_P',1})\cdot\widetilde{\nu}(e_{\delta_P',2})\big).\]
In particular, if $\Xmc'$ and $\Xmc$ are either both symmetric or both skew-symmetric in every pair of pants in $\Pbbb$, then the internal part of $\omega_{[\rho]}\left(\Xmc'([\rho]),\Xmc([\rho])\right)$ is zero.
\item If one of $\Xmc'$ or $\Xmc$ is symmetric in $P$, while the other is skew-symmetric in $P$, then
\[\tr\big(\widetilde{\nu}'(e_{\delta_P,1})\cdot\widetilde{\nu}(e_{\delta_P,2})\big)=-\tr\big(\widetilde{\nu}'(e_{\delta_P',1})\cdot\widetilde{\nu}(e_{\delta_P',2})\big)\]
\end{enumerate}
\end{lem} 

\begin{proof}
Let $c$ be the boundary component of $P$ such that if $(\widehat{\bf b},\widehat{\bf b}')$ is the pair of non-edge barriers associated to ${\bf P}:=(P,c)$, then there are (unique) lifts ${\bf b}$ and ${\bf b}'$ in $\widetilde{\Mmc}$ of $\widehat{\bf b}$ and $\widehat{\bf b}'$ respectively, such that ${\bf b}$ intersects the interior of $e_{\delta_P,1}$ and ${\bf b}'$ intersects the interior of $e_{\delta_P,2}$. Let ${\bf x}$ be the triple associated to ${\bf b}$ and let ${\bf x}'$ be the triple associated to ${\bf b}'$.
By \eqref{eqn: simplify not},
\begin{eqnarray*}
\widetilde{\nu}'(e_{\delta_P,1})&=&-\frac{1}{2}\sum_{\mathbf i\in\overline{\Bmc}\setminus\Bmc}{\mu'}^\mathbf i_{\widehat{\mathbf b}}A^{\mathbf i}_{\xi_\rho(\mathbf x)}-\sum_{\mathbf i\in\Bmc}{\mu'}^\mathbf i_{\widehat{\mathbf b}}A^{\mathbf i}_{\xi_\rho(\mathbf x)},\\
\widetilde{\nu}(e_{\delta_P,2})&=&-\frac{1}{2}\sum_{\mathbf i\in\overline{\Bmc}\setminus\Bmc}\mu^\mathbf i_{\widehat{\mathbf b}_+}A^{\mathbf i}_{\xi_\rho(\mathbf x_+)}-\sum_{\mathbf i\in\Bmc}\mu^\mathbf i_{\widehat{\mathbf b}_+}A^{\mathbf i}_{\xi_\rho(\mathbf x_+)},\\
\widetilde{\nu}'(e_{\delta_P',1})&=&\frac{1}{2}\sum_{\mathbf i\in\overline{\Bmc}\setminus\Bmc}{\mu'}^\mathbf i_{\widehat{\mathbf b}'}A^{\mathbf i}_{\xi_\rho(\mathbf x')}+\sum_{\mathbf i\in\Bmc}{\mu'}^\mathbf i_{\widehat{\mathbf b}'}A^{\mathbf i}_{\xi_\rho(\mathbf x')},\\
\widetilde{\nu}(e_{\delta_P',2})&=&\frac{1}{2}\sum_{\mathbf i\in\overline{\Bmc}\setminus\Bmc}\mu^\mathbf i_{\widehat{\mathbf b}'_-}A^{\mathbf i}_{\xi_\rho(\mathbf x'_-)}+\sum_{\mathbf i\in\Bmc}\mu^\mathbf i_{\widehat{\mathbf b}'_-}A^{\mathbf i}_{\xi_\rho(\mathbf x'_-)}.\\
\end{eqnarray*}

Lemma \ref{lem:constant0}(1) implies that for any $\mathbf i,\mathbf j\in\overline{\Bmc}$, 
\[\tr\left(A^{\mathbf i}_{\xi_\rho(\mathbf x)}\cdot A^{\mathbf j}_{\xi_\rho(\mathbf x_+)}\right)=[\min\{i_1-j_3,j_1-i_2\}]_+-\frac{i_1j_1}{n}=\tr\left(A^{\mathbf i'}_{\xi_\rho(\mathbf x')}\cdot A^{\mathbf j'}_{\xi_\rho(\mathbf x'_-)}\right).\]
By definition, if $\Xmc'$ and $\Xmc$ are either both symmetric in $P$ or skew-symmetric in $P$, then 
\[{\mu'}^{\mathbf i}_{\widehat{\mathbf b}}\,\mu^{\mathbf j}_{\widehat{\mathbf b}_+}={\mu'}^{\mathbf i}_{\widehat{\mathbf b}}\,\mu^{\mathbf j_-}_{\widehat{\mathbf b}}={\mu'}^{\mathbf i'}_{\widehat{\mathbf b}'}\,\mu^{(\mathbf j_-)'}_{\widehat{\mathbf b}'}={\mu'}^{\mathbf i'}_{\widehat{\mathbf b}'}\,\mu^{\mathbf j'}_{\widehat{\mathbf b}'_-}\]
for all $\mathbf i,\mathbf j\in\overline{\Bmc}$, so (1) holds. On the other hand, when one of $\Xmc'$ or $\Xmc$ is symmetric in $P$ while the other is skew-symmetric in $P$, then
\[{\mu'}^{\mathbf i}_{\widehat{\mathbf b}}\,\mu^{\mathbf j}_{\widehat{\mathbf b}_+}=-{\mu'}^{\mathbf i'}_{\widehat{\mathbf b}'}\,\mu^{\mathbf j'}_{\widehat{\mathbf b}'_-}\]
for all $\mathbf i,\mathbf j\in\overline{\Bmc}$, so (2) holds.
\end{proof}

\subsection{The external part of $\omega_{[\rho]}\left(\Xmc'([\rho]),\Xmc([\rho])\right)$}\label{sec:computational tools2}
%%%%%%%%%%%%%%%%%%%%%%%%%%%%%%%%%%%%%%%%%%%%%%%%%%

To calculate the external part of $\omega_{[\rho]}\left(\Xmc'([\rho]),\Xmc([\rho])\right)$, we consider each fundamental block separately.

For any fundamental block $D$, recall that $\Tbbb(D)$ denote the four triangles in $\widetilde{\Tbbb}$ that lie in $D$. By the way we enumerate the vertices of $\Tbbb$, note that all the crossing edges of $\Tbbb(D)$ are oriented so that their forward endpoints lie in a non-isolated edge of $\widetilde{\Tmc}$, and all the external edges of $\Tbbb(D)$ are oriented so that their backward endpoints lie in a bridge in $\widetilde{\Jmc}$. However, the orientation of the internal edges of $\Tbbb(D)$ depends on the position of the boundary cylinder $\pi(D)$ relative to the enumeration of the vertices of $\Tbbb$ in the pair of pants containing $\pi(D)$. This motivates the following definition.

\begin{definition}
Let $D$ be a fundamental block and let $P$ be the pair of pants  in $\Pbbb$ containing the boundary cylinder $\pi(D)$. We say $D$ is \emph{consistent} if the boundary of $\pi(D)$ contains edges of the form $\pi(e_{\delta_P,l})$ or $\pi(e_{\delta_P',l})$, where $l=1,2$. 
\end{definition}

If we have three fundamental blocks in $\widetilde\Sigma$ that project to the three different boundary cylinders of a pair of pants  in $\Pbbb$, then the enumeration of the vertices of $\Tbbb$ imply that of these three fundamental blocks, exactly two of them are consistent, and exactly one of them is inconsistent. When a fundamental block is consistent (resp. inconsistent), its internal edges are oriented so that their backward (resp. forward) endpoints lie in a bridge crossing edge. 

To write down formulas, we use the following notation.
\begin{notation}\label{not:block}
Let $D$ be a fundamental block. 
\begin{itemize} 
\item Let $u_1$ and $u_5$ be the bridge parallel edges in $D$, oriented so that their forward endpoints lie in a non-isolated edge, and $D$ lies to the left of $u_1$ and right of $u_5$. 
\item Let $\delta_1,\dots,\delta_4$ be the four triangles whose union is $D$, such that $\delta_1$ has $u_1$ as an edge, $\delta_4$ has $u_5$ as an edge, and $\delta_l$ and $\delta_{l+1}$ share a common crossing edge for all $l=1,2,3$. 
\item For $l=1,2,3$, let $u_{l+1}$ denote the common edge shared by $\delta_l$ and $\delta_{l+1}$, oriented so that its forward endpoint lie in a non-isolated edge. 
\item For $l=1,\dots,4$, let $v_l$ be the unique edge of $\delta_l$ that is not a crossing edge, oriented so that their backward endpoints lie in the bridge crossing edge.
\end{itemize}
\end{notation} 

\begin{figure}[ht]
\centering
\includegraphics[scale=0.8]{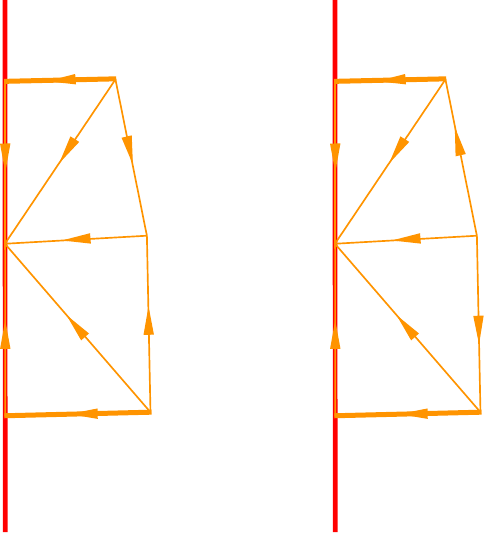}
\put (-173, 155){$\delta_1$}
%\put (-175, 142){$u_2$}
\put (-150, 134){$\delta_2$}
%\put (-153, 116){$u_3$}
\put (-160, 95){$\delta_3$}
%\put (-152, 75){$u_4$}
\put (-173, 65){$\delta_4$}
\put (-46, 155){$\delta_1$}
\put (-33, 127){$\delta_2$}
\put (-33, 95){$\delta_3$}
\put (-46, 65){$\delta_4$}
\put (-165, 182){$u_1$}
\put (-197, 145){$v_1$}
\put (-134, 145){$v_2$}
\put (-127, 75){$v_3$}
\put (-197, 75){$v_4$}
\put (-180, 36){$\gamma^{-1}\cdot u_1=u_5$}
\put (-38, 182){$u_1$}
\put (-70, 145){$v_1$}
\put (-7, 145){$-v_2$}
\put (0, 75){$-v_3$}
\put (-70, 75){$v_4$}
\put (-53, 36){$\gamma^{-1}\cdot u_1=u_5$}
\caption{On the left and right are consistent and inconsistent fundamental blocks respectively.}\label{fig:cylinder2}
\end{figure}

In this notation $u_1,\dots,u_5, v_1$, and $v_4$ are oriented according to the enumeration of the vertices of $\Tbbb$. However, $v_2$ and $v_3$ are oriented according to the enumeration of the vertices of $\Tbbb$ if and only if $D$ is consistent. From this, we see that 
\[e_{\delta_1,1}=u_1,\,\,\, e_{\delta_1,2}=v_1,\,\,\,  e_{\delta_4,1}=u_5,\,\,\,\text{ and }\,\,\,e_{\delta_4,2}=v_4\] 
for all fundamental blocks $D$. On the other hand, 
\[e_{\delta_2,1}=v_2,\,\,\, e_{\delta_3,1}=v_3,\,\,\,\text{ and }\,\,\,e_{\delta_2,2}=e_{\delta_3,2}=u_3\] 
if $D$ is a consistent fundamental block, but
\[e_{\delta_2,1}=-v_2,\,\,\, e_{\delta_2,2}=u_2,\,\,\, e_{\delta_3,1}=-v_3,\,\,\,\text{ and }\,\,\,e_{\delta_3,2}=u_4\]
if $D$ is an inconsistent fundamental block, see Figure \ref{fig:cylinder2}.

Let $\Xmc'$ and $\Xmc$ be $(\Tmc,\Jmc)$-parallel vector fields, and let $\widetilde{\nu}'$ and $\widetilde{\nu}$ be the $\Ad\circ\rho$-equivariant lift of the $(\rho,\Tmc,\Jmc)$-tangent cocycles $\nu':=\Psi(\Xmc'([\rho]))$ and $\nu:=\Psi(\Xmc([\rho]))$ respectively. Let ${\bf c}$ be the oriented, non-isolated edge in $\widetilde{\Pmc}^o$ such that ${\bf c}$ intersects $\partial D$ and $D$ lies to the right of ${\bf c}$. Note that if $\mathbf r:=(r_1,r_2)$ is the pair associated to ${\bf c}$, and $\{f_1,\dots,f_n\}$ is a basis of $\Rbbb^n$ such that $f_i\in\xi^{(i)}(r_1)\cap\xi^{(n-i+1)}(r_2)$ for all $i\in\{1,\dots,n\}$, then for all $h\in\{u_1,\dots,u_4\}\cup\{v_1,\dots,v_d\}$, both $\widetilde{\nu}(h)$ and $\widetilde{\nu}'(h)$ are represented in this basis by an upper triangular matrix. It is now an elementary computation to check that if $D$ is consistent, then
\begin{eqnarray}\label{eqn:alphacylinder}
&&\sum_{\delta\in\Tbbb(D)}\sgn(\delta)\tr\big(\widetilde{\nu}'(e_{\delta,1})\cdot\widetilde{\nu}(e_{\delta,2})\big)\nonumber\\
&=&\sum_{\delta\in\Tbbb(D)}\sgn(\delta)\tr\big(\widetilde{\nu}'(e_{\delta,1})_{{\bf r},{\rm diag}}\cdot\widetilde{\nu}(e_{\delta,2})_{{\bf r},{\rm diag}}\big)\nonumber\\
%&=&\tr\big(\widetilde{\nu}'(u_1)\cdot\widetilde{\nu}(v_1)\big)-\tr\big(\widetilde{\nu}'(u_5)\cdot\widetilde{\nu}(v_4)\big)+\tr\big(\widetilde{\nu}'(v_3)\cdot\widetilde{\nu}(u_3)\big)-\tr\big(\widetilde{\nu}'(v_2)\cdot\widetilde{\nu}(u_3)\big)\nonumber\\
&=&\tr\big(\widetilde{\nu}'(u_1)_{{\bf r},{\rm diag}}\cdot\widetilde{\nu}(v_1-v_4)\big)+\tr\big(\widetilde{\nu}'(v_3-v_2)_{{\bf r},{\rm diag}}\cdot\widetilde{\nu}(u_3)_{{\bf r},{\rm diag}}\big)\\
&=&\tr\big(\widetilde{\nu}'(u_1)_{{\bf r},{\rm diag}}\cdot\widetilde{\nu}(v_2-v_3)_{{\bf r},{\rm diag}}\big)+\tr\big(\widetilde{\nu}'(v_3-v_2)_{{\bf r},{\rm diag}}\cdot\widetilde{\nu}(-v_2+u_1+v_1)_{{\bf r},{\rm diag}}\big),\nonumber
\end{eqnarray}
Similarly, if $D$ is inconsistent, then
\begin{eqnarray}\label{eqn:gammacylinder}
&&\sum_{\delta\in\Tbbb(D)}\sgn(\delta)\tr\big(\widetilde{\nu}^1(e_{\delta,1})\cdot\widetilde{\nu}^2(e_{\delta,2})\big)\nonumber\\
&=&\sum_{\delta\in\Tbbb(D)}\sgn(\delta)\tr\big(\widetilde{\nu}^1(e_{\delta,1})_{{\bf r},{\rm diag}}\cdot\widetilde{\nu}^2(e_{\delta,2})_{{\bf r},{\rm diag}}\big)\nonumber\\
%&=&\tr\big(\widetilde{\nu}'(u_1)\cdot\widetilde{\nu}(v_1)\big)-\tr\big(\widetilde{\nu}'(u_5)\cdot\widetilde{\nu}(v_4)\big)+\tr\big(\widetilde{\nu}'(v_3)\cdot\widetilde{\nu}(u_4)\big)-\tr\big(\widetilde{\nu}'(v_2)\cdot\widetilde{\nu}(u_2)\big)\nonumber\\
&=&\tr\big(\widetilde{\nu}'(u_1)_{{\bf r},{\rm diag}}\cdot\widetilde{\nu}(v_1-v_4)\big)+\tr\big(\widetilde{\nu}'(v_3-v_2)_{{\bf r},{\rm diag}}\cdot\widetilde{\nu}(u_3)_{{\bf r},{\rm diag}}\big)\\
&&+\tr\big(\widetilde{\nu}'(v_3)_{{\bf r},{\rm diag}}\cdot\widetilde{\nu}(v_3)_{{\bf r},{\rm diag}}\big)-\tr\big(\widetilde{\nu}'(v_2)_{{\bf r},{\rm diag}}\cdot\widetilde{\nu}(v_2)_{{\bf r},{\rm diag}}\big)\nonumber\\
&=&\tr\big(\widetilde{\nu}'(u_1)_{{\bf r},{\rm diag}}\cdot\widetilde{\nu}(v_2-v_3)_{{\bf r},{\rm diag}}\big)+\tr\big(\widetilde{\nu}'(v_3-v_2)_{{\bf r},{\rm diag}}\cdot\widetilde{\nu}(-v_2+u_1+v_1)_{{\bf r},{\rm diag}}\big)\nonumber\\
&&+\tr\big(\widetilde{\nu}'(v_3)_{{\bf r},{\rm diag}}\cdot\widetilde{\nu}(v_3)_{{\bf r},{\rm diag}}\big)-\tr\big(\widetilde{\nu}'(v_2)_{{\bf r},{\rm diag}}\cdot\widetilde{\nu}(v_2)_{{\bf r},{\rm diag}}\big).\nonumber
\end{eqnarray}
 
 As a consequence, we have the following two lemmas.
 
 \begin{lem}\label{lem:alphacylinder}
If $D$ is a consistent fundamental block, and one of the following holds:
\begin{enumerate}
\item $\widetilde{\nu}'(v_3-v_2)$ and $\widetilde{\nu}(v_3-v_2)$ are both nilpotent endomorphisms, or
\item $\widetilde{\nu}'(v_3-v_2)$ and $\widetilde{\nu}'(u_1)$ are both nilpotent endomorphisms,
\end{enumerate}
then
\[\sum_{\delta\in\Tbbb(D)}\sgn(\delta)\tr\big(\widetilde{\nu}'(e_{\delta,1})\cdot\widetilde{\nu}(e_{\delta,2})\big)=0.\]
\end{lem} 

\begin{proof}
This is obvious from (\ref{eqn:alphacylinder}). 
\end{proof}

\begin{lem}\label{lem:gammacylinder}
If $D$ is an inconsistent fundamental block, and one of the following holds:
\begin{enumerate}
\item $\widetilde{\nu}'(v_3-v_2)$ and $\widetilde{\nu}(v_3-v_2)$ are both nilpotent endomorphisms, or
\item $\tr\big(\widetilde{\nu}'(v_3)\cdot\widetilde{\nu}(v_3)\big)=\tr\big(\widetilde{\nu}'(v_2)\cdot\widetilde{\nu}(v_2)\big)$, and the endomorphisms $\widetilde{\nu}'(v_3-v_2)$ and $\widetilde{\nu}'(u_1)$ are both nilpotent.
\end{enumerate}
 Then
\[\sum_{\delta\in\Tbbb(D)}\sgn(\delta)\tr\big(\widetilde{\nu}^1(e_{\delta,1})\cdot\widetilde{\nu}^2(e_{\delta,2})\big)=0.\]
\end{lem} 

\begin{proof}
If both $\widetilde{\nu}'(v_3-v_2)$ and $\widetilde{\nu}(v_3-v_2)$ are nilpotent, then the diagonal parts of $\widetilde{\nu}'(v_3)$ and $\widetilde{\nu}'(v_2)$ (with respect to ${\bf r}$) agree, and the diagonal parts of $\widetilde{\nu}(v_3)$ and $\widetilde{\nu}(v_2)$ agree, so (1) implies that $\tr\big(\widetilde{\nu}'(v_3)_{{\bf r},{\rm diag}}\cdot\widetilde{\nu}(v_3)_{{\bf r},{\rm diag}}\big)=\tr\big(\widetilde{\nu}'(v_2)_{{\bf r},{\rm diag}}\cdot\widetilde{\nu}(v_2)_{{\bf r},{\rm diag}}\big)$ as well. The lemma then follows from (\ref{eqn:gammacylinder}). 
\end{proof}
 
In order to apply Lemmas \ref{lem:alphacylinder} and \ref{lem:gammacylinder} to the special $(\Tmc,\Jmc)$-parallel vector fields, we use the following proposition. We omit its proof since it is a straightforward verification using Lemma~\ref{lem: diagonal part}.

\begin{prop}\label{prop:nil}
Let $D$ be a fundamental block, let $c\in\widetilde{\Pmc}$ be the non-isolated edge that contains the external edges in $D$ and let $\widehat{c}:=\pi(c)$.
\begin{enumerate}
\item If $\nu$ is a twist cocycle, then $\widetilde{\nu}(v_2)=0=\widetilde{\nu}(v_3)$. Further, if $\nu$ is not associated to $\widehat c$, then $\widetilde{\nu}(u_1)=0$ as well.
\item If $\nu$ is an eruption cocycle, then $\widetilde{\nu}(v_2-v_3)$ and $\widetilde{\nu}(u_1)$ are nilpotent endomorphisms.
\item If $\nu$ is a hexagon cocycle, then $\widetilde{\nu}(v_2-v_3)$ is a nilpotent endomorphism. Moreover, if $\mathbf P:=(P,\widehat{c})$ is a marked pair of pants, and $\nu=\eta^\mathbf i_\mathbf P$ for some $\mathbf i:=(i_1,i_2,i_3)\in\Bmc$ with $i_3>1$, then $\widetilde{\nu}(u_1)$ is also a nilpotent endomorphism.
\item If $\nu$ is a lozenge cocycle not associated to $\widehat c$, then $\widetilde{\nu}(v_2)$, $\widetilde{\nu}(v_3)$, $\widetilde{\nu}(v_2-v_3)$, and $\widetilde{\nu}(u_1)$ are nilpotent endomorphisms.
\end{enumerate}
\end{prop}

%\begin{prop}
%Let $D$ be a fundamental block and let $P$ be the pair of pants of $\Pmc$ that contains the boundary cylinder $\pi(D)$. If $\nu$ is a $(\rho,\Sigma,\Tmc,\Jmc)$-tangent cocycle that is symmetric in $P$, and $\widetilde{\nu}(v_2-v_3)$ is nilpotent, then $\widetilde{\nu}(v_2)$ and $\widetilde{\nu}(v_3)$ are both nilpotent.
%\end{prop}

%%%%%%%%%%%%%%%%%%%%%%%%%%%%%%%%%%%%%%%%%%%%%%%%%%
\subsection{Explicit computations for the proof of Theorem \ref{thm:symplectic basis}}\label{sec:explicit}
%%%%%%%%%%%%%%%%%%%%%%%%%%%%%%%%%%%%%%%%%%%%%%%%%%
Using the tools we developed in Sections \ref{sec:computational tools} and \ref{sec:computational tools2}, we will now compute $\omega_{[\rho]}\left(\Xmc'([\rho]),\Xmc([\rho])\right)$ when $(\Xmc',\Xmc)$ satisfies of one of the four cases described at the start of Section \ref{sec:Tbbb}. %To do so, recall one last time that 
%\begin{itemize}
%\item $\Amc$ denotes the set of pairs of positive integers that sum to $n$, $\Bmc$ denotes the set of triples of positive integers that sum to $n$, and $\overline{\Bmc}$ denotes the set of triples of integers $(i_1,i_2,i_3)$ that sum to $n$, such that $0\leq i_1,i_2,i_3\leq n-1$. 
%\item $\Xi:\mathscr{T}\to\mathscr{A}$ is the linear isomorphism that assigns to every $(\rho,\Sigma,\Tmc,\Jmc)$-tangent cocycle its associated $\Tmc$-admissible labelling at $\rho$. 
%\item if $\mathbf c$ is an oriented closed curve in $\Pmc^o$ and $\mathbf k\in\Amc$, then $\sigma^\mathbf k_\mathbf c$, $\zeta^\mathbf k_\mathbf c$ and $\gamma^\mathbf k_\mathbf c$ are respectively the $\mathbf k$-twist, $\mathbf k$-lozenge, and $\mathbf k$-length cocycle associated to $\mathbf c$.
%\item if $\mathbf P$ is a cyclically oriented pair of pants given by $\Pmc$ and $\mathbf i\in\Bmc$, then $\epsilon^\mathbf i_\mathbf P$ and $\eta^\mathbf i_\mathbf P$ are respectively the $\mathbf i$-eruption and $\mathbf i$-hexagon cocycle associated to $\mathbf P$.
%\item for all triples of flags $\mathbf F:=(F_1,F_2,F_3)$ in $\Fmc(V)$ and for all $\mathbf i\in\Bmc$, $A^\mathbf i_\mathbf F$ is the $\mathbf i$-eruption endomorphism associated to $\mathbf F$.
%\item for all triples of flags $\mathbf E:=(E_1,E_2)$ in $\Fmc(V)$ and for all $\mathbf k\in\Amc$, $D^\mathbf k_\mathbf E$ is the $\mathbf k$-shearing endomorphism associated to $\mathbf E$.
%\end{itemize}
%
The outcome of the computations in these four cases are described in the following four propositions. Together, they prove Theorem \ref{thm:symplectic basis}.

\begin{prop} \label{prop:(1)} Let $[\rho]\in\Hit_V(S)$, let $\Xmc'=\Smc^\mathbf k_{\widehat{\mathbf c}}$ for some oriented closed curve $\widehat{\mathbf c}\in\Pmc^o$ and some $\mathbf k:=(k_1,k_2)\in\Amc$ and $\Xmc$ is any special $(\Tmc,\Jmc)$-parallel vector field. Then
\[\omega_{[\rho]}(\Xmc'([\rho]),\Xmc([\rho]))=\left\{\begin{array}{ll}
1&\text{if }\Xmc=\Ymc^\mathbf k_{\widehat{\mathbf c}};\\
0&\text{otherwise.}
\end{array}\right.\]
\end{prop}

In this proof, we will use the following notation.

\begin{notation} \label{not:(1)}
Let $\mathbf l:=(l_1,l_2)\in\Amc$. For any generic pair of flags $\mathbf F:=(F_1,F_2)$, let $M^\mathbf l_\mathbf F:V\to V$ be the endomorphism that fixes the line $F_1^{(l_1)}\cap F_2^{(l_2+1)}$, and whose kernel is $F_1^{(l_1-1)}+F_2^{(l_2)}$.
\end{notation}

\begin{proof}[Proof of Proposition \ref{prop:(1)}]
Observe that $\Xmc'$ is both symmetric and skew-symmetric in every pair of pants  in $\Pbbb$. Since every special $(\Tmc,\Jmc)$-parallel vector field is either symmetric or skew-symmetric in every pair of pants  in $\Pbbb$, Lemma \ref{lem:internal}(1) implies that the internal part of $\omega_{[\rho]}\left(\Xmc'([\rho]),\Xmc([\rho])\right)$ is zero. It is now sufficient to show that the external part of $\omega_{[\rho]}\left(\Xmc'([\rho]),\Xmc([\rho])\right)$ is $1$ when $\Xmc=\Ymc^\mathbf k_{\widehat{\mathbf c}}$, and is $0$ otherwise. To do so, let $D$ be a fundamental block, and let $u_1,\dots,u_5,v_1,\dots,v_4$ be as defined in Notation \ref{not:block}. The proof proceeds in the following cases.

{\bf Case 1: $\Xmc$ is a twist, eruption or hexagon vector field.} 
By Proposition \ref{prop:nil}(1)--(3), we know that both $\widetilde{\nu}'(v_2-v_3)$ and $\widetilde{\nu}(v_2-v_3)$ are nilpotent, so Lemma \ref{lem:alphacylinder}(1) and Lemma \ref{lem:gammacylinder}(1) imply that 
\[\sum_{\delta\in\Tbbb(D)}\sgn(\delta)\tr\big(\widetilde{\nu}'(e_{\delta,1})\cdot\widetilde{\nu}(e_{\delta,2})\big)=0.\]
Since $D$ is arbitrary, the external part of $\omega_{[\rho]}\left(\Xmc'([\rho]),\Xmc([\rho])\right)$ is zero. 

{\bf Case 2: $\Xmc$ is a length vector field not associated to $\widehat{\mathbf c}$.} If the boundary cylinder $\pi(D)$ does not have $\widehat{\mathbf c}$ as a boundary component, then Proposition \ref{prop:nil}(1) implies that $\widetilde{\nu}'(u_1)=\widetilde{\nu}'(v_3)=\widetilde{\nu}'(v_2)=0$. Then Lemma \ref{lem:alphacylinder}(2) and Lemma \ref{lem:gammacylinder}(2) imply that
\[\sum_{\delta\in\Tbbb(D)}\sgn(\delta)\tr\big(\widetilde{\nu}'(e_{\delta,1})\cdot\widetilde{\nu}(e_{\delta,2})\big)=0.\]
Suppose that the boundary cylinder $\pi(D)$ has $\widehat{\mathbf c}$ as a boundary component. Since $\Xmc$ is a length vector field that is not associated to $\widehat{\mathbf c}$, Proposition \ref{prop:nil}(1),(4) imply that $\widetilde{\nu}'(v_2-v_3)$ and $\widetilde{\nu}(v_2-v_3)$ are both nilpotent, so Lemma \ref{lem:alphacylinder}(1) and Lemma \ref{lem:gammacylinder}(1) gives
\[\sum_{\delta\in\Tbbb(D)}\sgn(\delta)\tr\big(\widetilde{\nu}'(e_{\delta,1})\cdot\widetilde{\nu}(e_{\delta,2})\big)=0.\]
Thus, the external part of $\omega_{[\rho]}\left(\Xmc'([\rho]),\Xmc([\rho])\right)$ is zero.

{\bf Case 3: $\Xmc=\Ymc^{\mathbf l}_{\widehat{\mathbf c}}$ for some $\mathbf l\in\Amc$.} If the boundary cylinder $\pi(D)$ does not have $\widehat{\mathbf c}$ as a boundary component, the same argument as we used in Case 2 proves
\[\sum_{\delta\in\Tbbb(D)}\sgn(\delta)\tr\big(\widetilde{\nu}'(e_{\delta,1})\cdot\widetilde{\nu}(e_{\delta,2})\big)=0.\]
Suppose that the boundary cylinder $\pi(D)$ has $\widehat{\mathbf c}$ as a boundary component. Recall that $\Ymc^{\mathbf l}_{\widehat{\mathbf c}}$ is the sum of the lozenge vector field $\Zmc^\mathbf l_{\widehat{\mathbf c}}$ with some eruption cocycles. We have already established in Case 1 that 
\[\sum_{\delta\in\Tbbb(D)}\sgn(\delta)\tr\big(\widetilde{\nu}'(e_{\delta,1})\cdot\widetilde{\nu}''(e_{\delta,2})\big)=0.\]
 for any eruption cocycle $\nu''$, so
\[\sum_{\delta\in\Tbbb(D)}\sgn(\delta)\tr\big(\widetilde{\nu}'(e_{\delta,1})\cdot\widetilde{\nu}(e_{\delta,2})\big)=\sum_{\delta\in\Tbbb(D)}\sgn(\delta)\tr\big(\widetilde{\nu}'(e_{\delta,1})\cdot\widetilde{\zeta}^\mathbf l_{\widehat{\mathbf c}}(e_{\delta,2})\big),\]
where recall that $\zeta^\mathbf l_{\widehat{\mathbf c}}$ is the $(\rho,\Tmc,\Jmc)$-tangent cocycle associated to $\Zmc^\mathbf l_{\widehat{\mathbf c}}([\rho])$. Since $\widetilde{\nu}'(v_2)=0=\widetilde{\nu}'(v_3)$ by Proposition \ref{prop:nil}(1), (\ref{eqn:alphacylinder}) and (\ref{eqn:gammacylinder}) imply that regardless of whether $D$ is consistent or not, we have
\[\sum_{\delta\in\Tbbb(D)}\sgn(\delta)\tr\big(\widetilde{\nu}^1(e_{\delta,1})\cdot\widetilde{\nu}^2(e_{\delta,2})\big)=\tr\big(\widetilde{\sigma}^\mathbf k_{\widehat{\mathbf c}}(u_1)_{{\bf r},{\rm diag}}\cdot\widetilde{\zeta}^\mathbf l_{\widehat{\mathbf c}}(v_2-v_3)_{{\bf r},{\rm diag}}\big),\]
where recall that $\sigma^\mathbf k_{\widehat{\mathbf c}}$ is the $(\rho,\Tmc,\Jmc)$-tangent cocycle associated to $\Smc^\mathbf k_{\widehat{\mathbf c}}([\rho])$.

Let $P(1)$ and $P(2)$ be the pairs of pants of $\Pmc$ that lie to the right and left of $\widehat{\mathbf c}$ respectively. Then for both $m=1,2$, let ${\bf P}(m):=(P(m),\widehat{c})$, and let $(\widehat{\mathbf b}(m),\widehat{\mathbf b}'(m))$ be the pair of non-edge barriers associated to ${\bf P}(m)$. Let ${\bf c}$, ${\bf b}(m)$ and ${\bf b}'(m)$ be lifts to $\widetilde{S}$ of $\widehat{\bf c}$, $\widehat{\bf b}(m)$ and $\widehat{\bf b}'(m)$ respectively, so that ${\bf b}(m)$ intersects $\delta_{P(m)}$, ${\bf b}'(m)$ intersects $\delta'_{P(m)}$, and the three barriers ${\bf c}$, ${\bf b}(m)$, and ${\bf b}'(m)$ share a common endpoint. Then let ${\bf x}(m)$ and ${\bf x}'(m)$ be the triples associated to $\widehat{\bf b}(m)$ and $\widehat{\bf b}'(m)$ respectively, and let ${\bf r}$ be the pair associated to ${\bf c}$. 

Suppose that $\pi(D)$ lies in $P(1)$. Then by Proposition \ref{prop:derivative2},
\[\widetilde{\sigma}^\mathbf k_{\widehat{\mathbf c}}(u_1)_{{\mathbf r},{\rm diag}}=-\frac{1}{2}D^{\mathbf k}_{\xi_\rho({\mathbf r})}.\]
Also, by Proposition \ref{prop:derivative1} and \eqref{eqn: simplify not}
\begin{align*}
\widetilde{\zeta}^\mathbf l_{\widehat{\mathbf c}}(v_2-v_3)=&\frac{1}{2}\left(-A^{\mathbf l_3}_{\xi_\rho(\mathbf x(1))}+A^{\mathbf l_3(1,-1,0)}_{\xi_\rho(\mathbf x(1))}-A^{\bar{\mathbf l}_2}_{\xi_\rho(\mathbf x'(1))}+A^{\bar{\mathbf l}_2(1,0,-1)}_{\xi_\rho(\mathbf x'(1))}\right)\\
&-A^{\mathbf l_3(0,-1,1)}_{\xi_\rho(\mathbf x(1))}+A^{\mathbf l_3(-1,0,1)}_{\xi_\rho(\mathbf x(1))}\\
&+\frac{1}{2}\left(-A^{\mathbf l_3}_{\xi_\rho(\mathbf x(1))}+A^{\mathbf l_3(1,-1,0)}_{\xi_\rho(\mathbf x(1))}-A^{\bar{\mathbf l}_2}_{\xi_\rho(\mathbf x'(1))}+A^{\bar{\mathbf l}_2(1,0,-1)}_{\xi_\rho(\mathbf x'(1))}\right)\\
&-A^{\bar{\mathbf l}_2(0,1,-1)}_{\xi_\rho(\mathbf x'(1))}+A^{\bar{\mathbf l}_2(-1,1,0)}_{\xi_\rho(\mathbf x'(1))}\\
=&-A^{\mathbf l_3}_{\xi_\rho(\mathbf x(1))}+A^{\mathbf l_3(1,-1,0)}_{\xi_\rho(\mathbf x(1))}-A^{\bar{\mathbf l}_2}_{\xi_\rho(\mathbf x'(1))}+A^{\bar{\mathbf l}_2(1,0,-1)}_{\xi_\rho(\mathbf x'(1))}\\
&-A^{\mathbf l_3(0,-1,1)}_{\xi_\rho(\mathbf x(1))}+A^{\mathbf l_3(-1,0,1)}_{\xi_\rho(\mathbf x(1))}-A^{\bar{\mathbf l}_2(0,1,-1)}_{\xi_\rho(\mathbf x'(1))}+A^{\bar{\mathbf l}_2(-1,1,0)}_{\xi_\rho(\mathbf x'(1))},
\end{align*}
where we recall that 
\begin{itemize}
\item for any $\mathbf k=(k_1,k_2)\in\Amc$, we denote $\mathbf k_1:=(0,k_1,k_2)$, $\mathbf k_2:=(k_2,0,k_1)$, $\mathbf k_3:=(k_1,k_2,0)$, $\bar{\mathbf k}_1:=(0,k_2,k_1)$, $\bar{\mathbf k}_2:=(k_1,0,k_2)$, and $\bar{\mathbf k}_3:=(k_2,k_1,0)$, and  
\item for any ${\bf i}=(i_1,i_2,i_3)\in\Bmc$ and any triple of integers $(a_1,a_3,a_3)$, we denote $\mathbf i(a_1,a_2,a_3):=(i_1+a_1,i_2+a_2,i_3+a_3)$.
\end{itemize}
So, Lemma \ref{lem: diagonal part} implies that if for any ${\bf k}=(k_1,k_2)\in\Amc$ and any pair of integers $(a_1,a_2)$, we denote ${\bf k}(a_1,a_2):=(k_1+a_1,k_2+a_2)$, then 
\begin{eqnarray}\label{eqn:lozenge}
\widetilde{\zeta}^\mathbf l_{\widehat{\mathbf c}}(v_2-v_3)_{{\mathbf r},{\rm diag}}&=&4D^{\mathbf l(-1,1)}_{\xi_\rho(\mathbf r)}-8D^{\mathbf l}_{\xi_\rho(\mathbf r(1))}+4D^{\mathbf l(1,-1)}_{\xi_\rho(\mathbf r)}\\
&=&-2M^{\mathbf l}_{\xi_\rho(\mathbf r)}+2M^{\mathbf l(1,-1)}_{\xi_\rho(\mathbf r)}.\nonumber
\end{eqnarray}
%where $M^\mathbf l_\mathbf E$ is the endomorphism defined in Notation \ref{not:(1)}, and $N$ is a nilpotent endomorphism that preserves $\xi(d_m)$. 
Similarly, if $\pi(D)$ lies in $P(2)$, then
\[\widetilde{\sigma}^\mathbf k_{\widehat{\mathbf c}}(u_1)_{{\mathbf r},{\rm diag}}=\widetilde{\sigma}^{\bar{\mathbf k}}_{\widehat{\bar{\mathbf c}}}(u_1)_{{\mathbf r},{\rm diag}}=-\frac{1}{2}D^{\bar{\mathbf k}}_{\xi_\rho({\mathbf r})}\]
and
\[\widetilde{\zeta}^\mathbf l_{\widehat{\mathbf c}}(v_2-v_3)_{{\mathbf r},{\rm diag}}=\widetilde{\zeta}^{\bar{\mathbf l}}_{\widehat{\bar{\mathbf c}}}(v_2-v_3)_{{\mathbf r},{\rm diag}}=-2M^{\bar{\mathbf l}}_{\xi_\rho(\mathbf r)}+2M^{\bar{\mathbf l}(1,-1)}_{\xi_\rho(\mathbf r)}.\]
Thus, regardless of whether $\pi(D)$ lies in $P(1)$ or $P(2)$, we have
\begin{eqnarray*}
\sum_{\delta\in\Tbbb(D)}\sgn(\delta)\tr\big(\widetilde{\nu}'(e_{\delta,1})\cdot\widetilde{\nu}(e_{\delta,2})\big)%&=&\tr\left(D^{\mathbf k_m}_{\xi(\mathbf d_2)}\cdot \left(-M^{\mathbf l_m}_{\xi(\mathbf d_m)}+M^{\mathbf l_m(1,-1)}_{\xi(\mathbf d_m)}\right)\right)\\
&=&\left\{\begin{array}{ll}
\frac{1}{2}&\text{if }\mathbf l=\mathbf k;\\
0&\text{otherwise.}
\end{array}\right.
\end{eqnarray*}

This shows that if $\mathbf l\neq\mathbf k$, then the external part of $\omega_{[\rho]}\left(\Xmc'([\rho]),\Xmc([\rho])\right)$ is zero. On the other hand, when $\mathbf l=\mathbf k$, then of all the fundamental blocks $D$ that is the union of external triangles in $\widehat{\Tbbb}$, only two of them have the property that 
\[\sum_{\delta\in\Tbbb(D)}\sgn(\delta)\tr\big(\widetilde{\nu}'(e_{\delta,1})\cdot\widetilde{\nu}(e_{\delta,2})\big)=\frac{1}{2},\]
while the rest of them have the property that
\[\sum_{\delta\in\Tbbb(D)}\sgn(\delta)\tr\big(\widetilde{\nu}'(e_{\delta,1})\cdot\widetilde{\nu}(e_{\delta,2})\big)=0,\]
and so the external part of $\omega_{[\rho]}\left(\Xmc'([\rho]),\Xmc([\rho])\right)$ is $1$.
\end{proof}

\begin{prop}\label{prop:(2)}
Let $\Xmc'=\Emc^{\mathbf i}_{\mathbf P}$ for some marked pair of pants $\mathbf P$ and some $\mathbf i:=(i_1,i_2,i_3)\in\Bmc$, and let $\Xmc$ be any eruption, hexagon or length vector field. Then
\[\omega_{[\rho]}\left(\Xmc'([\rho]),\Xmc([\rho])\right)=\left\{\begin{array}{ll}
1&\text{if }\Xmc=\Hmc^{\mathbf i}_{\mathbf P};\\
0&\text{otherwise.}
\end{array}\right.\]
\end{prop}

\begin{proof}
The proof is divided into the following cases.

{\bf Case 1: $\Xmc$ is an eruption vector field.} Observe that $\Xmc'$ and $\Xmc$ are both skew-symmetric, so Lemma \ref{lem:internal}(1) implies that the internal part of $\omega_{[\rho]}\left(\Xmc'([\rho]),\Xmc([\rho])\right)$ is zero. Let $D$ be a fundamental block, and let $u_1,\dots,u_5,v_1,\dots,v_4$ be as defined in Notation \ref{not:block}. By Proposition~\ref{prop:nil}(2), $\widetilde{\nu}'(v_2-v_3)$ and $\widetilde{\nu}(v_2-v_3)$ are both nilpotent, so Lemma \ref{lem:alphacylinder}(1) and Lemma \ref{lem:gammacylinder}(1) imply that 
\[\sum_{\delta\in\Tbbb(D)}\sgn(\delta)\tr\big(\widetilde{\nu}^1(e_{\delta,1})\cdot\widetilde{\nu}^2(e_{\delta,2})\big)=0.\]
Since $D$ is arbitrary, the external part of $\omega_{[\rho]}\left(\Xmc'([\rho]),\Xmc([\rho])\right)$ is also zero.

{\bf Case 2: $\Xmc$ is a length vector field.} 
Let $\Xmc=\Ymc^\mathbf k_{\widehat{\mathbf c}}$, where $\widehat{\mathbf c}\in\Pmc^o$ and $\mathbf k\in\Amc$. By definition, $\Ymc^\mathbf k_{\widehat{\mathbf c}}$ is the sum of the lozenge vector field $\Zmc^\mathbf k_{\widehat{\mathbf c}}$ with some eruption cocycles, so Case 1 implies that
\[\omega_{[\rho]}\left(\Xmc'([\rho]),\Xmc([\rho])\right)=\omega_{[\rho]}\left(\Xmc'([\rho]),\Zmc^\mathbf k_{\widehat{\mathbf c}}([\rho])\right).\]
We thus need to show that $\omega_{[\rho]}\left(\Xmc'([\rho]),\Zmc^\mathbf k_{\widehat{\mathbf c}}([\rho])\right)=0$. 

Let $P(1)$ and $P(2)$ be the pairs of pants of $\Pmc$ that share $\widehat{\bf c}$ as a common boundary component, such that $P(1)$ and $P(2)$ lie to the right and left of $\widehat{\mathbf c}$ respectively. %Let $P$ be the pair of pants of $\Pmc$ such that ${\bf P}=(P,\widehat{c}')$ for some non-isolated edge $\widehat{c}'\in\Pmc$.

{\bf Case 2.1: $\widehat{\bf c}$ is not a boundary component of $P$.} For any triangle $\delta\in\Tbbb_{\rm lift}$, note that either $\widetilde{\nu}'(e_{\delta,1})=0=\widetilde{\nu}'(e_{\delta,2})$, or $\widetilde{\zeta}^\mathbf k_{\widehat{\mathbf c}}(e_{\delta,1})=0=\widetilde{\zeta}^\mathbf k_{\widehat{\mathbf c}}(e_{\delta,2})$. Thus, $\omega_{[\rho]}\left(\Xmc'([\rho]),\Zmc^\mathbf k_{\widehat{\mathbf c}}([\rho])\right)=0$.

{\bf Case 2.2: $\widehat{\bf c}$ is a boundary component of $P$.} By the symmetries of $\Emc^{\mathbf i}_{\mathbf P}$ and $\Ymc^\mathbf k_{\widehat{\mathbf c}}$, we may assume that ${\bf P}=(P(1),\widehat{c})$. 

If $\delta\in\Tbbb_{\rm lift}$ is a triangle such that $\pi(\delta)$ does not lie in $P(1)$, then by definition, $\widetilde{\nu}'(e_{\delta,1})=0=\widetilde{\nu}'(e_{\delta,2})$, so $\tr\big(\widetilde{\nu}'(e_{\delta,1})\cdot\widetilde{\zeta}^\mathbf k_{\widehat{\mathbf c}}(e_{\delta,2})\big)=0$. Also, if $D$ is a fundamental block and $u_1,\dots,u_5,v_1,\dots,v_4$ are the edges of $\Tbbb(D)$ as defined in Notation \ref{not:block}, then Proposition \ref{prop:nil}(2) states that $\widetilde{\nu}'(v_2-v_3)$ is a nilpotent endomorphism and $\widetilde{\nu}'(u_1)=0$. Hence, Lemma \ref{lem:alphacylinder}(2) implies that if $D$ is consistent, then
\[\sum_{\delta\in\Tbbb(D)}\sgn(\delta)\tr\big(\widetilde{\nu}'(e_{\delta,1})\cdot\widetilde{\zeta}^\mathbf k_{\widehat{\mathbf c}}(e_{\delta,2})\big)=0.\]
However, if $D$ is inconsistent, (\ref{eqn:gammacylinder}) implies that
\[\sum_{\delta\in\Tbbb(D)}\sgn(\delta)\tr\big(\widetilde{\nu}'(e_{\delta,1})\cdot\widetilde{\zeta}^\mathbf k_{\widehat{\mathbf c}}(e_{\delta,2})\big)=\tr\big(\widetilde{\nu}'(v_3)\cdot\widetilde{\zeta}^\mathbf k_{\widehat{\mathbf c}}(v_3)\big)-\tr\big(\widetilde{\nu}'(v_2)\cdot\widetilde{\zeta}^\mathbf k_{\widehat{\mathbf c}}(v_2)\big).\]
Thus, if $v_2$ and $v_3$ are the internal edges in the fundamental block $D$ such that $\pi(D)$ is the inconsistent boundary cylinder in $P=P(1)$, then 
\begin{eqnarray*}
\omega_{[\rho]}\left(\Xmc'([\rho]),\Zmc^\mathbf k_{\widehat{\mathbf c}}([\rho])\right)&=&\tr\big(\widetilde{\nu}'(e_{\delta_P,1})\cdot\widetilde{\zeta}^\mathbf k_{\widehat{\mathbf c}}(e_{\delta_P,2})\big)-\tr\big(\widetilde{\nu}'(v_2)\cdot\widetilde{\zeta}^\mathbf k_{\widehat{\mathbf c}}(v_2)\big)\\
&&-\left(\tr\big(\widetilde{\nu}'(e_{\delta_P',1})\cdot\widetilde{\zeta}^\mathbf k_{\widehat{\mathbf c}}(e_{\delta_P',2})\big)-\tr\big(\widetilde{\nu}'(v_3)\cdot\widetilde{\zeta}^\mathbf k_{\widehat{\mathbf c}}(v_3)\big)\right)\\
&=&2\tr\big(\widetilde{\nu}'(e_{\delta_P,1})\cdot\widetilde{\zeta}^\mathbf k_{\widehat{\mathbf c}}(e_{\delta_P,2})\big)-2\tr\big(\widetilde{\nu}'(v_2)\cdot\widetilde{\zeta}^\mathbf k_{\widehat{\mathbf c}}(v_2)\big),
\end{eqnarray*}
where the second equality is a consequence of Lemma \ref{lem:internal}(2) and the observation that $\nu'$ is skew-symmetric in $P$ while $\zeta^\mathbf k_\mathbf c$ is symmetric in $P$. To finish the proof in this case, it is now sufficient to show that 
\[\tr\big(\widetilde{\nu}'(e_{\delta_P,1})\cdot\widetilde{\zeta}^\mathbf k_{\widehat{\mathbf c}}(e_{\delta_P,2})\big)-\tr\big(\widetilde{\nu}'(v_2)\cdot\widetilde{\zeta}^\mathbf k_{\widehat{\mathbf c}}(v_2)\big)=0.\]

Let $(\widehat{\mathbf b},\widehat{\mathbf b}')$ be the pair of non-edge barriers associated to ${\bf P}$, let ${\bf b}$ and ${\bf b}'$ be lifts to $\widetilde{S}$ of $\widehat{\bf b}$ and $\widehat{\bf b}'$ respectively, so that ${\bf b}$ intersects $\delta_P$, ${\bf b}'$ intersects $\delta'_{P}$, and let ${\bf x}$ and ${\bf x}'$ be the triples associated to $\widehat{\bf b}$ and $\widehat{\bf b}'$ respectively. By cyclically permuting the enumeration of the three vertices of $\Tbbb$ in the interior of $P$ if necessary, we may ensure that $e_{\delta_P,1}$ intersects ${\bf b}$.  Then by \eqref{eqn: simplify not}
\begin{eqnarray*}
\widetilde{\nu}'(e_{\delta_P,1})&=&-\frac{1}{2}A^{\mathbf i}_{\xi_\rho(\mathbf x)},\\
\widetilde{\nu}'(v_2)&=&-\frac{1}{2}A^{\mathbf i_+}_{\xi_\rho(\mathbf x_+)},\\
\widetilde{\zeta}^\mathbf k_\mathbf c(e_{\delta_P,2})&=& - A_{\xi_\rho(\mathbf x_-)}^{\mathbf k_1(1,0,-1)}  + A_{\xi_\rho(\mathbf x_-)}^{\mathbf k_1(1,-1,0)},\quad\text{and}\\
\widetilde{\zeta}^\mathbf k_\mathbf c(v_2)%&=&\frac{1}{2}\left(-A^{\mathbf k_2}_{\xi_\rho(\mathbf x_+)}+A^{\mathbf k_2(-1,0,1)}_{\xi_\rho(\mathbf x_+)}\right)-A^{\mathbf k_2(-1,1,0)}_{\xi_\rho(\mathbf x_+)}+A^{\mathbf k_2(0,1,-1)}_{\xi_\rho(\mathbf x_+)}+\frac{1}{2}\left(-A^{\mathbf k_2}_{\xi_\rho(\mathbf x_+)}+A^{\mathbf k_2(-1,0,1)}_{\xi_\rho(\mathbf x_+)}\right)\\
&=&-A^{\mathbf k_2}_{\xi_\rho(\mathbf x_+)}+A^{\mathbf k_2(-1,0,1)}_{\xi_\rho(\mathbf x_+)}-A^{\mathbf k_2(-1,1,0)}_{\xi_\rho(\mathbf x_+)}+A^{\mathbf k_2(0,1,-1)}_{\xi_\rho(\mathbf x_+)}.
\end{eqnarray*}
Applying Lemma \ref{lem:constant0} gives
\begin{align*}
\tr\big(\widetilde{\nu}'(e_{\delta_P,1})\cdot\widetilde{\zeta}^\mathbf k_{\widehat{\mathbf c}}(e_{\delta_P,2})\big)=\frac{1}{2}\bigg([\min\{i_1-k_2+1,1-i_2\}]_+-[\min\{i_1-k_2,1-i_2\}]_+\bigg)=0
\end{align*}
and
\begin{align*}
&\tr\big(\widetilde{\nu}'(v_2)\cdot\widetilde{\zeta}^\mathbf k_{\widehat{\mathbf c}}(v_2)\big)\\
&=\frac{1}{2}\bigg(\min\{i_2,k_2\}-\min\{i_2,k_2-1\}+\min\{i_2,k_2-1\}-\min\{i_2,k_2\}\bigg)\\
&=0.
\end{align*}
%so $\tr\big(\widetilde{\nu}^1(e_{\delta_P,1})\cdot\widetilde{\zeta}^\mathbf k_\mathbf c(e_{\delta_P,2})\big)-\tr\big(\widetilde{\nu}^1(v_2)\cdot\widetilde{\zeta}^\mathbf k_\mathbf c(v_2)\big)=0$.
%\[\begin{array}{lll}
%&&\tr\left(\widetilde{\epsilon}^{\mathbf i}_{\mathbf P}(e_{\delta_P,1})\cdot\widetilde{\zeta}^\mathbf k_\mathbf c(e_{\delta_P,2})\right)\\
%&=&\displaystyle\frac{1}{2}\tr\left(\left(A_{\xi(\mathbf x_1)}^{\mathbf k^3} -A_{\xi(\mathbf x_1)}^{\mathbf k^3(1,-1,0)} + A_{\xi(\mathbf x_1)}^{\mathbf k^3(0,-1,1)}  - A_{\xi(\mathbf x_1)}^{\mathbf k^3(-1,0,1)}\right)\cdot A^{\mathbf i_2}_{\xi(\mathbf x_2)}\right)\vspace{.1cm}\\
%&=&\displaystyle\frac{1}{2}\left([\min\{k_1-i_1,i_2-k_2\}]_+-[\min\{k_1+1-i_1,i_2-k_2+1\}]_+\right.\\
%&&\left.+[\min\{k_1-i_1,i_2-k_2+1\}]_+-[\min\{k_1-1-i_1,i_2-k_2\}]_+\right)\vspace{.1cm}\\
%&=&\displaystyle\frac{1}{2}\left([i_2-k_2]_+-[i_2-k_2+1]_++[i_2-k_2+1]_+-[i_2-k_2]_+\right)\\
%&=&0.
%\end{array}\]

%\[\tr\big(\widetilde{\epsilon}^{\mathbf i}_{\mathbf P}(k_2)\cdot\widetilde{\zeta}^\mathbf k_\mathbf c(k_2)\big)=\frac{1}{2}\tr\left(A^{\mathbf i_3}_{\xi(\mathbf x_3)}\cdot\left(A_{\xi(\mathbf x_3)}^{\mathbf k^1(1,0,-1)}  - A_{\xi(\mathbf x_3)}^{\mathbf k^1(1,-1,0)}\right)\right)=0\]

{\bf Case 3: $\Xmc$ is a hexagon vector field.} Let $\Xmc=\Hmc^{\bf j}_{\bf Q}$, where ${\bf Q}$ be a marked pair of pants given by $\Pmc$ and ${\bf j}\in\Bmc$. By Proposition \ref{prop:nil}(2),(3), both $\widetilde{\nu}'(k_2-k_3)$ and $\widetilde{\nu}(k_2-k_3)$ are nilpotent, so Lemma \ref{lem:alphacylinder}(1) and Lemma \ref{lem:gammacylinder}(1) imply that 
\[\sum_{\delta\in\Tbbb(D)}\sgn(\delta)\tr\left(\widetilde{\nu}'(e_{\delta,1})\cdot\widetilde{\nu}(e_{\delta,2})\right)=0.\]
for any fundamental block $D$. In particular, the external part of $\omega_{[\rho]}\left(\Xmc'([\rho]),\Xmc([\rho])\right)$ is zero. Thus, it is sufficient to show that the internal part of $\omega_{[\rho]}\left(\Xmc'([\rho]),\Xmc([\rho])\right)$ is $1$ if $\mathbf Q=\mathbf P$ and $\mathbf j=\mathbf i$, and is zero otherwise. Let $P$ and $Q$ be the pairs of pants underlying $\mathbf P$ and $\mathbf Q$ respectively. 

{\bf Case 3.1: $P\neq Q$.} Observe that for any pair of pants $R$ of $\Pmc$, either $\widetilde{\nu}'(e_{\delta_{R}',1})=0=\widetilde{\nu}'(e_{\delta_{R}',2})$ or $\widetilde{\nu}(e_{\delta_{R}',1})=0=\widetilde{\nu}(e_{\delta_{R}',2})$, so the internal part of $\omega_{[\rho]}\left(\Xmc'([\rho]),\Xmc([\rho])\right)$ is zero when $P\neq Q$. 

{\bf Case 3.2: $P=Q$.} 
By the symmetries of $\Emc^{\bf i}_{\bf P}$ and $\Hmc^{\bf j}_{\bf Q}$, we may assume that $\mathbf P=\mathbf Q$, and that the pair $(\widehat{\bf b},\widehat{\bf b}')$ of non-edge barriers associated to ${\bf P}$ satisfy the property that there is a lift ${\bf b}$ of $\widehat{\bf b}$ that intersects $e_{\delta_P,1}$ and a lift ${\bf b}'$ of $\widehat{\bf b}'$ that intersects $e_{\delta_P',1}$. Let ${\bf x}$ and ${\bf x}'$ be the triples associated to ${\bf b}$ and ${\bf b}'$ respectively. 

By \eqref{eqn: simplify not},
\begin{align*}
\widetilde{\nu}'(e_{\delta_P,1})=&-\frac{1}{2}A^{\mathbf i}_{\xi_\rho(\mathbf x)}\quad\text{and}\\
%\widetilde{\nu}'(v_2)&=&-\frac{1}{2}A^{\mathbf i_+}_{\xi(\mathbf x_+)},\\
\widetilde{\nu}(e_{\delta_P,2})=&-A^{\mathbf j_-(-1,0,1)}_{\xi_\rho(\mathbf x_-)}+A^{\mathbf j_-(0,-1,1)}_{\xi_\rho(\mathbf x_-)}-A^{\mathbf j_-(1,-1,0)}_{\xi_\rho(\mathbf x_-)}\\
&+A^{\mathbf j_-(1,0,-1)}_{\xi_\rho(\mathbf x_-)}-A^{\mathbf j_-(0,1,-1)}_{\xi_\rho(\mathbf x_-)}+A^{\mathbf j_-(-1,1,0)}_{\xi_\rho(\mathbf x_-)}.
\end{align*}
%The previous paragraph proves that the internal part of $\omega_\rho([\nu^1],[\nu^2])$ can be written as
%\[\tr\big(\widetilde{\nu}^1(e_{\delta_P,1})\cdot\widetilde{\nu}^2(e_{\delta_P,2})\big)-\tr\big(\widetilde{\nu}^1(e_{\delta_P',1})\cdot\widetilde{\nu}^2(e_{\delta_P',2})\big).\]
Thus, by Lemma \ref{lem:constant0}(1)
\begin{align*}
&\tr\big(\widetilde{\nu}'(e_{\delta_P,1})\cdot\widetilde{\nu}(e_{\delta_P,2})\big)\\
&=\frac{1}{2}\bigg([\min\{j_3-1-i_3,i_1-j_1\}]_+-[\min\{j_3-i_3,i_1-j_1+1\}]_+\\
&+[\min\{j_3+1-i_3,i_1-j_1+1\}]_+-[\min\{j_3+1-i_3,i_1-j_1\}]_+\\
&+[\min\{j_3-i_3,i_1-j_1-1\}]_+-[\min\{j_3-1-i_3,i_1-j_1-1\}]_+\bigg)
%&=&\frac{1}{2}\tr\left( A^{\mathbf i_2}_{\xi(\mathbf x_2)}\cdot\left(A^{\mathbf j_1(0,1,-1)}_{\xi(\mathbf x_1)}-A^{\mathbf j_1(-1,1,0)}_{\xi(\mathbf x_1)}+A^{\mathbf j_1(-1,0,1)}_{\xi(\mathbf x_1)}-A^{\mathbf j_1(0,-1,1)}_{\xi(\mathbf x_1)}\right.\right. \\
%&& \left.\left.+A^{\mathbf j_1(1,-1,0)}_{\xi(\mathbf x_1)}-A^{\mathbf j_1(1,0,-1)}_{\xi(\mathbf x_1)}\right)\right)\\
%&=&\frac{1}{2}\left([\min(j_1-i_1,i_2-j_2-1)]_+ - [\min(j_1-1-i_1,i_2-j_2-1)]_+ \right.\\
%&&+[\min(j_1-1-i_1,i_2-j_2)]_+- [\min(j_1-i_1,i_2-j_2+1)]_+\\
%&&\left. +[\min(j_1+1-i_1,i_2-j_2+1)]_+ - [\min(j_1+1-i_1,i_2-j_2)]_+\right)\\
%&=&\begin{cases}
%0 & \text{if }j_1-i_1\geq i_2-j_2+1;
%\cr0 & \text{if }j_1-i_1\leq i_2-j_2-1;
%\cr\displaystyle\frac{1}{2}\left([j_1-1-i_1]_+- [j_1-i_1]_++[j_1+1-i_1]_+ - [j_1-i_1]_+ \right)& \text{if }j_1-i_1= i_2-j_2;
%\end{cases}\\
\end{align*}
From this, it is straightforward to observe that if $j_3-i_3\geq i_1-j_1+1$ and $j_3-i_3\leq i_1-j_1-1$, then $\tr\big(\widetilde{\nu}'(e_{\delta_P,1})\cdot\widetilde{\nu}(e_{\delta_P,2})\big)=0$.
On the other hand, if $j_3-i_3=i_1-j_1$, then the above formula specializes to
\begin{align*}
&\tr\big(\widetilde{\nu}'(e_{\delta_P,1})\cdot\widetilde{\nu}(e_{\delta_P,2})\big)\\
&=\frac{1}{2}\bigg([i_1-j_1-1]_+-[i_1-j_1]_++[i_1-j_1+1]_+-[i_1-j_1]_+\bigg)\\
&=\begin{cases}\frac{1}{2}&\text{if }i_1=j_1;\cr0 & \text{if }i_1\neq j_1\end{cases}
\end{align*}
We have thus proven that 
\[\tr\big(\widetilde{\nu}'(e_{\delta_P,1})\cdot\widetilde{\nu}(e_{\delta_P,2})\big)=\begin{cases}\frac{1}{2}&\text{if }\mathbf i=\mathbf j;\cr0 & \text{otherwise}\end{cases}\] 
Since $\Emc^{\bf i}_{\bf P}$ is skew-symmetric in $P$ and $\Hmc^{\bf j}_{\bf P}$ is symmetric in $P$, by Lemma \ref{lem:internal}(2), 
\[\tr\big(\widetilde{\nu}'(e_{\delta_P',1})\cdot\widetilde{\nu}(e_{\delta_P',2})\big)=\begin{cases}-\frac{1}{2}&\text{if }\mathbf i=\mathbf j;\cr0 & \text{otherwise}\end{cases}\] 
since the internal part of $\omega_{[\rho]}\left(\Xmc'([\rho]),\Xmc([\rho])\right)$ is 
\[\tr\big(\widetilde{\nu}'(e_{\delta_P,1})\cdot\widetilde{\nu}(e_{\delta_P,2})\big) -  \tr\big(\widetilde{\nu}'(e_{\delta_P',1})\cdot\widetilde{\nu}(e_{\delta_P',2})\big),\]
it follows that the internal part of $\omega_{[\rho]}\left(\Xmc'([\rho]),\Xmc([\rho])\right)$ is $1$ if $\mathbf i=\mathbf j$, and is zero otherwise.
\end{proof}

%\begin{remark}\label{rem:eruptionlozenge}
%Note that in Case 2 of the proof of Proposition \ref{prop:(2)}, we proved that 
%\[\omega_{[\rho]}\left(\Xmc'([\rho]),\Xmc([\rho])\right)=0\] 
%when $\Xmc'$ is an eruption cocycle and $\Xmc$ is a lozenge cocycle.
%\end{remark}

\begin{prop}\label{prop:(3)}
If $\Xmc'$ is a hexagon vector field and let $\Xmc$ is a hexagon or length vector field, then
\[\omega_{[\rho]}\left(\Xmc'([\rho]),\Xmc([\rho])\right)=0.\]
\end{prop}

\begin{proof}
Let $\Xmc'=\Hmc^{\mathbf i}_{\mathbf P}$ for some marked pair of pants $\mathbf P$ and some $\mathbf i=(i_1,i_2,i_3)\in\Bmc$. We prove the proposition for the cases when $\nu$ is a hexagon vector field and a length vector field separately.

{\bf Case 1: $\Xmc$ is a hexagon vector field.} Observe that $\Xmc'$ and $\Xmc$ are both symmetric in every pair of pants of $\Pmc$, so Lemma \ref{lem:internal}(1) tells us that the internal part of $\omega_{[\rho]}\left(\Xmc'([\rho]),\Xmc([\rho])\right)$ is zero. On the other hand, for any fundamental block $D$, if we let $u_1,\dots,u_5,v_1,\dots,v_4$ be as defined in Notation \ref{not:block}, then Proposition \ref{prop:nil}(3) states that both $\widetilde{\nu}'(v_3-v_2)$ and $\widetilde{\nu}(v_3-v_2)$ are nilpotent. Lemma \ref{lem:alphacylinder}(1) and Lemma \ref{lem:gammacylinder}(1) then imply that 
\[\sum_{\delta\in\Tbbb(D)}\sgn(\delta)\tr\big(\widetilde{\nu}'(e_{\delta,1})\cdot\widetilde{\nu}(e_{\delta,2})\big)=0,\]
so the external part of $\omega_{[\rho]}\left(\Xmc'([\rho]),\Xmc([\rho])\right)$ is also zero. 

{\bf Case 2: $\nu$ is a length vector field.} Let $\Xmc=\Ymc^\mathbf k_{\widehat{\mathbf c}}$ for some oriented, non-isolated edge $\widehat{\mathbf c}\in\Pmc^o$ and some $\mathbf k:=(k_1,k_2)\in\Amc$. Let $P$ be the pair of pants underlying $\mathbf P$, and let $\widehat{c}$ be the non-isolated edge in $\Pmc$ underlying $\widehat{\bf c}$.

{\bf Case 2.1: $\widehat{c}$ is not a boundary component of $P$.} In this case, observe that for any triangle $\delta\in\Tbbb_{\rm lift}$, either $\widetilde{\nu}'(e_{\delta,1})=0=\widetilde{\nu}'(e_{\delta,2})$ or $\widetilde{\nu}(e_{\delta,1})=0=\widetilde{\nu}(e_{\delta,2})$. Thus, $\omega_{[\rho]}\left(\Xmc'([\rho]),\Xmc([\rho])\right)=0$.

{\bf Case 2.2: $\widehat{c}$ is a boundary component of $P$.} Let $P(1)$ and $P(2)$ be the two pairs of pants of $\Pmc$ that share $\bf c$ as a common boundary component, and lie to the right  and left of $\widehat{\mathbf c}$ respectively. For both $m=1,2$, let ${\bf P}(m):=(P(m),\widehat{c})$. By the symmetries of $\Hmc^{\mathbf i}_{\mathbf P}$ and $\Ymc^\mathbf k_{\widehat{\mathbf c}}$, we may assume without loss of generality that ${\bf P}={\bf P}(1)$. Let $(\widehat{\mathbf b},\widehat{\mathbf b}')$ be the pair of non-edge barriers associated to ${\bf P}$, let ${\bf b}$ and ${\bf b}'$ be lifts to $\widetilde{S}$ of $\widehat{\bf b}$ and $\widehat{\bf b}'$ respectively, so that ${\bf b}$ intersects $\delta_P$, ${\bf b}'$ intersects $\delta'_{P}$. Let ${\bf x}$ and ${\bf x}'$ be the triples associated to $\widehat{\bf b}$ and $\widehat{\bf b}'$ respectively. By cyclically permuting the enumeration of the three vertices of $\Tbbb$ in the interior of $P$ if necessary, we may ensure that $e_{\delta_P,1}$ intersects ${\bf b}$ and $e_{\delta'_P,1}$ intersects ${\bf b}'$. 

By definition, 
\[\Ymc_{\widehat{\mathbf c}}^\mathbf k-\Zmc^\mathbf k_{\widehat{\mathbf c}}=2\Emc^{\mathbf k_3(0,-1,1)}_{\mathbf P(1)}-2\Emc^{\mathbf k_3(-1,0,1)}_{\mathbf P(1)}+2\Emc^{ \bar{\mathbf k}_3(0,-1,1)}_{\mathbf P(2)}-2\Emc^{\bar{\mathbf k}_3(-1,0,1)}_{\mathbf P(2)},\]
Hence, by Proposition \ref{prop:(2)}, we know that 
\[\omega_{[\rho]}(\Xmc',\Ymc^{\mathbf k}_{\widehat{\mathbf c}}-\Zmc^{\mathbf k}_{\widehat{\mathbf c}})=\left\{\begin{array}{ll}
2&\text{if }\mathbf k_3(-1,0,1)=\mathbf i;\\
-2&\text{if }\mathbf k_3(0,-1,1)=\mathbf i;\\
0&\text{otherwise}.
\end{array}\right.\]
To finish the proof, it is thus sufficient to show that
\[\omega_{[\rho]}(\Xmc',\Zmc^{\mathbf k}_{\widehat{\mathbf c}})=\left\{\begin{array}{ll}
-2&\text{if }\mathbf k_3=\mathbf i(1,0,-1);\\
2&\text{if }\mathbf k_3=\mathbf i(0,1,-1);\\
0&\text{otherwise}.
\end{array}\right.\]

Observe from their definitions that both $\Xmc'$ and $\Zmc^{\mathbf k}_{\widehat{\mathbf c}}$ are symmetric in every pair of pants given by $\Pmc$. Thus, Lemma \ref{lem:internal} implies that the internal part of $\omega_{[\rho]}(\Xmc',\Zmc^{\mathbf k}_{\widehat{\mathbf c}})$ is zero. To compute the external part of $\omega_{[\rho]}(\Xmc',\Zmc^{\mathbf k}_{\widehat{\mathbf c}})$, let $D$ be a fundamental block, and let $u_1,\dots,u_5,v_1,\dots,v_4$ be as defined in Notation \ref{not:block}. If $\pi(D)$ does not lie in $P$, then observe that $\widetilde{\nu}'(e_{\delta,1})=0$ for all $\delta\in\Tbbb(D)$, which implies
\[\sum_{\delta\in\Tbbb(D)}\sgn(\delta)\tr\left(\widetilde{\nu}'(e_{\delta,1})\cdot\widetilde{\zeta}^\mathbf k_{\widehat{\mathbf c}}(e_{\delta,2})\right)=0.\]

If $\pi(D)$ lies in $P$ but does not have $\widehat{c}$ as a boundary component, then Proposition \ref{prop:nil}(3),(4) tell us that $\widetilde{\nu}'(v_2-v_3)$ and $\widetilde{\zeta}^{\mathbf k}_{\widehat{\mathbf c}}(v_2-v_3)$ are both nilpotent. Thus Lemma \ref{lem:alphacylinder}(1) and Lemma~\ref{lem:gammacylinder}(1) imply
\[\sum_{\delta\in\Tbbb(D)}\sgn(\delta)\tr\big(\widetilde{\nu}'(e_{\delta,1})\cdot\widetilde{\zeta}^\mathbf k_{\widehat{\mathbf c}}(e_{\delta,2})\big)=0.\]

Now, suppose that $\pi(D)$ lies in $P$ and has $\widehat{c}$ as a boundary component. Let ${\bf c}$ be the lift in $\Std$ of $\widehat{\bf c}$ that intersects $\partial D$, and let ${\bf r}=(r_1,r_2)$ be the pair associated to $\mathbf c$. Then $D$ is necessarily a consistent fundamental block. By Proposition \ref{prop:derivative2} and Lemma \ref{lem: diagonal part},
\begin{align*}
\widetilde{\nu}'(u_1)_{{\bf r},{\rm diag}}&=\left\{\begin{array}{ll}
0&\text{if }i_3>1;\\
-\left(A^{\mathbf i(0,1,-1)}_{\xi_\rho(\mathbf x)}\right)_{{\bf r},{\rm diag}}+\left(A^{\mathbf i(1,0,-1)}_{\xi_\rho(\mathbf x)}\right)_{{\bf r},{\rm diag}}&\text{if }i_3=1.
\end{array}\right.\\
%&=\left\{\begin{array}{ll}
%0&\text{if }i_3>1;\\
%-\left(A^{\mathbf i'(0,-1,1)}_{\xi_\rho(\mathbf x')}\right)_{{\bf r},{\rm diag}}+\left(A^{\mathbf i'(1,-1,0)}_{\xi_\rho(\mathbf x')}\right)_{{\bf r},{\rm diag}}&\text{if }i_3=1.
%\end{array}\right.\\
&=\left\{\begin{array}{ll}
0&\text{if }i_3>1;\\
-2D^{(i_1,i_2+1)}_{\xi_\rho(\mathbf r)}+2D^{(i_1+1,i_2)}_{\xi_\rho(\mathbf r)}&\text{if }i_3=1.
\end{array}\right.
\end{align*}
Also, we previously computed, see \eqref{eqn:lozenge}, that
\[\widetilde{\zeta}^\mathbf k_{\widehat{\mathbf c}}(v_2-v_3)_{{\bf r},{\rm diag}}=-2M^{\mathbf k}_{\xi_\rho(\mathbf r)}+2M^{\mathbf k(1,-1)}_{\xi_\rho(\mathbf r)},\] 
where $M^\mathbf k_{\xi_\rho(\mathbf r)}$ is the endomorphism that is the identity on $\xi^{(k_1)}(r_1)\cap\xi^{(k_2+1)}(r_2)$, and whose kernel is $\xi^{(k_1-1)}(r_1)+\xi^{(k_2)}(r_2)$. 

Since Proposition \ref{prop:nil}(3) states that $\widetilde{\nu}'(v_2-v_3)$ is nilpotent, (\ref{eqn:alphacylinder}) implies
\begin{align*}
\sum_{\delta\in\Tbbb(D)}\sgn(\delta)\tr\big(\widetilde{\nu}'(e_{\delta,1})\cdot\widetilde{\zeta}^\mathbf k_{\widehat{\mathbf c}}(e_{\delta,2})\big)&=\tr\big(\widetilde{\nu}'(u_1)_{{\bf r},{\rm diag}}\cdot\widetilde{\zeta}^\mathbf k_{\widehat{\mathbf c}}(v_2-v_3)_{{\bf r},{\rm diag}}\big)\\
%&=&\tr\left(\left(D^{(i_1,i_2+1)}_{\xi(\mathbf d)}-D^{(i_1+1,i_2)}_{\xi(\mathbf d)}\right)\cdot\left(2M^{\mathbf k}_{\xi(\mathbf d)}-2M^{\mathbf k(1,-1)}_{\xi(\mathbf d)}\right)\right)\\
&=\left\{\begin{array}{ll}
-\frac{4i_2}{2n}+\frac{4(-i_1-1)}{2n}&\text{if } k_1=i_1+1\text{ and }i_3=1;\\
\frac{4(i_2+1)}{2n}-\frac{4(-i_1)}{2n}&\text{if } k_1=i_1\text{ and }i_3=1;\\
0&\text{otherwise.}\\
\end{array}\right.\\
&=\left\{\begin{array}{ll}
-2&\text{if }\mathbf k_3=\mathbf i(1,0,-1);\\
2&\text{if }\mathbf k_3=\mathbf i(0,1,-1);\\
0&\text{otherwise.}\\
\end{array}\right.
\end{align*}

We have thus shown that of all the fundamental blocks $D$ such that $\Tbbb(D)\subset\widehat{\Tbbb}$, the only one where the sum
\[\sum_{\delta\in\Tbbb(D)}\sgn(\delta)\tr\big(\widetilde{\nu}'(e_{\delta,1})\cdot\widetilde{\zeta}^\mathbf k_{\widehat{\mathbf c}}(e_{\delta,2})\big)\]
possibly does not vanish is the fundamental block $D$ such that $\pi(D)$ is a boundary cylinder of $P$ that has $\widehat{c}$ as a boundary component. Even then, 
\[\sum_{\delta\in\Tbbb(D)}\sgn(\delta)\tr\big(\widetilde{\nu}'(e_{\delta,1})\cdot\widetilde{\zeta}^\mathbf k_{\widehat{\mathbf c}}(e_{\delta,2})\big)=\left\{\begin{array}{ll}
-2&\text{if }\mathbf k_3=\mathbf i(1,0,-1);\\
2&\text{if }\mathbf k_3=\mathbf i(0,1,-1);\\
0&\text{otherwise.}
\end{array}\right.\]
Since we have also established that the internal part of $\omega_{[\rho]}(\Xmc',\Zmc^{\mathbf k}_{\widehat{\mathbf c}})$ is always zero, we have thus proven, as required, that 
\[\omega_{[\rho]}(\Xmc',\Zmc^{\mathbf k}_{\widehat{\mathbf c}})=\left\{\begin{array}{ll}
-2&\text{if }\mathbf k_3=\mathbf i(1,0,-1);\\
2&\text{if }\mathbf k_3=\mathbf i(0,1,-1);\\
0&\text{otherwise}.
\end{array}\right.\]
\end{proof}

\begin{prop}
If $\Xmc'$ and $\Xmc$ are both length vector fields, then
\[\omega_{[\rho]}\left(\Xmc'([\rho]),\Xmc([\rho])\right)=0.\]
\end{prop}

\begin{proof}
By Proposition \ref{prop:(2)}, we know that if $\Wmc'$ is an eruption vector field and $\Wmc$ is either an eruption or lozenge vector field, then $\omega_{[\rho]}\left(\Wmc'([\rho]),\Wmc([\rho])\right)=0$. Since the length vector fields are a sum of a lozenge vector field with some eruption vector fields, it suffices to show that
\[\omega_{[\rho]}\left(\Zmc^{\bf k}_{\widehat{\bf c}'}([\rho]),\Zmc^{\bf l}_{\widehat{\bf c}}([\rho])\right)=0\]
for some $\mathbf k,\mathbf l\in\Amc$ and some $\widehat{\mathbf c},\widehat{\mathbf c}'\in\Pmc^o$. Let $\widehat{c}$ and $\widehat{c}'$ denote the non-isolated edge in $\Pmc$ underlying $\widehat{\mathbf c}$ and $\widehat{\mathbf c}'$ respectively. 

Observe that lozenge vector fields are symmetric in every pair of pants of $\Pmc$, so Lemma \ref{lem:internal}(1) implies that the internal part of $\omega_{[\rho]}\left(\Zmc^{\bf k}_{\widehat{\bf c}'}([\rho]),\Zmc^{\bf l}_{\widehat{\bf c}}([\rho])\right)$ is zero. Thus, it is sufficient to prove that the external part of $\omega_{[\rho]}\left(\Zmc^{\bf k}_{\widehat{\bf c}'}([\rho]),\Zmc^{\bf l}_{\widehat{\bf c}}([\rho])\right)$ is also zero. We prove this in two cases.

{\bf Case 1: $\widehat{c}\neq \widehat{c}'$.}  For every fundamental block $D$, let $u_1,\dots,u_5,v_1,\dots,v_4$ be the edges in $D$ as defined in Notation \ref{not:block}. Then either $\widehat{c}'$ is not a boundary component of $\pi(D)$, $\widehat{c}$ is not a boundary component of $\pi(D)$. Proposition \ref{prop:nil}(4) then implies that $\widetilde{\zeta}^{\bf k}_{\widehat{\bf c}'}(v_3-v_2)$ and $\widetilde{\zeta}^{\bf k}_{\widehat{\bf c}'}(u_1)$ are nilpotent in the former case, while $\widetilde{\zeta}^{\bf l}_{\widehat{\bf c}}(v_3-v_2)$ and $\widetilde{\zeta}^{\bf l}_{\widehat{\bf c}}(u_1)$ are nilpotent in the latter case. Lemma \ref{lem:alphacylinder}(2) and Lemma \ref{lem:gammacylinder}(2) then implies that 
\[\sum_{\delta\in\Tbbb(D)}\sgn(\delta)\tr\big(\widetilde{\zeta}^{\bf k}_{\widehat{\bf c}'}(e_{\delta,1})\cdot\widetilde{\zeta}^{\bf l}_{\widehat{\bf c}}(e_{\delta,2})\big)=0.\]
Since $D$ is arbitrary, this implies that the external part of $\omega_{[\rho]}\left(\Zmc^{\bf k}_{\widehat{\bf c}'}([\rho]),\Zmc^{\bf l}_{\widehat{\bf c}}([\rho])\right)$ is zero.

{\bf Case 2: $\widehat{c}=\widehat{c}'$.} By the symmetry of lozenge vector fields, we may assume that $\widehat{\bf c}=\widehat{\bf c}'$. Observe that the same arguments used in Case 1 proves that if $D$ is a fundamental block such that $\pi(D)$ does not have $\widehat{c}$ as a boundary component, then
\[\sum_{\delta\in\Tbbb(D)}\sgn(\delta)\tr\big(\widetilde{\zeta}^{\bf k}_{\widehat{\bf c}'}(e_{\delta,1})\cdot\widetilde{\zeta}^{\bf l}_{\widehat{\bf c}}(e_{\delta,2})\big)=0.\]
Let $D(1)$ and $D(2)$ be the fundamental blocks such that $\Tbbb(D(m))\subset\Tbbb_{\rm lift}$, and $\pi(D(1))$ and $\pi(D(2))$ are the boundary cylinders that share $\widehat{\bf c}$ as a boundary component, and lie to the right and left of ${\widehat{\bf c}}$ respectively. To show that the external part of $\omega_{[\rho]}\left(\Zmc^{\bf k}_{\widehat{\bf c}}([\rho]),\Zmc^{\bf l}_{\widehat{\bf c}'}([\rho])\right)$ is zero, we now need only to prove that
\[\sum_{\delta\in\Tbbb(D(1))}\sgn(\delta)\tr\big(\widetilde{\zeta}^{\bf k}_{\widehat{\bf c}'}(e_{\delta,1})\cdot\widetilde{\zeta}^{\bf l}_{\widehat{\bf c}}(e_{\delta,2})\big)+\sum_{\delta\in\Tbbb(D(2))}\sgn(\delta)\tr\big(\widetilde{\zeta}^{\bf k}_{\widehat{\bf c}'}(e_{\delta,1})\cdot\widetilde{\zeta}^{\bf l}_{\widehat{\bf c}}(e_{\delta,2})\big)=0.\]

For both $m=1,2$, let $P(m)$ be the pair of pants of $\Pbbb$ that contains $\pi(D(m))$, let ${\bf P}(m):=(P(m),\widehat{c})$, and let $(\widehat{\mathbf b}(m),\widehat{\mathbf b}'(m))$ be the pair of non-edge barriers associated to ${\bf P}(m)$. Then let ${\bf b}(m)$ and ${\bf b}'(m)$ be lifts to $\widetilde{S}$ of $\widehat{\bf b}(m)$ and $\widehat{\bf b}'(m)$ respectively so that ${\bf b}(m)$ intersects $\delta_{P(m)}$ and ${\bf b}'(m)$ intersects $\delta'_{P(m)}$, and let ${\bf x}(m)$ and ${\bf x}'(m)$ be the triples associated to ${\bf b}(m)$ and ${\bf b}'(m)$ respectively. By cyclically permuting the enumeration of the three vertices of $\Tbbb$ in the interior of $P$ if necessary, we may ensure that $e_{\delta_{P(m)},1}$ intersects ${\bf b}(m)$ and $e_{\delta'_{P(m)},1}$ intersects ${\bf b}'(m)$. Let $u_1(m),\dots,u_5(m),v_1(m),\dots,v_4(m)$ be the edges in $D(m)$ as defined in Notation \ref{not:block}, let ${\bf c}$ be the lift of $\widehat{\bf c}$ such that $D(1)\cap{\bf c}$ and $D(2)\cap{\bf c}$ are non-empty and coincide, and let $\mathbf r=(r_1,r_2)$ be the pair associated to ${\bf c}$. 

Since lozenge vector fields are symmetric, 
\[\widetilde{\zeta}^{\bf k}_{\widehat{\bf c}'}(v_2(m))_{{\bf r},{\rm diag}}=-\widetilde{\zeta}^{\bf k}_{\widehat{\bf c}'}(v_3(m))_{{\bf r},{\rm diag}}\quad\text{and}\quad\widetilde{\zeta}^{\bf l}_{\widehat{\bf c}}(v_2(m))_{{\bf r},{\rm diag}}=-\widetilde{\zeta}^{\bf l}_{\widehat{\bf c}}(v_3(m))_{{\bf r},{\rm diag}}\] 
for both $m=1,2$, so
\[\tr\big(\widetilde{\zeta}^{\bf k}_{\widehat{\bf c}'}(v_3(m))_{{\bf r},{\rm diag}}\cdot\widetilde{\zeta}^{\bf l}_{\widehat{\bf c}}(v_3(m))_{{\bf r},{\rm diag}}\big)-\tr\big(\widetilde{\zeta}^{\bf k}_{\widehat{\bf c}'}(v_2(m))_{{\bf r},{\rm diag}}\cdot\widetilde{\zeta}^{\bf l}_{\widehat{\bf c}}(v_2(m))_{{\bf r},{\rm diag}}\big)=0.\] 
Also, since $\widetilde{\zeta}^{\bf k}_{\widehat{\bf c}'}(v_1(m))=-\widetilde{\zeta}^{\bf k}_{\widehat{\bf c}'}(v_4(m))$, we see that 
\[2\widetilde{\zeta}^{\bf k}_{\widehat{\bf c}'}(v_1(m))=\widetilde{\zeta}^{\bf k}_{\widehat{\bf c}'}(v_1(m)-v_4(m))=\widetilde{\zeta}^{\bf k}_{\widehat{\bf c}'}(v_2(m)-v_3(m))_{{\bf r},{\rm diag}}=2\widetilde{\zeta}^{\bf k}_{\widehat{\bf c}'}(v_2(m))_{{\bf r},{\rm diag}}.\] 
Similarly, $\widetilde{\zeta}^{\bf l}_{\widehat{\bf c}}(v_1(m))=-\widetilde{\zeta}^{\bf l}_{\widehat{\bf c}}(v_4(m))$,
\[2\widetilde{\zeta}^{\bf l}_{\widehat{\bf c}}(v_1(m))=\widetilde{\zeta}^{\bf l}_{\widehat{\bf c}}(v_2(m)-v_3(m))_{{\bf r},{\rm diag}}=2\widetilde{\zeta}^{\bf l}_{\widehat{\bf c}}(v_2(m))_{{\bf r},{\rm diag}}.\] 
Thus, (\ref{eqn:alphacylinder}) and (\ref{eqn:gammacylinder}) imply
\begin{align*}
\sum_{\delta\in\Tbbb(D(m))}\sgn(\delta)\tr\big(\widetilde{\zeta}^{\bf k}_{\widehat{\bf c}'}(e_{\delta,1})\cdot\widetilde{\zeta}^{\bf l}_{\widehat{\bf c}}(e_{\delta,2})\big)&=2\tr\big(\widetilde{\zeta}^{\bf k}_{\widehat{\bf c}'}(u_1(m))_{{\bf r},{\rm diag}}\cdot\widetilde{\zeta}^{\bf l}_{\widehat{\bf c}}(v_1(m))\big)\\
&-2\tr\big(\widetilde{\zeta}^{\bf k}_{\widehat{\bf c}'}(v_1(m))\cdot\widetilde{\zeta}^{\bf l}_{\widehat{\bf c}}(u_1(m))_{{\bf r},{\rm diag}}\big).
\end{align*}

Next, observe that $v_1(2)=v_4(1)$, so $\widetilde{\zeta}^{\bf k}_{\widehat{\bf c}'}(v_1(2))=-\widetilde{\zeta}^{\bf k}_{\widehat{\bf c}'}(v_1(1))$ and $\widetilde{\zeta}^{\bf l}_{\widehat{\bf c}}(v_1(2))=-\widetilde{\zeta}^{\bf l}_{\widehat{\bf c}}(v_1(1))$, so
\begin{eqnarray*}
&&\sum_{\delta\in\Tbbb(D(1))}\sgn(\delta)\tr\big(\widetilde{\zeta}^{\bf k}_{\widehat{\bf c}'}(e_{\delta,1})\cdot\widetilde{\zeta}^{\bf l}_{\widehat{\bf c}}(e_{\delta,2})\big)+\sum_{\delta\in\Tbbb(D(2))}\sgn(\delta)\tr\big(\widetilde{\zeta}^{\bf k}_{\widehat{\bf c}'}(e_{\delta,1})\cdot\widetilde{\zeta}^{\bf l}_{\widehat{\bf c}}(e_{\delta,2})\big)\\
&=&2\tr\left(\left(\widetilde{\zeta}^{\bf k}_{\widehat{\bf c}'}(u_1(1))_{{\bf r},{\rm diag}}-\widetilde{\zeta}^{\bf k}_{\widehat{\bf c}'}(u_1(2))_{{\bf r},{\rm diag}}\right)\cdot\widetilde{\zeta}^{\bf l}_{\widehat{\bf c}}(v_1(1))\right)\\
&&+2\tr\left(\widetilde{\zeta}^{\bf k}_{\widehat{\bf c}'}(v_1(1))\cdot\left(\widetilde{\zeta}^{\bf l}_{\widehat{\bf c}}(u_1(2))_{{\bf r},{\rm diag}}-\widetilde{\zeta}^{\bf l}_{\widehat{\bf c}}(u_1(1))_{{\bf r},{\rm diag}}\right)\right).
\end{eqnarray*}
By definition and Lemma \ref{lem: diagonal part},
\begin{align*}
\widetilde{\zeta}^{\bf k}_{\widehat{\bf c}'}(u_1(1))_{{\bf r},{\rm diag}}&=-\left(A^{{\mathbf k}_3}_{\xi_\rho(\mathbf x(1))}\right)_{{\bf r},{\rm diag}}+\left(A^{{\mathbf k}_3(1,-1,0)}_{\xi_\rho(\mathbf x(1))}\right)_{{\bf r},{\rm diag}}\\
&=-2D^{\bf k}_{\xi_\rho(\mathbf r)}+2D^{(k_1+1,k_2-1)}_{\xi_\rho(\mathbf r)}
\end{align*}
and
\begin{align*}
\widetilde{\zeta}^{\bf k}_{\widehat{\bf c}'}(u_1(2))_{{\bf r},{\rm diag}}&=-\left(A^{\bar{\mathbf k}_3}_{\xi_\rho(\mathbf x(2))}\right)_{{\bf r},{\rm diag}}+\left(A^{\bar{\mathbf k}_3(1,-1,0)}_{\xi_\rho(\mathbf x(2))}\right)_{{\bf r},{\rm diag}}\\
&=-2D^{\bf k}_{\xi_\rho(\mathbf r)}+2D^{(k_1+1,k_2-1)}_{\xi_\rho(\mathbf r)},
\end{align*}
so 
\[\widetilde{\zeta}^{\bf k}_{\widehat{\bf c}'}(u_1(1))_{{\bf r},{\rm diag}}-\widetilde{\zeta}^{\bf k}_{\widehat{\bf c}'}(u_1(2))_{{\bf r},{\rm diag}}=0.\]
Similarly,
\[\widetilde{\zeta}^{\bf l}_{\widehat{\bf c}}(u_1(2))_{{\bf r},{\rm diag}}-\widetilde{\zeta}^{\bf l}_{\widehat{\bf c}}(u_1(1))_{{\bf r},{\rm diag}}=0.\]
It follows that 
\[\sum_{\delta\in\Tbbb(D(1))}\sgn(\delta)\tr\big(\widetilde{\nu}^1(e_{\delta,1})\cdot\widetilde{\nu}^2(e_{\delta,2})\big)+\sum_{\delta\in\Tbbb(D(2))}\sgn(\delta)\tr\big(\widetilde{\nu}^1(e_{\delta,1})\cdot\widetilde{\nu}^2(e_{\delta,2})\big)=0.\qedhere\]
\end{proof}

\bibliographystyle{amsalpha}
\bibliography{ref}

\end{document}